\newcounter{notes}%
\renewcommand{\phi}{\varphi}
\newcommand{\bv}{\left(\begin{array}{c}}
\newcommand{\ev}{\end{array}\right)}
\newcommand{\bmat}{\begin{pmatrix}} 
\newcommand{\emat}{\end{pmatrix}}
\newcommand{\lip}{\mathrm{lip}}
\newcommand{\Lip}{\mathrm{Lip}}
\newcommand{\AdS}{\mathrm{AdS}^3}
\newcommand{\AdSS}{\mathrm{AdS}}
\newcommand{\Mink}{\mathrm{Min}}
\newcommand{\SO}{\mathrm{SO}}
\newcommand{\OO}{\mathrm{O}}
\newcommand{\PO}{\mathrm{PO}}
\newcommand{\PSL}{\mathrm{PSL}}
\newcommand{\PGL}{\mathrm{PGL}}
\newcommand{\g}{\mathfrak{g}}
\newcommand{\ssl}{\mathfrak{sl}}
\newcommand{\RR}{\mathbb{R}}
\newcommand{\A}{\mathcal{A}}
\newcommand{\RP}{\mathbb{RP}}
\newcommand{\ZZ}{\mathbb{Z}}
\newcommand{\NN}{\mathbb{N}}
\newcommand{\HH}{\mathbb{H}}
\newcommand{\Ad}{\operatorname{Ad}}
\newcommand{\ad}{\operatorname{ad}}
\newcommand{\D}{\mathrm{d}}
\newcommand{\Hom}{\mathrm{Hom}}
\newcommand{\ie}{\emph{i.e.}\ }
\newcommand{\eg}{\emph{e.g.}\ }
\newcommand{\eps}{\varepsilon}
\newcommand{\CH}{\operatorname{Conv}}
\newcommand{\Dev}{\operatorname{Dev}}
\newcommand{\dev}{\operatorname{dev}}
\newcommand{\hatDev}{\widehat{\operatorname{Dev}}}
\newcommand{\hatdev}{\widehat{\operatorname{dev}}}
\newcommand{\specialC}{\mathscr{C}}
\newcommand{\gmetric}{\varrho}
\newcommand{\hmetric}{\varsigma}
\newcommand{\newf}{\nu}
\newcommand{\smooth}{s}
\newcommand*{\longhookrightarrow}{\ensuremath{\lhook\joinrel\relbar\joinrel\rightarrow}}
\theoremstyle{plain}
\newtheorem{Theorem}{Theorem}
\newtheorem{Proposition}[Theorem]{Proposition}
\newtheorem{Corollary}[Theorem]{Corollary}
\newtheorem{Observation}[Theorem]{Observation}
\newtheorem{Claim}[Theorem]{Claim}
\newtheorem{Lemma}[Theorem]{Lemma}
\newtheoremstyle{Theoremwithref}{}{}{\itshape}{}{\bfseries}{.}{.5em}{#1 #2 #3}
\theoremstyle{Theoremwithref}
\newtheorem{Theoremwithref}[Theorem]{Theorem}
\newtheorem{Propositionwithref}[Theorem]{Proposition}
\theoremstyle{definition}
\newtheorem{Definition}[Theorem]{Definition}
\newtheorem{Remark}[Theorem]{Remark}
\numberwithin{Theorem}{section}
\numberwithin{equation}{section}
\newtheorem*{rep@theorem}{\rep@title}
\newcommand{\newreptheorem}[2]{%
\newenvironment{rep#1}[1]{%
 \def\rep@title{#2~\ref{##1}}%
 \begin{rep@theorem}}%
 {\end{rep@theorem}}}
\title[Complete Lorentz spacetimes of constant curvature]{Geometry and topology of complete Lorentz spacetimes of constant curvature}
\author{Jeffrey Danciger}
\address{Department of Mathematics, The University of Texas at Austin, 1 University Station C1200, Austin, TX 78712, USA}
\email{jdanciger@math.utexas.edu}
\author{Fran\c{c}ois Gu\'eritaud}
\address{CNRS and Universit\'e Lille 1, Laboratoire Paul Painlev\'e, 59655 Villeneuve d'Ascq Cedex, France}
\email{francois.gueritaud@math.univ-lille1.fr}
\author{Fanny Kassel}
\address{CNRS and Universit\'e Lille 1, Laboratoire Paul Painlev\'e, 59655 Villeneuve d'Ascq Cedex, France}
\email{fanny.kassel@math.univ-lille1.fr}
\thanks{J.D. is partially supported by the National Science Foundation under the grant DMS 1103939.
F.G. and F.K. are partially supported by the Agence Nationale de la Recherche under the grants DiscGroup (ANR-11-BS01-013) and ETTT (ANR-09-BLAN-0116-01), and through the Labex CEMPI (ANR-11-LABX-0007-01)}
\begin{document}

\begin{abstract}
We study proper, isometric actions of nonsolvable discrete groups~$\Gamma$ on the $3$-dimensional Minkowski space $\RR^{2,1}$ as limits of actions on the $3$-dimensional anti-de Sitter space $\AdS$. 
To each such action is associated a deformation of a hyperbolic surface group $\Gamma_0$ inside $\OO(2,1)$.
When $\Gamma_0$ is convex cocompact, we prove that $\Gamma$ acts properly on~$\RR^{2,1}$ if and only if this group-level deformation is realized by a deformation of the quotient surface that everywhere contracts distances at a uniform rate.
We give two applications in this case.
(1) Tameness: A complete flat spacetime is homeomorphic to the interior of a compact manifold.
(2) Geometric transition: A complete flat spacetime is the rescaled limit of collapsing $\AdSS$ spacetimes.
\end{abstract}

\maketitle

\section{Introduction}

A Lorentzian 3-manifold of constant negative curvature is locally modeled on the \emph{anti-de Sitter} space $\AdS = \PO(2,2)/\OO(2,1)$, which can be realized in $\RP^3$ as the set of negative points with respect to a quadratic form of signature $(2,2)$.
A flat Lorentzian 3-manifold is locally modeled on the \emph{Minkowski} space~$\RR^{2,1}$, which is the affine space $\RR^3$ endowed with the Lorentzian structure induced by a quadratic form of signature $(2,1)$.
Observe that the tangent space at a point of $\AdS$ identifies with $\RR^{2,1}$; this basic fact motivates the point of view of this paper that a large class of manifolds modeled on $\RR^{2,1}$ (convex cocompact Margulis spacetimes) are infinitesimal versions of manifolds modeled on $\AdS$.
We consider only \emph{complete} Lorentzian manifolds which are quotients of $\AdS$ or $\RR^{2,1}$ by discrete groups $\Gamma$ of isometries acting properly discontinuously.

The following facts, specific to dimension~$3$, will be used throughout the paper.
The anti-de Sitter space $\AdS$ identifies with the manifold $G=\nolinebreak\PSL_2(\RR)$ endowed with the Lorentzian metric induced by (a multiple of) the Killing form.
The group of orientation and time-orientation preserving isometries is $G\times G$ acting by right and left multiplication: $(g_1,g_2) \cdot g = g_2 g g_1^{-1}$.
The Minkowski space $\RR^{2,1}$ can be realized as the Lie algebra $\g = \ssl_2(\RR)$.
The group of orientation and time-orientation preserving isometries is $G \ltimes \g$ acting affinely: $(g,v) \cdot w = \Ad(g) w +v$.

Examples of groups of isometries acting properly discontinuously on $\AdS$ are easy to construct: one can take $\Gamma =\Gamma_0\times\{ 1\}$ where $\Gamma_0$ is any discrete subgroup of~$G$; in this case the quotient $\Gamma\backslash\AdS$ identifies with the unit tangent bundle to the hyperbolic orbifold $\Gamma_0\backslash\HH^2$.
Such quotients are called standard.
Goldman \cite{gol85} produced the first nonstandard examples by deforming standard ones, a technique that was later generalized by Kobayashi \cite{kob98}.
Salein \cite{sal00} then constructed the first examples that were not deformations of standard ones.

On the other hand, although cyclic examples are readily constructed, it is not obvious that there exist \emph{nonsolvable} groups acting properly discontinuously on~$\RR^{2,1}$.
The Auslander conjecture in dimension~$3$, proved by Fried--Goldman \cite{fg83}, states that any discrete group acting properly discontinuously \emph{and cocompactly} on~$\RR^{2,1}$ is solvable up to finite index, generalizing Bieberbach's theory of crystallographic groups.
Milnor \cite{mil77} asked if the cocompactness assumption could be removed.
This was answered negatively by Margulis \cite{mar83,mar84}, who constructed the first examples of nonabelian free groups acting properly discontinuously on~$\RR^{2,1}$; the quotient manifolds coming from such actions are often called \emph{Margulis spacetimes}.
Drumm \cite{dru92,dru93} constructed more examples of Margulis spacetimes by introducing \emph{crooked planes} to produce fundamental domains.

\subsection{Proper actions and contraction}

A discrete group~$\Gamma$ acting on $\AdS$ by isometries that preserve both orientation and time orientation is determined by two representations $j,\rho : \Gamma\rightarrow G=\PSL_2(\RR)$, called the \emph{first projection} and \emph{second projection} respectively.
We refer to the group of isometries determined by $(j,\rho)$ using the notation $\Gamma^{j,\rho}$. 
By work of Kulkarni--Raymond \cite{kr85}, if such a group $\Gamma^{j,\rho}$ acts properly on $\AdS$ and is torsion-free, then one of the representations $j,\rho$ must be injective and discrete; if $\Gamma$ is finitely generated (which we shall always assume), then we may pass to a finite-index subgroup that is torsion-free by the Selberg lemma \cite[Lem.\,8]{sel60}.
We assume then that $j$ is injective and discrete.
When $j$ is convex cocompact, Kassel \cite{kasPhD} gave a full characterization of properness of the action of $\Gamma^{j,\rho}$ in terms of a double contraction condition.
Specifically $\Gamma^{j,\rho}$ acts properly on $\AdS$ if and only if either of the following two equivalent conditions holds (up to switching $j$ and $\rho$ if both are convex cocompact):
\begin{itemize}
  \item (\emph{Lipschitz contraction}) There exists a $(j,\rho)$-equivariant Lipschitz~map $f: \HH^2 \rightarrow \HH^2$ with Lipschitz constant $<1$.
  \item (\emph{Length contraction}) 
  \begin{equation}\label{eqn:length_contraction}
  \sup_{\gamma\in\Gamma \text{ with } \lambda(j(\gamma)) > 0}\  \frac{\lambda(\rho(\gamma))}{\lambda(j(\gamma))} \,<\, 1,
  \end{equation} 
\end{itemize}
where $\lambda(g)$ is the hyperbolic translation length of $g\in G$ (defined to be~$0$ if $g$ is not hyperbolic, see \eqref{eqn:def-lambda}).
This was extended by Gu\'eritaud--Kassel \cite{gk12} to the case that the finitely generated group~$j(\Gamma)$ is allowed to have parabolic elements. 
The two (equivalent) types of contraction appearing above are easy to illustrate in the case when $\rho$ is also discrete and injective: the Lipschitz contraction criterion says that there exists a map $j(\Gamma)\backslash\HH^2 \rightarrow \rho(\Gamma)\backslash\HH^2$ (in the correct homotopy class) that uniformly contracts all distances on the surface, while the length contraction criterion says that any closed geodesic on $\rho(\Gamma)\backslash\HH^2$ is uniformly shorter than the corresponding geodesic on $j(\Gamma)\backslash\HH^2$. 
Lipschitz contraction easily implies length contraction, but the converse is not obvious.
One important consequence that can be deduced from either criterion is that for a fixed convex cocompact~$j$, the representations~$\rho$ that yield a proper action form an open set.
In Section~\ref{sec:applications} (which can be read independently), we derive topological and geometric information about the quotient manifold directly from the Lipschitz contraction property. 

We remark that $\Gamma^{j,\rho}$ does not act properly on $\AdS$ in the case that $\Gamma$ is a closed surface group and $j,\rho$ are both Fuchsian (\ie injective and discrete).
For Thurston showed, as part of his theory of the asymmetric metric on Teichm\"uller space \cite{thu86}, that the best Lipschitz constant of maps $j(\Gamma) \backslash \HH^2 \rightarrow \rho(\Gamma) \backslash \HH^2$ (in the correct homotopy class) is $\geq 1$, with equality only if $\rho$ is conjugate to~$j$.
However, $\Gamma^{j,\rho}$ does act properly on a convex subdomain of $\AdS$; the resulting $\AdSS$ manifolds are the globally hyperbolic spacetimes studied by Mess \cite{mes90}.

We now turn to the flat case.
A discrete group~$\Gamma$ acting on~$\RR^{2,1}$ by isometries that preserve both orientation and time orientation is determined by a representation $j : \Gamma\rightarrow\PSL_2(\RR)$ and a $j$-cocycle $u : \Gamma\rightarrow\ssl_2(\RR)$, \ie a map satisfying
$$u(\gamma_1\gamma_2) = u(\gamma_1) + \Ad(j(\gamma_1))\,u(\gamma_2)$$
for all $\gamma_1,\gamma_2\in\Gamma$.
We refer to the group of isometries determined by $(j,u)$ using the notation $\Gamma^{j,u}$, where $j(\Gamma)$ is the \emph{linear part} and $u$ the \emph{translational part} of~$\Gamma^{j,u}$.
The cocycle $u$ may be thought of as an infinitesimal deformation of~$j$ (see Section~\ref{subsec:small-deformation}).
Fried--Goldman \cite{fg83} showed that if $\Gamma$ acts properly on~$\RR^{2,1}$ and is not virtually solvable, then $j$ must be injective and discrete on a finite-index subgroup of $\Gamma$; in particular $j(\Gamma)$ is a surface group (up to finite index).
Unlike in the $\AdSS$ case, here $j(\Gamma)$ cannot be cocompact (see Mess \cite{mes90}).
In the case that it is convex cocompact, Goldman--Labourie--Margulis \cite{glm09} gave a properness criterion in terms of the so-called \emph{Margulis invariant}.
Given the interpretation of this invariant as a derivative of translation lengths \cite{gm00}, the group~$\Gamma^{j,u}$ (with $j$ convex cocompact) acts properly on~$\RR^{2,1}$ if and only if, up to replacing $u$ by~$-u$, the deformation $u$ contracts the lengths of group elements at a uniform rate:
\begin{align}\label{eqn:inf_length_contraction}
\sup_{\gamma\in\Gamma \text{ with } \lambda(j(\gamma)) >0 }\ \frac{\D}{\D t}\Big|_{t=0}\ \frac{\lambda(e^{tu(\gamma)}j(\gamma))}{\lambda(j(\gamma))} &< 0\,.
\end{align}
As a consequence, for a fixed~$j$, the set of $j$-cocycles~$u$ giving a proper~action is open.
The proof involves an extension of the Margulis invariant to the space of geodesic currents on $j(\Gamma)\backslash\HH^2$ and the dynamics of the geodesic flow on the affine bundle $\Gamma^{j,u}\backslash (G\ltimes \g)\rightarrow j(\Gamma)\backslash G$, where $j(\Gamma)\backslash G$ identifies with the unit tangent bundle of $j(\Gamma) \backslash\HH^2$. 

It is natural to view the properness criterion \eqref{eqn:inf_length_contraction} for $\RR^{2,1}$ as an infinitesimal version of the length contraction criterion \eqref{eqn:length_contraction} for $\AdS$, with $\rho$ approaching $j$ along the cocycle $u$. 
In the first part of this paper, we pursue this analogy further by developing an $\RR^{2,1}$ version of the Lipschitz theory of \cite{kasPhD, gk12}, replacing equivariant Lipschitz maps with their infinitesimal analogues, namely deformation vector fields that change distances in a uniformly controlled way.
This yields an infinitesimal version of the Lipschitz contraction criterion as well as a new proof of the infinitesimal length contraction criterion \eqref{eqn:inf_length_contraction} of \cite{glm09}.

\subsection{A new properness criterion for $\RR^{2,1}$}

As before, let $j : \Gamma \rightarrow G = \PSL_2(\RR)$ be a convex cocompact representation. 
An \emph{infinitesimal deformation} of the hyperbolic surface $S:=j(\Gamma)\backslash\HH^2$ is given by a vector field~$X$ on the universal cover $\widetilde{S}=\HH^2$ and a $j$-cocycle $u: \Gamma \rightarrow \g$ such that $X$ is \emph{$(j,u)$-equivariant}: for any $p \in \HH^2$ and $\gamma \in \Gamma$,
$$X(\gamma \cdot p) = \gamma_{\ast}X(p) + u(\gamma)(\gamma \cdot p),$$
where $\Gamma$ acts on $\HH^2$ via~$j$, and elements of $\g$ such as $u(\gamma)$ are interpreted as Killing vector fields on~$\HH^2$ in the usual way.
A $(j,u)$-equivariant vector field~$X$ should be thought of as the derivative of a family of \emph{developing maps} $f_t : \widetilde{S} = \HH^2 \rightarrow \HH^2$ describing a varying family of hyperbolic surfaces $j_t(\Gamma) \backslash \HH^2$, with $t=0$ corresponding to the original hyperbolic structure~$S$ (hence the map $f_0$ is the identity of~$\HH^2$).
The failure of the vector field $X=\frac{\D}{\D t}\big|_{t=0}\,f_t$ to descend to the surface is measured by the derivative of the holonomy representation, which is precisely the $\g$-valued cocycle~$u$:
$$u(\gamma) = \frac{\D}{\D t}\Big|_{t=0}\, j_t(\gamma)\,j(\gamma)^{-1} \in \g.$$ 
We call $u$ the \emph{holonomy derivative} of the deformation~$X$.

We say an infinitesimal deformation $X$ is \emph{$k$-lipschitz} (with a lowercase `l') if for any $p\neq q$ in~$\HH^2$, 
$$\frac{\D}{\D t}\Big|_{t=0}\, d\big(\exp_p(tX(p)), \exp_q(tX(q))\big) \leq k\,d(p,q).$$
The \emph{lipschitz constant} $\lip(X)$ will refer to the infimum of $k\in\RR$ such that $X$ is $k$-lipschitz.
We shall see (Proposition~\ref{prop:Lipschitz-family}) that under appropriate conditions, $\lip(X)$ is the derivative of the Lipschitz constants of a family of developing maps $f_t$ tangent to~$X$ as above.
The lowercase `l' is not intended to diminish the work of Rudolf Otto Sigismund Lipschitz (K\"onigsberg 1832 -- Bonn 1903), but rather to distinguish this notion from the traditional one while emphasizing its infinitesimal nature (as in the notational convention for Lie groups and their Lie algebras).
While a Lipschitz section of the tangent bundle is always lipschitz, the converse is false: for example, if $\chi$ is the characteristic function of the negative reals, then the $0$-lipschitz vector field $x \mapsto \chi(x) \frac{\partial}{\partial x}$ on $\RR$ is not even continuous.
The number $\lip(X)$ can be negative: this means that the vector field $X$ is in an intuitive sense ``contracting''.

With this terminology, here is what we prove.

\begin{Theorem}\label{thm:newGLM}
Let $\Gamma$ be a discrete group, $j\in\Hom(\Gamma,G)$ a convex cocompact representation, and $u : \Gamma\rightarrow\g$ a $j$-cocycle.
The action of $\Gamma^{j,u}$ on~$\RR^{2,1}$ is properly discontinuous if and only if, up to replacing $u$ by~$-u$, one of the equivalent conditions holds:
\begin{enumerate}
\item \emph{(Infinitesimal lipschitz contraction)} For some $k < 0$, there exists a $k$-lipschitz infinitesimal deformation of $j(\Gamma)\backslash\HH^2$ with holonomy derivative $u$.
 \item \emph{(Infinitesimal length contraction)} As in \cite{glm09}:
 $$\sup_{\gamma\in\Gamma\ \mathrm{ with}\ \lambda(j(\gamma))>0}\ \frac{\D}{\D t}\Big|_{t=0}\ \frac{\lambda(e^{tu(\gamma)}j(\gamma))}{\lambda(j(\gamma))} < 0.$$
 \end{enumerate}
\end{Theorem}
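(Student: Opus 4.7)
The plan is to mirror the structure of the $\AdS$ dichotomy ``Lipschitz contraction $\Leftrightarrow$ length contraction'' of Kassel \cite{kasPhD} and Gu\'eritaud--Kassel \cite{gk12}, at the infinitesimal level. I would establish the three implications (1)$\Rightarrow$(2), properness $\Rightarrow$(2), and (2)$\Rightarrow$(1), together with (1)$\Rightarrow$ properness.

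\emph{(1)$\Rightarrow$(2): easy differentiation.} Suppose $X$ is a $(j,u)$-equivariant $k$-lipschitz vector field with $k<0$. For a hyperbolic element $\gamma\in\Gamma$, the translation length $\lambda(j(\gamma))$ is realized as $d(p,j(\gamma)p)$ for $p$ on the axis. Differentiating $d(\exp_p tX(p), \exp_{j(\gamma)p}tX(j(\gamma)p))$ at $t=0$ and using the equivariance together with the definition of the lipschitz constant, one obtains $\frac{\D}{\D t}|_{t=0}\lambda(e^{tu(\gamma)}j(\gamma)) \le k\,\lambda(j(\gamma))$, which gives (2). The implication ``properness $\Rightarrow$ (2)'' is the theorem of Goldman--Labourie--Margulis \cite{glm09}, which I would simply invoke.

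\emph{(2)$\Rightarrow$(1): the main step, via a geometric transition.} The idea is to integrate the cocycle $u$ to a path of representations $\rho_t = \exp(tu(\cdot))\,j(\cdot)$ (to first order in~$t$) and to take the derivative at $t=0$ of the $\AdS$ Lipschitz maps furnished by Kassel. More precisely:
\begin{enumerate}
\item[(i)] The infinitesimal length contraction (2) is equivalent, by a first-order expansion, to the fact that the pairs $(j,\rho_t)$ satisfy the $\AdS$ length contraction \eqref{eqn:length_contraction} for all sufficiently small $t>0$, with a constant $C_t<1$ such that $\limsup_{t\to 0^+}(C_t-1)/t < 0$.
\item[(ii)] By \cite{kasPhD} (using that $j$ is convex cocompact), there exist $(j,\rho_t)$-equivariant Lipschitz maps $f_t:\HH^2\to\HH^2$ with $\Lip(f_t)=C_t$, and $f_0=\id$.
\item[(iii)] Arzel\`a--Ascoli on compact subsets of $\HH^2$ (the family $(f_t-\id)/t$ is uniformly bounded in Lipschitz norm and vanishes at a basepoint up to a controlled term) yields a subsequential limit $X$; equivariance of $X$ follows from passing to the limit in the equivariance relation for $f_t$; and the inequality $d(\exp_p tX(p),\exp_q tX(q))\le (1+kt+o(t))\,d(p,q)$ gives $\lip(X)\le k<0$.
\end{enumerate}
Alternatively, one can construct $X$ directly by differentiating the explicit optimal Lipschitz maps built from stretch loci / foliations in \cite{gk12}; this bypasses the compactness argument but requires reopening that construction in families.

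\emph{(1)$\Rightarrow$ properness: the infinitesimal version of the $\AdS$ argument.} Given a $k$-lipschitz $(j,u)$-equivariant $X$ with $k<0$, I would prove proper discontinuity by a direct dynamical argument parallel to Kassel's $\AdS$ argument: use $X$ to build a coercive ``infinitesimal displacement'' function on $\HH^2\times\RR^{2,1}$ that tends to infinity along any sequence $\Gamma^{j,u}\cdot (p,v)\to\infty$, exploiting the negative lipschitz constant to absorb the translational part $u(\gamma)$ by the $X$-contraction along the $j(\gamma)$-orbit. Equivalently, one may argue through (2): (1)$\Rightarrow$(2) was done above, and (2)$\Rightarrow$ properness is the GLM theorem.

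\emph{Main obstacle.} The crux is step (iii) in (2)$\Rightarrow$(1): the space of equivariant Lipschitz maps is not linear and has no a priori tangent space structure at the identity, so one must carefully extract a limit and show that the resulting vector field inherits the correct bound on the first-order variation of distances. Controlling the remainder $o(t)$ in the distance expansion uniformly in $(p,q)$ across all of $\HH^2$ (or at least where it matters for the sup defining $\lip(X)$) is what I expect to require the most care, and is where the convex cocompactness of $j$ should be used (via cocompactness of the convex core action) to reduce to a compact situation.
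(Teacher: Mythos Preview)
Your overall strategy diverges substantially from the paper's, and the key step (2)$\Rightarrow$(1) has a genuine gap.

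\textbf{The gap in (2)$\Rightarrow$(1).} You propose to obtain the contracting lipschitz field $X$ as a limit of $(f_t-\id)/t$, where $f_t$ is an optimal $(j,\rho_t)$-equivariant Lipschitz map supplied by \cite{kasPhD}. Several things go wrong. First, there is no canonical or even continuous choice of optimal $f_t$, so there is no reason for $f_t\to\id$ as $t\to 0$: a subsequential limit would only be some $(j,j)$-equivariant $1$-Lipschitz self-map of~$\HH^2$, and there are many of those besides the identity (e.g.\ the closest-point projection to the convex core). Without $f_t\to\id$, the difference quotient does not even make sense. Second, and more fundamentally, even granting a good family with $f_0=\id$, your Arzel\`a--Ascoli step fails: a uniform bound on $\lip$ does \emph{not} give equicontinuity of the difference quotients $(f_t-\id)/t$ (think of the $0$-lipschitz field $x\mapsto \chi_{(-\infty,0)}(x)\,\partial_x$ on~$\RR$). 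Limits of $k$-lipschitz vector fields are in general only set-valued \emph{convex fields} (Definition~\ref{def:convfield}), not vector fields. This is precisely the obstacle the paper is built around: it develops an intrinsic infinitesimal theory (convex fields, an infinitesimal Kirszbraun-type extension theorem, and the maximally stretched lamination of Theorem~\ref{thm:laminations}) rather than trying to differentiate the macroscopic theory. The paper then recovers vector fields from convex fields a posteriori via a ``flow-back'' construction (Proposition~\ref{prop:flowback}).

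\textbf{Comparison with the paper.} The paper proves (1)$\Leftrightarrow$(2) without invoking \cite{glm09}: (1)$\Rightarrow$(2) is your easy direction (this is \eqref{eqn:lip-marg2}), and $\neg$(1)$\Rightarrow\neg$(2) comes from the existence of a $k$-stretched lamination for $k\ge 0$ (Theorem~\ref{thm:laminations}) together with Proposition~\ref{prop:longcurves}. For properness, the paper again avoids \cite{glm09}: (1)$\Rightarrow$properness is a short, self-contained fibration argument (Proposition~\ref{prop:fibrations}), namely the zero set of $X-Y$ for $Y\in\g$ gives a $\Gamma$-equivariant projection $\g\to\HH^2$; and $\neg$(1)$\Rightarrow\neg$properness (Proposition~\ref{prop:proper-lip}) uses the maximally stretched lamination to produce either a bounded orbit (when $k=0$) or Margulis's opposite-sign configuration (when $k>0$). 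Your plan instead leans on \cite{glm09} for both ``properness $\Rightarrow$ (2)'' and, as a fallback, ``(2) $\Rightarrow$ properness''; this is logically fine if you are willing to quote \cite{glm09}, but it means your argument is not independent of it, whereas the paper's is.
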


We note that our proof of Theorem~\ref{thm:newGLM} and the resulting applications is independent of \cite{glm09}.
As in the $\AdSS$ case, the geometric and topological descriptions of flat Lorentzian manifolds that we give in this paper (Theorem~\ref{thm:fibrations} and Corollary~\ref{cor:tameness} below) are derived directly from the lipschitz contraction criterion; it is not clear to us that they could be derived from length contraction only. 
The proof of Theorem~\ref{thm:newGLM} requires $j(\Gamma)$ to be convex cocompact.
However, we believe that in the future similar techniques could be applied, with some adjustment, to the case when $j(\Gamma)$ contains parabolic elements (as has already been done by \cite{gk12} in the $\AdSS$ case).

\subsection{The topology of quotients of $\AdS$ and $\RR^{2,1}$}

Theorem~\ref{thm:newGLM} and its $\AdSS$ predecessor from \cite{kasPhD,gk12} allow for a complete characterization of the topology of the quotient manifold when $j$ is convex cocompact.
We prove:
 
\begin{Theorem}\label{thm:fibrations}
Let $\Gamma$ be a torsion-free discrete group and $j\in\Hom(\Gamma,G)$ a convex cocompact with quotient surface $S=j(\Gamma)\backslash\HH^2$.
\begin{enumerate}
  \item Let $\rho\in\Hom(\Gamma,G)$ be any representation such that $\Gamma^{j,\rho}$ acts properly on $\AdS$. Then the quotient manifold $\Gamma^{j,\rho} \backslash \AdS$ is a principal $\mathbb{S}^1$-bundle over~$S$.
  \item Let $u : \Gamma\rightarrow\g$ be any $j$-cocycle such that $\Gamma^{j,u}$ acts properly on~$\RR^{2,1}$. Then the quotient manifold $\Gamma^{j,u} \backslash \RR^{2,1}$ is a principal $\RR$-bundle over~$S$.
\end{enumerate}
In both cases the fibers are timelike geodesics.
\end{Theorem}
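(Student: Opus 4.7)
The plan is to use the contraction criteria in each case to construct an explicit $\Gamma$-equivariant smooth projection $\tilde\Phi$ from the model space onto $\HH^2$. This descends to the desired projection onto $S$, and the fibers turn out to be timelike geodesics carrying a natural free one-parameter action from which the principal bundle structure follows. The key inputs are Kassel's Lipschitz criterion for~(1) and the infinitesimal lipschitz criterion of Theorem~\ref{thm:newGLM} for~(2).

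For~(1), Kassel's theorem (together with standard smoothing) supplies a smooth $(j,\rho)$-equivariant map $f:\HH^2\to\HH^2$ with $\Lip(f)<1$. For each $g\in\AdS=G$, the self-map $p\mapsto g^{-1}f(p)$ of $\HH^2$ is a strict contraction, so Banach's fixed point theorem yields a unique $\tilde\Phi(g)\in\HH^2$ satisfying $g\cdot\tilde\Phi(g)=f(\tilde\Phi(g))$. Smoothness of $\tilde\Phi$ comes from smooth parameter dependence of contraction fixed points, and the equivariance of $f$ translates directly into $\tilde\Phi(\rho(\gamma)g j(\gamma)^{-1})=j(\gamma)\cdot\tilde\Phi(g)$, so $\tilde\Phi$ descends to the desired $\Phi:\Gamma^{j,\rho}\backslash\AdS\to S$. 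The fiber of $\tilde\Phi$ over $p$ is the left coset $g_0\cdot\mathrm{Stab}_G(p)$ for any $g_0$ with $g_0\cdot p=f(p)$. Since $\mathrm{Stab}_G(p)$ is the compact one-parameter subgroup generated by an elliptic (hence timelike) element $\xi_p\in\g$, and one-parameter subgroups are geodesics of the bi-invariant Killing metric, these cosets are closed timelike geodesics of $\AdS$. The action $(\theta,g)\mapsto g\cdot\exp(\theta\xi_{\tilde\Phi(g)})$ is a smooth free $\mathbb{S}^1$-action, preserves fibers of $\tilde\Phi$, acts transitively on each, and is $\Gamma$-equivariant via $\xi_{j(\gamma)\cdot p}=\Ad(j(\gamma))\xi_p$; it descends to the required principal $\mathbb{S}^1$-bundle structure.

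For~(2), Theorem~\ref{thm:newGLM} (with smoothing) supplies a smooth $(j,u)$-equivariant vector field $X$ on $\HH^2$ with $\lip(X)<0$. For $v\in\RR^{2,1}=\g$, let $v^{\#}$ be the associated Killing field on $\HH^2$. Killing fields are $0$-lipschitz, so $\lip(X-v^{\#})=\lip(X)<0$, and the corresponding flow contracts distances exponentially. This yields at most one zero of $X-v^{\#}$; combined with completeness of the forward flow (see below), we obtain a unique zero $\tilde\Phi(v)$. Equivariance $\tilde\Phi(\Ad(j(\gamma))v+u(\gamma))=j(\gamma)\cdot\tilde\Phi(v)$ follows from the $(j,u)$-equivariance of $X$ and the covariance $(\Ad(j(\gamma))v)^{\#}(j(\gamma)\cdot p)=j(\gamma)_{*}v^{\#}(p)$ of Killing fields, giving the descended $\Phi:\Gamma^{j,u}\backslash\RR^{2,1}\to S$. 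The fiber over $p$ is the affine line $v_0+\RR\xi_p$ in $\g$, where $\xi_p$ generates the stabilizer of $p$; as $\xi_p$ is timelike in $\RR^{2,1}$, the fibers are timelike geodesics. Translation $(t,v)\mapsto v+t\xi_{\tilde\Phi(v)}$ gives a smooth, free, $\Gamma$-equivariant $\RR$-action along the fibers, producing the principal $\RR$-bundle structure.

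The main technical obstacle is the existence part of $\tilde\Phi$ in~(2): while the contraction bound $\lip(X-v^{\#})<0$ yields uniqueness of a zero, existence requires showing that the forward flow of $X-v^{\#}$ is defined for all positive time and does not escape to the ideal boundary of $\HH^2$. This needs an a priori estimate using the convex-cocompactness of $j(\Gamma)$: the equivariant field $X$ is bounded on a neighbourhood of the convex core and has controlled asymptotic behaviour on the funnels, and this control is compatible with the affine perturbation by $v^{\#}$, forcing the forward trajectory into a bounded region of $\HH^2$. Once $\tilde\Phi$ is established, its submersion property is immediate from the vertical tangent space computation (namely $\RR\xi_p$ in both cases), and the principal bundle structure follows from freeness and properness of the one-parameter action along fibers---automatic for $\mathbb{S}^1$ in~(1), and in~(2) a consequence of the fact that each $\RR$-orbit embeds as a closed submanifold of $\Gamma^{j,u}\backslash\RR^{2,1}$.
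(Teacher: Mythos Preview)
Your overall architecture matches the paper's (Proposition~\ref{prop:fibrations}): in both cases one constructs an equivariant projection to $\HH^2$ by sending $g\in G$ (resp.\ $v\in\g$) to the unique fixed point of $g^{-1}\circ f$ (resp.\ the unique zero of $X-v^{\#}$), and the fibers are cosets of point-stabilizers, hence timelike geodesics.

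The one place where you diverge, and where you leave a gap, is the existence of the zero of $X-v^{\#}$ in part~(2). You try to obtain it by proving forward completeness of the flow and then invoking exponential contraction of distances; you flag this as ``the main technical obstacle'' and propose to handle it with a priori estimates coming from convex cocompactness and the funnel structure of~$X$. This works in principle but is much heavier than necessary, and as written it is only a sketch. The paper's argument (Proposition~\ref{prop:fixpoint}) bypasses the flow entirely: fix any $p_0\in\HH^2$ and any vector $x_0\in (X-v^{\#})(p_0)$; for $q$ on the boundary of a large ball around~$p_0$, the $k$-lipschitz condition $d'(x_0,x_q)\le k\,d(p_0,q)$ with $k<0$ forces $x_q$ to point strictly inward, and Brouwer's theorem yields a zero inside the ball. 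No convex cocompactness, no funnel analysis, no flow completeness is needed. (Incidentally, the same inward-pointing observation would also give your flow completeness for free: forward orbits cannot leave the large ball.)

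Two minor remarks. First, the paper only asserts continuity of the projections $\Pi$ and~$\varpi$, which suffices for a topological principal bundle; your smoothness claim is plausible via the implicit function theorem once $f$ and $X$ are smooth, but you should say so explicitly. Second, your principal $\RR$-bundle verification in~(2) is fine; the paper simply notes that $\varpi$ is a continuous surjection with fibers the affine lines $\ell_{X(p)}$, and the $\RR$-action by translation along these timelike lines is immediate.
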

 
Based on a question of Margulis, Drumm--Goldman \cite{dg95} conjectured in the early 1990s that all Margulis spacetimes should be \emph{tame}, meaning homeomorphic to the interior of a compact manifold.
Since then, Charette--Drumm--Goldman have obtained partial results toward this conjecture, including a proof in the special case that the linear holonomy is a three-holed sphere group \cite{cdg11}.
In the context of Theorem~\ref{thm:fibrations}, we obtain tameness in both the flat and negatively-curved case as a corollary:

\begin{Corollary}\label{cor:tameness}
\begin{enumerate}
  \item Any manifold which is the quotient of $\AdS$ by a group of isometries with convex cocompact first projection is Seifert fibered over a hyperbolic orbifold.
  \item A complete flat Lorentzian manifold with convex cocompact linear holonomy is homeomorphic to the interior of a handlebody. 
\end{enumerate}
\end{Corollary}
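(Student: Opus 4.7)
The plan is to deduce both parts directly from Theorem \ref{thm:fibrations}, translating the principal bundle descriptions given there into the topological language of the corollary. I would begin by invoking the Selberg lemma \cite{sel60} to choose a finite-index torsion-free subgroup $\Gamma' \leq \Gamma$; since $j|_{\Gamma'}$ remains convex cocompact, Theorem \ref{thm:fibrations} applies to $\Gamma'$ and supplies a principal bundle structure on the corresponding finite cover of the original quotient.

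For part (1), the principal $\mathbb{S}^1$-bundle produced for the torsion-free cover $M' = (\Gamma')^{j,\rho}\backslash\AdS$ is tautologically a Seifert fibration over $S' = j(\Gamma')\backslash\HH^2$ (with all regular fibers). To descend to the general case, I would observe that the finite group $F = \Gamma/\Gamma'$ acts on $M'$ by isometries, and isometries preserve the family of timelike geodesics; hence $F$ permutes the $\mathbb{S}^1$-fibers. Passing to $M = \Gamma^{j,\rho}\backslash\AdS = M'/F$ then produces a decomposition into closed circle fibers over the hyperbolic orbifold $S = j(\Gamma)\backslash\HH^2$, with possibly exceptional fibers over the cone points of~$S$. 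This is exactly a Seifert fibration over a hyperbolic orbifold.

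For part (2), I first check via a short computation with the cocycle condition (using that for a finite-order semisimple $A$, the kernel of $I + A + \cdots + A^{n-1}$ coincides with the image of $I - A$) that any torsion element of $\Gamma^{j,u}$ necessarily has a fixed point on $\RR^{2,1}$, so freeness of the action forces $\Gamma$ to be torsion-free. Theorem \ref{thm:fibrations}(2) then presents $M = \Gamma^{j,u}\backslash\RR^{2,1}$ as a principal $\RR$-bundle over $S = j(\Gamma)\backslash\HH^2$; since $\RR$ is contractible, such a bundle admits a continuous section and is therefore trivial, so $M \cong S \times \RR$. Now $j(\Gamma)$ is convex cocompact and, by Mess \cite{mes90}, cannot be cocompact, so $S$ is homeomorphic to the interior of a compact orientable surface $\bar{S}$ with non-empty boundary. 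The product $\bar{S} \times [0,1]$ is then a handlebody (it is a compact, orientable, irreducible 3-manifold with non-empty boundary whose fundamental group is free of rank equal to that of $\pi_1(S)$), and $M \cong \bar{S}^{\circ} \times \RR$ is homeomorphic to its interior. The bulk of the work is already contained in Theorem \ref{thm:fibrations}; the only mildly subtle remaining point is the equivariant descent used in part (1), which goes through precisely because the fibers are intrinsic geometric objects preserved by any isometric action.
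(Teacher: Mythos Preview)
Your argument for part~(2) is correct and in fact more careful than the paper, which simply points to Theorem~\ref{thm:fibrations}.(2) without discussing torsion; your fixed-point computation neatly disposes of that issue.

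Part~(1), however, has a genuine gap. The fibration of Theorem~\ref{thm:fibrations}.(1) is \emph{not} intrinsic: its fibers are the specific timelike geodesics $L_{p,f(p)}=\{g\in G\,|\,g\cdot p=f(p)\}$, and they depend on the choice of the $(j|_{\Gamma'},\rho|_{\Gamma'})$-equivariant contracting map~$f$. An isometry of $\AdS$ certainly sends timelike geodesics to timelike geodesics, but it has no reason to send $L_{p,f(p)}$ to some $L_{q,f(q)}$ unless $f$ is equivariant for the full group~$\Gamma$. So the sentence ``the fibers are intrinsic geometric objects preserved by any isometric action'' is false as stated, and the descent of the fibration does not follow. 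The fix is not hard: one can average the map~$f$ over coset representatives of $\Gamma'$ in~$\Gamma$ (using barycenters in~$\HH^2$, which preserves the Lipschitz bound) to produce a fully $(j,\rho)$-equivariant contracting map, and then the fibration \emph{is} $\Gamma$-invariant and descends. You should make this averaging step explicit.

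The paper itself takes a different route for part~(1): rather than descending the fibration directly, it observes that the quotient is finitely covered by the tame manifold of Theorem~\ref{thm:fibrations}.(1), deduces tameness of the quotient via Tucker's criterion~\cite{tuc74}, and then invokes Waldhausen's classical results~\cite{wal67} to obtain the Seifert fibered structure. Your (corrected) approach is more hands-on and avoids these black boxes, at the cost of the averaging step; the paper's approach is shorter but relies on deeper $3$-manifold topology.
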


In the compact case, Corollary~\ref{cor:tameness}.(1) follows from Kulkarni--Raymond's description of the fundamental groups of quotients of $\AdS$ and from classical results of Waldhausen \cite{wal67} and Scott~\cite{sco83} (see \cite[\S\,3.4.2]{salPhD}).
However, Corollary~\ref{cor:tameness}.(1) is to our knowledge the first result on the topology of noncompact quotients of $\AdS$.
The noncompact quotients appearing here are finitely covered by the tame manifolds of Theorem~\ref{thm:fibrations}.(1) and are therefore tame (e.g. using Tucker's criterion \cite{tuc74}); the Seifert-fibered statement then follows from classical results of Waldhausen.
We note also that Theorem~\ref{thm:fibrations}.(1) and Corollary~\ref{cor:tameness}.(1) actually hold in the more general case that~$j$ is any finitely generated surface group representation; indeed, the properness criterion of \cite{kasPhD} on which we rely holds in this more general setting, by~\cite{gk12}.

Choi--Goldman have recently announced a different proof of the tameness of complete flat Lorentzian manifolds with convex cocompact linear holonomy \cite{cho12,cg13}.
Their proof, which builds a bordification of the $\RR^{2,1}$ spacetime by adding a real projective surface at infinity, is very different from the proof given here. 
In particular, we do not use any compactification and our proof is independent of \cite{glm09}.

\subsection{Margulis spacetimes are limits of AdS manifolds}\label{sec:intro-geomtrans}

We also develop a \emph{geometric transition} from $\AdSS$ geometry to flat Lorentzian geometry.
The goal is to find collapsing $\AdSS$ manifolds which, upon zooming in on the collapse, limit to a given Margulis spacetime.
We obtain two statements that make this idea precise, the first in terms of convergence of real projective structures, the second in terms of convergence of Lorentzian metrics.

The projective geometry approach follows work of Danciger \cite{dan13} in describing the transition from hyperbolic to $\AdSS$ geometry. 
Both $\AdS$ and~$\RR^{2,1}$ are real projective geometries: each space can be represented as a domain in~$\RP^3$, with isometries acting as projective linear transformations. 
As such, all manifolds modeled on either $\AdS$ or $\RR^{2,1}$ naturally inherit a real projective structure.
We show that every quotient of~$\RR^{2,1}$ by a group of isometries with convex cocompact linear holonomy is (contained in) the limit of a collapsing family of complete $\AdSS$ manifolds, in the sense that the underlying real projective structures converge.
Note that collapsing $\AdSS$ manifolds need not (and in this case do not) collapse as projective manifolds, because there is a larger group of coordinate changes that may be used to prevent collapse.

\begin{Theorem}\label{thm:geomtrans}
Let $M = \Gamma^{j,u} \backslash \RR^{2,1}$ be a Margulis spacetime such that $S = j(\Gamma) \backslash \HH^2$ is a convex cocompact hyperbolic surface.
Let $t\mapsto j_t$ and $t\mapsto\rho_t$ be smooth paths with $j_0=\rho_0=j$ and $\frac{\D}{\D t}\big|_{t=0}\,\rho_tj_t^{-1} = u$.
\begin{enumerate}
  \item \label{short-time-admissible} There exists $\delta > 0$ such that for all $t \in (0,\delta)$ the group $\Gamma^{j_t,\rho_t}$ acts properly discontinuously on~$\AdS$.
  \item \label{parameterization} There is a smooth family of $(j_t, \rho_t)$-equivariant diffeomorphisms (developing maps) $\HH^2 \times \mathbb{S}^1 \to \AdS$, defined for $t\in (0,\delta)$, determining complete $\AdSS$ structures $\mathscr{A}_t$ on the fixed manifold $S \times \mathbb{S}^1$.
  \item \label{limit} The real projective structures $\mathscr P_t$ underlying $\mathscr A_t$ converge to a projective structure $\mathscr P_0$ on $S \times \mathbb{S}^1$.
  The Margulis spacetime $M$ is the restriction of $\mathscr P_0$ to $S \times (-\pi, \pi)$, where $\mathbb{S}^1=\RR/2\pi\ZZ$.
\end{enumerate}
\end{Theorem}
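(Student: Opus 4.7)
The plan is to prove the three parts in order, with each step building on the previous one.

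\textbf{Part~(\ref{short-time-admissible}).} I combine the lipschitz contraction criterion of Theorem~\ref{thm:newGLM} with the Lipschitz contraction criterion of~\cite{kasPhD}. Since $\Gamma^{j,u}$ acts properly on $\RR^{2,1}$, Theorem~\ref{thm:newGLM} produces, after possibly replacing $u$ by $-u$, a $(j,u)$-equivariant vector field $X$ on $\HH^2$ with $\lip(X)=-k<0$. Proposition~\ref{prop:Lipschitz-family} then yields a smooth family of $(j_t,\rho_t)$-equivariant maps $f_t:\HH^2\to\HH^2$ with $f_0=\id$, $\frac{\D}{\D t}\big|_{t=0} f_t=X$, and $\Lip(f_t)\le 1-kt/2$ for $t\in(0,\delta)$ with $\delta>0$ small. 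The Lipschitz properness criterion of~\cite{kasPhD} then gives the properness of $\Gamma^{j_t,\rho_t}$ on $\AdS$.

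\textbf{Part~(\ref{parameterization}).} Using the contractions $f_t$, I construct a smooth family of equivariant developing maps $\Phi_t:\HH^2\times\mathbb{S}^1\to\AdS=G$. Informally, $\Phi_t(p,\theta)$ is the element of $G$ sending $p$ to $f_t(p)$ with ``rotation angle'' $\theta$ measured against a fixed smooth section $g:\HH^2\to G$ of $G\to\HH^2$; explicitly, $\Phi_t(p,\theta)=g(f_t(p))\cdot r_\theta\cdot g(p)^{-1}$ where $r_\theta$ is rotation by $\theta$ at the base point. Equivariance for the $\Gamma$-action on $\HH^2\times\mathbb{S}^1$ holds after coupling the circle action to the frame rotation (a standard construction), and using $\Lip(f_t)<1$ a direct Jacobian computation shows $\Phi_t$ is a local diffeomorphism. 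Properness combined with Theorem~\ref{thm:fibrations}.(1) then gives that $\Phi_t$ descends to a diffeomorphism $S\times\mathbb{S}^1\to\Gamma^{j_t,\rho_t}\backslash\AdS$, defining the complete $\AdSS$ structure $\mathscr{A}_t$. The $\mathbb{S}^1$-fibers $\theta\mapsto\Phi_t(p,\theta)$ are left-cosets of the elliptic one-parameter subgroup $\mathrm{Stab}(f_t(p))\subset G$, hence timelike geodesics in $\AdS$.

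\textbf{Part~(\ref{limit}).} This is the main obstacle. I realize $\AdS=\PSL_2(\RR)$ inside $\RP^3=\mathrm{P}(M_2(\RR))$ as the open set $\{[A]:\det A>0\}$, and $\RR^{2,1}=\ssl_2(\RR)$ as the affine chart $\{[\mathrm{Id}+v]:v\in\ssl_2(\RR)\}$. There is a one-parameter family $C_t\in\PGL_4(\RR)$ of projective rescalings (scaling the $\mathrm{Id}$-direction by $t$, or equivalently the $\ssl_2$-directions by $1/t$) such that:
\begin{enumerate}
  \item[(a)] The conjugates $C_t\,(j_t(\gamma),\rho_t(\gamma))\,C_t^{-1}$ converge as $t\to 0$ to the Minkowski isometry $(\Ad(j(\gamma)),u(\gamma))\in G\ltimes\g$ inside $\PGL_4(\RR)$.
  \item[(b)] The rescaled developing maps $\widetilde\Phi_t:=C_t\circ\Phi_t$ converge in $C^\infty$ on compact sets to a projective local diffeomorphism $\widetilde\Phi_0:\HH^2\times\mathbb{S}^1\to\RP^3$ whose restriction to $\HH^2\times(-\pi,\pi)$ takes values in the affine chart $\RR^{2,1}$ and coincides with a developing map of the Margulis spacetime~$M$.
\end{enumerate}
The computation rests on the first-order expansion $\Phi_t(p,\theta)=r_{p,\theta}+t\,A(p,\theta)+O(t^2)$ in $M_2(\RR)$, where $A(p,\theta)$ is smooth in $(p,\theta)$ and encodes the infinitesimal data $(u,X)$; the rescaling $C_t$ exactly isolates this first-order term in projective coordinates near the identity $r_{p,0}=\mathrm{Id}$. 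The hard part will be verifying $C^\infty$ convergence across the ``seam'' $\theta=\pm\pi$, where $r_{p,\theta}$ has vanishing $\mathrm{Id}$-component and $\widetilde\Phi_0$ leaves the affine chart; here the limit must still be a projective local diffeomorphism on all of $\HH^2\times\mathbb{S}^1$, which requires controlling the second-order behavior of $\Phi_t$ near these poles and identifying the limit using the projective geometry of the quadric $\partial\AdS$.
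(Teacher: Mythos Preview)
Your outline for Parts~(\ref{short-time-admissible}) and~(\ref{parameterization}) is close to the paper's, though in~(\ref{short-time-admissible}) you should note that Theorem~\ref{thm:newGLM} by itself only gives a lipschitz vector field; Proposition~\ref{prop:Lipschitz-family} requires $X$ to be \emph{smooth} and standard in the funnels, which the paper arranges via Proposition~\ref{prop:smooth-approximates}. In~(\ref{parameterization}) the paper avoids your ``cocycle coupling'' by using the osculating isometry map $\sigma_t$ of Proposition~\ref{prop:sections}, which is canonically $(j_t,(j_t,\rho_t))$-equivariant and whose $t$-derivative $\sigma'$ is already a section of the Minkowski fibration~$\mathcal F_X$; this sets up Part~(\ref{limit}) much more cleanly than a non-equivariant frame section $g$ would.

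There is, however, a genuine gap in Part~(\ref{limit}). Your rescaled maps $\widetilde\Phi_t=C_t\circ\Phi_t$ do \emph{not} converge to a local diffeomorphism on $\HH^2\times(-\pi,\pi)$: the zeroth-order term $r_{p,\theta}$ is a nontrivial rotation for every $\theta\neq 0$, with $\ssl_2$-component $\sim\sin(\theta/2)\,v_p$ of order~$1$, so after rescaling the $\ssl_2$-directions by $1/t$ the projective limit as $t\to 0$ is $[v_p]\in\HH^2_\infty$ for \emph{all} $\theta\neq 0$, not just $\theta=\pm\pi$. Thus $\widetilde\Phi_t$ collapses the whole punctured circle to the boundary, and the ``first-order term $A(p,\theta)$'' is never isolated except at $\theta=0$.

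The paper's fix is precisely a $t$-dependent reparametrization of the circle: one replaces $\Dev_t(p,\theta)$ by $\hatDev_t(p,\theta)=\Dev_t(p,\xi_t(\theta))$ with $\xi_t(\theta)=\psi^{-1}(t\,\psi(\theta))$ for a fixed diffeomorphism $\psi:\mathbb S^1\to\RP^1$ with $\psi(\theta)\sim\theta$ near $0$ and $\psi(\pi)=\infty$. This forces $\hatDev_t(p,\theta)\to 1\in G$ for every $\theta\in(-\pi,\pi)$, so Proposition~\ref{prop:inflate} applies and $r_t\,I\circ\hatDev_t\to i\circ\hatdev$ on $\HH^2\times(-\pi,\pi)$; the point $\theta=\pi$ (where $\xi_t(\pi)=\pi$ for all $t$) is then handled separately via the limit~\eqref{eqn:limit-r_t-not1}, landing in $\HH^2_\infty$. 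Without this reparametrization your scheme cannot recover a developing map for~$M$.
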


In order to construct this geometric transition very explicitly, we arrange for the geodesic fibrations of the $\mathscr A_t$, given by Theorem~\ref{thm:fibrations}, to change continuously in a controlled manner.
In particular, the geodesic fibrations of the collapsing $\AdSS$ manifolds converge to a geodesic fibration of the limiting Margulis spacetime.
The surface $S \times \{\pi\}$ in $\mathscr{P}_0$ compactifies each timelike geodesic fiber, making each fiber into a circle.

As a corollary, we derive a second geometric transition statement in terms of convergence of Lorentzian metrics. 

\begin{Corollary}\label{cor:smooth-metrics}
Let $M$ be a complete flat Lorentzian $3$-manifold with convex cocompact linear holonomy $j(\Gamma)$.
Let $S = j(\Gamma)\backslash \HH^2$ be the associated surface.
Then there exist complete anti-de Sitter metrics $\gmetric_t$ on $S \times \mathbb{S}^1$, defined for all sufficiently small $t > 0$, such that when restricted to $S \times (-\pi,\pi)$, the metrics $t^{-2} \gmetric_t$ converge uniformly on compact sets to a complete flat Lorentzian metric $\gmetric$ that makes $S \times (-\pi,\pi)$ isometric to~$M$.
\end{Corollary}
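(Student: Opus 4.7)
The plan is to realize both the $\AdSS$ metrics $\gmetric_t$ and the target flat metric $\gmetric$ as pullbacks through the developing maps produced by Theorem~\ref{thm:geomtrans}, and then to extract the convergence from a standard In\"on\"u--Wigner-type contraction of the $\AdSS$ metric onto the Minkowski metric.

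For each $t \in (0,\delta)$, let $\Dev_t : \HH^2 \times \mathbb{S}^1 \to \AdS$ denote the $(j_t,\rho_t)$-equivariant developing diffeomorphism of Theorem~\ref{thm:geomtrans}(2), and define $\gmetric_t$ on $S \times \mathbb{S}^1$ to be the descent of $\Dev_t^{\,*} g_{\AdSS}$, i.e.\ the complete $\AdSS$ metric underlying the structure $\mathscr{A}_t$. By Theorem~\ref{thm:geomtrans}(3), the restriction of the projective structure $\mathscr{P}_0$ to $S \times (-\pi,\pi)$ is identified with the Margulis spacetime $M$; let $\gmetric$ denote the corresponding complete flat Lorentzian metric, so that $(S\times(-\pi,\pi),\gmetric)$ is tautologically isometric to $M$.

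Next, fix a basepoint $\alpha \in \AdS$ and an affine projective chart of $\RP^3$ containing~$\alpha$. The projective dilation $\phi_t$ of $\RP^3$ centered at~$\alpha$ with factor $1/t$ has the property that, as $t \to 0$, the image $\phi_t(\AdS)$ exhausts an affine neighborhood of~$\alpha$ which identifies naturally with $\RR^{2,1} \cong T_\alpha \AdS$. A direct calculation (essentially the In\"on\"u--Wigner contraction of $\so(2,2)$ to $\so(2,1) \ltimes \RR^{2,1}$) shows that the rescaled metrics $t^{-2}(\phi_t^{-1})^{*} g_{\AdSS}$ converge smoothly on compact subsets of this chart to the flat Minkowski metric $g_{\Mink}$. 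Meanwhile, the projective convergence $\mathscr{P}_t \to \mathscr{P}_0$ of Theorem~\ref{thm:geomtrans}(3), read in this common chart, says precisely that the composed developing maps $\phi_t \circ \Dev_t$ converge smoothly on compact subsets of $\widetilde{S\times(-\pi,\pi)}$ to a developing map $\Dev_0$ for $\mathscr{P}_0$ whose image lies in the flat chart. Using the identity
\begin{equation*}
t^{-2} \gmetric_t \;=\; t^{-2}\, \Dev_t^{\,*} g_{\AdSS} \;=\; (\phi_t \circ \Dev_t)^{*}\bigl(t^{-2}(\phi_t^{-1})^{*} g_{\AdSS}\bigr),
\end{equation*}
combining the two smooth convergences yields $t^{-2}\gmetric_t \to \Dev_0^{\,*} g_{\Mink} = \gmetric$ uniformly on compact subsets of $S \times (-\pi,\pi)$, as required.

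The main technical point is upgrading the projective convergence asserted in Theorem~\ref{thm:geomtrans}(3) to smooth convergence of the maps $\phi_t \circ \Dev_t$ together with their first derivatives. This should follow directly from the construction of the $\Dev_t$ in Theorem~\ref{thm:geomtrans}, where the developing maps are built to respect the timelike geodesic fibrations of Theorem~\ref{thm:fibrations}; these fibrations depend smoothly on $t$ and on the holonomy data $(j_t,\rho_t)$, and they converge smoothly to the flat fibration of $M$ at $t=0$, so that the corresponding developing maps vary smoothly across $t=0$ in the projective chart chosen above.
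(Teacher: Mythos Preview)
Your approach matches the paper's: both write $t^{-2}\gmetric_t$ as the pullback, via a rescaled developing map into a fixed affine chart of $\RP^3$, of the rescaled $\AdSS$ metric on that chart, and then combine the convergence of the rescaled developing maps with the In\"on\"u--Wigner contraction $t^{-2}(\phi_t^{-1})^*g_{\AdSS}\to g_{\Mink}$. The paper uses the explicit dilation $r_t=\mathrm{diag}(t^{-1},t^{-1},t^{-1},1)$ and reduces the contraction to the one-line identity $t^{-2}\hmetric^t_x(v,w)=\hmetric^{\AdSS}_{r_t^{-1}(x)}(v,w)$, which tends to $\hmetric^{\AdSS}_{x_0}=\hmetric^{\Mink}_x$ by continuity.

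There is, however, a real point hidden in your last paragraph. The developing maps $\Dev_t$ produced directly from the fibrations and sections in the proof of Theorem~\ref{thm:geomtrans} do \emph{not}, after rescaling, converge on $S\times(-\pi,\pi)$: for fixed $\theta\neq 0$ one has $\Dev_t(p,\theta)\to\operatorname{Rot}(p,\theta)\neq 1$, so $\phi_t\circ\Dev_t(p,\theta)$ escapes to the boundary $\HH^2_\infty$ of the chart. The paper remedies this by precomposing with a $t$-dependent reparametrization of the circle factor, $\xi_t(\theta)=\psi^{-1}(t\,\psi(\theta))$, and it is the resulting $\hatDev_t(p,\theta)=\Dev_t(p,\xi_t(\theta))$ whose rescaling $r_t I\circ\hatDev_t$ converges smoothly to a developing map for~$M$. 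Your sketch invokes only the smooth variation of the fibrations, but the fiber \emph{coordinate} must also be rescaled with~$t$; without this step the metrics $\gmetric_t=\Dev_t^*g_{\AdSS}$ you write down would not satisfy the stated convergence on $S\times(-\pi,\pi)$.
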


This second statement is proved using the projective coordinates given in Theorem~\ref{thm:geomtrans}.
Note that the convergence of metrics is more delicate in the Lorentzian setting than in the Riemannian setting.
In particular, even using the topological characterization of Theorem~\ref{thm:fibrations}, it would be difficult to prove Corollary~\ref{cor:smooth-metrics} directly.

\subsection{Maximally stretched laminations}

The key step in the proof of Theorem~\ref{thm:newGLM} is to establish the existence of a \emph{maximally stretched lamination}.
We recall briefly the corresponding statement in the $\AdSS$ setting, as established in \cite{kasPhD,gk12}.
Let $j, \rho: \Gamma \rightarrow G$ be representations with $j$ convex cocompact (or more generally geometrically finite) and let $K$ be the infimum of all possible Lipschitz constants of $(j,\rho)$-equivariant maps $\HH^2 \rightarrow \HH^2$.
If $K \geq 1$, then there is a nonempty geodesic lamination $\mathscr{L}$ in the convex core of $j(\Gamma) \backslash \HH^2$ that is ``maximally stretched'' by \emph{any} $K$-Lipschitz $(j,\rho)$-equivariant map $f : \HH^2\rightarrow\HH^2$, in the sense that $f$ multiplies arc length by exactly~$K$ on the leaves of the lift to~$\HH^2$ of~$\mathscr{L}$.
(In fact, a similar result holds when replacing $\HH^2$ by~$\HH^n$ and $\PSL_2(\RR)\simeq\SO(2,1)_0$ by $\SO(n,1)_0$.)

Now let $u: \Gamma \rightarrow \g$ be a $j$-cocycle and consider $(j,u)$-equivariant vector fields $X$ on~$\HH^2$.
Assume that the infimum~$k$ of lipschitz constants of all such~$X$ satisfies $k\geq 0$.
By analogy, one hopes for the existence of a geodesic lamination that would be stretched at rate exactly~$k$ by any $k$-lipschitz~$X$.
This turns out to be true, but there is a crucial problem: it is not clear that a vector field~$X$ with the best possible lipschitz constant exists.
Indeed, bounded $k$-lipschitz vector fields on a compact set are not necessarily equicontinuous, and so the Arzel\`a--Ascoli theorem does not apply.
In fact, a limit of lipschitz vector fields is something more general that we call a \emph{convex field}.
A convex field is a closed subset of $T\HH^2$ such that the fiber above each point of~$\HH^2$ is a convex set (Definition~\ref{def:convfield}); in other words, a convex field is a closed convex set-valued section of the tangent bundle which is upper semicontinuous in the Hausdorff topology.

\begin{Theorem}\label{thm:laminations}
Let $\Gamma$ be a discrete group, $j \in \Hom(\Gamma, G)$ a convex cocompact representation, and $u : \Gamma\rightarrow\g$ a $j$-cocycle.
Let $k\in\RR$ be the infimum of lipschitz constants of $(j,u)$-equivariant vector fields on~$\HH^2$.
If $k\geq 0$, then there exists a geodesic lamination $\mathscr{L}$ in the convex core of $S:=j(\Gamma)\backslash\HH^2$ that is \emph{maximally stretched} by any $(j,u)$-equivariant, $k$-lipschitz convex field~$X$, meaning that 
$$\frac{\D}{\D t}\Big|_{t=0}\, d\big(\exp_p(t x_p), \exp_q(t x_q)\big) = k\,d(p,q)$$
for any distinct points $p,q\in \HH^2$ on a common leaf of the lift to~$\HH^2$ of~$\mathscr{L}$ and any vectors $x_p \in X(p)$ and $x_q \in X(q)$; such convex fields~$X$ exist.
\end{Theorem}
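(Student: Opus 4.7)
The plan is to mirror the construction of maximally stretched laminations for Lipschitz maps from \cite{kasPhD,gk12}, substituting infinitesimal deformations for Lipschitz maps and circumventing the lack of equicontinuity by working in the category of convex fields.

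First I would establish existence of a $(j,u)$-equivariant $k$-lipschitz convex field. Take a minimizing sequence $(X_n)$ of $(j,u)$-equivariant vector fields with $\lip(X_n)\to k$. Using the uniform lipschitz bound together with the freedom of adding $j(\Gamma)$-invariant fields (which does not disturb $(j,u)$-equivariance), normalize the $X_n$ so that their graphs $\operatorname{graph}(X_n)\subset T\HH^2$ lie in an exhaustion by compact sets. A diagonal Hausdorff subsequential limit followed by fiberwise closed convex hulls produces a closed, upper semicontinuous, convex-set-valued section $X$ of $T\HH^2$. Equivariance passes to the limit directly. For the lipschitz bound, the first variation formula
\[
\frac{\D}{\D t}\Big|_{t=0}\,d\big(\exp_p(tx_p),\exp_q(tx_q)\big) = \langle x_q, T_q\rangle - \langle x_p, T_p\rangle,
\]
with $T_p,T_q$ the unit tangents to the geodesic $[p,q]$ at its endpoints, expresses $\lip\leq k$ as a closed linear condition on pairs of graph points, stable under Hausdorff limits and fiberwise convex hulls; hence $\lip(X)=k$.

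Next I would construct the lamination. Define the stretched set $E(X)\subset\HH^2\times\HH^2$ of pairs $(p,q)$ for which some $x_p\in X(p)$, $x_q\in X(q)$ realize the rate $k\,d(p,q)$. Nonemptiness follows by optimality: if every pair were stretched at a rate strictly below $k$, a localized perturbation on a fundamental domain would strictly decrease $\lip$, contradicting minimality. The key rigidity is an infinitesimal triangle inequality: for $(p,q)\in E(X)$ and $m\in[p,q]$, one has $(p,m),(m,q)\in E(X)$, with the optimal vectors at $m$ having constant projection on the direction of $[p,q]$. Concatenating and taking Hausdorff limits of longer and longer stretched arcs produces complete stretched geodesics, and a no-crossing argument (two transversely crossing stretched geodesics would contradict the equality case of the first variation in negative curvature) organizes them into a closed $j(\Gamma)$-invariant geodesic lamination $\widetilde{\mathscr L}$, descending to $\mathscr L$ in $S$. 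Convex cocompactness of $j(\Gamma)$ confines $\mathscr L$ to the convex core. Universality --- that any other $(j,u)$-equivariant $k$-lipschitz convex field $X'$ maximally stretches $\mathscr L$ --- follows from the convexity of the space of such convex fields: $\tfrac12(X+X')$ is again $(j,u)$-equivariant and $k$-lipschitz, and the triangle-inequality rigidity applied to this combination forces the leaves of $\mathscr L$ to be stretched at rate $k$ by $X'$ as well.

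The principal obstacle is the compactness step. Uppercase-Lipschitz vector fields are equicontinuous and would admit Arzel\`a--Ascoli limits inside the category of vector fields; lowercase-lipschitz ones need not even be continuous (as the introduction illustrates), so the limit necessarily leaves that category and becomes multivalued. The delicate technical labor is verifying that the pointwise lipschitz condition, defined through the exponential map, really does pass to Hausdorff limits after fiberwise convex hulls, and that the triangle-inequality rigidity underlying the construction of $\mathscr L$ continues to operate for multivalued deformations.
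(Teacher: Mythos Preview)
Your high-level strategy---pass to convex fields to restore compactness, locate a stretch locus, and extract a lamination from it---matches the paper's, but the compactness step as you describe it has a real gap.

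You write that by ``adding $j(\Gamma)$-invariant fields'' one can normalize the minimizing sequence so that the graphs are bounded on compacta. Adding a $j$-invariant field does preserve $(j,u)$-equivariance, but unless that field is Killing it changes the lipschitz constant; Killing fields form only a three-dimensional family and cannot tame behavior in the funnels. In fact the paper explains (start of Section~\ref{sec:extlipschitz}) that one can always add a $j$-invariant field pointing strongly toward the convex core without increasing the lipschitz constant, so minimizing sequences genuinely diverge outside the convex core. Proposition~\ref{prop:bounds} gives a~priori bounds only on compacta in the \emph{interior} of the convex core; your Hausdorff limit therefore lives only on that open set. Promoting it to a $(j,u)$-equivariant $k$-lipschitz convex field defined on all of~$\HH^2$ is exactly what the paper's extension machinery (Sections~\ref{subsec:standard-funnels} and~\ref{sec:extlipschitz}) is built for: one imposes ``standard'' behavior in the funnels (Definition~\ref{def:standard-funnels}), glues via Proposition~\ref{prop:fixup-boundary}, and then runs an infinitesimal, equivariant Kirszbraun argument (Theorem~\ref{thm:compact-extension}, Propositions~\ref{prop:strict-extension} and~\ref{prop:equivariant-kirszbraun}). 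Without this, your nonemptiness-by-perturbation argument also breaks: even if the local stretch rate is below~$k$ everywhere on a fundamental domain, the supremum could be realized only asymptotically in the funnels.

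A second point you do not address: the theorem defines $k$ as the infimum over \emph{vector fields}, but your limit is a convex field. You need to know that the convex-field infimum is no smaller than~$k$, otherwise your limit might have lipschitz constant strictly below~$k$ and there would be no $k$-stretched lamination to find. The paper handles this with the flow-back construction (Section~\ref{subsec:flowback}, Propositions~\ref{prop:flowback} and~\ref{prop:allfields}): flowing a lipschitz convex field backward along the geodesic flow for short time produces a genuine vector field with nearly the same lipschitz constant. This step is not a formality.

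Finally, your no-crossing argument is correct for $k>0$ (this is the content of Proposition~\ref{prop:strict-extension}.(\ref{case1})), but when $k=0$ two $0$-stretched segments \emph{can} cross; the convex field then restricts to a Killing field on their convex hull, and the paper treats this case separately (Proposition~\ref{prop:strict-extension}.(\ref{case2}) and the end of the proof of Proposition~\ref{prop:equivariant-kirszbraun}).
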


Let us describe briefly how Theorem~\ref{thm:laminations} implies Theorem~\ref{thm:newGLM}.
When the infimum $k$ of lipschitz constants is $<0$, the action of $\Gamma^{j,u}$ on~$\RR^{2,1}$ is proper: this relatively easy fact is the content of Proposition~\ref{prop:fibrations}, whose proof can be read independently.
Theorem~\ref{thm:laminations} implies that the converse also holds: if $k\geq 0$ then the action is not proper (Proposition~\ref{prop:proper-lip}).
Roughly speaking, following leaves of the maximally stretched lamination~$\mathscr{L}$ gives a sequence $(\gamma_n)_{n\in\NN}$ of pairwise distinct elements of~$\Gamma$ that fail to carry a compact subset of~$\RR^{2,1}$ off itself under the $(j,u)$-action.

We mention also that Theorem~\ref{thm:laminations} recovers the result, due to Goldman--Labourie--Margulis--Minsky \cite{glmm}, that the length contraction criterion \eqref{eqn:inf_length_contraction} still holds when the supremum is taken over simple closed curves rather than the entire fundamental group $\Gamma$.

\subsection{Organization of the paper}

In Section~\ref{sec:prelim} we introduce some notation and recall elementary facts about affine actions, the Margulis invariant, and geodesic laminations.
In Section~\ref{sec:convfields}, we define and give some basic properties about convex fields.
In Section~\ref{sec:extlipschitz} we develop the main tool of the paper, namely the extension theory of lipschitz convex fields on~$\HH^2$, in the spirit of the more classical theory of the extension of Lipschitz maps on~$\HH^2$.
In Section~\ref{sec:prooflamin} we give a proof of Theorem~\ref{thm:laminations}, which is needed for the more difficult direction of Theorem~\ref{thm:newGLM}.
In Section~\ref{sec:applications}, we give the connection between geodesic fibrations and contracting lipschitz fields and prove both directions of Theorem~\ref{thm:newGLM}, as well as Theorem~\ref{thm:fibrations}.
Finally, Section~\ref{sec:geotrans} is dedicated to Theorem~\ref{thm:geomtrans}, showing how to build $\AdSS$ manifolds that limit to a given Margulis spacetime.

\subsection*{Acknowledgements}

We are very grateful to Thierry Barbot for the key idea of Proposition~\ref{prop:fibrations}, which gives the connection between Lipschitz maps of $\HH^2$ and fibrations of $\AdS$.
This paper was also nurtured by discussions with Bill Goldman and Virginie Charette during the 2012 special program on Geometry and analysis of surface group representations at the Institut Henri Poincar\'e in Paris.
We would like to thank the Institut Henri Poincar\'e, the Institut CNRS-Pauli (UMI 2842) in Vienna, and the University of Illinois in Urbana-Champaign for giving us the opportunity to work together in very stimulating environments.
We are also grateful for support from the GEAR Network, funded by the National Science Foundation under grant numbers DMS 1107452, 1107263, and 1107367 (``RNMS: GEometric structures And Representation varieties").
Finally, we thank Olivier Biquard for encouraging us to derive the metric transition statement, Corollary~\ref{cor:smooth-metrics}, from our projective geometry formulation of the transition from $\AdS$ to $\RR^{2,1}$.

\section{Notation and preliminaries}\label{sec:prelim}

\subsection{Anti-de Sitter and Minkowski spaces}\label{subsec:defspaces}

Throughout the paper, we denote by~$G$ the group $\SO(2,1)_0\simeq\PSL_2(\RR)$ and by~$\g$ its Lie algebra.
Let $\langle\cdot |\cdot\rangle$ be half the Killing form of~$\g$: for all $v,w\in\g$,
$$\langle v|w\rangle = \frac{1}{2}\,\mathrm{tr}\big(\mathrm{ad}(v)\mathrm{ad}(w)\big).$$

As mentioned in the introduction, we identify $\AdS$ with the $3$-dimensional real manifold~$G$, endowed with the bi-invariant Lorentzian structure induced by $\langle\cdot |\cdot\rangle$.
Then the identity component of the group of isometries of $\AdS$ is $G\times G$, acting by right and left multiplication:
$$(g_1,g_2)\cdot g := g_2gg_1^{-1}.$$
(Letting $g_1$ act on the right and $g_2$ on the left ensures later compatibility with the usual definition of a cocycle.)

We identify $\RR^{2,1}$ with the Lie algebra~$\g$, endowed with the Lorentzian structure induced by $\langle\cdot |\cdot\rangle$.
Then the identity component of the group of isometries of~$\RR^{2,1}$ is $G\ltimes\g$, acting by affine transformations:
$$(g,v)\cdot w := \Ad(g)w+v.$$
\emph{In the rest of the paper, we will write $g\cdot w$ for $\Ad(g)w$.}

We shall use the usual terminology for rank-one groups: a nontrivial element of~$G$ is \emph{hyperbolic} if it has exactly two fixed points in the boundary at infinity $\partial_{\infty}\HH^2$ of~$\HH^2$, \emph{parabolic} if it has exactly one fixed point in $\partial_{\infty}\HH^2$, and \emph{elliptic} if it has a fixed point in~$\HH^2$.
If $g\in G$ is hyperbolic, we will denote its (oriented) translation axis in $\HH^2$ by~$\A_g$.
For any $g\in G$, we set
\begin{equation}\label{eqn:def-lambda}
\lambda(g) := \inf_{p\in\HH^2} d(p,g\cdot p)\ \geq 0.
\end{equation}
This is the translation length of~$g$ if $g$ is hyperbolic, and $0$ if $g$ is parabolic, elliptic, or trivial.

\subsection{Affine actions}\label{subsec:affineactions}

Recall that a \emph{Killing field} on~$\HH^2$ is a vector field whose flow preserves the hyperbolic metric.
Each $X\in\g$ defines a Killing field
$$p \longmapsto \frac{\D}{\D t}\Big|_{t=0}\, (e^{tX}\cdot p)\ \in T_p\HH^2$$
on~$\HH^2$,
and any Killing field on~$\HH^2$ is of this form for a unique $X\in\g$.
We henceforth identify~$\g$ with the space of Killing fields on~$\HH^2$, writing $X(p)\in T_p\HH^2$ for the vector at $p\in \HH^2$ of the Killing field $X\in\g$.
Under this identification, the adjoint action of $G$ on~$\g$ coincides with the pushforward action of~$G$ on vector fields of~$\HH^2$: 
\begin{equation}\label{eqn:adjoint-pushforward}
(\Ad(g)X)(g\cdot p) = (g_{\ast} X) (g\cdot p) = g_{\ast}(X(p)) = \mathrm{d}_p(L_g)(X(p)),
\end{equation}
where $L_g : \HH^2\rightarrow\HH^2$ is the left translation by~$g$.
We can express Killing fields directly in Minkowski space: if $\g\simeq\RR^{2,1}$ is seen as $\RR^3$ with the quadratic form $x^2+y^2-z^2$ and $\HH^2$ as the upper hyperboloid $\{ (x,y,z)\in\RR^{2,1} \,|\linebreak x^2+y^2-z^2=-1,\ z>0\}$, then
\begin{equation}\label{eqn:crossproduct-killing}
X(p) = X\wedge p \ \in T_p \HH^2 \subset \g
\end{equation}
for all $p\in\HH^2$, where $\wedge$ is the natural Minkowski crossproduct on~$\RR^{2,1}$: 
$$(x_1,x_2,x_3) \wedge (y_1,y_2,y_3) := (x_2y_3-x_3y_2 \, , \,  x_3y_1-x_1y_3 \, , \,-x_1y_2+x_2y_1).$$
(In Lie-theoretic terms, if we see $\HH^2$ as a subset of~$\g$ as above, then $X(p)=\ad(X)\,p$ for all $X\in\g$ and $p\in\HH^2\subset\g$.) 
The cross-product is $\Ad(G)$-equivariant: $g\cdot (v\wedge w)=(g\cdot v)\wedge (g\cdot w)$ for all $g\in G$ and $v,w\in \g$.
Note that in general $\langle p | q\wedge r \rangle = \det(p,q,r)$ is invariant under cyclic permutations~of~$p,q,r$.

Let $\Gamma$ be a discrete group and $j\in\Hom(\Gamma,G)$ a convex cocompact representation.
By \emph{convex cocompact} we mean that $j$ is injective and that $j(\Gamma)$ is a discrete subgroup of~$G$ acting cocompactly on the convex hull $\mathcal{C}_{j(\Gamma)}\subset\HH^2$ of the limit set $\Lambda_{j(\Gamma)}\subset\partial_{\infty}\HH^2$ (the image of $\mathcal{C}_{j(\Gamma)}$ in $j(\Gamma)\backslash\HH^2$ is called the \emph{convex core} of $j(\Gamma)\backslash\HH^2$); equivalently, the hyperbolic orbifold $j(\Gamma)\backslash\HH^2$ has finitely many funnels and no cusp.
By definition, a \emph{$j$-cocycle} is a map $u : \Gamma\rightarrow\g$ such that
\begin{equation} \label{eqn:cocycle-condition}
u(\gamma_1\gamma_2) = u(\gamma_1) + j(\gamma_1)\cdot u(\gamma_2)
\end{equation}
for all $\gamma_1,\gamma_2\in\Gamma$.
A \emph{$j$-coboundary} is a $j$-cocycle of the form
$$u_X(\gamma) = X - j(\gamma)\cdot X$$
where $X\in\g$.
The condition \eqref{eqn:cocycle-condition} means exactly that $\Gamma$ acts on~$\g$ by affine isometries:
\begin{equation}\label{eqn:affineaction}
\gamma \bullet X = j(\gamma) \cdot X + u(\gamma).
\end{equation}
This action fixes a point $X\in\g$ if and only if $u$ is the coboundary~$u_X$.

\begin{Definition}\label{def:proper-def}
We say that $u$ is a \emph{proper deformation} of~$j(\Gamma)$ if the $\Gamma$-action \eqref{eqn:affineaction} on~$\g$ is properly discontinuous. 
\end{Definition}

\subsection{Small deformations}\label{subsec:small-deformation}

The above terminology of proper \emph{deformation} comes from the fact that $j$-cocycles $u : \Gamma\rightarrow\g$ are the same as infinitesimal deformations of the homomorphism~$j$, in the following sense.
Suppose $t\mapsto j_t\in\Hom(\Gamma,G)$ is a smooth path of representations with $j_0=j$.
For $\gamma\in\Gamma$, the derivative $\frac{\D}{\D t}\big|_{t=0}\,j_t(\gamma)$ takes any $p\in\HH^2$ to a vector of $T_{j(\gamma)\cdot p}\,\HH^2$, these vectors forming a Killing field $u(\gamma)$ as $p$ ranges over~$\HH^2$.
As above, we see $u(\gamma)$ as an element of~$\g$; if we also see $\HH^2$ and its tangent spaces as subsets of~$\g$, then \eqref{eqn:crossproduct-killing} yields the formula
\begin{equation}\label{eqn:cocycle-deform}
\frac{\D}{\D t}\Big|_{t=0}\ j_t(\gamma)\cdot p \,=\, u(\gamma) \wedge (j(\gamma)\cdot p)
\end{equation}
for all $p\in\HH^2\subset\g$.
Equivalently, $\Ad_*(\frac{\D}{\D t} j_t(\gamma))=\ad(u(\gamma))\circ \Ad(j(\gamma))$.
The multiplicativity relation in~$\Gamma$ is preserved to first order in~$t$ if and only if for all $\gamma_1,\gamma_2\in \Gamma$,
\begin{eqnarray*}
\frac{\D}{\D t}\Big|_{t=0}\,j_t(\gamma_1\gamma_2)\cdot p & = & \frac{\D}{\D t}\Big|_{t=0}\, \big(j_t(\gamma_1) \cdot (j_t(\gamma_2)\cdot p)\big) \\ 
& = & \!\!\!\Big( \frac{\D}{\D t}\Big|_{t=0}\, j_t(\gamma_1) \Big ) \!\cdot\!  (j(\gamma_2)\cdot p) + j(\gamma_1) \!\cdot\! \Big( \frac{\D}{\D t}\Big|_{t=0}\, j_t(\gamma_2)\cdot p \Big ) \\
&=& u(\gamma_1)\wedge \big(j(\gamma_1\gamma_2) \cdot p\big) + j(\gamma_1)\cdot \big(u(\gamma_2) \wedge (j(\gamma_2)\cdot p)\big) \\
&=& \big(u(\gamma_1) + j(\gamma_1)\cdot u(\gamma_2)\big) \wedge \big(j(\gamma_1\gamma_2) \cdot p\big).
\end{eqnarray*}
Since the left-hand side is also equal to $u(\gamma_1\gamma_2)\wedge (j(\gamma_1\gamma_2)\cdot p)$, this is equivalent to the fact that $u$ is a $j$-cocycle.
Given $X\in \g$, it is easy to check that if $j_t$ is the conjugate of $j$ by $g_t$ where $\frac{\D}{\D t}\big|_{t=0}\,g_t=X\in\g$, then the cocycle $\frac{\D}{\D t}\big|_{t=0}\,j_t$ is the coboundary~$u_X$.

\subsection{The Margulis invariant}\label{subsec:margulis-invariant}

Let $u : \Gamma\rightarrow\g$ be a $j$-cocycle.
We now recall the definition of the Margulis invariant ${\boldsymbol\alpha}_u(\gamma)$ for $\gamma\in\Gamma$ with $j(\gamma)$ hyperbolic (see \cite{mar83,mar84}).
The adjoint action of $j(\gamma)\in G$ on $\g\cong\RR^{2,1}$ has three distinct eigenvalues $\mu>1>\mu^{-1}$.
Let $c^+, c^-$ be eigenvectors in the \emph{positive} light cone of~$\g$, for the respective eigenvalues $\mu,\mu^{-1}$, and let $c^0\in\g$ be the unique \emph{positive} real multiple of $c^-\wedge c^+$ with $\langle c^0|c^0\rangle =1$.
For instance, if $j(\gamma)=\begin{pmatrix} a & 0\\ 0 & a^{-1}\end{pmatrix}\in\PSL_2(\RR)=G$ with $a>1$, then $\mu=a^2$ and we can take
$$c^+ = \begin{pmatrix} 0 & 1\\ 0 & 0\end{pmatrix}, \quad c^- = \begin{pmatrix} 0 & 0\\ -1 & 0\end{pmatrix}, \quad c^0 = \frac{1}{2} \begin{pmatrix} 1 & 0\\ 0 & -1\end{pmatrix}.$$
By definition, the Margulis invariant of~$\gamma$ is
\begin{equation}\label{eqn:margulis}
{\boldsymbol\alpha}_u(\gamma) := \langle u(\gamma) \,|\, c^0\rangle~. 
\end{equation}
It is an easy exercise to check that ${\boldsymbol\alpha}_u$ is invariant under conjugation and that ${\boldsymbol\alpha}_u(\gamma^n)=|n|\,{\boldsymbol\alpha}_u(\gamma)$ for all $n\in\ZZ$.
The affine action of $\gamma$ on~$\g$ by $\gamma\bullet X=j(\gamma)\cdot X+u(\gamma)$ preserves a unique affine line directed by~$c^0$, and ${\boldsymbol\alpha}_u(\gamma)$ is the (signed) translation length along this line.
Since the image of $\mathrm{Id}_{\g}-\Ad(j(\gamma))$ is orthogonal to~$c^0$, we have ${\boldsymbol\alpha}_u(\gamma)=\langle\gamma\bullet X\,|\,c^0\rangle$ for all $X\in\g$.
In particular, if $u$ is a coboundary, then ${\boldsymbol\alpha}_u(\gamma)=0$ for all~$\gamma$.
If $u_1$ and~$u_2$ are two $j$-cocycles and $t_1,t_2\in\RR$, then
$${\boldsymbol\alpha}_{t_1u_1+t_2u_2} = t_1\,{\boldsymbol\alpha}_{u_1} + t_2\,{\boldsymbol\alpha}_{u_2}.$$
Thus ${\boldsymbol\alpha}_u$ depends only on the cohomology class of~$u$.

Note that the projection of the Killing field $u(\gamma)\in\g$ to the oriented translation axis $\A_{j(\gamma)}\subset\HH^2$ is the same at all points $p$ of~$\A_{j(\gamma)}$, equal to ${\boldsymbol\alpha}_u(\gamma)$.
Indeed, the unit tangent vector to $\A_{j(\gamma)}$ at $p$ is $c^0\wedge p$, and so the $\A_{j(\gamma)}$-component $\pi_{\A_{j(\gamma)}}$ of $u(\gamma)(p)=u(\gamma)\wedge p\in T_p\HH^2$ is 
\begin{eqnarray}\label{eqn:constant-component}
\pi_{\A_{j(\gamma)}}\big(u(\gamma)(p)\big) & = & \langle c^0\wedge p \,|\, u(\gamma)\wedge p \rangle = \langle u(\gamma) \,|\, p\wedge (c^0\wedge p) \rangle\nonumber\\ 
& = & \langle u(\gamma) \,|\, c^0 \rangle = {\boldsymbol\alpha}_u(\gamma)
\end{eqnarray}
since $c^0$ and $p$ are mutually orthogonal (respectively spacelike and timelike) with unit norms.
More generally, any Killing field always has a constant component along any given line.

\subsection{Length derivative}\label{subsec:length_deriv}

Finally, we recall that if the $j$-cocycle~$u$ is the derivative $\frac{\D}{\D t}\big|_{t=0}\,j_t$ of a smooth path $t\mapsto j_t\in\Hom(\Gamma,G)$ with $j_0=j$, then the Margulis invariant ${\boldsymbol\alpha}_u(\gamma)$ associated with~$u$ is also the $t$-derivative of the length $\lambda(j_t(\gamma))$ of the geodesic curve in the class of~$\gamma$:
$${\boldsymbol\alpha}_u(\gamma) = \frac{\D}{\D t}\Big|_{t=0}\, \lambda(j_t(\gamma)) = \frac{\D}{\D t}\Big|_{t=0}\, \lambda\big(e^{tu(\gamma)}j(\gamma)\big) .$$
This was first observed by Goldman--Margulis \cite{gm00}.
Here is a short explanation.
By conjugating $j_t(\gamma)$ by a smooth path based at $1\in G$, we may assume that the translation axis of $j_t(\gamma)$ is constant; indeed, $\lambda$ is invariant under conjugation, and conjugation changes $u$ by a coboundary.
The key point is that $j(\gamma)=e^{\lambda(j(\gamma))\,c^0}$, where $c^0$ is the unit spacelike vector of Section~\ref{subsec:margulis-invariant}.
Since $j_t(\gamma)$ has the same translation axis as $j(\gamma)$, we can write
$$j_t(\gamma) = e^{\lambda(j_t(\gamma))\,c^0} = e^{[\lambda(j_t(\gamma))-\lambda(j(\gamma))]\,c^0}\,j(\gamma)\,.$$
Thus $u(\gamma)=\frac{\D}{\D t}\big|_{t=0}\,\lambda(j_t(\gamma))\,c^0$.
The formula follows: since $\langle c^0|c^0\rangle = 1$,
$${\boldsymbol\alpha}_u(\gamma) = \langle u(\gamma) \,|\, c^0\rangle = \frac{\D}{\D t}\Big|_{t=0}\, \lambda(j_t(\gamma))\,.$$

By rigidity of the marked length spectrum for surfaces (see \cite{dg01})
we thus have ${\boldsymbol\alpha}_u=0$ if and only if $u$ is a coboundary.

\subsection{Geodesic laminations}\label{subsec:geo-lamin}

Let $\Omega$ be an open subset of $\HH^2$. 
In this paper, we call \emph{geodesic lamination} in~$\Omega$ any closed subset $\tilde{\mathscr{L}}$ of~$\Omega$ endowed with a partition into straight lines, called \emph{leaves}.
We allow leaves to end at the boundary of~$\Omega$ in~$\HH^2$.
Note that the disjointness of the leaves implies that the collection of leaves is closed in the $C^1$ sense: any limiting segment $\sigma$ of a sequence of leaf segments $\sigma_i$ is a leaf segment (otherwise the leaf of~$\tilde{\mathscr{L}}$ through any point of $\sigma$ would intersect the~$\sigma_i$).
If we worked in $\HH^n$ with $n>2$, then $C^1$-closedness would have to become part of the definition.

When $\tilde{\mathscr{L}}$ and $\Omega$ are globally invariant under some discrete group $j(\Gamma)$, we also call lamination the projection of $\tilde{\mathscr{L}}$ to the quotient $j(\Gamma)\backslash \Omega$ (it is a closed disjoint union of injectively immersed geodesic copies of the circle and/or the line).
In such a quotient lamination, if some half-leaf does not escape to infinity, then it accumulates on a sublamination which can be approached by a sequence of simple closed geodesics.
Geodesic laminations are thus in some intuitive sense a generalization of simple closed (multi)curves.

\section{Vector fields and convex fields}\label{sec:convfields}

Some fundamental objects in the paper are equivariant, lipschitz vector fields, as well as what we call \emph{convex fields}.
\begin{Definition}\label{def:convfield}
A \emph{convex field} on~$\HH^2$ is a closed subset $X$ of the tangent bundle $T\HH^2$ whose intersection~$X(p)$ with $T_p\HH^2$ is convex for any $p\in\HH^2$.
\end{Definition}

Equivalently, a convex field is a subset $X$ of~$\HH^2$ whose intersection~$X(p)$ with $T_p\HH^2$ is convex and closed for any~$p$ and such that $X(p)$ depends in an upper semicontinuous way on~$p$ for the Hausdorff topology.
For instance, any continuous vector field $X$ on~$\HH^2$ is a convex field; \emph{we shall assume all vector fields in the paper to be continuous}.
In general, we do allow certain fibers $X(p)$ to be empty, but say that the convex field $X$ is \emph{defined} over a set $A\subset\HH^2$ if all fibers above~$A$ are nonempty.
We shall use the notation
$$X(A) := \bigcup_{p\in A} X(p)\subset T\HH^2$$
and
\begin{equation}\label{eqn:normconvexfield}
\| X(A)\| := \sup_{x\in X(A)} \| x\| .
\end{equation}

\subsection{Definitions and basic properties of convex fields}

For any convex fields $X_1$ and~$X_2$ and any real-valued functions $\psi_1$ and~$\psi_2$, we define the sum $\psi_1X_1+\psi_2X_2$ fiberwise:
$$(\psi_1X_1 + \psi_2X_2)(p) = \big\{ \psi_1(p)\,v_1 + \psi_2(p)\,v_2 \ |\ v_i\in X_i(p)\big\} .$$
It is still a convex field.

\begin{Definition}\label{def:equivariant}
Let $\Gamma$ be a discrete group, $j\in\Hom(\Gamma,G)$ a representation, and $u : \Gamma\rightarrow\g$ a $j$-cocycle.
We say that a convex field $X$ on~$\HH^2$ is \emph{$(j,u)$-equivariant} if for all $\gamma \in \Gamma$ and $p\in \HH^2$,
$$X(j(\gamma)\cdot p) = j(\gamma)_{\ast}(X(p)) + u(\gamma)(j(\gamma)\cdot p).$$
A $(j,0)$-equivariant field is called \emph{$j$-invariant}.
\end{Definition}

If $t\mapsto j_t$ is a deformation of $j$ tangent to~$u$, then the $(j,u)$-equivariance of a vector field~$X$ expresses the fact that whenever $j(\gamma)\cdot p = q$, the relation persists to first order under the flow of~$X$, that is, $d(j_t(\gamma)\cdot p_t, q_t)=o(t)$ where $p_t = \exp_p(t X(p))$ and $q_t = \exp_q(t X(q))$.

We can rephrase Definition~\ref{def:equivariant} in terms of group actions.
The group~$\Gamma$ acts on convex fields via the \emph{pushforward} action $(\gamma_* X)(j(\gamma)\cdot p)=j(\gamma)_*(X(p))$, and also via the affine $u$-action 
\begin{equation}\label{eqn:u-action}
\gamma\bullet X = \gamma_*X + u(\gamma)
\end{equation}
(which is a group action, due to \eqref{eqn:adjoint-pushforward} and the cocycle condition \eqref{eqn:cocycle-condition}).
A convex field $X$ is $(j,u)$-equivariant (resp.\ $j$-invariant) if and only if $\gamma\bullet X = X$ (resp.\ $\gamma_*X=X$) for all $\gamma\in\Gamma$.

\begin{Definition}\label{def:lipX}
A convex field $X$ is \emph{$k$-lipschitz} (lowercase `l') if for any distinct points $p,q\in \HH^2$ and any vectors $x_p\in X(p)$ and $x_q \in X(q)$, the rate of change of the distance between $p$ and $q$ satisfies
\begin{equation}\label{eqn:d'}
d'(x_p,x_q) \: := \: \frac{\D}{\D t}\Big|_{t=0}\, d\big(\exp_p(tx_p), \exp_q(tx_q)\big) \: \leq \:  k\,d(p,q).
\end{equation}
The \emph{lipschitz constant} of~$X$, denoted by $\lip(X)$, is the infimum of $k\in\RR$ such that $X$ is $k$-lipschitz.
For $A\subset\HH^2$, we set
$$\lip_A(X) := \lip(X(A)).$$ 
Finally, for $p\in\HH^2$, we define the \emph{local lipschitz constant} $\lip_p(X)$ to be the infimum of $\lip_{\mathcal{U}}(X)$ over all neighborhoods $\mathcal{U}$ of $p$ in~$\HH^2$.
We shall often use the notation
$$d'_X(p,q) := \sup\big\{d'(x_p, x_q) \ |\ x_p\in X(p),\ x_q\in X(q)\big\}\, .$$
The inverse of the map $\exp_p : T_p\HH^2\rightarrow\HH^2$ will be written $\log_p$. 
\end{Definition}

A diagonal argument shows that the ``local lipschitz constant'' function $p\mapsto\lip_p(X)$ is upper semicontinuous: for any converging sequence $p_n\rightarrow p$,
$$\lip_p(X) \,\geq\, \limsup_{n\rightarrow +\infty}\, \lip_{p_n}(X).$$

In order to compute (or estimate) lipschitz constants, we will often make use of the following observation (see Figure~\ref{fig:dprime}).

\begin{Remark}\label{rem:d'proj}
The quantity $d'(x_p,x_q)$ is the difference of the (signed) projections of $x_p$ and~$x_q$ to the geodesic line $(p,q)\subset\HH^2$, oriented from $p$ to~$q$.
\end{Remark}

\begin{figure}[h!]
{\centering
\def\svgwidth{7cm}
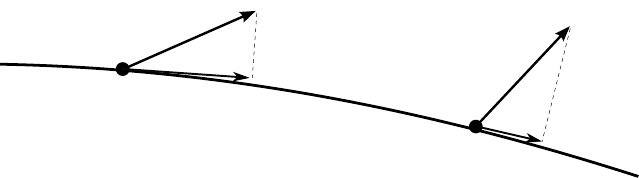}
\caption{The quantity $d'(x_p, x_q)$ may be calculated as the difference of signed projections of $x_p$ and $x_q$ to the line $(p,q)$. Here the contribution from $x_p$ is negative ($x_p$ pushes $p$ towards~$q$), while the contribution from $x_q$ is positive ($x_q$ pushes $q$ away from~$p$). }
\label{fig:dprime}
\end{figure}

In the case that $X$ is a smooth vector field, the local lipschitz constant is given by the formula
\begin{equation}\label{eqn:lipsmooth}
\lip_p(X) = \sup_{y \in T^1_p \HH^2} \langle \nabla_y X, y \rangle,
\end{equation}
where $\nabla$ is the Levi-Civita connection and $T^1_p \HH^2$ denotes the unit tangent vectors based at~$p$.
This is the vector field analogue of the formula $\Lip_p(f) = \|\D_pf\|$ for smooth maps~$f$.

The following remarks are straightforward.

\begin{Observation}\label{obs:donkey}
Let $X$ be a convex field.
\begin{enumerate}[(i)]
  \item If $X$ is $(j,u)$-equivariant and if $X_0$ is a $j$-invariant convex field, then the convex field $X+X_0$ is $(j, u)$-equivariant.
  \item If $(X_i)_{i\in I}$ is a family of $(j,u)$-equivariant convex fields with $\bigcup_{i\in I}X_i(p)$ bounded for all $p\in\HH^2$, and $(\mu_i)_{i\in I}$ a family of nonnegative reals summing up to~$1$, then the convex field $\sum_{i\in I} \mu_i X_i$ is well defined and $(j,u)$-equivariant.
  \item If in addition $X_i$ is $k_i$-lipschitz, with $(k_i)_{i\in I}$ bounded, then the convex field $\sum_{i\in I} \mu_i X_i$ is $\left ( \sum_{i\in I}\mu_i k_i\right )$-lipschitz.
  \item Subdivision: if a segment $[p,q]$ is covered by open sets $\mathcal{U}_i$ such that $\lip_{\mathcal{U}_i}(X)\leq k$ for all~$i$, then $d'_X(p,q)\leq k\,d(p,q)$.
  \item In particular, if $A\subset\HH^2$ is convex, then $\lip_A(X)=\sup_{p\in A} \lip_p(X)$.
  \item If $d'_X(p,q)=k:=\lip(X)$, then $d'_X(p,r)=d'_X(r,q)=k$ for any point $r$ in the interior of the geodesic segment $[p,q]$; in this case we say that the segment $[p,q]$ is \emph{$k$-stretched} by~$X$.
  \item Invariance: if $X$ is $(j,u)$-equivariant, then $\lip_{j(\gamma)\cdot A}(X)=\lip_A(X)$ and $\lip_{j(\gamma)\cdot p}(X)=\lip_p(X)$ for all $\gamma\in\Gamma$, all $A\subset\HH^2$, and all $p\in\HH^2$.
  \item The map $d'$ is subscript-additive: $d'_X(p,q)+d'_Y(p,q)=d'_{X+Y}(p,q)$.
  \item The map $d'_X(\cdot, \cdot)$ is uniformly $0$ if and only if $X$ is a Killing field.
\end{enumerate}
\end{Observation}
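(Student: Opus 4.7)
My plan centers on the projection picture from Remark \ref{rem:d'proj}: $d'(x_p,x_q)$ equals the difference of the signed projections of $x_p$ and $x_q$ onto the oriented geodesic $(p,q)$. This has two crucial consequences: $d'$ is bilinear in its two vector arguments, and it satisfies the \emph{collinear additivity} $d'(x_p,x_q) = d'(x_p,x_r) + d'(x_r,x_q)$ whenever $r\in [p,q]$ and $x_r\in X(r)$, because the contributions of $x_r$ cancel with opposite signs.

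From these, items (i), (ii), (iii), (viii) are direct from the definitions. Item (i) uses that the cocycle term in $(X+X_0)(j(\gamma)\cdot p)$ can only come from $X$. Item (ii) combines linearity of the pushforward with the normalization $\sum\mu_i=1$, so that $u(\gamma)$ appears with total coefficient one under the affine action \eqref{eqn:u-action}. Items (iii) and (viii) are direct applications of the bilinearity of $d'$, with the nonnegative weights of (iii) preserving the sign in the inequality.

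Items (iv), (v), (vi) all invoke collinear additivity together with compactness. For (iv), I would extract a finite subcover of the compact segment $[p,q]$ from the $\mathcal U_i$, refine to a subdivision $p=p_0,\ldots,p_n=q$ with each consecutive pair inside a common $\mathcal U_{i_s}$, pick intermediate vectors $x_{p_s}\in X(p_s)$, and telescope the bounds $d'(x_{p_{s-1}},x_{p_s}) \leq k\,d(p_{s-1},p_s)$ using additivity. Item (v) then follows: the inequality $\leq$ comes from (iv) applied to a segment $[p,q]\subset A$ covered by neighborhoods on which $\lip$ is close to $\lip_r(X)$, and $\geq$ is immediate from the definition of $\lip_p$. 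Item (vi) follows because if $d'_X(p,q)=k\,d(p,q)$ is saturated, then additivity together with $d'_X(p,r)\leq k\,d(p,r)$ and $d'_X(r,q)\leq k\,d(r,q)$ forces both sub-bounds to be saturated.

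The Killing-field statements (vii) and (ix) are the subtlest. For (vii), $j(\gamma)$ acts by isometries on $\HH^2$, so its pushforward preserves $d'$, while the translational correction $u(\gamma)$ is a Killing field, which by (ix) contributes zero to $d'$; this yields invariance of both $\lip_A$ and $\lip_p$. For (ix), a single-valued Killing field has isometric flow, whence $d'\equiv 0$. Conversely, from $d'_X\equiv 0$ I would first deduce that $X$ is single-valued: fixing $p$ and approaching it along a geodesic in direction $v$ from both sides by points $q_1,q_2$, unfolding $d'_X(p,q_i)=0$ in terms of projections and using upper semicontinuity of the fibers pinches the $v$-projection of $X(p)$ to a single value; varying $v$ shows $X(p)$ is a singleton. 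Then $X$ is a continuous vector field whose flow preserves pairwise distances to first order, hence is by isometries, so $X$ is Killing. The main obstacle in the list is precisely this single-valuedness step in (ix); the telescoping arguments in (iv)--(vi) are routine once collinear additivity is established, although care is needed in (vi) to choose a single intermediate vector $x_r$ that witnesses both subsegment estimates.
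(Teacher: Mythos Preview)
The paper offers no proof for this Observation, declaring the nine items ``straightforward,'' so there is nothing to compare your argument against; your unpacking via Remark~\ref{rem:d'proj} (bilinearity and collinear additivity of $d'$) is exactly the intended route, and items (i)--(viii) go through as you describe.

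The only place that deserves an extra line is the converse in (ix). Your idea of approaching $p$ from both sides along a line is right, but it is cleaner to use the three collinear points $q_2,p,q_1$ simultaneously rather than a limit: from $d'_X(p,q_1)=0$ you get $\inf_{x\in X(p)}\langle x,v\rangle=\sup_{y\in X(q_1)}\langle y,u_{q_1}\rangle$; from $d'_X(p,q_2)=0$ you get $\sup_{x\in X(p)}\langle x,v\rangle=-\sup_{z\in X(q_2)}\langle z,u_{q_2}\rangle$; and $d'_X(q_2,q_1)=0$ says these two right-hand sides coincide, so the $v$-projection of $X(p)$ is a single value---no semicontinuity needed. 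After that, your ``flow preserves distances, hence isometries'' step tacitly assumes the flow of $X$ exists, but at this stage you only know $X$ is continuous. A safer finish: pick a Killing field $Y$ with $Y(p_1)=X(p_1)$ and $Y(p_2)=X(p_2)$ for two fixed points (possible because Killing fields have constant component along any line and $d'_X(p_1,p_2)=0$ supplies exactly the needed compatibility); then $Z:=X-Y$ has $d'_Z\equiv 0$ and vanishes at $p_1,p_2$, so for any $q$ off the line $p_1p_2$ the two relations $d'_Z(p_i,q)=0$ force $Z(q)\perp[p_i,q]$ in two independent directions, hence $Z(q)=0$, and the same for $q$ on the line using a third point.
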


If $k<0$, then any $k$-lipschitz \emph{vector field} $X$ on~$\HH^2$ tends to bring points closer together; in particular, $X$ has a positive inward component on the boundary of any large enough round ball of fixed center.
By Brouwer's theorem, $X$ therefore has a zero in~$\HH^2$, necessarily unique since $k<0$.
In fact, this extends to convex fields:

\begin{Proposition}\label{prop:fixpoint}
Any $k$-lipschitz convex field $X$ with $k<0$, defined on all of $\HH^2$, has a unique zero (that is, there is a unique fiber $X(p)$ containing $0\in T_p\HH^2$).
\end{Proposition}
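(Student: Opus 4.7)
\medskip

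\noindent\textbf{Proof plan.}

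\emph{Uniqueness.} If $0\in X(p)$ and $0\in X(q)$ for some $p\neq q$, then applying the $k$-lipschitz inequality \eqref{eqn:d'} with $x_p=x_q=0$ yields $0=d'(0,0)\leq k\,d(p,q)<0$, a contradiction.

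\emph{Existence, inward-pointing property.} The first step is to show that $X$ points strictly inward along sufficiently large geodesic spheres. Fix a basepoint $o\in\HH^2$ and any vector $x_o\in X(o)$. For $p$ at distance $r$ from $o$ and any $x_p\in X(p)$, the $k$-lipschitz inequality $d'(x_o,x_p)\leq k\,r$ combined with the geometric description of $d'$ from Remark~\ref{rem:d'proj} gives
$$\langle x_p,n\rangle \,\leq\, \langle x_o,n_o\rangle + k\,r \,\leq\, \|x_o\| + k\,r,$$
where $n$ is the unit tangent at $p$ pointing outward along the geodesic from $o$ to $p$ and $n_o$ is the corresponding unit tangent at $o$. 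Choosing $R>\|x_o\|/|k|$, the outward component $\langle x_p,n\rangle$ is strictly negative for every $p\in\partial B_R(o)$ and every $x_p\in X(p)$. An analogous two-sided comparison (pairing $p$ with fibers $X(q)$ on both sides of $p$ along every direction) shows that $X$ is uniformly bounded on $\bar B_R(o)$, so each $X(p)$ is a nonempty compact convex subset of $T_p\HH^2\cong\RR^2$.

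\emph{Existence via approximation and Brouwer.} Since $X$ is upper semicontinuous with nonempty, compact, convex values, Cellina's approximate selection theorem provides, for every $\varepsilon>0$, a continuous single-valued vector field $Y_\varepsilon$ on $\bar B_R(o)$ whose graph lies in the $\varepsilon$-neighborhood of the graph of $X$. For $\varepsilon$ sufficiently small, $Y_\varepsilon$ inherits the strictly inward-pointing property on $\partial B_R(o)$, so Brouwer's theorem (in the equivalent form: a continuous inward-pointing vector field on a closed disk has a zero) yields some $p_\varepsilon\in B_R(o)$ with $Y_\varepsilon(p_\varepsilon)=0$. Extract a convergent subsequence $p_\varepsilon\to p^*\in\bar B_R(o)$. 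The approximation bound produces points $q_\varepsilon$ with $d(p_\varepsilon,q_\varepsilon)<\varepsilon$ and vectors $v_\varepsilon\in X(q_\varepsilon)$ with $\|v_\varepsilon\|<\varepsilon$, so $(q_\varepsilon,v_\varepsilon)\to(p^*,0)$; closedness of the graph of $X$ then yields $0\in X(p^*)$.

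The main obstacle is the multivalued, upper semicontinuous nature of $X$: continuous single-valued selections need not exist, so Brouwer's theorem does not apply directly. Cellina's approximate selection theorem bridges this gap; a Kakutani-type fixed-point theorem could alternatively be used once the image sets are arranged to be convex (for instance by working with Euclidean addition in the Klein model and taking a convex hull).
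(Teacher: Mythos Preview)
Your proof is correct and shares the paper's overall strategy---establish the strict inward-pointing property on a large sphere and reduce to Brouwer's theorem---but handles the multivalued nature of $X$ by a different technical device. The paper argues by contradiction: assuming $X$ has no zero, each fiber $X(q)$ is a closed convex set missing the origin, so a separating hyperplane yields a local vector field $Y_q$ pairing positively with $X$ on a neighborhood $V_q$; gluing these with a partition of unity produces a continuous, nowhere-zero, inward-pointing field on the disk, contradicting Brouwer. You instead invoke Cellina's approximate selection theorem to obtain continuous near-selections $Y_\varepsilon$, apply Brouwer to each, and pass to the limit using closedness of the graph of~$X$. The paper's route is more self-contained (only separating hyperplanes and partitions of unity), while yours is cleaner once Cellina's theorem is available and makes the connection to Kakutani-type fixed point theory explicit---the paper itself remarks immediately after its proof that the result may be compared to Kakutani's theorem. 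Your boundedness step, though only sketched, is essentially the content of Lemma~\ref{lem:convexbounds} (use three or more reference points whose directions from~$p$ positively span $T_p\HH^2$, not just two opposite ones).
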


\begin{proof}
We prove this by contradiction: suppose $X$ is a counterexample; let us construct a \emph{vector field} $Y$ on a large ball $B$ of~$\HH^2$ with no zero, but with positive inward component everywhere on $\partial B$.
Fix $p\in \HH^2$ and $x\in X(p)$.
If $B$ is a large enough ball centered at~$p$, of radius~$R$, then every vector $y\in X(q)$ for $q\in \partial B$ is inward-pointing because $d'(x,y)\leq k \, d(p,q)=kR\ll 0$.
Since $X$ has no zero and is closed in $T\HH^2$, with convex fibers, we can find for any $q\in B$ a neighborhood $V_q$ of~$q$ in~$\HH^2$ and a \emph{vector field}~$Y_q$ defined on~$V_q$, such that $Y(q')$ has positive scalar product with any vector of $X(q')$ when $q'\in V_q$.
Moreover, we can assume that the fields $Y_q$ are all inward-pointing in a neighborhood of $\partial B$.
Extract a finite covering $V_{q_1}\cup \dots \cup V_{q_n}$ of~$B$ and pick a partition of unity $(\psi_i)_{1\leq i\leq n}$ adapted to the~$V_{q_i}$.
Then the vector field $Y:=\sum_{i=1}^n \psi_iY_{q_i}$ is continuous, defined on all of~$B$, inward-pointing on $\partial B$, and with no zero (it everywhere pairs to positive values with~$X$).
This vector field~$Y$ cannot exist by Brouwer's theorem, hence $X$ must have a zero --- necessarily unique since $X$ is $k$-lipschitz with $k<0$.
\end{proof}

Proposition~\ref{prop:fixpoint} and its proof may be compared to Kakutani's fixed point theorem for set-valued maps \cite{kak41}.
Here are two related results, which will be important throughout the paper:

\begin{Proposition}\label{prop:limitofconvex}
Any Hausdorff limit of a sequence of convex fields that are uniformly bounded and $k$-lipschitz over a ball~$B$, is a $k$-lipschitz convex field defined over~$B$.
\end{Proposition}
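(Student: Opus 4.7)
The plan is to verify the four defining properties of a $k$-lipschitz convex field defined over $B$ for the Hausdorff limit $X := \lim_n X_n$ (taken along a subsequence; by uniform boundedness the $X_n$ all lie in a common compact region of $T\HH^2|_{\overline{B}}$, inside which Hausdorff convergence is well defined).

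Three of the properties are essentially routine. Closedness of $X$ in $T\HH^2$ is automatic from the Hausdorff convergence of closed subsets of a compact set. That $X$ is defined over $B$ follows by fixing $p \in B$, choosing $x_n \in X_n(p)$, extracting a convergent subsequence $x_n \to x$ using uniform boundedness in $T_p\HH^2$, and observing that $(p, x_n) \in X_n$ forces $(p, x) \in X$. For the $k$-lipschitz property, given $x_p \in X(p)$ and $x_q \in X(q)$ with $p \neq q$, I would choose approximating $(p_n, x_{p,n}), (q_n, x_{q,n}) \in X_n$ and pass the bound $d'(x_{p,n}, x_{q,n}) \leq k\, d(p_n, q_n)$ to the limit; this uses continuity of the hyperbolic distance and of $d'$, the latter being by Remark~\ref{rem:d'proj} the difference of signed projections onto the geodesic $(p_n, q_n)$, which converges to $(p, q)$.

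Fiber convexity is the delicate property. Given $x, y \in X(p)$ and $t \in [0,1]$, I need $z := tx + (1-t)y \in X(p)$. The plan: start from approximating sequences $(p_n, x_n) \to (p, x)$ and $(q_n, y_n) \to (p, y)$ in $X_n$, then pass to a common base point $r_n \to p$ (for instance $r_n = p_n$, or on the segment $[p_n, q_n]$) and produce vectors $x_n', y_n' \in X_n(r_n)$ with $x_n' \to x$ and $y_n' \to y$. The fiberwise convexity of $X_n$ then yields $z_n := t x_n' + (1-t) y_n' \in X_n(r_n)$, so $(r_n, z_n) \in X_n$ converges to $(p, z)$, placing $(p, z) \in X$.

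The hard step will be producing $x_n', y_n' \in X_n(r_n)$ with the required convergence. The $k$-lipschitz bound controls only one directional projection of the difference between a vector at $p_n$ and one at $r_n$, so by itself it does not force fibers at nearby points to match. I would combine it with uniform boundedness and Blaschke's selection theorem to extract a subsequence along which the convex sets $X_n(r_n)$, identified with subsets of a fixed $T_p\HH^2$ via parallel transport along the short geodesics from $r_n$ to $p$, converge in Hausdorff to a closed convex $C \subset T_p\HH^2$. A careful separation argument combined with the closedness of $X$ and the Hausdorff convergence $X_n \to X$ should then yield $x, y \in C$; convexity of $C$ gives $z \in C$, and any sequence of vectors in $X_n(r_n)$ realizing $z$ as a limit supplies the required approximation. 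Rendering this separation argument precise, in view of the one-sided nature of the lipschitz bound, is where I expect the main technical difficulty.
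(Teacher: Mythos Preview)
Your treatment of closedness, nonemptiness of fibers, and the $k$-lipschitz property is fine and matches the paper. The gap is in fiber convexity, and it is a real one: the ``separation argument'' you defer cannot be completed along the lines you sketch. The obstruction is exactly the one you name. To show $x\in C:=\lim X_n(r_n)$ you would need, for the separating direction $v$, a lower bound on $\sup_{a\in X_n(r_n)}\langle a,v\rangle$ close to $\langle x,v\rangle$. But for $a\in X_n(r_n)$ and $x_n\in X_n(p_n)$, the $k$-lipschitz inequality $d'(a,x_n)\le k\,d(r_n,p_n)$ reads $\langle x_n,w_n\rangle-\langle a,w_n\rangle\le k\,d(r_n,p_n)$ with $w_n$ the unit direction from $r_n$ to $p_n$; this yields only an \emph{upper} bound on the $w_n$-component of $x_n$ relative to $a$, and $w_n$ is determined by $p_n,r_n$, not by $v$. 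Choosing $r_n$ so that $w_n\approx\pm v$ does not help: in either case the inequality goes the wrong way to contradict $\sup_{X_n(r_n)}\langle\cdot,v\rangle<\langle x,v\rangle-\varepsilon$. Introducing an auxiliary point $q$ at distance $\delta$ from $p$ in direction $\pm v$ and chaining inequalities through $X_n(q)$ runs into the same one-sidedness. In short, the lipschitz hypothesis gives you half-space containments, never the reverse inclusion needed to force $x\in C$.

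The paper bypasses this entirely by invoking the fixed-point result just proved (Proposition~\ref{prop:fixpoint}). After subtracting a Killing field one reduces to showing that $0\in\CH(X_\infty(p))$ implies $0\in X_\infty(p)$. Set $W(q)=\log_q(p)$, which is $(-1)$-lipschitz and vanishes at $p$, and take $Y_n:=X_n+cW$ with $c$ large, so that each $Y_n$ is $(-1)$-lipschitz and inward on $\partial B$. By Proposition~\ref{prop:fixpoint} each $Y_n$ has a unique zero $q_n\in B$; passing to a subsequence, $q_n\to q$ and $0\in Y_\infty(q)$. Since $W(p)=0$ we have $Y_\infty(p)=X_\infty(p)$, so $0\in\CH(Y_\infty(p))$; but the fiberwise convex hull of $Y_\infty$ is still $(-1)$-lipschitz, and uniqueness of its zero forces $p=q$, hence $0\in Y_\infty(p)=X_\infty(p)$. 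The contracting auxiliary field is the missing idea: it converts the convexity question into a uniqueness-of-zero statement where the one-sided lipschitz bound is exactly what is needed.
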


\begin{proof}
Let $(X_n)_{n\in\NN}$ be such a sequence of convex fields, and $X_{\infty}$ their Hausdorff limit (a closed subset of $T\HH^2$).
For any $p\in B$, the closed set $X_{\infty}(p)$ is nonempty because $(X_n(p))_{n\in\NN}$ is uniformly bounded.
To see that $X_{\infty}$ is $k$-lipschitz, we fix distinct points $p,q\in B$ and consider sequences $x_n\in X_n(p_n)$ converging to $x\in X_{\infty}(p)$ and $y_n \in X_n(q_n)$ converging to $y\in X_{\infty}(q)$.
Then $d'(x_n,y_n) \leq k\,d(p_n, q_n)$ for all~$n$, and taking the limit as $n\to +\infty$ gives $d'(x,y)\leq k\,d(p,q)$.

We now check that $X_{\infty}(p)$ is convex for all $p\in B$.
By adding a Killing field, it is enough to show that if the zero vector lies in the convex hull $\CH(X_{\infty}(p))$ of $X_{\infty}(p)$ in $T_p\HH^2$, then $0\in X_{\infty}(p)$.
Consider the vector field $W: q \mapsto \log_q(p)$ that points toward $p$ with strength equal to the distance from~$p$.
By convexity of the distance function in~$\HH^2$, the vector field $W$ is $-1$~lipschitz.
Let $Y_n = X_n + c W$, where $c\gg 1$ is large enough so that for any~$n$, the convex field $Y_n$ is $-1$~lipschitz and for all $p\in\partial B$, all vectors of $Y_n(p)$ point strictly into~$B$.
By the proof of Proposition~\ref{prop:fixpoint}, the convex field $Y_n$ has a zero at a point $q_n\in B$, and after taking a subsequence we may assume that $(q_n)_{n\in\NN}$ converges to some $q\in B$.
Then $Y_{\infty} = X_{\infty} + c W$ is $-1$~lipschitz and $0\in Y_{\infty}(q)$.
The fiberwise convex hull of $Y_{\infty}$ is still $-1$~lipschitz and contains $0\in T_q\HH^n$.
Note that $X_{\infty}(p)=Y_{\infty}(p)$.
Therefore, if $0\in\CH(X_{\infty}(p))$, then $p = q$ and we have $0 \in X_{\infty}(p) = Y_{\infty}(p)$.
\end{proof}

\begin{Proposition}\label{prop:zeroconvex}
Let $X$ be a $0$-lipschitz convex field defined over~$\HH^2$.
For any Killing field $Y$ on~$\HH^2$, the set $\specialC:=\{p\in \HH^2 \,|\, Y(p)\in X\}$ is convex.
\end{Proposition}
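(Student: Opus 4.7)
\emph{Proof plan.} Set $Z := X - Y$; since $Y$ is a Killing field, Observation~\ref{obs:donkey}(ix) gives $d'_Y \equiv 0$, and additivity (Observation~\ref{obs:donkey}(viii)) yields $d'_Z = d'_X \leq 0$. Hence $Z$ is a $0$-lipschitz convex field on all of $\HH^2$ (the fiber $X(p) - Y(p)$ is closed and convex, and $Z$ is closed in $T\HH^2$ as the preimage of $X$ under $(p, z) \mapsto (p, z + Y(p))$). Since $\specialC = \{p : 0 \in Z(p)\}$, it suffices to prove: \emph{the zero set of any $0$-lipschitz convex field on $\HH^2$ is convex.}

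Fix $p_0, p_1$ with $0 \in Z(p_i)$ and let $p_t$ lie on the open geodesic segment $(p_0, p_1)$. Since the pair $(0, 0) \in Z(p_0) \times Z(p_1)$ realizes $d'(0, 0) = 0 = \lip(Z)$, we have $d'_Z(p_0, p_1) = 0 = \lip(Z)$, and Observation~\ref{obs:donkey}(vi) then gives $d'_Z(p_0, p_t) = d'_Z(p_t, p_1) = 0$. Applied with $0 \in Z(p_0)$ (resp.\ $0 \in Z(p_1)$) and any $z \in Z(p_t)$, Remark~\ref{rem:d'proj} forces the signed projection of $z$ onto the direction $\hat u$ of the segment at $p_t$ to be $\leq 0$ (resp.\ $\geq 0$), and hence zero. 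Thus $Z(p_t)$ is contained in the line $\hat u^\perp \subset T_{p_t}\HH^2$ perpendicular to the segment.

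Suppose for contradiction $0 \notin Z(p_t)$. Since $Z(p_t)$ is a closed convex subset of $\hat u^\perp$ missing $0$, we may (up to changing signs) assume $Z(p_t) \subset \{\beta v_t : \beta \geq \beta_0\}$ for some unit $v_t \in \hat u^\perp$ and some $\beta_0 > 0$. I would then probe $Z$ transversely to the segment: for $\eta > 0$ small, set $q_\eta := \exp_{p_t}(\eta v_t)$. For any $z \in Z(q_\eta)$, applying $d'_Z(p_0, q_\eta) \leq 0$ with $0 \in Z(p_0)$ and $d'_Z(q_\eta, p_1) \leq 0$ with $0 \in Z(p_1)$, and computing to first order in $\eta$ the directions of the geodesics $(p_0, q_\eta)$ and $(q_\eta, p_1)$ at $q_\eta$ via the hyperbolic triangles $p_0 p_t q_\eta$ and $p_t p_1 q_\eta$ with one side of length $\eta$, yields inequalities of the form
\begin{equation*}
\pi_{\hat u}(z) + \tfrac{\eta}{tL}\,\pi_{v_t}(z) \,\leq\, O(\eta^2), \qquad \pi_{\hat u}(z) - \tfrac{\eta}{(L - t)L}\,\pi_{v_t}(z) \,\geq\, -O(\eta^2),
\end{equation*}
where $L := d(p_0, p_1)$. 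Subtracting these forces $\pi_{v_t}(z) \leq C\eta$ for a constant $C = C(t, L) > 0$, uniformly for all $z \in Z(q_\eta)$, so $\pi_{v_t}(z) \to 0$ as $\eta \to 0$. On the other hand, upper semicontinuity of $Z$ (in the Hausdorff sense provided by Definition~\ref{def:convfield}) forces $Z(q_\eta)$ to lie in the $(\beta_0/2)$-neighborhood of $Z(p_t)$ for $\eta$ small enough, so $\pi_{v_t}(z) \geq \beta_0/2 > 0$ for every $z \in Z(q_\eta)$. These two estimates are contradictory, so $0 \in Z(p_t)$, as desired.

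The main technical obstacle is the first-order expansion in $\eta$ of the directions of the geodesics $(p_0, q_\eta)$ and $(q_\eta, p_1)$ at $q_\eta$; this amounts to a computation with hyperbolic triangles having one infinitesimal side, but must be carried out with sufficient care for the asymptotic coefficients $1/tL$ and $1/(L-t)L$ to come out correctly and, in particular, to survive the upper-semicontinuity step in the (possibly unbounded) setting of convex fields.
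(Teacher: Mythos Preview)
Your approach shares the paper's key geometric idea---probe at $q_\eta = \exp_{p_t}(\eta v_t)$ and use the two $0$-lipschitz constraints from $p_0, p_1$ to confine $Z(q_\eta)$ to a narrow angular sector pointing back toward the segment---but the final step has a genuine gap. You claim that ``upper semicontinuity forces $Z(q_\eta)$ to lie in the $(\beta_0/2)$-neighborhood of $Z(p_t)$,'' but Definition~\ref{def:convfield} only asserts that $Z$ is \emph{closed} in $T\HH^2$; this is strictly weaker than the containment you invoke and does not by itself prevent $Z(q_\eta)$ from containing vectors of arbitrarily large norm (for instance, nothing so far rules out $-\eta^{-1}v_t \in Z(q_\eta)$, which satisfies both angular constraints yet escapes to infinity as $\eta\to 0$). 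You correctly flag unbounded fibers as a worry but do not resolve it. The missing ingredient is one more use of the $0$-lipschitz condition, now \emph{along} the transverse geodesic: fixing $\eta^*>0$ and any $z^*\in Z(q_{\eta^*})$, the inequality $d'(z^*,z)\leq 0$ in the $v_t$ direction gives $\pi_{v_t}(z)\geq\pi_{v_t}(z^*)$ for all $z\in Z(q_\eta)$ with $0<\eta<\eta^*$; combined with $\pi_{v_t}(z)\leq 0$ and the sector bound on $\pi_{\hat u}(z)$, this makes $\|z\|$ uniformly bounded, and then closedness does yield a limit $x_0\in Z(p_t)$ with $\pi_{v_t}(x_0)\leq 0$, the desired contradiction.

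The paper's proof is organized slightly differently and sidesteps the contradiction: it probes from \emph{both} sides of the segment, extracts limit vectors $x_0,x_1\in Z(p_t)$ pointing weakly down and weakly up respectively, and concludes $0\in[x_0,x_1]\subset Z(p_t)$ by convexity of the fiber. Two minor simplifications worth noting: (i) your first-order expansion with specific coefficients and $O(\eta^2)$ errors is unnecessary, since the exact half-plane constraints $\langle z,w_i\rangle\leq 0$ (with $w_i$ the unit direction from $p_i$ to $q_\eta$ at $q_\eta$) already force $\pi_{v_t}(z)\leq 0$ with no error term; (ii) the appeal to Observation~\ref{obs:donkey}(vi) is superfluous, since $d'(0_{p_i},z)\leq 0$ follows directly from $\lip(Z)\leq 0$ for any $z\in Z(q_\eta)$.
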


\begin{proof}
We may assume that $Y$ is the zero vector field $\underline{0}$, up to replacing $X$ with $X-Y$ (which is still $0$-lipschitz).
Consider two distinct points $p,q\in\specialC$ and a point $r$ on the segment $[p,q]$.
Thinking of $[p,q]$ as the horizontal direction, consider a point $r'\in\nolinebreak\HH^2$ very close to~$r$ above $[p,q]$.
Let $x_{r'} \in X(r')$.
Since $d'(\underline{0}(p),x_{r'}) \leq 0$ and $d'(\underline{0}(q),x_{r'}) \leq 0$, the vector $x_{r'}$ must belong to a narrow angular sector around the vertical, downward direction (see Figure~\ref{fig:flat}).
In the limit as $r'$ approaches $r$ from above $[p,q]$, we find that $X(r)$ must contain a vector $x_0$ orthogonal to $[p,q]$ pointing (weakly) down.
By letting $r'$ approach $r$ from below, we find that $X(r)$ also contains a vector $x_1$ orthogonal to $[p,q]$ pointing (weakly) up.
Since $\underline{0}(r)$ is in the convex hull of $\{x_0, x_1\}$, we have $\underline{0}(r)\in X(r)$, hence $r\in\specialC$.
\end{proof}

\begin{figure}[h!]
{\centering
\def\svgwidth{9cm}
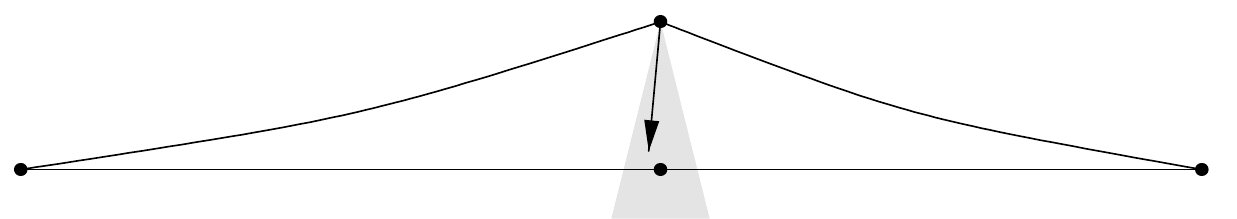}
\caption{If $d'_X(p,q)=0=\lip(X)$, then $X$ contains the restriction of a Killing field to $[p,q]$.}
\label{fig:flat}
\end{figure}

\subsection{Computing the Margulis invariant from an equivariant vector field}\label{subsec:computingMarginv}

The Margulis invariant ${\boldsymbol\alpha}_u$ and the map $d':T\HH^2\times T\HH^2\rightarrow\RR$ of Definition~\ref{def:lipX} both record rates of variation of hyperbolic lengths.
We now explain how one can be expressed in terms of the other via equivariant convex fields.
Fix a $(j,u)$-equivariant convex field $X$ on~$\HH^2$.
For $\gamma\in\Gamma$~with $j(\gamma)$ hyperbolic, choose a point $p$ on the oriented axis $\A_{j(\gamma)}\subset\HH^2$ of $j(\gamma)$ and a vector $x_p$ in the convex set $X(p)$.
Define also $x_{j(\gamma)\cdot p}:=j(\gamma)_*(X_p)+u(\gamma)(j(\gamma)\cdot p)$, which belongs to $X(j(\gamma)\cdot p)$ by equivariance of~$X$.
If $\pi_{\A_{j(\gamma)}}(x)$ denotes the $\A_{j(\gamma)}$-component of any vector $x\in T\HH^2$ based at a point of~$\A_{j(\gamma)}$, then \eqref{eqn:constant-component} implies
\begin{eqnarray}\label{eqn:lip-marg1}
d'(x_p, x_{j(\gamma)\cdot p}) & = & \pi_{\A_{j(\gamma)}}(x_{j(\gamma)\cdot p}) - \pi_{\A_{j(\gamma)}}(x_p)\\ 
& = & \pi_{\A_{j(\gamma)}}\big(u(\gamma)(j(\gamma)\cdot p)\big) = {\boldsymbol\alpha}_u(\gamma). \notag 
\end{eqnarray}
In particular,
\begin{equation}\label{eqn:lip-marg2}
\frac{{\boldsymbol\alpha}_u(\gamma)}{\lambda(j(\gamma))} \leq \lip(X).
\end{equation}
We now assume that $X$ is a smooth equivariant \emph{vector field}.
The function $\newf_X :\nolinebreak\A_{j(\gamma)}\rightarrow\nolinebreak\RR$ defined by $\newf_X(p)=\pi_{\A_{j(\gamma)}}(X(p))$ satisfies $\newf_X(j(\gamma)\cdot p)=\newf_X(p)+{\boldsymbol\alpha}_u(\gamma)$, hence the derivative $\newf_X'$ is periodic and descends to a \emph{scalar} function on the geodesic loop $c_{\gamma}$ representing the isotopy class of $\gamma$ on the hyperbolic orbifold $S=j(\Gamma)\backslash\HH^2$.
By construction, $d'_X(p,j(\gamma)\cdot p)$ is just the integral of $\newf_X'$ along $c_{\gamma}$ for the Lebesgue measure~$\D\mu_{\gamma}$.
Therefore,
\begin{equation}\label{eqn:muintegral}
{\boldsymbol\alpha}_u(\gamma) = \int_{c_{\gamma}} \newf_X' \,\D\mu_{\gamma}.
\end{equation}
This formula holds independently of the choice of the smooth equivariant vector field~$X$.

Moreover, we can generalize this process and extend ${\boldsymbol\alpha}_u$ to the space of geodesic currents.
This extension was described, in different terms, in \cite{lab01} and \cite{glm09}. 
First, the functions $\newf_X : \A_{j(\gamma)}\rightarrow\RR$ above, for $\gamma\in\Gamma$ with $j(\gamma)$ hyperbolic, piece together and extend to a smooth function on the unit tangent bundle $T^1\HH^2$ of~$\HH^2$, which we again denote by~$\newf_X$: it takes $y\in T^1_p\HH^2$ to $\langle X(p), y \rangle\in\RR$.
By construction, the derivative $\newf_X' : T^1\HH^2\rightarrow \RR$ of~$\newf_X$ along the geodesic flow satisfies
\begin{equation}\label{eqn:dintegral}
d'_X(p,q)=\int_{[p,q]} \newf'_X
\end{equation}
for any distinct $p,q\in \HH^2$, where the geodesic flow line $[p,q]\subset T^1\HH^2$ from $p$ to~$q$ is endowed with its natural Lebesgue measure.
In terms of the Levi-Civita connection~$\nabla$,
$$\newf_X'(y) = \langle\nabla_yX,y\rangle_{T_p \HH^2}$$
for any unit vector $y$ in the Euclidean plane $T_p\HH^2$.
Remarkably, the function $\newf'_X$ is $j(\Gamma)$-invariant, because $j(\gamma)_* X$ and $X$ differ only by a Killing field $u(\gamma)$, and Killing fields have constant component along any geodesic flow line.
Therefore $\newf_X'$ descends to the unit tangent bundle $T^1S$ of $S=j(\Gamma)\backslash\HH^2$, and \eqref{eqn:muintegral} can be rewritten in the form 
\begin{equation}\label{eqn:muintegral-currents}
{\boldsymbol\alpha}_u(\gamma)=\int_{T^1S} \newf_X' \,\D\mu_{\gamma},
\end{equation}
which extends to all geodesic currents $\D\mu$ on $T^1S$ since $\newf_X'$ is continuous.
Here is a useful consequence of this construction.

\begin{Proposition}\label{prop:longcurves}
Suppose there exists a $(j,u)$-equivariant convex field $Y$ that $k$-stretches a geodesic lamination $\mathscr{L}$ in the convex core of~$S$, in the sense that $d'_Y(p,q)=k\,d(p,q)$ for any distinct $p,q\in\HH^2$ on a common leaf of preimage $\tilde{\mathscr{L}}\subset\HH^2$ of~$\mathscr{L}$.
Then for any sequence $(\gamma_n)_{n\in\NN}$ of elements of~$\Gamma$ whose translation axes $\A_{j(\gamma_n)}$ converge to (a sublamination of) $\tilde{\mathscr{L}}$ in the Hausdorff topology,
$$\lim_{n\to +\infty} \frac{{\boldsymbol\alpha}_u(\gamma_n)}{\lambda(j(\gamma_n))} = k.$$
\end{Proposition}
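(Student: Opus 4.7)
My plan is to apply the integral formula~\eqref{eqn:muintegral-currents} with a carefully chosen smooth $(j,u)$-equivariant vector field~$X$ on~$\HH^2$. Any such~$X$ satisfies
$$\frac{\boldsymbol{\alpha}_u(\gamma_n)}{\lambda(j(\gamma_n))} \,=\, \frac{1}{\lambda(j(\gamma_n))}\int_{T^1 S}\newf_X'\,\D\mu_{\gamma_n},$$
which is the average of the continuous $j$-invariant function~$\newf_X'$ along the closed geodesic $c_{\gamma_n}\subset T^1 S$. I will construct an~$X$ for which $\newf_X'\equiv k$ on the tangent lift of~$\tilde{\mathscr{L}}$ in $T^1\HH^2$; the conclusion will then follow because Hausdorff convergence of oriented geodesics in~$\HH^2$ passes to oriented tangent lifts in~$T^1\HH^2$, so uniform continuity of~$\newf_X'$ on the compact unit tangent bundle~$T^1\mathcal{C}$ over the convex core $\mathcal{C}\subset S$ forces $\newf_X'\to k$ uniformly along the tangent lifts of~$c_{\gamma_n}$, and hence the averages to~$k$.

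The construction of~$X$ rests on a structural fact about~$Y$ along leaves. Fix an oriented leaf $\ell\subset\tilde{\mathscr{L}}$, parameterized by arc length with unit tangent~$\vec\ell$, and set $\pi_\ell(v):=\langle v,\vec\ell\rangle$. For $p<q$ on~$\ell$, Remark~\ref{rem:d'proj} gives $d'(y_p,y_q)=\pi_\ell(y_q)-\pi_\ell(y_p)$ for all $y_p\in Y(p),\,y_q\in Y(q)$, so the stretching hypothesis $d'_Y(p,q)=k(q-p)$ becomes
$$\sup\pi_\ell(Y(q))-\inf\pi_\ell(Y(p))\,=\,k(q-p).$$
Varying~$q$ (resp.~$p$) with~$p$ (resp.~$q$) fixed shows that both $q\mapsto\sup\pi_\ell(Y(q))$ and $p\mapsto\inf\pi_\ell(Y(p))$ are affine of slope~$k$ along~$\ell$; reinserting into the identity forces their additive constants to agree, so the leaf-projection of~$Y$ is in fact a single real value $\nu_\ell(p)$, affine of slope exactly~$k$ along~$\ell$.

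The resulting map $\nu$ is continuous on~$\tilde{\mathscr{L}}$ (a consequence of the upper semicontinuity of~$Y$) and satisfies $\nu_{j(\gamma)\cdot\ell}(j(\gamma)\cdot p)=\nu_\ell(p)+\pi_{j(\gamma)\cdot\ell}\bigl(u(\gamma)(j(\gamma)\cdot p)\bigr)$, inherited from the $(j,u)$-equivariance of~$Y$. Fix any smooth $(j,u)$-equivariant vector field~$X_0$ (existence by a standard partition-of-unity construction), with leaf-projection $\nu^0_\ell:=\pi_\ell(X_0)$; then $h:=\nu-\nu^0$ is $j$-invariant on $\tilde{\mathscr{L}}$ and descends to a continuous function on $\mathscr{L}\subset S$, which I extend by Tietze and mollification to a smooth function~$\bar h$ on~$S$. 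Choose next a smooth vector field~$V$ on~$S$ whose restriction to~$\mathscr{L}$ is the unit leafwise tangent (passing to an oriented double cover of~$S$ first if necessary), let~$Z$ be the $j$-invariant lift to~$\HH^2$ of $\bar h\cdot V$, and set $X:=X_0+Z$. This $X$ is smooth and $(j,u)$-equivariant, with $\pi_\ell(X(p))=\nu_\ell(p)$ for every $p\in\tilde{\mathscr{L}}$; since leaves are geodesics, $\newf_X'(\vec\ell(p))$ is nothing but the derivative of $\pi_\ell(X)=\nu_\ell$ along~$\ell$, which equals~$k$, as required.

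The main technical obstacle is precisely this construction of~$X$: one must globally extend the pointwise leaf data~$\nu$, defined only on~$\tilde{\mathscr{L}}$, to a smooth $(j,u)$-equivariant vector field on~$\HH^2$ whose leaf-projection on~$\tilde{\mathscr{L}}$ still matches~$\nu$ exactly. The structural lemma of the second paragraph does most of the work by pinning~$\nu$ down to a single continuous, leafwise affine function; with that in hand, the remaining task reduces to extending a continuous function and a unit tangent field from a closed subset of the compact convex core, handled by Tietze extension and standard partition-of-unity arguments.
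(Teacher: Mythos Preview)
Your approach differs from the paper's and carries a genuine gap in the smoothing step. You want a smooth $(j,u)$-equivariant $X$ with $\newf'_X\equiv k$ \emph{exactly} on the tangent lift of $\tilde{\mathscr{L}}$, and to achieve this you need both $\bar h$ and $V$ to be smooth \emph{and} to agree with prescribed continuous data on $\mathscr{L}$. But Tietze extension followed by mollification does not preserve values on the closed set $\mathscr{L}$: mollification averages over a neighborhood and generically perturbs $\bar h$ there. Likewise, the unit tangent field to a geodesic lamination, while continuous, is typically not the restriction of any smooth vector field on~$S$. Even if you relax to approximate equality $|\pi_\ell(X)-\nu_\ell|<\epsilon$ on leaves, this gives no control on the derivative $\newf'_X(\vec\ell)=\frac{d}{ds}\pi_\ell(X)$: a $C^0$-small perturbation of an affine function can have wildly varying slope, so the pointwise conclusion $\newf'_X=k$ on $\tilde{\mathscr{L}}$ evaporates.

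The paper sidesteps the construction entirely. It picks \emph{any} smooth $(j,u)$-equivariant $X$; then $Y-X$ is $j$-invariant, hence bounded on the convex core by convex cocompactness, so $|d'_X(p,q)-k\,d(p,q)|=|d'_{Y-X}(p,q)|$ is uniformly bounded for $p,q$ on a common leaf. Thus the average of $\newf'_X$ over a leaf segment of length $L$ is $k+O(L^{-1})$, and uniform continuity of $\newf'_X$ on $T^1S$ transfers this to the long closed geodesics $c_{\gamma_n}$ lying $C^1$-close to leaves. Your structural observation that $\pi_\ell(Y)$ is single-valued and leafwise affine of slope $k$ is correct and pleasant, but not needed: the paper's argument uses only the bounded discrepancy between $d'_X$ and $d'_Y$, not any exact matching on the lamination.
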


\begin{proof}
Let $X$ be any smooth, $(j,u)$-equivariant vector field.
Then $Y-X$ is $j$-invariant, hence bounded over the convex core by convex cocompactness.
Thus for any distinct $p,q\in\HH^2$ on a common leaf of~$\tilde{\mathscr{L}}$, the difference between $d'_Y(p,q)=k\,d(p,q)$ and $d'_{X}(p,q)$ is bounded.
Hence the average value of~$\newf'_{X}$ over a segment of length~$L$ of~$\mathscr{L}$ is $k+O(L^{-1})$.
On the other hand, $\newf'_X$ is a uniformly continuous function on $T^1S$.
Since the loops representing $\gamma_n$ lift to long segments $c_n\subset\HH^2$, of length $\lambda(j(\gamma_n))$, uniformly close to leaves of~$\tilde{\mathscr{L}}$, uniform continuity of~$\newf'_X$ implies ${\boldsymbol\alpha}_u(\gamma_n)=\int_{c_n}\newf'_X \sim k\,\lambda(j(\gamma_n))$.
\end{proof}

\subsection{A priori bounds inside the convex core}\label{subsec:corebounds}

Let $j\in\Hom(\Gamma,G)$ be convex cocompact and let $U\subset\HH^2$ be the preimage of the interior of the convex core of $j(\Gamma)\backslash\HH^2$.
Let $u : \Gamma\rightarrow\g$ be a $j$-cocycle.
In this section we prove the following.

\begin{Proposition}\label{prop:bounds}
For any compact subset $\specialC$ of~$U$ and any $k\in\RR$, there exists $R>0$ such that for any $k$-lipschitz, $(j,u)$-equivariant convex field~$X$, any vector $x \in X(\specialC)$ satisfies $\| x \| < R$. 
\end{Proposition}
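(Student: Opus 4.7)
The plan is to use the cocompact action of $j(\Gamma)$ on $U$ to reduce the statement to a uniform bound over a compact fundamental domain $\mathcal{F}\subset U$, and then to bound any $x\in X(p)$ at $p\in\mathcal{F}$ by applying the lipschitz condition to pairs $(p,\,j(\gamma)\cdot p)$ for a carefully chosen finite family of elements $\gamma\in\Gamma$.

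First, since $j(\Gamma)$ acts cocompactly on $\mathcal{C}_{j(\Gamma)}$, there is a compact set $\mathcal{F}\subset U$ such that $\specialC$ is covered by finitely many translates $j(\gamma_l)\cdot\mathcal{F}$, $\gamma_l\in\Gamma$. The $(j,u)$-equivariance
$$X(j(\gamma_l)\cdot p_0)=j(\gamma_l)_\ast X(p_0)+u(\gamma_l)(j(\gamma_l)\cdot p_0),$$
together with the fact that $j(\gamma_l)_\ast$ acts as a linear isometry on tangent spaces and $\|u(\gamma_l)(q)\|$ is bounded for $q\in\specialC$ and $\gamma_l$ in the finite cover, reduces the problem to a uniform bound on $\|x\|$ for $x\in X(p)$ with $p\in\mathcal{F}$.

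For $\gamma\in\Gamma\setminus\{1\}$ and $p\in\mathcal{F}$, set $q:=j(\gamma)\cdot p$ and let $v_p,v_q$ denote the unit tangents to the geodesic $(p,q)$ at $p$ and $q$ respectively, both oriented from $p$ toward $q$. Since $x_q := j(\gamma)_\ast x+u(\gamma)(q)\in X(q)$ by equivariance, the $k$-lipschitz inequality combined with Remark~\ref{rem:d'proj} yields
$$\langle x,\phi_\gamma(p)\rangle + \langle u(\gamma)(q),v_q\rangle \,=\, d'(x,x_q) \,\leq\, k\,d(p,q),$$
where $\phi_\gamma(p):=(j(\gamma)^{-1})_\ast v_q-v_p\in T_p\HH^2$. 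Since $d(p,q)$ and $\|u(\gamma)(q)\|$ are uniformly bounded in $p\in\mathcal{F}$ for fixed $\gamma$, this gives a \emph{one-sided} linear bound $\langle x,\phi_\gamma(p)\rangle \leq M_\gamma$ with $M_\gamma$ depending only on $\gamma$, $k$, $j$, $u$, and $\mathcal{F}$. A short computation shows that $\phi_\gamma(p)=0$ if and only if $p\in\A_{j(\gamma)}$ (when $p$ lies on the axis, the geodesic $(p,q)$ coincides with the axis and $j(\gamma)^{-1}$ fixes its direction), so otherwise $\phi_\gamma(p)\neq 0$.

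To conclude, it suffices to select finitely many $\gamma_1,\dots,\gamma_N\in\Gamma$ (with $j(\gamma_i)$ hyperbolic) such that the vectors $\{\phi_{\gamma_i}(p)\}_{i=1}^N$ have the origin of $T_p\HH^2$ in the interior of their convex hull with a uniform margin $\delta>0$ over $p\in\mathcal{F}$: namely $\max_i\langle y,\phi_{\gamma_i}(p)\rangle \geq \delta$ for every unit vector $y\in T_p\HH^2$. Given this, the one-sided bounds imply $\delta\|x\| \leq \max_i\langle x,\phi_{\gamma_i}(p)\rangle \leq \max_i M_{\gamma_i}$, yielding the required uniform bound $\|x\|<R$. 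The existence of such a family follows from $j(\Gamma)$ being non-elementary convex cocompact (hyperbolic elements have translation axes densely filling the convex core) together with a continuity-and-compactness argument: for each $p\in\mathcal{F}$ one can find three elements whose axes avoid $p$ and whose $\phi$'s surround the origin, then extract a finite subfamily covering $\mathcal{F}$ by continuity of $p\mapsto\phi_\gamma(p)$. The main technical obstacle is this last step, where one must verify quantitatively that a single finite family of $\gamma_i$ works uniformly over all of $\mathcal{F}$; handling the points of $\mathcal{F}$ that happen to lie on one (or even two) axes of the chosen $\gamma_i$ requires including enough additional elements with axes in general position.
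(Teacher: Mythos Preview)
Your proposal is correct and follows essentially the same strategy as the paper. Your vector $\phi_\gamma(p)=(j(\gamma)^{-1})_\ast v_q-v_p$ is precisely what the paper computes explicitly as $2\cos\theta\cdot\hat n$, where $\hat n$ is the unit normal to the axis $\A_{j(\gamma)}$ at $p$ pointing away from the axis and $\theta$ is the angle at $p$ between $-\hat n$ and the segment $[p,q]$; this identification immediately explains your observation that $\phi_\gamma(p)=0$ exactly on the axis, and more importantly makes your final ``surrounding'' step transparent: choosing three axes that enclose $p$ (possible because $p$ lies in the interior of the convex core) makes the three normals $\hat n_i$ positively span $T_p\HH^2$, with the angles $\vartheta_{ii'}$ between them bounded away from $\pi$ on a neighborhood of~$p$. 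The paper then writes down an explicit bound $\|x\|\leq \max_{i,i'}\frac{k(\lambda_i+2r_i)+Q}{\cos\theta_i\cos(\vartheta_{ii'}/2)}$ and observes it is locally uniform, which is exactly the continuity-compactness argument you invoke abstractly at the end. Your preliminary reduction to a fundamental domain is harmless but unnecessary: the paper works directly on~$\specialC$.
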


\begin{proof}
Consider $\gamma\in\Gamma$ such that $j(\gamma)$ is hyperbolic, with translation axis $\A=\A_{j(\gamma)}$.
Let $p\in\HH^2\smallsetminus\A$ and $q := j(\gamma)\cdot p$ be at distance $r>0$ from~$\A$.
Let $\hat n$ be the unit vector field pointing away from~$\A$ in the direction orthogonal to~$\A$, and $\theta\in (0,\frac{\pi}{2})$ the angle at $p$ (or $q$) between $-\hat{n}$ and the segment $[p,q]$ (see Figure~\ref{fig:propbounds}).
A classical formula gives $\tan \theta= \frac{\coth \lambda(j(\gamma))/2}{\sinh r}.$
We claim that for any $x\in T_p \HH^2$,
\begin{equation}\label{eqn:boundtheta}
d'(x,j(\gamma)_* x) = 2 \langle x, \hat n \rangle \cos \theta.
\end{equation}
Indeed, let $\hat a$ be the unit vector field along the segment $[p,q]$, oriented towards~$q$.
Let $\hat e$ be the unit vector field orthogonal to $\hat n$ such that at $p$, the vectors $\hat a$ and~$\hat e$ form an angle $\theta'=\frac{\pi}{2}-\theta$.
By symmetry, at~$q$, the vectors $\hat a$ and~$\hat e$ form an angle $-\theta'$.
Note that the fields $\hat e$ and~$\hat n$ are invariant under~$j(\gamma)$.
\begin{figure}[h]
{\centering
\def\svgwidth{6cm}
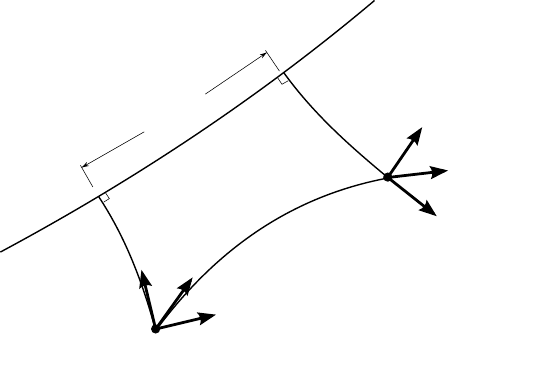}
\caption{Computing $d'(x,j(\gamma)_* x)$ in the proof of Proposition~\ref{prop:bounds}.}
\label{fig:propbounds}
\end{figure}
We have
\begin{align*}
\langle x, \hat a \rangle &= \langle x , \hat e \rangle \cos \theta' - \langle x, \hat n \rangle \sin \theta',\\
\langle j(\gamma)_*x, \hat a \rangle &= \langle x , \hat e \rangle \cos \theta' + \langle x, \hat n \rangle \sin \theta',
\end{align*}
hence $d'(x, j(\gamma)_* x) = 2 \langle x, \hat n \rangle \sin \theta'=2\langle x, \hat n \rangle \cos \theta$ (by Remark~\ref{rem:d'proj}), proving~\eqref{eqn:boundtheta}.

Now, given a point $p\in U$, we choose three elements $\gamma_1, \gamma_2, \gamma_3 \in \Gamma$ such that $j(\gamma_i)$ is hyperbolic and its translation axis $\A_i=\A_{j(\gamma_i)}$ does not contain~$p$, for each $1\leq i\leq 3$. 
Set $q_i=j(\gamma_i)\cdot p$ and $\lambda_i=\lambda(j(\gamma_i))$, and define $r_i, \theta_i, \hat{n}_i$ similarly to above. 
Since $p\in U$, we may choose the $\gamma_i$ so that the positive span of the $\hat n_i$ is all of $T_p \HH^2$, \ie the $\hat n_i$ are not all contained in a closed half-plane. Let $\vartheta_{ii'}\in (0,\pi)$ be the angle between~$\hat n_i$ and~$\hat n_{i'}$ (see Figure~\ref{fig:propbounds2}). 
Set $Q:=\max_{1\leq i \leq 3} \| u(\gamma_i)(q_i)\|$.
\begin{figure}[h!]
{\centering
\def\svgwidth{6cm}
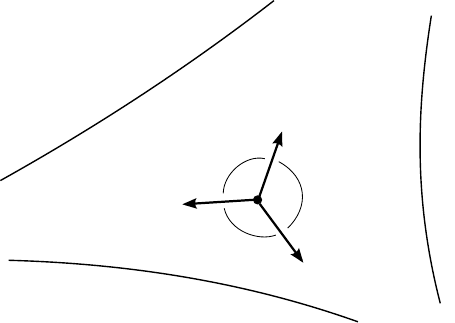}
\caption{Proof of Proposition~\ref{prop:bounds}: the angles $\vartheta_{ii'}$ are bounded away from $\pi$ for $p$ lying in a compact subset of the interior of the convex core.}
\label{fig:propbounds2}
\end{figure}

Now consider a $(j,u)$-equivariant convex field $X$ and a vector $x\in X(p)$.
For $1\leq i\leq 3$, the vector $\gamma_i \bullet x  = j(\gamma_i)_* x + u(\gamma_i)(q_i) \in T_{q_i} \HH^2$ also belongs to~$X$.
By \eqref{eqn:boundtheta},
$$d'(x, \gamma_i \bullet x) \geq 2 \langle x, \hat n_i \rangle \cos \theta_i  - Q.$$ 
However, $d(p, q_i) \leq \lambda_i + 2 r_i$.
If $X$ is $k$-lipschitz, it follows that 
$$2 \langle x, \hat n_i \rangle \leq \frac{k (\lambda_i + 2 r_i) + Q}{\cos \theta_i}.$$
Now, for some $i,i'$ the vector $x$ makes an angle $\leq \vartheta_{ii'}/2$ with $\hat n_i$.
Thus 
$$\| x \| \leq \max_{i,i'} \frac{k (\lambda_i + 2 r_i) + Q}{\cos \theta_i \cos\!\big(\frac{\vartheta_{ii'}}{2}\big)}\,.$$
Since $\theta_i=\arctan \frac{\coth\lambda_i/2}{\sinh r_i}$ is bounded away from $\frac{\pi}{2}$ when $r_i$ is bounded away from~$0$, and $Q$ is bounded by a continuous function of $p\in\specialC$, this gives a uniform bound in an open neighborhood of~$p$ where $r_i$ is bounded away from~$0$ and $\vartheta_{ii'}$ is bounded away from~$\pi$.
\end{proof}

\subsection{Standard fields in the funnels}\label{subsec:standard-funnels}

We now focus on the exterior of the convex core, namely on the so-called \emph{funnels} of the hyperbolic surface (or orbifold) $j(\Gamma)\backslash\HH^2$.
We define explicit vector fields in the funnels which will be used in Section~\ref{sec:prooflamin} to extend a $k$-lipschitz convex field on the interior of the convex core to the entire surface. 

We work explicitly with Fermi coordinates.
Let $\A$ be an oriented geodesic line in~$\HH^2$ and $p$ a point on~$\A$.
For $q \in \HH^2$, let $p'$ be the point of $\A$ closest to~$q$; we define $\xi(q)\in\RR$ and $\eta(q)\in\RR$ to be the signed distance from $p$ to~$p'$ and from $p'$ to~$q$, respectively. 
The numbers $\xi(q)$ and~$\eta(q)$ are called the \emph{Fermi coordinates} of~$q$ with respect to $(\A,p)$.
Note that the Fermi coordinate map $F : \RR^2\overset{\scriptscriptstyle\sim}{\rightarrow}\HH^2$ sends $\RR \times \{0\}$ isometrically to~$\A$, and $\{\xi\} \times \RR$ isometrically to the geodesic line orthogonal to $\A$ at $F(\xi,0)$.
The curves $F(\RR \times\{ \eta\})$, which lie at constant (signed) distance $\eta$ from~$\A$, are called \emph{hypercycles}.

For $k,r\in\RR$, we define the \emph{$(k,r)$-standard} vector field with respect to $(p,\A)$ by 
\begin{equation}\label{eqn:standardfield}
X^k_r :\ F(\xi,\eta) \,\longmapsto\, k\xi\frac{\partial F}{\partial \xi}+ r\eta \frac{\partial F}{\partial \eta}.
\end{equation}
It is a smooth vector field on~$\HH^2$.

\begin{Proposition}\label{prop:standard-lipschitz}
Suppose that $r < \min(k, 0)$.
Then the vector field $X^k_r$ is $k$-lipschitz.
Further, for any $\eta_0 > 0$ there exists $\varepsilon > 0$ such that at any $p \in \HH^2$ with $d(p,\A) \geq \eta_0$, the local lipschitz constant satisfies $\lip_p(X^k_r) \leq k - \varepsilon$.
In particular, $d'_{X^k_r}(p,q)<k\,d(p,q)$ for all distinct $p,q\in F(\RR\times\RR_-^{\ast})$.
\end{Proposition}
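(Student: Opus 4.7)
The plan is to use the local formula \eqref{eqn:lipsmooth}, namely $\lip_p(X^k_r) = \sup_{y \in T^1_p \HH^2}\langle \nabla_y X^k_r, y\rangle$, and carry out the computation directly in Fermi coordinates. Recall that in Fermi coordinates the hyperbolic metric reads $g = \cosh^2(\eta)\,\D\xi^2 + \D\eta^2$, and the only nonzero Christoffel symbols are $\Gamma^\xi_{\xi\eta}=\Gamma^\xi_{\eta\xi} = \tanh\eta$ and $\Gamma^\eta_{\xi\xi} = -\sinh\eta\cosh\eta$. A straightforward application of $\nabla$ to $X^k_r = k\xi\,\partial_\xi + r\eta\,\partial_\eta$ yields
\begin{align*}
\nabla_{\partial_\xi} X^k_r &= (k+r\eta\tanh\eta)\,\partial_\xi - k\xi\sinh\eta\cosh\eta\,\partial_\eta,\\
\nabla_{\partial_\eta} X^k_r &= k\xi\tanh\eta\,\partial_\xi + r\,\partial_\eta.
\end{align*}

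The key step is then to evaluate $\langle \nabla_y X^k_r, y\rangle$ for a unit tangent vector $y = a\,e_1 + b\,e_2$, expressed in the orthonormal frame $e_1 = (\cosh\eta)^{-1}\partial_\xi$, $e_2 = \partial_\eta$ with $a^2+b^2=1$. The cross terms (proportional to $abk\xi\sinh\eta$) cancel, and one is left with the clean identity
$$\langle \nabla_y X^k_r,y\rangle \,=\, a^2\bigl(k + r\eta\tanh\eta\bigr) + b^2\,r.$$
This cancellation is the only subtle point in the computation, and the reason the proposition is true. Since the right-hand side is a convex combination, formula \eqref{eqn:lipsmooth} gives
$$\lip_p(X^k_r) \,=\, \max\bigl(k + r\eta\tanh\eta,\, r\bigr).$$

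From here the three claims follow at once. Because $r<0$, the quantity $\eta\tanh\eta \geq 0$ forces $k+r\eta\tanh\eta \leq k$; because $r<k$, the second term is also $<k$. Hence $\lip_p(X^k_r)\leq k$ everywhere, and by Observation~\ref{obs:donkey}.(v) applied to the convex set $A=\HH^2$ we conclude $\lip(X^k_r)\leq k$. For the second statement, when $|\eta|\geq \eta_0$ one has $\eta\tanh\eta\geq\eta_0\tanh\eta_0>0$, so $k + r\eta\tanh\eta \leq k - |r|\eta_0\tanh\eta_0$; setting $\varepsilon := \min\bigl(|r|\eta_0\tanh\eta_0,\ k-r\bigr) > 0$ gives $\lip_p(X^k_r)\leq k-\varepsilon$ uniformly on $\{|\eta|\geq \eta_0\}$.

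For the final assertion, observe that the open half-plane $F(\RR\times\RR_-^*)$ bounded by the geodesic $\A$ is geodesically convex in $\HH^2$, so for any distinct $p,q$ in it the entire segment $[p,q]$ lies in $\{\eta<0\}$, along which $\eta\tanh\eta>0$ and therefore $\langle\nabla_{\cdot}X^k_r,\cdot\rangle < k$ pointwise. By compactness of $[p,q]$ and continuity, this local quantity is uniformly $\leq k-\delta$ for some $\delta=\delta(p,q)>0$, and integrating via \eqref{eqn:dintegral} yields $d'_{X^k_r}(p,q) < k\,d(p,q)$. The main obstacle here is simply noticing and executing the cancellation that produces the diagonal formula; once that is in hand, everything else is a direct consequence of $r<\min(k,0)$ and convexity.
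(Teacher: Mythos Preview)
Your proof is correct and follows essentially the same approach as the paper: both compute $\langle\nabla_y X^k_r, y\rangle$ directly in Fermi coordinates, obtain the same diagonal expression (your orthonormal-frame formula $a^2(k+r\eta\tanh\eta)+b^2r$ is just the paper's \eqref{eqn:nablaXkr} normalized), and then read off the conclusions from \eqref{eqn:lipsmooth} together with the convexity/subdivision observations. The only cosmetic differences are that you spell out the Christoffel symbols and use \eqref{eqn:dintegral} rather than Observation~\ref{obs:donkey}.(v) for the strict inequality on $F(\RR\times\RR_-^*)$.
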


\begin{proof}
For any tangent vector $y = a \frac{\partial F}{\partial \eta} + b \frac{\partial F}{\partial \xi} \in T_{F(\xi,\eta)}$, where $a,b,\xi,\eta\in\RR$, direct computation yields
\begin{equation}\label{eqn:nablaXkr}
\langle \nabla_y X^k_r, y \rangle = r a^2  + k b^2 \, \cosh^2\eta + r b^2 \eta \, \sinh\eta \,\cosh\eta.
\end{equation}
In particular, $\langle\nabla_y X^k_r,y\rangle\leq k (a^2 + b^2 \cosh^2 \eta)=k\| y\|^2$, hence by \eqref{eqn:lipsmooth} we have $\lip_{F(\xi,\eta)}(X^k_r)\leq\nolinebreak k$, which implies that $\lip(X^k_r)\leq k$ (Observation~\ref{obs:donkey}).
If~we assume that $|\eta|\geq\eta_0$, then~\eqref{eqn:nablaXkr} gives the more precise estimate $\langle \nabla_y X^k_r, y \rangle < \max\{ r, k+r\eta_0\tanh\eta_0\}\,\| y\|^2$, hence $\lip_{F(\xi,\eta)}(X^k_r)$ is uniformly bounded away from~$k$ by \eqref{eqn:lipsmooth}.
To deduce that $d'_{X^k_r}(p,q)<k\,d(p,q)$ for all $p,q\in F(\RR\times\RR_-^{\ast})$, we use Observation~\ref{obs:donkey}.(v).
\end{proof}

Now let $j: \Gamma \to G$ be convex cocompact and let $u: \Gamma \to \g$ be a $j$-cocycle.
For $\gamma \in \Gamma$ with $j(\gamma)$ hyperbolic, let $F : \RR^2 \to \HH^2$ be a Fermi coordinate map with respect to the translation axis $\A_{j(\gamma)}$ and let $X^k_r$ be the standard vector field given by \eqref{eqn:standardfield}.
If $u(\gamma)$ is an infinitesimal translation along~$\A_{j(\gamma)}$ (which we can always assume after adjusting $u$ by a coboundary), then $X^k_r$ is $(j|_{\langle\gamma\rangle}, u|_{\langle\gamma\rangle})$-equivariant if and only if $k = k_{\gamma} := {{\boldsymbol\alpha}_u(\gamma)}/{\lambda(j(\gamma))}$.
In the case that $\gamma$ is a peripheral element, we orient $\A_{j(\gamma)}$ so that $F(\RR \times \RR^*_-)$ is a component of the complement of the convex core; this region covers a funnel of the quotient $j(\Gamma)\backslash\HH^2$.

\begin{Definition}\label{def:standard-funnels}
\begin{itemize}
  \item For peripheral~$\gamma$, we say that a $(j|_{\langle\gamma\rangle},u|_{\langle\gamma\rangle})$-equi\-variant convex field $X$ on~$\HH^2$ is \emph{standard} in the funnel $F(\RR \times \RR^*_-)$ if there exists $\eta<0$ such that $X$ coincides on $F(\RR \times (-\infty,\eta))$, up to addition of a Killing field, with a vector field of the form~$X^k_r$.
  \item We say that a $(j,u)$-equivariant convex field $X$ on~$\HH^2$ is \emph{standard in the funnels} if it is standard in every funnel.
\end{itemize}
\end{Definition}

The following proposition, in combination with the lipschitz extension theory of Section~\ref{sec:extlipschitz}, will be used in Section~\ref{sec:prooflamin} to extend $k$-lipschitz convex fields defined on the interior of the convex core to $k$-lipschitz convex fields defined over all of~$\HH^2$.

\begin{Proposition}\label{prop:fixup-boundary}
For $\gamma \in \Gamma$ with $j(\gamma)$ hyperbolic, let $X$ be a locally bounded, $(j|_{\langle\gamma\rangle}, u|_{\langle\gamma\rangle})$-equivariant convex field defined over a $j(\langle\gamma\rangle)$-invariant subset $\Omega\neq\emptyset$ of~$\HH^2$. 
\begin{enumerate}
  \item We have $\lip(X)\geq k_{\gamma} :={{\boldsymbol\alpha}_u(\gamma)}/{\lambda(j(\gamma))}$.
  \item Suppose $\Omega$ is a hypercycle $F(\RR \times \{\eta_0\})$ with $\eta_0>0$, and let $N:= F(\RR \times (-\infty, \eta_1])$ for some $\eta_1 < 0$.
  There is an extension $Y$ of $X$ to $\Omega \cup N$ such that
  \begin{itemize}
    \item $Y$ is standard on~$N$ (in particular, $\lip_N(Y)\leq k_{\gamma}$),
    \item $\lip(Y)=\lip(X)$,
    \item $d'_Y(p,q) <k_{\gamma} \,d(p,q)$ for all $p\in N$ and $q\in\Omega$.
  \end{itemize}
\end{enumerate}
\end{Proposition}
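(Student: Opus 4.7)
For (1), the idea is to iterate $j(\gamma)$ and pass to the limit. After adjusting $u$ by a coboundary we may assume $u(\gamma)$ is the infinitesimal translation along $\A := \A_{j(\gamma)}$ with speed ${\boldsymbol\alpha}_u(\gamma)$, i.e.\ $\Ad(j(\gamma))\,u(\gamma)=u(\gamma)$; the cocycle identity then gives $u(\gamma^n) = n\,u(\gamma)$ for every $n\in\NN$. Pick any $p \in \Omega$ and $x_p \in X(p)$; by $(j|_{\langle\gamma\rangle}, u|_{\langle\gamma\rangle})$-equivariance the vector
$$x_{p_n} \,:=\, j(\gamma^n)_* x_p + n\,u(\gamma)(p_n)$$
lies in $X(p_n)$, where $p_n := j(\gamma^n)\cdot p$. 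By Remark~\ref{rem:d'proj}, $d'(x_p, x_{p_n})$ is the difference of signed projections onto the oriented line $\ell_n := (p, p_n)$. The projection of the Killing field $u(\gamma)$ onto $\ell_n$ is constant along $\ell_n$ (Section~\ref{subsec:margulis-invariant}), and that constant converges to ${\boldsymbol\alpha}_u(\gamma)$ as $n\to\infty$ because $\ell_n$ asymptotes to $\A$ near the attracting endpoint of $j(\gamma)$. The remaining contributions from $x_p$ and $j(\gamma^n)_* x_p$ are bounded by $\|x_p\|$; since $d(p, p_n) = n\,\lambda(j(\gamma)) + O(1)$, dividing and letting $n\to\infty$ yields $\lip(X) \geq k_\gamma$.

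For (2), I would define $Y := X$ on $\Omega$ and $Y := X^{k_\gamma}_r$ on $N$, with $r < \min(k_\gamma,0)$ to be chosen sufficiently negative. With the normalization of $u(\gamma)$ above, $X^{k_\gamma}_r$ is $(j|_{\langle\gamma\rangle}, u|_{\langle\gamma\rangle})$-equivariant on $N$ (as recalled before Definition~\ref{def:standard-funnels}), so $Y$ is equivariant on $\Omega\cup N$. Proposition~\ref{prop:standard-lipschitz} gives $\lip_N(Y) \leq k_\gamma$, and $\lip_\Omega(Y) = \lip(X)$ is automatic; combined with part~(1) both are $\leq \lip(X)$. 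Since $\Omega$ and $N$ are disjoint, $\lip(Y)$ is the maximum of $\lip_\Omega(Y)$, $\lip_N(Y)$, and the cross term $\sup_{p\in N,q\in\Omega} d'(Y(p),X(q))/d(p,q)$, so the only remaining check is the strict cross bound $d'(Y(p), X(q)) < k_\gamma\,d(p, q)$ for $p\in N$, $q\in\Omega$ (which also gives $\lip(Y)\leq\lip(X)$).

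By equivariance the cross condition reduces to $p \in D_N := F([0,\lambda(j(\gamma))]\times(-\infty,\eta_1])$ and $q$ in the compact fundamental domain $D_\Omega := F([0,\lambda(j(\gamma))]\times\{\eta_0\})$; local boundedness of $X$ on $D_\Omega$ gives a uniform bound $\|X(q)\| \leq M$. At $p = F(\xi, \eta) \in D_N$ the vector $Y(p) = k_\gamma\xi\,\partial_\xi + r\eta\,\partial_\eta$ has $\partial_\eta$-component $r\eta$ of magnitude $\geq |r||\eta_1|$ pointing upward toward $\A$ (since $r<0$ and $\eta<0$), and the geodesic $[p,q]$ crosses $\A$ from below to above, so its tangent at $p$ makes an acute angle with $\partial_\eta$ bounded away from $\pi/2$ uniformly. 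By Remark~\ref{rem:d'proj},
$$d'(Y(p), X(q)) \,=\, \pi_{(p,q)}(X(q)) - \pi_{(p,q)}(Y(p)),$$
where the second term is positive and grows linearly in $|r|$, and the first is bounded in absolute value by $M$; hence for $|r|$ large, $d'(Y(p), X(q)) < k_\gamma\,d(p,q)$ uniformly on $N\times\Omega$. The main obstacle is the uniform control as $\eta\to-\infty$ within $D_N$: in that asymptotic regime both $\pi_{(p,q)}(Y(p))$ and $d(p,q)$ grow linearly in $|\eta|$, so one needs a quantitative hyperbolic-geometric estimate showing that $\pi_{(p,q)}(Y(p))/d(p,q)$ can be made arbitrarily large by choosing $|r|$ large, independently of $q\in D_\Omega$. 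Once this estimate is in hand, all three required properties of $Y$ follow.
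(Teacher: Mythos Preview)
Your argument for part~(1) is correct and close in spirit to the paper's: both iterate $j(\gamma)$ and pass to the limit along the orbit of a point. The paper packages this slightly differently, comparing $X$ to a standard field $X^{k_\gamma}_r$ and using that $X-X^{k_\gamma}_r$ is $j$-invariant hence globally bounded on~$\Omega$, but your direct computation (using that the constant component of $u(\gamma)$ along $\ell_n$ tends to ${\boldsymbol\alpha}_u(\gamma)$ because $\langle c_{\A}^0, c_{\ell_\infty}^0\rangle=1$ for asymptotic geodesics) works just as well.

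Part~(2), however, has a real gap. Your sentence ``by equivariance the cross condition reduces to $p\in D_N$ and $q\in D_\Omega$'' is false: the diagonal action of $j(\langle\gamma\rangle)$ on $N\times\Omega$ lets you bring \emph{one} of $p,q$ into a fundamental domain, not both. If you reduce $q$ to $D_\Omega$ (so that $\|X(q)\|\leq M$), then $p$ still ranges over all of~$N$, in particular with $\xi(p)$ unbounded, and $\|Y(p)\|$ blows up; if instead you reduce $p$ to $D_N$, then $q$ ranges over all of~$\Omega$ and $\|X(q)\|$ blows up (since $X$ is only equivariant, not invariant). Either way your bound $\pi_{(p,q)}(X(q))\leq M$ fails, and the argument collapses. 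You also leave the $\eta\to-\infty$ asymptotics as an unresolved ``obstacle'', so even on its own terms the proof is incomplete.

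The paper's device for~(2) is exactly the comparison you used implicitly in~(1): write $X(q)=X^{k_\gamma}_r(q)+\big(X(q)-X^{k_\gamma}_r(q)\big)$ where the second term is $j$-invariant and hence bounded by some $b$ on \emph{all} of~$\Omega$. Then set $Y=X^{k_\gamma}_R$ on~$N$ with $R\ll r$, and split
\[
d'(Y(p),x)=d'_{X^{k_\gamma}_r}(p,q)+d'\big(\underline{0}(p),x-X^{k_\gamma}_r(q)\big)+d'\big((R-r)\eta(p)\,\partial_\eta,\underline{0}(q)\big).
\]
The first term is $\leq k_\gamma\,d(p,q)$ globally by Proposition~\ref{prop:standard-lipschitz}; the second is $\leq b$; the third is $\leq -(r-R)\,|\eta(p)|\cos\theta\leq -(r-R)\,|\eta_1|\tanh|\eta_1|$, using the elementary bound $\cos\theta\geq\tanh|\eta_1|$ for any geodesic from $p\in N$ crossing~$\A$. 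Choosing $r-R>b/(|\eta_1|\tanh|\eta_1|)$ makes the sum strictly $<k_\gamma\,d(p,q)$ for all $p\in N$, $q\in\Omega$ simultaneously---no reduction to a fundamental domain is needed. This is the missing idea in your sketch.
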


Note that unlike in Section~\ref{subsec:computingMarginv}, here we do not assume $X$ to be defined over $\A_{j(\gamma)}$, hence Proposition~\ref{prop:fixup-boundary}.(1) does not follow directly from \eqref{eqn:lip-marg1}.

\begin{proof}
We first prove~(1).
Without loss of generality, we may assume that $\Omega$ is compact modulo $j(\langle\gamma\rangle)$.
Up to adjusting $u$ by a coboundary, we may assume that $u(\gamma)$ is an infinitesimal translation along the axis~$\A_{j(\gamma)}$.
Set $k := k_{\gamma}$ and fix $r<\min(0,k)$.
The convex field $X-X^k_r$ is $j|_{\langle\gamma\rangle}$-invariant and locally bounded, hence globally bounded on~$\Omega$: there exists $b>0$ such that $\| (X - X^k_r)(\Omega)\| < b$ (notation \eqref{eqn:normconvexfield}).
For $p,q\in\Omega$,
$$\frac{d'_X(p,q)}{d(p,q)} \geq \frac{d'_{X^k_r}(p,q)}{d(p,q)} - \frac{2b}{d(p,q)}.$$
However, for points $p,q \in \Omega$ further and further apart on fixed hypercycles, the ratio $d'_{X^k_r}(p,q)/d(p,q)$ limits to~$k$.
Indeed, if $p$ and~$q$ both belong to $\Omega\cap\A_{j(\gamma)}$, then $d'_{X^k_r}(p,q) = k\,d(p,q)$.
Otherwise, note that if $p$ and~$q$ are very far apart, the segment $[p,q]$ spends most of its length close to the axis~$\A_{j(\gamma)}$; we can then conclude using \eqref{eqn:dintegral} and the uniform continuity of $\newf'_{X^k_r}$ near~$\A_{j(\gamma)}$.
As a consequence, $\lip(X)\geq k$, proving~(1).

For~(2), choose $R< r - \frac{b}{|\eta_1 \tanh \eta_1|}$ and define $Y$ to be $X$ on~$\Omega$ and $X^{k}_{R}$ on~$N$.
Then $Y$ is $k$-lipschitz on~$N$ by Proposition~\ref{prop:standard-lipschitz}.
Thus we only need to check that $d'_Y(p,q) <k\,d(p,q)$ for all $p\in N$ and $q\in\Omega$.
Let $\theta \leq \pi/2$ be the angle at~$p$ between $[p,q]$ and $\frac{\partial F}{\partial \eta}(p)$.
Then $\cos \theta \geq \tanh |\eta_1|$ by a standard trigonometric formula.
In particular, $d'\left(\frac{\partial F}{\partial \eta}(p),\underline{0}(q)\right)\leq - \tanh |\eta_1|$.
Then for $x \in X(q)$,
\begin{eqnarray*}
 d'(Y(p),x) &=& d'(X^{k}_{R}(p),x) \\
& = & d'\left( X^{k}_{r}(p)+(R-r)\,\eta(p)\,\dfrac{\partial F}{\partial \eta}(p)\:,\: X^{k}_{r}(q)+x-X^{k}_{r}(q) \right) \\
& = & d'\big (X^{k}_{r}(p),X^{k}_{r}(q)\big )~+~d'\big (\underline{0}(p),x-X^{k}_{r}(q)\big ) \\
& & +\ d'\left( (R-r)\,\eta(p)\,\frac{\partial F}{\partial \eta}(p),\underline{0}(q)\right) \\
& \leq & k \ d(p,q) + b + (R-r) \eta_1 (-\tanh |\eta_1|),
\end{eqnarray*}
which is $<k\,d(p,q)$ by choice of~$R$. 
\end{proof}

\section{Extension of lipschitz convex fields}\label{sec:extlipschitz}

To produce the lipschitz convex fields promised in Theorem~\ref{thm:laminations}, we will need to extend lipschitz convex fields that are only defined on part of~$\HH^2$.
This falls into the subject of Lipschitz extension, a topic initiated by Kirsz\-braun's theorem \cite{kir34} to the effect that a partially-defined, $K$-Lipschitz map from a Euclidean space to another always extends (with the same Lipschitz constant~$K$) to the whole space.
An analogue of Kirszbraun's theorem in~$\HH^n$ (when $K\geq 1$) was proved by Valentine \cite{val44}.
We need a generalization of this in several directions:
\begin{itemize}
  \item at the infinitesimal level ($k$-lipschitz fields, not $K$-Lipschitz maps);
  \item with local control, \ie information on which pairs of points achieve the lipschitz constant (eventually, pairs belonging to a leaf of some lamination for $k\geq 0$);
  \item in an equivariant context.
\end{itemize}
Negative curvature is responsible for the sharp divide taking place at $K=1$ (resp.\ $k=0$).
The ``macroscopic case'' of maps from $\HH^n$ to~$\HH^n$, in an equivariant context and with a local control of the Lipschitz constant, was treated in \cite{gk12}, refining \cite{kasPhD}.
For context, we quote:

\begin{Theoremwithref}[{\cite[Th.\,1.6 \& 5.1]{gk12}}]\label{thm:macro-kirszbraun}
Let $\Gamma$ be a discrete group and $j,\rho : \Gamma\rightarrow\mathrm{Isom}(\HH^n)$ two representations with $j$ convex cocompact.
Suppose $\rho(\Gamma)$ does \emph{not} have a unique fixed point in $\partial_{\infty}\HH^n$. 
Let $\specialC\neq\emptyset$ be a $j(\Gamma)$-invariant cocompact subset of~$\HH^n$ and $\varphi : \specialC\rightarrow \HH^n$ a $(j,\rho)$-equivariant Lipschitz map with Lipschitz constant~$K$.
Then there exists an equivariant extension $f :\nolinebreak\HH^n\rightarrow\HH^n$ of~$\varphi$ with
$$\left \{ \begin{array}{lll}  \Lip(f) < 1 & \text{if } K<1,\\
\Lip(f) = K & \text{if } K\geq 1.
  \end{array} \right .$$
Moreover, if $K>1$ (resp.\ $K=1$), then the \emph{relative stretch locus} $E_{\specialC,\varphi}(j,\rho)$~is nonempty, contained in the convex hull of $\specialC$, and is (resp.\ contains) the union of the stretch locus of~$\varphi$ and of the closure of a geodesic lamination $\tilde{\mathscr{L}}$ of $\HH^n\smallsetminus\specialC$ that is maximally stretched by any $K$-Lipschitz $(j,\rho)$-equivariant extension $f : \HH^n\rightarrow\HH^n$ of~$\varphi$.
\end{Theoremwithref}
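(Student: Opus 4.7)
The plan is to prove existence first, then analyze the stretch locus. For existence, I would rely on a Valentine--Kirszbraun type pointwise extension theorem in~$\HH^n$: given a partially-defined $K$-Lipschitz map with $K\geq 1$, one can always extend it to a single new point while preserving $K$. The classical proof uses the fact that in $\HH^n$ (with $K\geq 1$), any finite collection of closed balls of radii $K r_i$ centered at $\varphi(p_i)$ has nonempty intersection whenever the balls of radii $r_i$ centered at the $p_i$ do --- a statement about expansion of configurations under $K$-magnification in negative curvature. Iterating this (with transfinite induction or Zorn's lemma), one extends $\varphi$ to all of~$\HH^n$. To make the extension equivariant, I would work over a compact fundamental domain~$D$ for the $j(\Gamma)$-action on the convex hull of~$\specialC$, extend on $D\smallsetminus\specialC$ using the pointwise technique at each stage in a way that is compatible with the gluing pattern along $\partial D$, and then push forward by the $j$-action to cover all of~$\HH^n$. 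Outside the convex hull, one can simply define $f$ by nearest-point projection composed with $\varphi$, which adds no stretch.

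For the contracting case $K<1$, I would first produce a $K$-Lipschitz equivariant extension as above, and then strictly contract it. The hypothesis that $\rho(\Gamma)$ has no unique fixed point in $\partial_\infty\HH^n$ is essential here: if $\rho(\Gamma)$ fixes a point $p_0\in\HH^n$ or preserves a geodesic/horoball, one can average $f$ toward $p_0$ (or project toward the fixed subspace) via a $\rho$-equivariant contraction of the target, gaining a uniform slack. When $\rho(\Gamma)$ has no global fixed point in $\overline{\HH^n}$ at all, the averaging of $f$ against a well-chosen $\rho$-equivariant measurable family of "target centroids" (built from the bounded orbit structure on the minimal invariant set of $\rho(\Gamma)$) decreases the Lipschitz constant strictly.

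For the stretch locus, set $K^\ast := \inf\{\Lip(f)\}$ over equivariant extensions. Take a sequence $f_n$ of extensions with $\Lip(f_n)\to K^\ast$; these are equi-Lipschitz and equi-bounded over a fundamental domain~$D$ (since they all agree with $\varphi$ on $\specialC\cap D$), hence equicontinuous there. Arzel\`a--Ascoli produces a subsequential limit $f$ with $\Lip(f)=K^\ast$, realizing the infimum. Now define the relative stretch locus $E_{\specialC,\varphi}(j,\rho)$ as the union over all $K^\ast$-Lipschitz equivariant extensions of the set of points through which some maximally stretched geodesic segment passes. By convexity of the hyperbolic distance function along geodesics and the equality case in the Lipschitz inequality, if $d(f(p),f(q)) = K^\ast d(p,q)$ with $K^\ast \geq 1$, then $f$ must send the entire geodesic segment $[p,q]$ isometrically (up to the factor $K^\ast$) onto the geodesic between $f(p)$ and $f(q)$; this rigidity is where negative curvature cuts sharply at $K=1$ and forces the stretched set to be a union of geodesics. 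Disjointness of the maximally stretched geodesics through distinct points (since two $K^\ast$-stretched geodesics meeting at an angle would force $f$ to locally exceed $K^\ast$, contradiction via the hinge inequality in $\HH^n$) shows that the part of $E_{\specialC,\varphi}(j,\rho)$ lying in $\HH^n\smallsetminus\specialC$ is foliated by disjoint geodesics, \emph{i.e.}\ is a geodesic lamination~$\tilde{\mathscr{L}}$.

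The main obstacle is the combination of two issues: (a) promoting the raw pointwise Kirszbraun--Valentine extension into an equivariant one while controlling the Lipschitz constant; and (b), more delicately, establishing the strict inequality $\Lip(f)<1$ when $K<1$, which without the hypothesis on $\rho$ could fail (for example, a Fuchsian $\rho$ conjugate to $j$ would force $\Lip \geq 1$ by Thurston's work cited in the introduction). The equivariance issue can be handled by working modulo $j(\Gamma)$ on the convex hull of $\specialC$, where convex cocompactness provides a compact quotient and thus finite combinatorial complexity; the strict contraction issue requires a careful case analysis on the dynamical behavior of $\rho(\Gamma)$ in $\overline{\HH^n}$.
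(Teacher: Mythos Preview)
This theorem is not proved in the present paper. It is quoted verbatim from \cite{gk12} (as the label ``Theorem~[Th.\,1.6 \& 5.1]{gk12}'' indicates), purely for context before the paper develops its infinitesimal analogue. The paper's contribution is the \emph{microscopic} version --- Theorem~\ref{thm:compact-extension}, Proposition~\ref{prop:strict-extension}, and Proposition~\ref{prop:equivariant-kirszbraun} --- where Lipschitz maps are replaced by lipschitz convex fields. So there is no proof in this paper to compare your proposal against.

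That said, a few remarks on your sketch, which is broadly in the right spirit but has real gaps. Your treatment of equivariance (``extend on $D\smallsetminus\specialC$ \ldots\ in a way that is compatible with the gluing pattern along $\partial D$'') is where most of the difficulty hides: the pointwise Kirszbraun--Valentine step gives no canonical choice, so making consecutive extensions respect the group action is not automatic and needs a genuine argument (averaging over finite stabilizers, or a more careful construction). Your handling of the $K<1$ case is also too loose: the averaging-toward-a-centroid idea does not obviously produce a \emph{strictly} smaller Lipschitz constant unless you can exhibit uniform slack, and the case analysis on $\rho(\Gamma)$ you allude to is the substantive content. Finally, your definition of the relative stretch locus (``the union over all $K^\ast$-Lipschitz equivariant extensions of the set of points through which some maximally stretched geodesic segment passes'') is not the one used here or in \cite{gk12}: the relative stretch locus is an \emph{intersection} over optimal extensions of their local stretch loci, not a union, and this distinction matters for the lamination structure. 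If you want to see how the analogous arguments are actually organized, look at Sections~\ref{subsec:local-control} and~\ref{subsec:equivariant-extension} of this paper, which carry out the infinitesimal version in detail.
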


By \emph{maximally stretched} we mean that distances are multiplied by~$K$ on every leaf of~$\tilde{\mathscr{L}}$.
The \emph{stretch locus} of~$\varphi$ is by definition the set of points~$p\in\specialC$ such that the Lipschitz constant of~$\varphi$ restricted to $\mathcal{U}\cap\specialC$ is $K$ (and no smaller) for all neighborhoods $\mathcal{U}$ of $p$ in~$\HH^n$.
The \emph{relative stretch locus} $E_{\specialC,\varphi}(j,\rho)$ is the set of points $p\in\HH^n$ such that the Lipschitz constant of any $K$-Lipschitz equivariant extension of~$\varphi$ is~$K$ on any neighborhood of $p$ in~$\HH^n$.

Note that when $K=1$, a $K$-Lipschitz equivariant extension of~$\varphi$ may be forced to be isometric on a larger set (for instance, if $j=\rho$ and $\varphi=\mathrm{Id}_{\specialC}$, then $f$ must be the identity map on the convex hull of~$\specialC$).
Theorem~5.1 of \cite{gk12} describes precisely which pairs of points $p,q\in \HH^n$ achieve $d(f(p),f(q))=Kd(p,q)$ for all $K$-Lipschitz equivariant maps~$f$ when $K=1$, and also allows for geometrically finite $j(\Gamma)$ (with parabolic elements) when $K\geq 1$.

To save space and focus on applications, we will treat the microscopic analogue less thoroughly here, restricting in particular to~$\HH^2$ with no parabolic elements in $j(\Gamma)$, and to special ``$\specialC$'' and ``$\varphi$''.
However all macroscopic ideas of \cite{gk12} should generalize.

To work out a microscopic analogue of Theorem~\ref{thm:macro-kirszbraun}, the starting point is to consider a sequence of equivariant convex fields with lipschitz constants converging to the infimum.
Note however that to an equivariant field, we can always add an invariant field pointing strongly towards the convex core, without increasing the lipschitz constant: this is why minimizing sequences will usually not converge outside the convex core.
We therefore resort to imposing a ``standard'' form (as in Section~\ref{subsec:standard-funnels}) to the convex field inside the funnels, and minimize under that constraint.

In Section~\ref{subsec:local-control}, we prove a local lipschitz extension theorem with a local control: this is where $k$-stretched lines appear for the first time.
In Section~\ref{subsec:equivariant-extension}, we turn this into a global equivariant extension result for vector fields defined away from the convex core (typically the standard vector fields of Section~\ref{subsec:standard-funnels}).
These tools will be used in Section~\ref{sec:prooflamin} to prove Theorem~\ref{thm:laminations}.

\subsection{Local lipschitz extensions of convex fields with a local control}\label{subsec:local-control}

We say that a convex field $Y$ is an \emph{extension} of a convex field~$X$ if $Y\supset X$ as subsets of $T\HH^2$ and if $X(U')=Y(U')$ for any \emph{open} set $U'\subset\HH^2$ on which $X$ is defined.
This means that $Y$ is defined at least on the largest domain where $X$ is and that $X$ and~$Y$ coincide on the interior of this domain.

The following key theorem lets us extend convex fields locally without loss in the lipschitz constant. 

\begin{Theorem}\label{thm:compact-extension}
Let $\specialC\subset\HH^2$ be a compact set and $X$ a compact (\ie bounded) convex field defined over~$\specialC$.
Suppose $X$ is $k$-lipschitz with $k\geq 0$.
Then $X$ admits a $k$-lipschitz compact extension to the convex hull $\CH(\specialC)$.
\end{Theorem}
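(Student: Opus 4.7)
The plan is to extend $X$ one fiber at a time, using Helly's theorem in each tangent plane to reduce the problem to a three-point \emph{infinitesimal Kirszbraun--Valentine} lemma in the hyperbolic plane, and then to globalize by a Zorn-type argument. The structure mirrors the classical Lipschitz-map extension theorems, with the crucial simplification that in each tangent space all constraints are linear.

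\emph{Per-fiber setup.} Fix a candidate point $p\in \CH(\specialC)\setminus\specialC$. For each $q \in \specialC$ and each $x_q \in X(q)$, Remark~\ref{rem:d'proj} identifies the required inequality $d'(x_p,x_q)\leq k\,d(p,q)$ with a lower bound on the signed projection of $x_p$ onto the unit tangent vector at $p$ pointing toward~$q$. Taking the supremum over $x_q \in X(q)$ (finite, as $X(q)$ is compact) packages the entire constraint at $q$ into a single closed affine half-plane $H_q \subset T_p\HH^2$. Define $X(p) := \bigcap_{q\in \specialC} H_q$, which is automatically closed and convex; the core question is whether $X(p)\neq \emptyset$.

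\emph{Nonemptiness via Helly.} Since the constraints are half-planes in $T_p\HH^2 \cong \RR^2$, Helly's theorem reduces us to showing that every triple $H_{q_1}\cap H_{q_2}\cap H_{q_3}$ is nonempty, with the infinite Helly upgrade following from compactness of~$\specialC$, continuity of $q\mapsto H_q$, and the fact that $p \in \CH(\specialC)$ forces the full intersection to be bounded (the unit vectors pointing from $p$ to points of $\specialC$ positively span). The three-point case is the heart of the matter, and I expect it to be the main obstacle: given any three points $q_1,q_2,q_3\in\HH^2$ and vectors $x_{q_i}\in T_{q_i}\HH^2$ with pairwise $d'(x_{q_i},x_{q_j})\leq k\,d(q_i,q_j)$, and any $p\in \HH^2$, produce $x_p\in T_p\HH^2$ with $d'(x_p,x_{q_i})\leq k\,d(p,q_i)$ for all~$i$. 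I would attack this by an explicit planar construction: build $x_p$ as a combination (with appropriate barycentric-type weights) of parallel transports of the $x_{q_i}$ along the geodesics $[p,q_i]$, then adjust by a correction vector exploiting the slack afforded by $k\geq 0$ together with the negative curvature of~$\HH^2$, in analogy with Valentine's extension for $\HH^n$ which requires $K\geq 1$.

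\emph{Globalization and compactness.} With the per-fiber extension available, invoke Zorn's lemma on the poset of $k$-lipschitz compact convex field extensions of $X$ to closed subsets intermediate between $\specialC$ and $\CH(\specialC)$, ordered by inclusion. Chains admit upper bounds by Proposition~\ref{prop:limitofconvex}, which guarantees that Hausdorff limits of uniformly bounded $k$-lipschitz convex fields remain $k$-lipschitz. A maximal element must then be defined on all of $\CH(\specialC)$, for otherwise the per-fiber step would contradict maximality. Uniform boundedness of the new fibers (\ie compactness of the extension) follows from an a priori bound in the spirit of Proposition~\ref{prop:bounds}: the half-planes coming from $q \in \specialC$ surrounding~$p$ already carve out a bounded region of $T_p\HH^2$. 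Upper semicontinuity of the extended $X$ is inherited from closedness of each $H_q$ and the continuous dependence on~$q$.
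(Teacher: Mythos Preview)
Your Helly-theoretic framework is sound and in fact closely parallels the paper's argument: defining $X(p)$ as the intersection of the half-planes $H_q$ is equivalent to minimizing the function $\varphi_p(x):=\sup_{(q,y)\in X(\specialC)} d'(x,y)/d(p,q)$, which is exactly what the paper does. The paper then observes that at the minimizer the set of tight constraints must positively span $T_p\HH^2$, so $p$ lies in the convex hull of at most three of the~$q_i$ --- i.e.\ the same reduction to a two- or three-point configuration that Helly gives you. The globalization step is also equivalent: the paper extends to a countable dense subset and takes the fiberwise convex hull of the closure, which is a concrete substitute for your Zorn argument (and avoids having to verify that chains have upper bounds). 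For boundedness of the new fibers, the relevant input is Lemma~\ref{lem:convexbounds} rather than Proposition~\ref{prop:bounds}; the paper actually adds auxiliary vertices $p_1,\dots,p_m$ outside $\CH(\specialC)$ with large inward-pointing vectors precisely to make that lemma apply uniformly.

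The genuine gap is your three-point step. You correctly identify it as the heart of the matter, but your plan (``parallel transport the $x_{q_i}$, take a barycentric combination, then adjust by a correction vector'') is not a proof and it is not clear how to make it one. The paper's argument here is the actual content of the theorem, and it is rather different from what you sketch. One does \emph{not} build $x_p$ explicitly; instead one takes the minimizer $x_0$ of $\varphi_p$ and shows the minimum value $k'$ satisfies $k'\leq k$. If $p$ lies on a segment $[q_i,q_{i'}]$ with both constraints tight, additivity of $d'$ along a line gives $k'\leq k$ immediately. If $p$ lies strictly inside a triangle $q_1q_2q_3$ of tight constraints, one normalizes so that $x_0=0$ and one $y_i$ is radial, then observes that the radial components $\hat y_i=-k'\log_{q_i}(p)$ satisfy $d'(\hat y_a,\hat y_b)>k'\,d(q_a,q_b)$ for some pair $a,b$, by \emph{strict convexity of the distance function} (the map $t\mapsto d(\exp_{q_a}(t\hat y_a),\exp_{q_b}(t\hat y_b))$ is convex and vanishes at $t=-1/k'$). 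Since the transverse components can only increase $d'$ for a suitable pair, this forces $k'\leq k$ (indeed $k'<k$ in the nondegenerate case). This convexity argument, not a barycentric construction, is the key idea you are missing; it is also where $k\geq 0$ and negative curvature genuinely enter.
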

The proof will be simplified by the following lemma, which is also useful for several other arguments in the paper.
We use the notation \eqref{eqn:normconvexfield}.

\begin{Lemma}\label{lem:convexbounds}
Consider the vertices $p_1, \ldots, p_m \in\HH^2$ of a convex polygon $\Pi$, vectors $x_i\in\nolinebreak T_{p_i}\HH^2$, and a compact subset $\specialC'$ contained in the interior of $\Pi$.
For any $k\in\RR$, there exists $R >0$ such that $\|Y(\specialC')\|\leq R$ for any $k$-lipschitz convex field extension $Y$ of $\{x_1,\ldots,x_m\}$ to $\specialC'\cup \{p_1,\ldots, p_m\}$.
\end{Lemma}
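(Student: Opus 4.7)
The plan is to derive, at each $q \in \specialC'$ and for each $y \in Y(q)$, a pointwise bound on $\|y\|$ by combining the $m$ lipschitz inequalities $d'(y, x_i) \leq k\, d(q, p_i)$ coming from the anchor vectors. By Remark~\ref{rem:d'proj}, if $\hat{u}_i(q) \in T^1_q \HH^2$ denotes the unit vector at $q$ pointing toward $p_i$ along the geodesic, and $\pi_i(x_i)$ denotes the signed projection of $x_i$ onto the line $(q, p_i)$ oriented from $q$ to $p_i$, this inequality rewrites as $\pi_i(x_i) - \langle y, \hat{u}_i(q)\rangle \leq k\, d(q, p_i)$, yielding the lower bound
\[
\langle y, \hat{u}_i(q)\rangle \;\geq\; \pi_i(x_i) - k\, d(q, p_i) \;\geq\; -\max_j \|x_j\| \,-\, |k|\,D \;=:\; -C,
\]
where $D := \sup\{d(q, p_i) : q\in\specialC',\, 1\leq i\leq m\} < \infty$ by compactness of $\specialC'$. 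Thus $y$ has uniformly bounded-below components in each of the $m$ directions $\hat{u}_i(q)$.

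Next I would use that $\specialC'$ is contained in the interior of the polygon $\Pi = \CH(p_1,\ldots,p_m)$ to produce a uniform \emph{positive-spanning} constant: for every interior point $q$ of $\Pi$, the unit vectors $\hat{u}_1(q), \ldots, \hat{u}_m(q)$ cannot all lie in a closed half-plane of $T_q \HH^2$ (otherwise one could push $q$ off of $\Pi$ along the opposite direction). By compactness of $\specialC'$ and continuity of $q \mapsto \hat{u}_i(q)$, this upgrades to a uniform constant $\varepsilon > 0$ such that for every $q \in \specialC'$ and every unit vector $v \in T_q \HH^2$, at least one index $i$ satisfies $\langle v, \hat{u}_i(q)\rangle \leq -\varepsilon$.

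Combining, suppose $\|y\| = M$ and write $y = M v$. Choose $i$ with $\langle v, \hat{u}_i(q)\rangle \leq -\varepsilon$; then the two inequalities
\[
-C \;\leq\; \langle y, \hat{u}_i(q)\rangle \;=\; M\,\langle v, \hat{u}_i(q)\rangle \;\leq\; -M\varepsilon
\]
force $M \leq C/\varepsilon =: R$, a bound depending only on $\specialC'$, $\{p_i\}$, $\{x_i\}$, and $k$ — not on $Y$. The only mildly subtle step is extracting the uniform positive-spanning constant $\varepsilon$, but this follows from a standard compactness argument applied to a fixed convex polygon together with a compact subset of its interior.
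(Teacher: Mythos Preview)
Your argument is correct and is essentially the same as the paper's: both bound $\langle y,\hat u_i(q)\rangle$ from below via the $k$-lipschitz inequality and Remark~\ref{rem:d'proj}, then exploit that the directions $\hat u_i(q)$ to the vertices are not contained in any closed half-plane (because $q$ lies in the interior of $\Pi$) to convert these one-sided bounds into a norm bound. The paper phrases the spanning condition as a bound $\vartheta<\pi/2$ on half the maximal angular gap between consecutive $\hat u_i(q)$, which plays the same role as your uniform constant~$\varepsilon$ (indeed $\varepsilon=\cos\vartheta$ up to the choice of sign of~$v$).
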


\begin{proof}
Consider the vectors $\log_p(p_i)\in T_p\HH^2$ pointing towards~$p_i$.
By compactness of~$\specialC'$, the maximum angle $2\vartheta$ between $\log_p(p_i)$ and $\log_p(p_{i'})$ for $1\leq i,i'\leq m$ and $p\in\specialC'$ is $<\pi$.
For any $p\in\specialC'$ and $x \in T_p\HH^2$, there exists $1\leq i\leq m$ such that the angle between $x$ and $\log_p(p_i)$ lies between zero and~$\vartheta$.
By Remark~\ref{rem:d'proj},
$$d'(x,x_i) \geq \| x \| \cos \vartheta - \| x_i \|.$$
Therefore, if $x=Y(p)$ for some $k$-lipschitz extension~$Y$ of $\{x_1,\ldots,x_m\}$, then
\begin{equation} \label{eqn:convbounds}
\| x \| \leq \frac{\| x_i \| + k\,d(p, p_i)}{\cos \vartheta} \leq \frac{\max_{i'}(\| x_i\|) + |k| \max_{i',p}(d(p,p_{i'}))}{\cos \vartheta},
\end{equation}
where $1\leq i'\leq m$ and $p$ ranges over~$\specialC'$.
This gives the desired uniform~bound.
\end{proof}

\begin{proof}[Proof of Theorem~\ref{thm:compact-extension}]
Let $p_1,\dots,p_m \in \HH^2$ be the vertices of a convex polygon $\Pi$ containing $\CH(\specialC)$ in its interior.
Since $X$ is bounded we may extend $X$ to each $p_i$ by choosing a large vector $x_i$ pointing into $\Pi$  along the bisector of the angle at $p_i$, so that the extension remains $k$-lipschitz on $\specialC\cup \{p_1,\ldots,p_m\}$.
As we extend~$X$, maintaining the $k$-lipschitz property, to points of $\CH(\specialC)$, these helper points, via Lemma~\ref{lem:convexbounds}, guarantee that our extension will be bounded: $\|X(\CH(\specialC))\| \leq R$.

First we extend $X$ to a single point $p$ of $\CH(\specialC) \smallsetminus \specialC$.
To choose $X(p)$ optimally with respect to the lipschitz property, consider the map $\varphi_p : T_p\HH^2\rightarrow\RR$ defined by
$$\varphi_p(x):=\sup_{(q,y)\in X(\specialC)}\, \frac{d'(x, y)}{d(p,q)}\,.$$
By Lemma~\ref{lem:convexbounds}, the function~$\varphi_p$ is proper.
In particular, it has a minimum~$k'$, achieved at a vector $x_0\in T_p\HH^2$ with $\|x_0 \| \leq R$. 
For any $(q,y)\in X(\specialC)$, the ratio $d'(x, y)/d(p,q)$ is an affine function of $x\in T_p\HH^2$, of gradient intensity $1/d(p,q)$.
Since affine functions are convex, so is~$\varphi_p$.
Let us show that $k'\leq k$. 
Since the convex field $X$ is compact in $T\HH^2$, the supremum defining $\varphi_p$ is achieved.
Let $\{ (q_i, y_i) \,|\, i\in I\}$ be the (compact) set of all vectors $(q,y)\in X(\specialC)$ such that $d'(x_0,y)=k'\,d(p,q)$.
Suppose, for contradiction, that the convex hull of the $q_i$ does not contain~$p$.
Then there is an open half-plane $\mathscr{H}\subset\HH^2$ that is bounded by a line through $p$ and contains all the~$q_i$.
By compactness,
$$\max_{(q,y)\in X(\specialC\smallsetminus\mathscr{H})} \frac{d'(x_0, y)}{d(p,q)} < k'.$$
Since the gradient intensities $1/d(p,q)$ for $q\in\specialC$ are bounded from above, it follows that $\varphi_p(x_0+\xi)<k'$ for any short enough vector $\xi$ pointing orthogonally into~$\mathscr{H}$: a contradiction with the minimality of~$k'$.
Therefore, $p$ lies in the convex hull of the~$q_i$.
There are two cases to consider.

\noindent
$\bullet$ \emph{Case (i):} Suppose $p$ lies on a segment $[q_i,q_{i'}]$ with $i,i'\in I$.
Then
\begin{align*}
k\,d(q_i,q_{i'}) \geq d'(y_i, y_{i'})
&= d'(y_i, x_0) + d'(x_0, y_{i'}) \\
&= k'\,d(q_i,p) + k'\,d(p,q_{i'})\\
&= k'\,d(q_i,q_{i'}),
\end{align*}
showing that $k' \leq k$. 

\noindent
$\bullet$ \emph{Case (ii):} If $p$ does not lie on such a segment, then it lies in the interior of a nondegenerate triangle $q_i q_{i'} q_{i''}$ with $i,i',i''\in I$.
We write $\{ i,i',i''\}=\{ 1,2,3\}$ to simplify notation.
By adding a Killing field, we may assume that $x_0 = \underline{0}(p)$ and that $y_1$ is parallel to the segment $[p,q_1]$, so that $y_1 = - k'\log_{q_1}(p)$.
The geodesic rays from~$p$ passing through each of $q_1,q_2,q_3$ divide $\HH^2$ into three connected components (see Figure~\ref{fig:tripod}).
There is a pair of distinct indices $a,b\in \{1,2,3\}$ such that $y_a$ and~$y_b$ point (weakly) away from the component bordered by the rays from $p$ through~$q_a$ and through~$q_b$.
Then $d'(y_a, y_b)\geq d'(\hat{y_a}, \hat{y_b})$ where $\hat{y_i}$ is the projection of~$y_i$ to the $[p,q_i]$ direction.
Note that $\hat{y_i} = - k' \log_{q_i}(p)$ by Remark~\ref{rem:d'proj}, because $d'(x_0,y_i)=k'\,d(p,q_i)$.
Now set $q_a^t := \exp_{q_a}(t \hat y_a)$ and $q_b^t := \exp_{q_b}(t \hat y_b)$.
Since the angle $\widehat{q_a p q_b}$ is different from $0$ and~$\pi$, the distance function $\psi : t\mapsto d(q_a^t, q_b^t)$ is strictly convex (a feature of negative curvature) and vanishes at $t = -1/k'$ as long as $k' > 0$ (if $k' \leq 0$, then we already have $k' \leq k$). Thus
\begin{align}\label{eqn:strictk'}
 k' & \,<\, \frac{\psi'(0)}{\psi(0)} \,=\, \frac{d'(\hat y_a, \hat y_b)}{d(q_a, q_b)} \,\leq\, \frac{d'(y_a, y_b)}{d(q_a, q_b)} \,\leq\, k.
\end{align}

\begin{figure}[h!]
{\centering
\def\svgwidth{10cm}
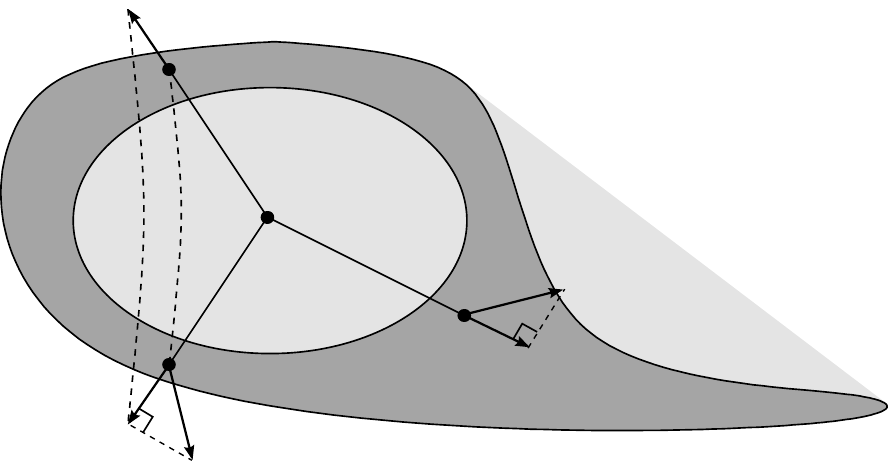}
\caption{In this illustration of case~(ii), the vectors $y_1$ and $y_2$ point weakly away from the sector $q_1 p q_2$, hence $d'(y_1,y_2) \geq d'(\hat y_1, \hat y_2)$. Next, $d'(\hat y_1, \hat y_2) > k' d(q_1,q_2)$ by convexity of the function $t\mapsto d(\exp_{q_1}(t\hat y_1),\exp_{q_2}(t \hat y_2))$. }
 \label{fig:tripod}
\end{figure}

We have shown that $X$ admits a $k$-lipschitz extension to $\specialC\cup \{p\}$.
Replacing $\specialC$ with $\specialC\cup \{p\}$, we can extend to a second point $p'$ of $\CH(\specialC)\smallsetminus\specialC$, then to a third, and eventually to a dense subset $\mathscr{S}$ of $\CH(\specialC)$.
We take our final extension $Y$ to be the fiberwise convex hull of the closure of $X(\mathscr{S})$ in $T\HH^2$.
That is, for any $p\in\CH(\specialC)$, we define $Y(p)$ to be the (closed) convex hull in $T_p\HH^2$ of all limits of sequences $(p_n,y_n)\in X(\mathscr{S})$ with $p_n\rightarrow p$.
Note that $Y(p)\neq \emptyset$ because $X(\mathscr{S})$ was bounded uniformly at the beginning of the proof.
By construction, $Y$ is closed in $T\HH^2$, and $k$-lipschitz because $d'$ is continuous.
It agrees with the original convex field~$X$ on the interior of~$\specialC$, but may have larger fibers above points of the boundary of $\specialC$ in~$\HH^2$.
\end{proof}

As mentioned at the beginning of Section~\ref{sec:extlipschitz}, we will also need a version of Theorem~\ref{thm:compact-extension} with locally improved lipschitz constant: it is given by the following proposition, which is a simple consequence of the proof of Theorem~\ref{thm:compact-extension}.
Recall the definition~\ref{def:lipX} of the local lipschitz constant $\lip_p(X)$.

\begin{Proposition}\label{prop:strict-extension}
Under the hypotheses of Theorem~\ref{thm:compact-extension}, for any point $p\in\CH(\specialC) \smallsetminus \specialC$, the convex field $X$ admits a $k$-lipschitz extension $Y$ to $\CH(\specialC)$ such that $\lip_p(Y)<k$, unless one of the following holds:
\begin{enumerate}
  \item \label{case1} $k > 0$ and $p$ lies in the interior of a geodesic segment $[q_1,q_2]$ with $q_1,q_2\in \specialC$ and $d'_X(q_1,q_2)=k\,d(q_1,q_2)$, the direction of this segment at~$p$ being unique;
  \item \label{case2} $k=0$ and $p$ lies in the convex hull of three (not necessarily distinct) points $q_1,q_2,q_3\in\specialC$ such that $X$ contains the restriction of a Killing field to $\{q_1,q_2,q_3\}$ (in particular, $d'_X(q_i, q_{i'}) = 0$ for all $1\leq i<\nolinebreak i'\leq\nolinebreak 3$).
\end{enumerate}
In case~(\ref{case2}), any $k$-lipschitz extension of~$X$ to $\CH(\specialC)$ restricts to a Killing field on the interior of the triangle $q_1q_2q_3$.
\end{Proposition}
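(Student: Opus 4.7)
The plan is to refine the point-by-point extension procedure of Theorem~\ref{thm:compact-extension} by tracking when the infimum $k'(p) := \min \varphi_p \leq k$ is strict (where $\varphi_p$ is the function on $T_p \HH^2$ defined in that proof), and to identify the obstruction configuration when equality $k'(p) = k$ holds. Revisiting the two-case analysis there, Case~(i) ($p$ lying on a segment between two maximizers $(q_i, y_i), (q_{i'}, y_{i'})$) yields $k'(p) = k$ precisely when $d'_X(q_i, q_{i'}) = k\,d(q_i, q_{i'})$, which matches case~(\ref{case1}) of the proposition. In Case~(ii) ($p$ in the interior of a nondegenerate triangle of maximizers), the strict convexity of hyperbolic distance gives $k'(p) < k$ whenever $k'(p) > 0$; hence for $k > 0$, $k'(p) = k$ forces Case~(i). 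For $k = 0$, the degenerate equality in Case~(ii) corresponds to the function $\psi(t) = d(q_a^t, q_b^t)$ being constant, i.e.\ $d'(y_a, y_b) = 0$, and together with the minimizer constraints $d'(x_0, y_i) = 0$ this furnishes a Killing field interpolating $(p, x_0)$ and the maximizers on~$\specialC$, matching case~(\ref{case2}).

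Assuming $k'(p) < k$, I construct the desired extension $Y$ as follows. Pick $x_0 \in T_p \HH^2$ with $\varphi_p(x_0) \leq k - \epsilon$, and let $\tilde{x}_0$ be any Killing field on $\HH^2$ with $\tilde{x}_0(p) = x_0$. On a small closed ball $\overline{B(p, r)}$ disjoint from~$\specialC$, define $Y_0(q) := \tilde{x}_0(q)$; this is a $0$-lipschitz vector field matching the prescribed value at~$p$. By uniform continuity of $d'$ and compactness of $\specialC$, shrinking~$r$ gives the cross-pair bound $d'(Y_0(q), y) \leq (k - \epsilon/2)\,d(q, q')$ for all $q \in \overline{B(p, r)}$ and $(q', y) \in X(\specialC)$, so $X \cup Y_0$ is still a $k$-lipschitz convex field on $\specialC \cup \overline{B(p, r)}$. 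Applying Theorem~\ref{thm:compact-extension} to this enlarged data extends it to $\CH(\specialC \cup \overline{B(p, r)}) \supseteq \CH(\specialC)$; on the open ball $B(p, r)$ the resulting $Y$ coincides with the Killing field $\tilde{x}_0$ by the extension property, so $\lip_{B(p, r)}(Y) = 0$, giving $\lip_p(Y) < k$ when $k > 0$. For $k = 0$ (so $k'(p) < 0$), replace $Y_0$ by $\tilde{x}_0 + c\,W$ where $W(q) := \log_q(p)$ is $(-1)$-lipschitz and vanishes at~$p$, and $c > 0$ is small; then $Y_0$ becomes $(-c)$-lipschitz on $\overline{B(p, r)}$, while the margin from $\varphi_p(x_0) < 0$ preserves the cross-pair bound, yielding $\lip_p(Y) \leq -c < 0$.

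Finally I verify that cases~(\ref{case1}) and~(\ref{case2}) are genuine obstructions and derive the ``Killing field restriction'' claim. In case~(\ref{case1}), any $k$-lipschitz extension $Y$ satisfies $d'_Y(q_1, q_2) = k\,d(q_1, q_2)$, and Observation~\ref{obs:donkey}(vi) forces the subsegments $[q_1, p]$ and $[p, q_2]$ to be $k$-stretched, so $\lip_p(Y) \geq k$. In case~(\ref{case2}), let $Y_0$ be the Killing field with $Y_0(q_i) \in X(q_i)$; then $Y - Y_0$ is a $0$-lipschitz convex field containing~$0$ at each~$q_i$. Proposition~\ref{prop:zeroconvex} applied to $Y - Y_0$ against the Killing field~$\underline{0}$ shows $\{q : 0 \in (Y - Y_0)(q)\}$ is convex and contains $\{q_1, q_2, q_3\}$, hence the whole convex hull; so $Y_0(q) \in Y(q)$ throughout the triangle. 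To upgrade to $Y(q) = \{Y_0(q)\}$ on its interior, suppose $y' \in (Y - Y_0)(p)$ is nonzero; the $0$-lipschitz bound $d'(0_q, y') \leq 0$ for $q$ near~$p$ in the triangle means $\langle y', \log_p(q)\rangle \geq 0$ over a full cone of directions, forcing $y' = 0$. So $Y$ coincides with $Y_0$ near~$p$, giving $\lip_p(Y) = 0 = k$. The principal obstacle is the $k = 0$ step of the equality analysis above: extracting the full three-point Killing structure required by case~(\ref{case2}) from the single-pair degeneracy $d'(y_a, y_b) = 0$ of the Case~(ii) chain requires an optimized choice of $x_0$ within the $1$-parameter family of candidates, so as to simultaneously match all three maximizer vectors.
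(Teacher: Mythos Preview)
Your overall architecture matches the paper's: analyze the minimum $k'=\min\varphi_p$, build a locally strict extension when $k'<k$, and identify the obstruction when $k'=k$. Your explicit construction of the local extension (Killing field on a ball, plus the $cW$ correction when $k=0$) is a valid and somewhat more concrete version of what the paper does in one sentence. However, two genuine gaps remain in your analysis of the obstruction $k'=k$.

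First, in case~(\ref{case1}) you have not addressed the \emph{uniqueness} of the direction of the $k$-stretched segment through~$p$, which is part of the statement. The paper's argument for this is short but not automatic: suppose $p$ lay on two $k$-stretched segments $[q_1,q_2]$ and $[q_3,q_4]$ of different directions; then one has four maximizing pairs $(q_i,y_i)$ with $d'(x_0,y_i)=k\,d(p,q_i)$, and the Case~(ii) ``tripod'' argument from the proof of Theorem~\ref{thm:compact-extension} (now with four rays instead of three) yields indices $a,b$ with $d'(y_a,y_b)>k\,d(q_a,q_b)$, contradicting $\lip(X)\le k$.

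Second, as you yourself flag, your derivation of case~(\ref{case2}) is incomplete: from Case~(ii) with $k'=k=0$ you only get $d'(y_a,y_b)=0$ for \emph{one} pair $\{a,b\}$, not the three-point Killing structure. The paper closes this gap without any further optimization of~$x_0$. After normalizing by a Killing field so that $x_0=\underline{0}(p)$ and $y_1=\underline{0}(q_1)$, the constraint $d'(x_0,y_i)=0$ kills the $[p,q_i]$-component $\hat y_i$ for $i=2,3$. If the component of some $y_i$ orthogonal to $[p,q_i]$ were nonzero, then for a suitable pair $\{a,b\}\subset\{1,2,3\}$ (chosen so that $y_a,y_b$ point weakly out of the sector $q_apq_b$, as in Case~(ii)) one would get $d'(y_a,y_b)>d'(\hat y_a,\hat y_b)=0$, contradicting $\lip(X)\le 0$. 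Hence all $y_i=\underline{0}(q_i)$, which is exactly the restriction of the Killing field~$\underline{0}$ to $\{q_1,q_2,q_3\}$.

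Your final paragraph verifying that cases~(\ref{case1}) and~(\ref{case2}) are genuine obstructions is extra (the proposition only asserts the ``unless'', not the converse), though your argument for the last sentence of the proposition via Proposition~\ref{prop:zeroconvex} is correct and essentially what the paper does.
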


The segments $[q_i,q_{i'}]$ are \emph{$k$-stretched} by~$X$ in the sense of Observation~\ref{obs:donkey}.

\begin{proof}
Fix $p\in\CH(\specialC) \smallsetminus \specialC$.
As in the proof of Theorem~\ref{thm:compact-extension}, we first define $Y(p)$ to be a vector at which $\varphi_p$ achieves its minimum $k'$.
If $k'<k$, then we can extend $Y$ as we wish in a continuous and locally $k'$-lipschitz way near $p$ without destroying the global $k$-lipschitz property, and then continue extending $Y$ to the rest of $\CH(\specialC)$ as in the proof of Theorem~\ref{thm:compact-extension}.
We need therefore only understand the case $k'=k$.

Suppose $k' = k > 0$.
Then the strict inequality \eqref{eqn:strictk'} shows that we cannot be in case~\textit{(ii)} in the proof of Theorem~\ref{thm:compact-extension}.
Therefore, we are in case~\textit{(i)}, and so $p$ lies in the interior of a $k$-stretched segment $[q_1,q_2]$ with $q_1,q_2\in \specialC$.
To see that (\ref{case1}) holds, suppose that $p$ lies in the interior of a $k$-stretched segment $[q_3,q_4]$ of a different direction, with $q_3,q_4\in\specialC$.
Then for $1\leq i\leq 4$, there are $y_i\in X(q_i)$ such that $d'(Y(p),y_i)=k\,d(p,q_i)$ and we may argue as in case~\textit{(ii)} of the proof of Theorem~\ref{thm:compact-extension} (with four directions instead of three) that $d'(y_a,y_b)>k\,d(q_a, q_b)$ for some $1\leq a<b\leq 4$, a contradiction.

Now suppose $k' = k = 0$.
If we are in case~\textit{(i)} in the proof of Theorem~\ref{thm:compact-extension}, then $p$ belongs to the interior of a $0$-stretched segment $[q_1,q_2]$ with $q_1,q_2\in\nolinebreak\specialC$; in particular, $X$ contains the restriction to $\{ q_1,q_2\}$ of a Killing field.
Suppose we are in case~\textit{(ii)}, \ie $p$ lies in the interior of a nondegenerate triangle $q_1q_2q_3$ such that for any $1\leq i\leq 3$ we have $d'(x_0,y_i)=0$ for some $x_0 \in T_p \HH^2$ and $y_i\in X(q_i)$.
Up to adding a Killing field, we may assume $x_0 = \underline{0}(p)$ and $y_1= \underline{0}(q_1)$.
Then, for $i \in \{2,3\}$, the component $\hat{y_i}$ of $y_i$ in the direction $[p,q_i]$ must be zero, since $d'(x_0, y_i) = d'(\underline{0}(p), \hat{y_i}) = 0$.
The component of $y_i$ orthogonal to $[p,q_i]$ must also be zero for each~$i$ or else $d'(y_a, y_b) > 0$ for some $1\leq a<b\leq 3$, contradicting that $X$ is $0$-lipschitz.
This proves that (\ref{case2}) holds.
In general, if $X$ contains the restriction of a Killing field $Z$~to $\{q_1,q_2,q_3\}$, then Proposition~\ref{prop:zeroconvex} shows that any $0$-lipschitz extension $Y$ of $X$ to the full triangle $q_1q_2q_3$ contains the restriction of $Z$ to $q_1q_2q_3$.~Further~,~$Y = Z$ on the interior of $q_1 q_2 q_3$ (apply Remark~\ref{rem:d'proj} with $q=q_i$ for $1\leq i\leq 3$).
\end{proof}

\subsection{Equivariant extensions of vector fields defined in the funnels}\label{subsec:equivariant-extension}

We next derive a technical consequence of Theorem~\ref{thm:compact-extension}, namely that equivariant lipschitz vector fields defined outside the convex core, with nice enough inward-pointing properties, can be extended equivariantly to all of~$\HH^2$.
This will be applied in the next section to standard vector fields (Definition~\ref{def:standard-funnels}). 

\begin{Proposition}\label{prop:equivariant-kirszbraun}
Let $\Gamma$ be a discrete group, $j : \Gamma \to G$ a convex cocompact representation, and $u: \Gamma \rightarrow \g$ a $j$-cocycle.
Let $\overline{U}\subset\HH^2$ be the preimage of the convex core of $j(\Gamma)\backslash\HH^2$, let $U_1$ be the open $1$-neighborhood of~$\overline{U}$, and let $N_1 := \HH^2 \smallsetminus U_1$ be its complement.
Let $X$ be a $(j,u)$-equivariant lipschitz vector field on~$N_1$ and set $k:=\lip(X)$.
Suppose that there exists $\varepsilon>0$ such that for all distinct $p,q\in N_1$,
\begin{itemize}
  \item[$(*)$] $d'_X(p,q)<k\,d(p,q)$ (strict inequality);
  \item[$(**)$] $\lip_p(X)\leq k-\varepsilon$;
  \item[$(*{*}*)$] if $p\in \partial N_1$ then $p$ has a neighborhood $V$ such that $X(V\cap N_1)$ admits a vector field extension to $V$ with lipschitz constant $<k$.
\end{itemize}
Then there exists a $(j,u)$-equivariant convex field $Y$ on~$\HH^2$, extending~$X$, such that
\begin{enumerate}
  \item if $k < 0$, then $\lip(Y)<0$;
  \item if $k \geq 0$, then $\lip(Y)=k$ and there is a $j(\Gamma)$-invariant geodesic lamination $\tilde{\mathscr{L}}$ in~$\overline{U}$ that is maximally stretched by~$Y$, in the sense that $d'_X(p,q) = k\,d(p,q)$ for any $p,q$ on a common leaf of~$\tilde{\mathscr{L}}$; in particular,
$$k \,=\, k_{\boldsymbol\alpha} \,:=\, \sup_{\gamma \text{ \rm{with} } \lambda (j(\gamma)) > 0}\ \frac{{\boldsymbol\alpha}_u(\gamma)}{\lambda(j(\gamma))}\,.$$
\end{enumerate}
\end{Proposition}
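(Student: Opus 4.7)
The plan is to minimize the lipschitz constant over the set $\mathcal{E}$ of $(j,u)$-equivariant convex field extensions of $X$ to all of $\HH^2$. The first step is to show $\mathcal{E}$ is nonempty: working over a compact thickened fundamental domain $D' \subset \HH^2$ for $j(\Gamma)$ that meets $N_1$ along a collar neighborhood, I would extend $X|_{D' \cap N_1}$ via Theorem~\ref{thm:compact-extension} to the convex hull $\CH(D' \cap N_1) \supset D' \cap \overline{U}$. Condition $(*{*}*)$ provides the vector field continuation across $\partial N_1$ needed to mesh smoothly with $X$; condition $(**)$ provides the local slack on $N_1$ allowing equivariant translates of the extension to be reconciled over collar overlaps by convex combination (Observation~\ref{obs:donkey}.(iii)); and $(*)$ rules out saturation on pairs in $N_1$. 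Propagating by $j(\Gamma)$ yields an extension $Y_0 \in \mathcal{E}$ with $\lip(Y_0) \leq k$. If $k<0$, this already gives~(1).

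For~(2), set $k_0 := \inf\{\lip(Y) : Y \in \mathcal{E}\}$; since any $Y\in\mathcal{E}$ extends $X$ we have $k \leq k_0 \leq \lip(Y_0) \leq k$, so $k_0 = k$. Fibers of any $Y \in \mathcal{E}$ are uniformly bounded over compacta inside $\overline{U}$ by Proposition~\ref{prop:bounds}, and the bound propagates to all of $\HH^2$ by equivariance. Proposition~\ref{prop:limitofconvex} then lets a minimizing sequence subconverge in the Hausdorff topology to some $Y \in \mathcal{E}$ with $\lip(Y) = k_0 = k$, still $(j,u)$-equivariant by passage of the equivariance relation to the limit. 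Inside $\overline{U}$, Proposition~\ref{prop:strict-extension} classifies obstructions to strict local improvement of $Y$: each obstructed point either lies on a $k$-stretched geodesic segment in $Y$ whose direction is unique (by minimality of $Y$, since two distinct $k$-stretched directions would allow local strict improvement), or, when $k=0$, sits in a $0$-Killing triangle whose boundary segments are $0$-stretched. These segments assemble into a $j(\Gamma)$-invariant geodesic lamination $\tilde{\mathscr{L}}\subset\overline{U}$ maximally stretched by $Y$, with closedness in the $C^1$ sense automatic in $\HH^2$ (cf.~\S\ref{subsec:geo-lamin}). To obtain $k = k_{\boldsymbol\alpha}$, pick $(\gamma_n)\subset\Gamma$ whose translation axes Hausdorff-accumulate onto a sublamination of $\tilde{\mathscr{L}}$: Proposition~\ref{prop:longcurves} applied to $Y$ gives ${\boldsymbol\alpha}_u(\gamma_n)/\lambda(j(\gamma_n)) \to k$, so $k \leq k_{\boldsymbol\alpha}$; and \eqref{eqn:lip-marg2} applied to $Y$ yields $k_{\boldsymbol\alpha} \leq k$.

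The hard step will be the equivariant initial construction: local $k$-lipschitz extensions produced by Theorem~\ref{thm:compact-extension} on overlapping translates of a fundamental domain will generally disagree on overlaps, and the three strictness hypotheses $(*), (**), (*{*}*)$ are precisely what supplies the small amount of slack required to reconcile them without exceeding the lipschitz bound. A secondary subtlety is passing from the obstruction locus furnished by Proposition~\ref{prop:strict-extension} to an honest $j(\Gamma)$-invariant geodesic lamination; uniqueness of the stretched direction at each obstructed point, resting on the minimality of $Y$, is the crucial structural ingredient.
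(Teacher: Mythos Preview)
Your principal gap is in step~1. You assert that gluing local $k$-lipschitz extensions (from Theorem~\ref{thm:compact-extension}) over $j(\Gamma)$-translates of a fundamental domain, reconciled by convex combination, yields an equivariant $Y_0$ with $\lip(Y_0)\leq k$. But a partition-of-unity combination incurs an error of order $(\sup_i \Lip(\psi_i))\cdot\sup_i\|Z_i\|$ in the lipschitz constant (this is exactly the computation carried out in the Claim inside the paper's proof), and the fixed slack $\varepsilon$ from $(**)$ cannot absorb it. More fundamentally, even a single $k$-lipschitz extension $Z_0$ over a fundamental domain $D'$, once propagated equivariantly, need not be $k$-lipschitz across translates: the bound $d'_{Z_0}(p,q)\leq k\,d(p,q)$ for $p,q\in D'$ says nothing about $d'(\gamma\bullet Z_0(p),\gamma'\bullet Z_0(q))$ when $\gamma\neq\gamma'$. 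The hypotheses $(*),(**),(*{*}*)$ concern only $X$ on $N_1$ and give no control over how two local extensions inside $U_1$ might disagree. The paper proceeds in the opposite order: it first builds \emph{some} equivariant lipschitz extension with no control on the constant (the Claim), sets $k^*:=\inf_{\mathcal E}\lip\geq k$, extracts a minimizer by compactness, analyzes its stretch locus, and only at the very end proves $k^*=k$ --- by producing the lamination, showing $k^*=k_{\boldsymbol\alpha}$ via Proposition~\ref{prop:longcurves}, and then exhibiting long segments with endpoints in $N_1$ that run close to the lamination to force $k\geq k^*$. Your direct argument for $k_0=k$, and hence your proof of~(1), do not stand without this.

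There are secondary gaps. Proposition~\ref{prop:bounds} bounds equivariant fields only over compacta in the \emph{interior} $U$ of the convex core, not over $\partial\overline{U}$ or the collar $U_1\smallsetminus\overline{U}$; to get uniform bounds on $\overline{U}_1$ the paper uses Lemma~\ref{lem:convexbounds} instead, covering $\overline{U}_1$ by translates of a polygon with vertices in $N_1$. Also, working with the stretch locus $E_Y$ of a single minimizer $Y$ is not enough: Proposition~\ref{prop:strict-extension} tells you that at $p\in E_Y$ either the segment structure holds \emph{or} a $k$-lipschitz re-extension with $\lip_p<k$ exists, but the latter only shows $p\notin E:=\bigcap_{Z\in\mathcal Z}E_Z$, not $p\notin E_Y$. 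The paper resolves this by constructing, via a countable convex combination of minimizers, a specific $Y\in\mathcal Z$ with $E_Y=E$, and then argues on~$E$. Finally, you do not address why the stretch locus is nonempty, why it must meet the open set $U_1$ (this is precisely where $(*{*}*)$ enters), or why no $k$-stretched leaf can have both endpoints on $\partial\overline{U}_1$ (this is where $(*)$ enters).
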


\begin{proof}
The idea of the proof is to construct an extension $Y$ of~$X$ that is in a certain sense ``optimal''.
Then the lamination $\tilde{\mathscr{L}}$ will arise as the union of the $k$-stretched segments of Proposition~\ref{prop:strict-extension}.(\ref{case1}) at points $p$ where $\lip_p(Y)=k$.
Considering segments with endpoints in~$N_1$ that spend most of their length near~$\tilde{\mathscr{L}}$, this will imply $\lip(Y)=k$. 

We first show that equivariant lipschitz extensions of~$X$ exist.
The following claim holds in general, independently of the regularity assumptions $(*),(**),(*{*}*)$, and even if $X$ is a convex field instead of a vector field.

\begin{Claim}
There exist $(j,u)$-equivariant lipschitz convex field extensions of $X$ to~$\HH^2$ (possibly with very bad lipschitz constant).
\end{Claim}

\begin{proof}
Let $B_1,\dots,B_m$ be open balls of~$\HH^2$ such that the sets $j(\Gamma)\cdot B_i$ for $1\leq i\leq m$ cover~$U_1$.
We take them small enough so that $j(\gamma)\cdot B_i$ is either equal to or disjoint from~$B_i$ for all $\gamma\in\Gamma$ and $1\leq i\leq m$.
Let $(\psi_i)_{1\leq i\leq m}$ be a $j(\Gamma)$-invariant partition of unity  on~$U_1$, with each $\psi_i$ supported in $j(\Gamma) \cdot B_i$.
We require the restriction of $\psi_i$ to~$B_i$ to be Lipschitz.
For any~$i$, Theorem~\ref{thm:compact-extension} gives a compact extension $Z_i$ of $X|_{B_i \cap N_1}$ to~$B_i$ with $\lip_{B_i}(Z_i) \leq \max\{k,0\}$; in case the stabilizer $\Gamma_i\subset\Gamma$ of~$B_i$ is nontrivial, we can assume that $Z_i$ is $(j|_{\Gamma_i},u|_{\Gamma_i})$-equivariant after replacing it with $\frac{1}{\#\Gamma_i} \sum_{\gamma\in\Gamma_i} \gamma\bullet Z_i$ (notation \eqref{eqn:u-action}), using Observation~\ref{obs:donkey}.
We then extend it to a $(j,u)$-equivariant convex field $Z_i$ on $j(\Gamma)\cdot B_i$.
The extension
$$Z := X \cup \sum_{i=1}^m \psi_i Z_i.$$
of~$X$ is $(j,u)$-equivariant (Observation~\ref{obs:donkey}).
Let us check that $Z$ is lipschitz.
By subdivision and equivariance (Observation~\ref{obs:donkey}), we only need to check that $Z$ is lipschitz on each of the balls $B_{i'}$ for $1\leq i'\leq m$.
Consider two distinct points $p,q \in B_{i'}$ and vectors $z_p \in Z(p)$ and $z_q \in Z(q)$.
We can write
$$z_p = \sum_{i=1}^m \psi_i(p) x_i \quad\quad\text{and}\quad\quad z_q = \sum_{i=1}^m \psi_i(q) y_i$$
where $x_i \in Z_i(p)$ and $y_i \in Z_i(q)$.
By Observation~\ref{obs:donkey},
\begin{align*}
d'(z_p, z_q) &= d'\left(\sum_{i=1}^m \psi_i(p) x_i, \sum_{i=1}^m \psi_i(q) y_i\right) \\
&= d'\left(\sum_{i=1}^m \psi_i(p) x_i, \sum_{i=1}^m \psi_i(p) y_i\right) + d'\left ( \underline{0}(p), \sum_{i=1}^m \big(\psi_i(q)- \psi_i(p)\big)\,y_i\right) \\
&\leq \sum_{i=1}^m \psi_i(p) \, d'(x_i, y_i)  + \left \| \sum_{i=1}^m \big(\psi_i(q)- \psi_i(p)\big)\,y_i\right \| \\
&\leq \sum_{i=1}^m \psi_i(p) \, \lip_{B_{i'}}(Z_i) \, d(p,q) + \sum_{i=1}^m \Lip(\psi_i) \, \| y_i\| \, d(p,q)\\
&\leq \left( \sup_{1\leq i\leq m} \lip_{B_{i'}}(Z_i) + \Big(\sup_{1\leq i\leq m} \Lip(\psi_i)\Big) \sum_{i=1}^m \sup_{z_i\in Z_i(B_{i'})} \| z_i\| \right) \, d(p,q),
\end{align*}
where the last supremum is $<+\infty$ because $B_{i'}$ meets only finitely many $j(\Gamma)$-translates of~$B_i$ and $Z_i(j(\gamma)\cdot B_i)$ is compact for all $\gamma\in\Gamma$.
Thus $d'_Z(p,q)/d(p,q)$ is uniformly bounded for $p,q \in B_{i'}$ and $Z$ is lipschitz.
\end{proof}

Returning to the proof of Proposition~\ref{prop:equivariant-kirszbraun}, let $k^* \in [k,+\infty)$ be the infimum of lipschitz constants over all $(j,u)$-equivariant convex field extensions of $X$ to $\HH^2$.
If $k^* < 0$, then $k < 0$; we may choose an extension $Y$ with $\lip(Y)$ arbitrarily close to~$k^*$, in particular with $\lip(Y)<0$.
This proves~(1).

From now on, we assume $k^* \geq 0$.
Let $(Z_n)_{n\in\NN}$ be a sequence of $(j,u)$-equivariant extensions of~$X$ with $\lip(Z_n)\rightarrow k^*$.
Note that $U_1$ is covered by the $j(\Gamma)$-translates of some polygon with vertices in~$N_1$.
Therefore, by Lemma~\ref{lem:convexbounds} (using Observation~\ref{obs:donkey}.(iv) and~(vii)), the convex fields $Z_n$ are uniformly bounded over any compact set.
By Proposition~\ref{prop:limitofconvex}, we may extract a subsequence which is Hausdorff convergent to a convex field $Z_{\infty}$ on~$\HH^2$ with $\lip(Z_{\infty}) = k^*$.
Thus the set $\mathcal Z$ of $(j,u)$-equivariant convex field extensions of $X$ to $\HH^2$ with minimal lipschitz constant~$k^*$ is nonempty.
For $Z \in \mathcal Z$, we set
$$E_Z := \{p \in \HH^2 \ |\ \lip_p(Z) = k^*\} .$$
It is a closed subset of~$\HH^2$, as the function $p\mapsto\lip_p(Z)$ is upper semicontinuous, and it is $j(\Gamma)$-invariant (Observation~\ref{obs:donkey}).
We define the \emph{stretch locus} relative to~$X$ to be
$$E := \bigcap_{Z \in \mathcal Z} E_Z.$$
We claim that there exists $Y\in\mathcal Z$ such that $E=E_Y$.
Indeed, for any $p \in \HH^2 \smallsetminus E$ we can find a convex field $Z_p \in \mathcal Z$ and a neighborhood $V_p$ of $p$ in~$\HH^2$ such that $\delta_p :=  k^* - \lip_{V_p}(Z_p) > 0$.
Let $\{V_{p_i}\}_{i\in\NN^{\ast}}$ be a countable set~of~such neighborhoods such that $\HH^2 \smallsetminus E = \bigcup_{i\in\NN^{\ast}} V_{p_i}$.
The equivariant~convex~field
$$Y = \sum_{i=1}^{+\infty}\, 2^{-i} Z_{p_i}$$
satisfies $\lip_{V_{p_i}}(Y)\leq k^* - 2^{-i} \delta_{p_i}$ for all~$i$, hence $E_Y = E$.
We next study the structure of~$E$.

By $(**)$, since $k\leq k^*$, the set $E$ is contained in~$\overline{U}_1$.
Moreover, $E$ is not empty: otherwise we could cover $\overline U_1$ (which is compact modulo $j(\Gamma)$) by the $j(\Gamma)$-translates of finitely many open sets $V_{p_i}$ for which $\lip_{V_{p_i}}(Y) < k^*$.
Since the local lipschitz constant in~$N_1$ is also bounded by $k -\varepsilon \leq k^* - \varepsilon$ by $(**)$, it would then follow by $j(\Gamma)$-invariance and subdivision (Observation~\ref{obs:donkey}) that $\lip(Y)< k^*$, a contradiction.
Thus $E$ contains a point of~$\overline{U}_1$.

In fact, $E$ must contain a point of~$U_1$.
Indeed, let us show that if $E$ contains a point $p$ of $\partial U_1 = \partial N_1$, then there exists $q\in U_1$ such that
\begin{equation} \label{eqn:stretchisfromX}
d'_Y(p,q) = k^*\,d(p,q) .
\end{equation}
Let $B$ be a small ball centered at~$p$, of radius $r>0$, and let $A$ be a thin annulus neighborhood of $\partial B$ in~$B$.
We have $\lip_{(B\cap N_1)\cup A}(Y)\leq\lip(Y)=k^*$.
Suppose by contradiction that
\begin{equation}\label{eqn:nostretchedradius}
\sup_{q\in A}\, \frac{d'_Y(p,q)}{d(p,q)} < k^* .
\end{equation}
By assumption $(*{*}*)$ on~$X$, if $r$ is sufficiently small then $X(B\cap\nolinebreak N_1)$ admits a vector field extension $X'$ to~$B$ with $\lip(X')<k\leq k^*$.
If $B'\subset B$ is another ball centered at~$p$, of radius $r'\ll r$ small enough, then \eqref{eqn:nostretchedradius} and the continuity of the vector field~$X'$ imply that $d'(X'(p'),y_q)<k^*d(p',q)$ for all $p'\in B'$, all $q\in A$, and all $y_q\in Y(q)$.
Then the convex field defined over $\specialC:=(B\cap N_1)\cup A \cup B'$ that agrees with $Y$ over $A$ and $B\cap N_1$ and with $X'$ over $B'$ is $k^*$-lipschitz (see Figure~\ref{fig:terminal}).
Applying Theorem~\ref{thm:compact-extension} to $\specialC$, we find a $k^*$-lipschitz convex field $Y'$ on the ball $B=\CH(\specialC)$ that contains $Y(A)$ and $X(B\cap N_1)$ and satisfies $\lip_p(Y')=\lip_p(X')<k\leq k^*$; we can extend it to $j(\Gamma)\cdot B$ in a $(j,u)$-equivariant way, and then to~$\HH^2$ by taking $Y'=Y$ on $\HH^2\smallsetminus j(\Gamma)\cdot B$.
Using subdivision at points of $j(\Gamma)\cdot A$, we see that $\lip(Y')\leq k^*$, which contradicts the fact that $p\in E$.
Thus \eqref{eqn:nostretchedradius} is false, and \eqref{eqn:stretchisfromX} holds for some $q\in U_1$, which implies $q\in E$.

\begin{figure}[h!]
{\centering
\def\svgwidth{8cm}
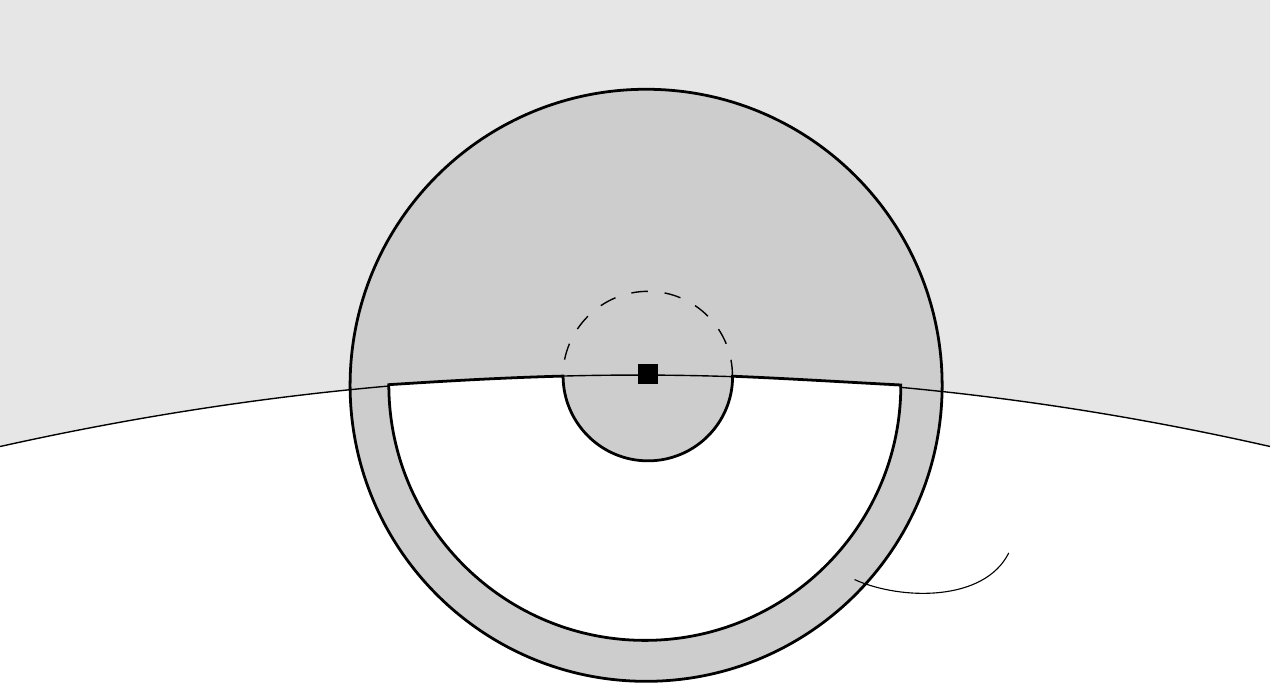}
\caption{Definition of the region $\specialC = (B \cap N_1) \cup A \cup B'$.}
\label{fig:terminal}
\end{figure}

Let us now prove that $E$ contains the lift of a $k^*$-stretched lamination contained in the convex core.

Assume first that $k^* > 0$.
Consider a point $p \in E \cap U_1$.
Let $Y'$ be the convex field obtained from $Y$ by simply removing all vectors above a small ball $B_p \subset U_1$ centered at~$p$, so that $Y'$ is defined over $\HH^2 \smallsetminus B_p$.
Proposition~\ref{prop:strict-extension}, applied to the restriction of $Y'$ to a small neighborhood of $\partial B_p$, implies that $p$ lies on a unique $k^*$-stretched segment $[q,q']$ with $q,q'\in\partial B_p$ (or else $Y'$ could have been extended with a smaller lipschitz constant at~$p$, and similarly at each $j(\gamma)\cdot p$ in an equivariant fashion, contradicting the fact that $p\in E$). 
This applies to all points $p \in E \cap U_1$, and so $E \cap U_1$ is a union of geodesic segments.
Moreover, the direction of the $k^*$-stretched segment $[q,q']$ at $p$ is unique for each $p$, and its length $2\,d(p,q)$ (the diameter of the ball $B_p$) can be taken to be bounded from below by a continuous, positive function of $p\in U_1$: it follows that any segment of $U_1$ that contains a $k^*$-stretched subsegment is $k^*$-stretched.
Note that one or more of these $k^*$-stretched (partial) geodesics in $E$ may have an endpoint in $\partial\overline{U}_1$; however, they cannot have two endpoints in $\partial\overline{U}_1$, by the assumption $(*)$.
Thus any (partial) geodesic $\ell\subset E$ descends to a simple (partial) geodesic in $j(\Gamma)\backslash\HH^2$ which, at least in one direction, remains in $j(\Gamma) \backslash U_1$ and accumulates on a geodesic lamination~$\mathscr{L}$.
This lamination~$\mathscr{L}$ must be contained in the convex core.
By closedness of~$E$, its preimage $\tilde{\mathscr{L}}\subset\HH^2$ is also part of~$E$, and each leaf of $\tilde{\mathscr{L}}$ is $k^*$-stretched.

Next, assume $k^*=0$.
Consider a point $p \in E \cap U_1$ and let $Y'$ be the convex field obtained from $Y$ by removing all vectors above a small ball $B_p \subset U_1$ around~$p$, so that $Y'$ is defined over $\HH^2 \smallsetminus B_p$.
Proposition~\ref{prop:strict-extension} applied to $Y'$ implies that either:
\begin{itemize}
  \item[$(i)$] $p$ lies on a segment $[q,q']$ with $q,q' \in \partial B_p$ and $d'_Y(q,q') = 0$; or
  \item[$(ii)$] $p$ lies in the convex hull of three distinct points $q_1,q_2,q_3 \in \partial B_p$ with $d'_Y(q_i, q_{i'}) = 0$ for all $1\leq i <i'\leq 3$.
\end{itemize} 
In case~$(i)$ the segment $[q,q']$ is $0$-stretched by $Y$, and in case~$(ii)$ the restriction of $Y$ to the interior of the triangle $q_1 q_2 q_3$ is a Killing field.
In both cases there is a $0$-stretched segment in~$E$ with midpoint~$p$, and we can bound from below the length of this segment by a continuous function of $p\in U_1$, \eg half the radius of $B_p$ (in case~$(ii)$, just pick the direction of the segment to be $pq_i$ where $q_i$ is at the smallest angle of the triangle $q_1 q_2 q_3$).
Now we must conclude that $E$ contains a geodesic lamination.
Whenever there are two intersecting, $0$-stretched, open line segments of $E$ containing~$p$, the $0$-lipschitz convex field $Y$ restricts to a Killing field on the union of these segments and in fact on their convex hull, so that points near $p$ satisfy case~$(ii)$.
So, if case~$(ii)$ never happens, then the germ of $0$-stretched segment through $p$ is unique, and we may proceed exactly as when $k^* > 0$.
If case~$(ii)$ does occur, then $E$ has an interior point near which $Y$ is a Killing field, which may be assumed to be $\underline{0}$ up to adjusting $u$ by a coboundary.
The set $E_0:=\{ p\in\HH^2 \,|\, \underline{0}(p)\in Y\}$ is convex by Proposition~\ref{prop:zeroconvex}, and contained in~$E$.
On the interior of~$E_0$, the convex field $Y$ coincides with~$\underline{0}$.
Consider $p\in \partial E_0 \smallsetminus \partial U_1$.
Points approaching $p$ from the interior of $E_0$ are midpoints of $0$-stretched segments, necessarily included in $E_0$, whose lengths are bounded from below.
This means $p$ cannot be an extremal point of~$E_0$.
Thus, if $E_0$ is strictly contained in $\overline{U}_1$, then any $p\in U_1\cap \partial E_0$ belongs to a (straight) side of $E_0$ which can only terminate, if at all, on $\partial\overline{U}_1$.
As in the case $k^*>0$, this side accumulates in the quotient $j(\Gamma) \backslash \HH^2$ to a geodesic lamination (it cannot terminate on $\partial\overline{U}_1$ at both ends by the assumption $(*)$).

Thus, for $k^*\geq 0$ we have a lamination $\mathscr{L}$ in the convex core of $j(\Gamma) \backslash \HH^2$ whose lift $\tilde{\mathscr{L}}$ to $\HH^2$ is $k^*$-stretched by~$Y$.
It remains to see that $k^* = k = k_{\boldsymbol\alpha}$.
We know that $k^*\geq k$ and $k^*\geq k_{\boldsymbol\alpha}$ (by \eqref{eqn:lip-marg2} applied to~$Y$).
In fact, $k^*=k_{\boldsymbol\alpha}$ by Proposition~\ref{prop:longcurves}, since any minimal component of~$\mathscr{L}$ can be approximated by simple closed curves.
Choose a smooth, $(j,u)$-equivariant vector field $W$ on~$\HH^2$ and recall the function $\newf'_W : T^1\HH^2\rightarrow\RR$ of \eqref{eqn:dintegral}.
For a long segment $[p,q]$ with endpoints in $N_1$, spending most of its length near~$\tilde{\mathscr{L}}$, we see as in Proposition~\ref{prop:longcurves} that $d'_W(p,q)=\int_{[p,q]}\newf'_W$ is roughly $k^*d(p,q)$ by uniform continuity of $\newf'_W$; therefore $k\geq k^*$ and finally $k=k^*$.
This completes the proof of Proposition~\ref{prop:equivariant-kirszbraun}.
\end{proof}

\section{Existence of a maximally stretched lamination}\label{sec:prooflamin}

We now prove Theorem~\ref{thm:laminations}, which states the existence of a lamination that is maximally stretched by all equivariant vector fields of minimal lipschitz constant.
First, in Section~\ref{subsec:weakermaxstretchedlamin}, we bring together all strands of Section~\ref{sec:extlipschitz} and prove the weaker Theorem~\ref{thm:laminations-convex} below, which differs from Theorem~\ref{thm:laminations} only in that the optimal lipschitz constant~$k$ is defined as an infimum over all convex fields (not just vector fields).
To prove Theorem~\ref{thm:laminations}, we then show that the infimum over convex fields is the same as the infimum over vector fields: this is done in Section~\ref{subsec:regularize}, after some technical preparation in Sections \ref{subsec:flowback} and~\ref{subsec:extraingredient} to approximate convex fields by vector fields with almost the same lipschitz constant.
Finally, in Section~\ref{subsec:smoothness} we describe a smoothing process that approximates a vector field by smooth vector fields, again with nearly the same lipschiz constant; this will be used in Section~\ref{sec:geotrans}.

\subsection{A weaker version of Theorem~\ref{thm:laminations}}\label{subsec:weakermaxstretchedlamin}

We first prove the following.

\begin{Theorem}\label{thm:laminations-convex}
Let $\Gamma$ be a discrete group, $j\in\Hom(\Gamma,G)$ a convex cocompact representation, and $u : \Gamma\rightarrow\g$ a $j$-cocycle.
Let $k\in\nolinebreak\RR$ be the infimum of lipschitz constants of $(j,u)$-equivariant \emph{convex fields} $X$ defined over~$\HH^2$.
Suppose $k\geq 0$.
Then there exists a geodesic lamination $\mathscr{L}$ in the convex core of $j(\Gamma)\backslash\HH^2$ that is $k$-stretched by all $(j,u)$-equivariant, $k$-lipschitz convex fields~$X$ defined over~$\HH^2$, meaning that
$$d'_X(p,q) = k\,d(p,q)$$
for any $p\neq q$ on a common leaf of the lift of $\mathscr{L}$ to~$\HH^2$; such convex fields $X$ exist, and can be taken to be standard in the funnels (Definition~\ref{def:standard-funnels}).
\end{Theorem}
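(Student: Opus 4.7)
The plan is to apply the equivariant extension theorem of the previous subsection (Proposition~\ref{prop:equivariant-kirszbraun}) to a ``seed'' vector field $X$ defined outside an enlarged convex core, constructed from the standard funnel data of Section~\ref{subsec:standard-funnels}. The output convex field will automatically furnish both a $k$-lipschitz equivariant field standard in the funnels and the maximally stretched lamination $\tilde{\mathscr{L}}$.

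First I construct the seed. For each conjugacy class of peripheral $\gamma\in\Gamma$, I adjust $u$ by a coboundary so $u(\gamma)$ is an infinitesimal translation along~$\A_{j(\gamma)}$, and pick $r<\min(k_\gamma,0)$ sufficiently negative. The standard field $X^{k_\gamma}_r$ of Definition~\ref{def:standard-funnels} is $(j|_{\langle\gamma\rangle},u|_{\langle\gamma\rangle})$-equivariant, $k_\gamma$-lipschitz, and enjoys a uniform local-lipschitz gap outside a tubular neighborhood of~$\A_{j(\gamma)}$ by Proposition~\ref{prop:standard-lipschitz}. Proposition~\ref{prop:fixup-boundary}(2) then extends this data across the remaining collar up to the outer boundary hypercycle of $N_1 := \HH^2 \setminus U_1$, where $U_1$ is the open $1$-neighborhood of the convex core preimage~$\overline U$. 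Propagating $\Gamma$-equivariantly yields a $(j,u)$-equivariant lipschitz vector field~$X$ on~$N_1$, standard in each funnel, satisfying the regularity hypotheses $(*),(**),(*{*}*)$ of Proposition~\ref{prop:equivariant-kirszbraun}.

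Since $k\geq 0$, I next apply Proposition~\ref{prop:equivariant-kirszbraun}(2) to~$X$: it produces a $(j,u)$-equivariant convex field $Y$ on~$\HH^2$ extending~$X$ with $\lip(Y)=\lip(X)=k_{\boldsymbol\alpha}:=\sup_\gamma {\boldsymbol\alpha}_u(\gamma)/\lambda(j(\gamma))$, together with a $j(\Gamma)$-invariant geodesic lamination $\tilde{\mathscr{L}}\subset\overline U$ maximally stretched by~$Y$. Its projection~$\mathscr{L}$ lies in the convex core of~$S$, and $Y$ is standard in the funnels since it extends~$X$. Because \eqref{eqn:lip-marg2} applied to an arbitrary $(j,u)$-equivariant convex field on~$\HH^2$ gives $\lip\geq k_{\boldsymbol\alpha}$, the infimum~$k$ of the theorem equals~$k_{\boldsymbol\alpha}$, so $Y$ is the $k$-lipschitz example sought. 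To show $\tilde{\mathscr{L}}$ is $k$-stretched by an \emph{arbitrary} such $Z$, I fix a leaf $\ell\subset\tilde{\mathscr{L}}$ and choose a sequence $(\gamma_n)$ with axes $\A_{j(\gamma_n)}$ Hausdorff-converging to~$\ell$; Proposition~\ref{prop:longcurves} applied to~$Y$ gives $k_{\gamma_n}\to k$. For any $p_n\in\A_{j(\gamma_n)}$ and any $z_{p_n}\in Z(p_n)$, \eqref{eqn:lip-marg1} applied to~$Z$ yields $d'(z_{p_n},\gamma_n\bullet z_{p_n})=k_{\gamma_n}\lambda(j(\gamma_n))$, so the mean $d'_Z$-rate along a fundamental translate of the axis equals $k_{\gamma_n}\to k$, while the pointwise rate is~$\leq k$; subdivision (Observation~\ref{obs:donkey}.(iv)) plus Hausdorff passage to the limit and concatenation along~$\ell$ (Observation~\ref{obs:donkey}.(vi)) force $d'_Z(p,q)=k\,d(p,q)$ for all distinct $p,q$ on~$\ell$.

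The main obstacle is the seed construction: one must verify the strict inequalities $(*),(**)$ and the extendability condition $(*{*}*)$ uniformly over the non-convex region~$N_1$ (in particular for pairs of points in distinct funnel components, whose connecting segment leaves~$N_1$) and arrange $\lip(X)=k_{\boldsymbol\alpha}$ as demanded by the conclusion of Proposition~\ref{prop:equivariant-kirszbraun}(2). In the case $k_{\boldsymbol\alpha}>\max_{\gamma\text{ peripheral}}k_\gamma$, the extra ``length'' of~$X$ must be supplied by the asymptotic behavior of $d'_X$ for long transverse pairs between funnels, a subtle matching issue depending on the choice of funnel data.
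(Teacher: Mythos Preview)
Your proposal has a genuine gap in the seed construction that you yourself flag as an ``obstacle'' in the last paragraph, and this gap is not peripheral --- it is precisely what the paper's proof is designed to overcome.

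The issue is this. You build the seed $X$ on $N_1$ funnel-by-funnel from standard fields $X^{k_\gamma}_r$ (plus Killing adjustments), then invoke Proposition~\ref{prop:equivariant-kirszbraun}. But the $k$ appearing in that proposition is $\lip_{N_1}(X)$, not the theorem's global infimum $k$; these coincide only if you can show $\lip_{N_1}(X)\le k$, and you have no mechanism for this. Within a single funnel component the standard field has lipschitz constant $k_\gamma$, but for a pair $p,q$ lying in \emph{distinct} funnel components the ratio $d'_X(p,q)/d(p,q)$ depends on the Killing-field choices and on the geometry of the segment $[p,q]$ through the convex core, where your $X$ is not defined. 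There is no subdivision argument available to you here: subdivision (Observation~\ref{obs:donkey}(iv)) requires intermediate points where the field is defined, and your seed is blank on~$U_1$. Consequently you cannot verify hypothesis~$(*)$ of Proposition~\ref{prop:equivariant-kirszbraun} for cross-funnel pairs, nor can you identify $\lip_{N_1}(X)$. (Your appeal to Proposition~\ref{prop:fixup-boundary}(2) is also miscast: that proposition takes data on a hypercycle with $\eta_0>0$, i.e.\ \emph{inside} the convex core, and extends it outward into a funnel --- you have no such input.)

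The paper resolves exactly this by first producing a $k$-lipschitz equivariant convex field $X_\infty$ on the \emph{interior} $U$ of the convex core, as a Hausdorff limit of a minimizing sequence (using the a~priori bound of Proposition~\ref{prop:bounds} to get compactness, and Proposition~\ref{prop:limitofconvex} to pass to the limit). Only then does it attach standard funnel pieces to $X_\infty|_{U_0}$ via Proposition~\ref{prop:fixup-boundary}(2), applied to the restriction of $X_\infty$ to hypercycles just inside the core. Now for $p,q$ in different components of $N$, the segment $[p,q]$ crosses $U_0$, and subdivision through points of the hypercycles bounds $d'_X(p,q)\le k\,d(p,q)$ strictly; the hypotheses $(*),(**),(*{*}*)$ follow. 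This convex-core limit is the missing ingredient in your argument, not a refinement of it.

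A secondary point: your argument that an \emph{arbitrary} $k$-lipschitz equivariant $Z$ must $k$-stretch $\tilde{\mathscr{L}}$ is incomplete. Equation~\eqref{eqn:lip-marg1} gives the average rate $k_{\gamma_n}$ over a full period $\lambda(j(\gamma_n))$ of the approximating axis, but $\lambda(j(\gamma_n))\to\infty$ while $k-k_{\gamma_n}\to 0$; the product error $(k-k_{\gamma_n})\lambda(j(\gamma_n))$ need not vanish, so your ``subdivision plus Hausdorff passage'' does not force $d'_Z(p,q)=k\,d(p,q)$ on a fixed-length subsegment of the leaf.
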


\begin{proof}
Recall that by \eqref{eqn:lip-marg2},
$$k \geq k_{\boldsymbol\alpha} \,:=\, \sup_{\gamma \text{ \rm{with} } \lambda (j(\gamma)) > 0}\ \frac{{\boldsymbol\alpha}_u(\gamma)}{\lambda(j(\gamma))}\,,$$
and $k_{\boldsymbol\alpha}$ is still a lower bound for the lipschitz constant of any locally bounded, $(j,u)$-equivariant convex field defined over a nonempty subset of~$\HH^2$, by Proposition~\ref{prop:fixup-boundary}.(1).

Let $(X_n)_{n\in\NN}$ be a sequence of $(j,u)$-equivariant convex fields defined over~$\HH^2$ such that $\lip(X_n)\rightarrow \nolinebreak k$.
By Proposition~\ref{prop:bounds}, the $X_n$ are uniformly bounded over any compact subset of the interior $U \subset \HH^2$ of the convex core.
Therefore, some subsequence of $(X_n(U))_{n\in\NN}$ admits a Hausdorff limit $X_{\infty}$, which is a $k$-lipschitz, $(j,u)$-equivariant convex field defined over~$U$ by Proposition~\ref{prop:limitofconvex}. 
We now use the extension theory of Section~\ref{sec:extlipschitz} to produce a $k$-lipschitz, $(j,u)$-equivariant convex field, defined over all of~$\HH^2$, that agrees with $X_{\infty}$ inside (a subset of) $U$ and is standard in the funnels. 
Let $N$ (resp. $N'$) be the set of points in~$\HH^2$ at distance $>1$ (resp. $>1/2$) from~$U$.
Choose $\eta_0>0$, small enough so that all the hypercycles $H_i$ running inside $U$ at distance $\eta_0$ from the boundary components of~$U$ are disjoint. 
Let $U_0\subset U$ be the closed, connected region bounded by the~$H_i$; the restriction $X_{\infty}(U_0)$ is locally bounded and $k$-lipschitz. 
Choose a hypercycle $H_i$ parallel to a connected component $N'_i$ of~$N'$.
Applying Proposition~\ref{prop:fixup-boundary}.(2) with $\eta_1=-1/2$ and Proposition~\ref{prop:standard-lipschitz}, we obtain a $k$-lipschitz extension $X$ of $X_{\infty}(U_0)$ to $U_0\cup N'_i$ such that
\begin{itemize}
  \item $d'_X(p,q)<k_{\boldsymbol\alpha}\,d(p,q)$ for all distinct $p\in N'_i$ and $q\in H_i\cup N'_i$,
  \item $\lip_p(X)$ is uniformly smaller than $k_{\boldsymbol\alpha}$ for $p\in N'_i$,
  \item $X$ is standard on~$N'_i$.
\end{itemize}
We then extend $X$ equivariantly to $U_0\cup j(\Gamma)\cdot N'_i$, and repeat the procedure for each hypercycle $H_i$ modulo $j(\Gamma)$: this produces a vector field $X$ defined on $U_0 \cup N'$.
We have $k_N:=\lip_N(X)\leq k$ by construction, using a subdivision argument (Observation~\ref{obs:donkey}) to bound $d'_{X(N)}(p,q)/d(p,q)$ for $p$ and~$q$ in different components of~$N$. 
Moreover, the vector field $X(N)$, with lipschitz constant $k_N$, satisfies the hypotheses $(*),(**),(*{*}*)$ of Proposition~\ref{prop:equivariant-kirszbraun} since $k_N\geq k_{\boldsymbol\alpha}$.
(The hypothesis $(*{*}*)$ is satisfied because $X(N)$ is a restriction of the standard field $X(N')$.)
Let $Y$ be the $(j,u)$-equivariant extension of $X(N)$ to~$\HH^2$ given by Proposition~\ref{prop:equivariant-kirszbraun}.
If $k_N<0$, then $\lip(Y)<0$, which is impossible since $k\geq 0$ by assumption. 
Therefore, $k_N\geq 0$ and $k_N=\lip(Y)$.
Since $Y$ is defined over~$\HH^2$, this yields $k_N\geq k$, hence $k_N=k$.
Proposition~\ref{prop:equivariant-kirszbraun} also gives a $k$-stretched lamination in the convex core, and $k_N=k_{\boldsymbol\alpha}$.
\end{proof}

By Theorem~\ref{thm:laminations-convex}, the infimum $k$ of lipschitz constants of $(j,u)$-equivariant convex fields defined on~$\HH^2$ is achieved by a convex field $X$ that is standard in the funnels.
To prove Theorem~\ref{thm:laminations}, we now only need to establish the following proposition.

\begin{Proposition}\label{prop:exaequo}
Given a $(j,u)$-equivariant \emph{convex field}~$X$ defined on~$\HH^2$ and standard in the funnels, there exist $(j,u)$-equivariant \emph{vector fields} $X^*$ defined on $\HH^2$ with $\lip(X^*)$ arbitrarily close to $\lip(X)$. 
\end{Proposition}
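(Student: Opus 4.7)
The plan is to construct $X^*$ by averaging, via an equivariant partition of unity, a family of local Killing-field approximations of~$X$. Since the hypothesis that $X$ is standard in the funnels means $X$ is already a smooth vector field outside some $j(\Gamma)$-invariant neighborhood~$\mathcal{U}$ of the convex core, with $\mathcal{U}$ compact modulo $j(\Gamma)$, the construction will only modify $X$ on~$\mathcal{U}$, setting $X^*:=X$ elsewhere. Fix $\varepsilon>0$, let $\delta>0$ be small (to be chosen at the end), and take an equivariant locally finite cover $\{B_i\}$ of $\mathcal{U}$ by balls of radius~$\delta$ with bounded overlap, together with an associated $j(\Gamma)$-equivariant smooth partition of unity $\{\psi_i\}$.

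The heart of the construction is the choice, for each~$i$, of a basepoint $p_i\in B_i$ and a Killing field $K_i$ with $K_i(p_i)\in X(p_i)$, made \emph{coherently}: using the $k$-lipschitz property of~$X$, one arranges that for every $p\in B_i$ the fiber $X(p)$ contains a vector within $O(\delta)$ of $K_i(p)$, and that $K_i$ and $K_j$ differ by $O(\delta)$ on overlaps $B_i\cap B_j$. Producing such a family $j(\Gamma)$-equivariantly, with the right quantitative coherence along orbit relations, is the purpose of the flow-back construction in Section~\ref{subsec:flowback} together with the compatibility input of Section~\ref{subsec:extraingredient}. One then defines
\[
X^*(p) \,:=\, \sum_i \psi_i(p)\,K_i(p) \qquad (p\in\mathcal{U}),
\]
which together with $X^*:=X$ on the funnels yields a smooth, $(j,u)$-equivariant vector field.

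The lipschitz estimate uses the additivity of $d'$ in its two arguments, an immediate consequence of the signed-projection formula of Remark~\ref{rem:d'proj} (or Observation~\ref{obs:donkey}), together with the fact that Killing fields are $0$-lipschitz (also Observation~\ref{obs:donkey}). Writing $\pi$ for signed projection onto the oriented geodesic $(p,q)$, one finds
\[
d'(X^*(p),X^*(q)) \,=\, \sum_i \psi_i(p)\,d'(K_i(p),K_i(q)) + \sum_i (\psi_i(q)-\psi_i(p))\,\pi(K_i(q)),
\]
where the first sum vanishes term by term. Since $\sum_i(\psi_i(q)-\psi_i(p))=0$, subtracting a common reference $\pi(K_{i_0}(q))$ from the second sum yields
\[
|d'(X^*(p),X^*(q))| \,\leq\, C\,\max_i\Lip(\psi_i)\,d(p,q)\,\max_{i,j}\|K_i(q)-K_j(q)\|,
\]
with $C$ a bound on the number of overlapping indices. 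The coherent-choice property makes the last factor $O(\delta)$, which cancels against $\Lip(\psi_i)=O(\delta^{-1})$, producing an $O(k)\,d(p,q)$ bound.

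The main obstacle will be refining this naive bound from $Ck\,d(p,q)$ (with $C>1$) down to $(k+\varepsilon)\,d(p,q)$: the basic partition-of-unity averaging does not by itself deliver a constant arbitrarily close to~$k$. To get there, the $K_i$'s must be matched not merely to within $O(\delta)$, but in a way that respects the first-order variation of~$X$, so that the leading contribution of the partition gradient cancels against the variation of $\pi(K_i(q))$. Engineering this sharper cancellation is the purpose of the technology developed in Sections~\ref{subsec:flowback}--\ref{subsec:extraingredient}. Once the refined bound is in hand, choosing $\delta$ small enough and blending $X^*$ smoothly into the standard field across the funnel boundaries (using the uniform strict inequalities that $X$ already satisfies there by Proposition~\ref{prop:standard-lipschitz}) completes the construction with $\lip(X^*)\leq\lip(X)+\varepsilon$.
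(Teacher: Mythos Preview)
Your outline does not match the paper's argument, and the central step is a genuine gap rather than a detail to be filled in. The paper does \emph{not} average local Killing fields via a partition of unity. Instead it writes $X=Y+Z$ with $Y$ a smooth $(j,u)$-equivariant vector field agreeing with $X$ in the funnels and $Z:=X-Y$ a bounded $j$-\emph{invariant} convex field supported on a cocompact set; it then applies the backward geodesic flow to $Z$ alone. Proposition~\ref{prop:flowback} shows that for small $t<0$ the flow-back $Z_t=\varphi_t(Z)$ is a $j$-invariant \emph{vector field} and that $\frac{d'}{d}(x_t,y_t)\le \max\{\frac{d'}{d}(x,y),-R\}+\varepsilon$. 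Proposition~\ref{prop:allfields}, using Lemma~\ref{lem:almost-parallel} to control how the direction of $[p_t,q_t]$ drifts from that of $[p,q]$ (and hence how the $Y$-contribution changes), then gives $\lip(Y+Z_t)\le\lip(X)+2\varepsilon$. That is the whole proof: the flow-back is not a device for selecting coherent local Killing fields, it is the mechanism that directly collapses the convex fibers of $Z$ to single vectors with nearly the same $d'/d$.

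Your averaging scheme, by contrast, cannot deliver a constant close to~$k$. The identity you derive,
\[
d'_{X^*}(p,q)=\sum_i\big(\psi_i(q)-\psi_i(p)\big)\,\pi\big(K_i(q)\big),
\]
is bounded by $C\cdot\Lip(\psi_i)\cdot\max_{i,j}\|K_i(q)-K_j(q)\|\cdot d(p,q)$, and with $\Lip(\psi_i)\sim\delta^{-1}$ and $\|K_i-K_j\|\sim\delta$ the product is $O(1)$, not $k+\varepsilon$; no choice of $K_i$ from the fibers of~$X$ changes this scaling. (This is exactly why the partition-of-unity step in the Claim inside the proof of Proposition~\ref{prop:equivariant-kirszbraun} is used only to produce \emph{some} equivariant extension, explicitly ``possibly with very bad lipschitz constant''.) The ``sharper cancellation'' you invoke from Sections~\ref{subsec:flowback}--\ref{subsec:extraingredient} is not a refinement of the partition-of-unity estimate; it is an entirely different construction, and your sketch does not explain how it would feed into the averaging. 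There is also a prior difficulty you pass over: for a genuine convex field the fibers $X(p_i)$ may be large, and the $k$-lipschitz condition constrains only the $d'$-projections, not the full norms, so the ``coherence'' bound $\|K_i-K_j\|=O(\delta)$ on overlaps is not available without first producing a single-valued selection --- which is precisely what the flow-back accomplishes.
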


The main idea (Proposition~\ref{prop:flowback} below) is that the backwards flow of a lipschitz convex field gives vector field approximates with nearly the same lipschitz constant.

\subsection{The flow-back construction}\label{subsec:flowback}

Let $Z$ be a convex field on~$\HH^2$.
For any $x,y\in\nolinebreak Z$ with $x\in T_p\HH^2$ and $y\in T_q\HH^2$, we set
$$\frac{d'}{d}(x,y) := \frac{d'(x,y)}{d(p,q)}\,,$$
where $d'$ has been defined in \eqref{eqn:d'}.
When $Z$ is a vector field, there is no ambiguity about the vectors $x\in Z(p)$ and $y\in Z(q)$ and we set
\begin{equation}\label{eqn:d'Z}
\frac{d'_Z}{d}(p,q) := \frac{d'}{d}(x,y) = \frac{d'(x,y)}{d(p,q)}\,;
\end{equation}
then $\lip(Z)=\sup_{p\neq q}\frac{d'_Z}{d}(p,q)$.

\begin{Definition}\label{def:flowback}
Let $(\varphi_t)_{t\in\RR}$ be the geodesic flow of~$\HH^2$, acting on $T\HH^2$.
For $t\in\RR$, we shall denote by $Z_t$ the convex field produced by flowing for time~$t$: 
$$Z_t := \varphi_t(Z).$$
When $t < 0$, we refer to $Z_t$ as the \emph{flow-back} of $Z$.
\end{Definition}

The point is that the flow-back of a lipschitz convex field is a (globally defined) vector field (see Figure~\ref{fig:1dflowback} for a one-dimensional illustration):

\begin{figure}[h]
{\centering
\def\svgwidth{4.4in}
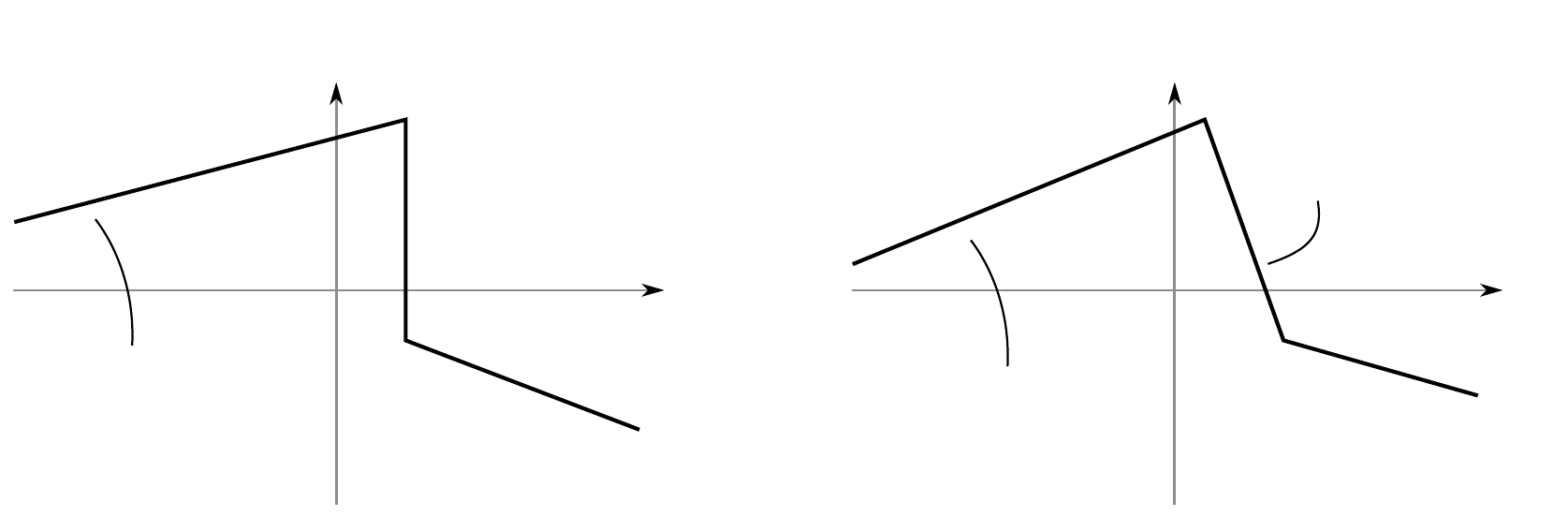}
\caption[]{A one-dimensional illustration: a lipschitz convex field $Z$ over~$\RR$ identifies with a subset of $\RR^2\simeq T\RR$, namely a curve whose slope is bounded from above (by $\lip(Z)$). Its flow-back $Z_{-\eps}$ is the image of $Z$ under the linear map $\left( \begin{smallmatrix} 1 & -\varepsilon\\ 0 & \ 1\end{smallmatrix}\right)$, and is a vector field (\ie the graph of a continuous function $\RR \rightarrow \RR$) with slightly larger lipschitz constant than~$Z$.}
\label{fig:1dflowback}
\end{figure}

\begin{Proposition}\label{prop:flowback}
Let $Z$ be an $R$-lipschitz \emph{convex field} defined on all of $\HH^2$, with $R>0$.
For any negative $t\in (\frac{-1}{R},0)$, the set $Z_t$ is a lipschitz \emph{vector field} defined on~$\HH^2$.
Moreover, for any $\varepsilon>0$ there exists $t_0<0$ such that for all $t\in (t_0,0)$ and all $x\neq y$ in~$Z$,
\begin{equation}\label{eqn:flowback}
\frac{d'}{d}(x_t,y_t) \leq \max \left\{ \frac{d'}{d}(x,y), -R\right\} + \varepsilon ,
\end{equation}
where we set $x_t:=\varphi_t(x)\in Z_t$ and $y_t:=\varphi_t(y)\in Z_t$, and we interpret the maximum in \eqref{eqn:flowback} to be $-R$ if $x$ and $y$ are based at the same point of~$\HH^2$.
If $Z$ is $j$-invariant, then~so~is~$Z_t$.
\end{Proposition}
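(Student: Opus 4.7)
The plan is to exploit the convexity of distance along geodesic pairs in the hyperbolic plane. For any $x \in T_p\HH^2$ and $y \in T_q\HH^2$ with $p \neq q$, set
$$f(s) := d\big(\exp_p(sx), \exp_q(sy)\big),$$
a smooth convex function on $\RR$ (by the standard convexity of distance in a CAT$(-1)$ space) with $f(0) = d(p,q)$, $f'(0) = d'(x,y)$, $f(t) = d(p_t,q_t)$, and $f'(t) = d'(x_t, y_t)$. All estimates will come from the two consequences of convexity: $f'$ is non-decreasing, and $f$ lies above each of its tangent lines, so that $f(t) \geq f(0) + t f'(0)$.

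To see that $Z_t$ is single-valued, I would suppose $x \neq y$ in $Z$ satisfy $\pi(x_t) = \pi(y_t)$ and derive a contradiction. If $p = q$, then two geodesic rays from the same point in $\HH^2$ cannot meet again, forcing $x = y$. If $p \neq q$, the tangent-line bound with $r := f'(0)/f(0) \leq R$ (from $\lip(Z) \leq R$) gives $f(t) \geq f(0)(1+tr)$, and $1+tr > 0$ for $t \in (-1/R, 0)$: clear when $r \leq 0$, and when $0 < r \leq R$ use $|tr| < R \cdot (1/R) = 1$. Hence $f(t) > 0$, contradicting $\pi(x_t) = \pi(y_t)$. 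That $Z_t$ is defined over all of $\HH^2$ follows by noting that the continuous injection $F : Z \to \HH^2$, $(p,x) \mapsto \exp_p(tx)$, is proper (local boundedness of $Z$ from $R$-lipschitzness forces $\|x_n\| \to \infty$ if $\pi(x_n) \to \infty$ and $F(x_n)$ converges, a contradiction) and open (for $|t|$ small, $F$ is $C^0$-close to the projection $\pi$, which is open), so that $F(Z)$ is a nonempty clopen subset of the connected space $\HH^2$.

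For the lipschitz bound in the case $p \neq q$, convexity gives $f'(t) \leq f'(0) = r f(0)$ and $f(t) \geq f(0)(1+tr) > 0$, hence
$$\frac{d'(x_t,y_t)}{d(p_t,q_t)} \;=\; \frac{f'(t)}{f(t)} \;\leq\; \frac{r}{1+tr}.$$
An elementary analysis of this upper bound is the crux. When $r \in [-R,R]$, the difference $\frac{r}{1+tr} - r = \frac{-tr^2}{1+tr}$ is $O(|t| R^2)$, hence $\leq \varepsilon$ for $|t|$ small (in terms of $\varepsilon, R$). When $r < -R$, the map $r \mapsto \frac{r}{1+tr}$ is increasing (its $r$-derivative is $(1+tr)^{-2} > 0$), so on $(-\infty,-R]$ it is bounded by its value at $r = -R$, namely $\frac{-R}{1+|t|R}$, and this is $\leq -R + \varepsilon$ provided $|t| \leq \varepsilon/[R(R-\varepsilon)]$. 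Together these yield $\frac{d'}{d}(x_t,y_t) \leq \max\{r,-R\} + \varepsilon$. For the degenerate case $p = q$, apply convexity instead to $g(s) := d(\exp_{p_t}(sx_t), \exp_{q_t}(sy_t))$ on $[0, -t]$: note $g(0) = d(p_t,q_t) > 0$ and $g(-t) = d(p,p) = 0$ (the forward flow undoes the backward flow), so the chord inequality gives $g'(0) \leq [g(-t) - g(0)]/(-t) = d(p_t, q_t)/t$, whence $\frac{d'(x_t,y_t)}{d(p_t, q_t)} \leq 1/t < -R \leq -R + \varepsilon$, matching the stated convention.

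The $j$-invariance of $Z_t$ is automatic: the geodesic flow on $T\HH^2$ commutes with the pushforward action of any isometry, so $Z_t = \varphi_t(Z)$ inherits $j$-invariance from $Z$. The main obstacle is the two-regime case split $r < -R$ versus $r \in [-R,R]$ in the lipschitz estimate, where $\frac{r}{1+tr}$ must be controlled by monotonicity in one regime and by a Taylor-style estimate in the other; the surjectivity component of ``defined over $\HH^2$'' is a technicality handled by the proper/open embedding argument.
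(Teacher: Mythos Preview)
Your overall strategy---use convexity of $\psi(\tau)=d(\exp_p(\tau x),\exp_q(\tau y))$ to bound $\psi'(t)/\psi(t)$ by $r/(1+tr)$ with $r=\psi'(0)/\psi(0)$, then analyze that rational function---is exactly the paper's approach.  However, two steps are not correctly justified.

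\textbf{The key inequality $\psi'(t)/\psi(t)\le r/(1+tr)$ does not follow from the two facts you cite.}  From $\psi'(t)\le r\,\psi(0)$ and $\psi(t)\ge \psi(0)(1+tr)>0$ you get $\psi'(t)/\psi(t)\le r\,\psi(0)/\psi(t)$, and then you want $r\,\psi(0)/\psi(t)\le r/(1+tr)$.  When $r\ge 0$ this is fine, but when $r<0$ the inequality $\psi(t)\ge\psi(0)(1+tr)$ goes the wrong way after multiplication by the negative number $r\,\psi(0)$.  The desired bound \emph{is} true, but it requires a more careful argument.  The paper treats separately the case $\psi'(t)\le 0\le\psi'(0)$ (trivial, since the left side is $\le 0$ and the right side $\ge 0$) and the case where $\psi'(t)$ and $\psi'(0)$ have the same sign: there one inverts to $\psi(t)/\psi'(t)\ge t+\psi(0)/\psi'(0)$, i.e.\ $s(t)\le s(0)$ where $s(\tau)=\tau-\psi(\tau)/\psi'(\tau)$ is the $\tau$-intercept of the tangent line to the graph of $\psi$ at $\tau$.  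That $s$ is monotone in this regime is a direct consequence of convexity (two tangent lines with slopes of the same sign cross between the tangency points, and the one with steeper slope hits the axis first).

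\textbf{The surjectivity argument is sketchy.}  Your claim that properness follows because ``$\|x_n\|\to\infty$ is a contradiction'' is not a contradiction at all once $p_n\to\infty$: local boundedness of $Z$ gives nothing there.  Your openness claim (``$F$ is $C^0$-close to $\pi$'') is also insufficient, since $Z$ is not a manifold and invariance of domain does not apply.  The paper handles existence of a preimage of a given $p\in\HH^2$ directly: the vector field $W(q)=\log_q(p)$ is $(-1)$-lipschitz, and $\varphi_t(x)\in T_p\HH^2$ iff $x=\tfrac{1}{t}W(\pi(x))$; since $Z+\tfrac{1}{|t|}W$ is $(R-\tfrac{1}{|t|})$-lipschitz with $R-\tfrac{1}{|t|}<0$, it has a zero by Proposition~\ref{prop:fixpoint}, which is exactly the required $x\in Z$.

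Your treatment of single-valuedness, of the degenerate case $p=q$, of the downstream analysis of $r\mapsto r/(1+tr)$, and of $j$-invariance is correct.
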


\begin{proof}
The set $Z_t$ is closed in $T\HH^2$ (since $Z$ is), and $j$-invariant if $Z$ is.
We claim that if $-1/R<t<0$, then $Z_t$ has at most one vector above each point $p\in\HH^2$: indeed, if $Z_t(p)$ contained two vectors $x\neq y$, then we would have $\varphi_{-t}(x),\varphi_{-t}(y)\in Z$ but
$$\frac{d'}{d}\big(\varphi_{-t}(x), \varphi_{-t}(y)\big) \geq \frac{1}{-t} > R$$
by convexity of the distance function, which would contradict $\lip(Z) \leq R$.
Moreover, $Z_t=\varphi_t(Z)$ also has \emph{at least} one point above~$p$: indeed, if $W$ denotes the vector field $q\mapsto \log_q(p)$ that flows all points to $p$ in time~$1$, then $\lip(W)\leq -1$ (again by convexity of the distance function), and a vector $x\in T\HH^2$ satisfies $\varphi_t(x)\in T_p\HH^2$ if and only if $x\in\frac{1}{t}W$.
But $Z$ contains a vector of $\frac{1}{t}W$, because $Z+\frac{1}{-t}W$ has lipschitz constant at most $\lip(Z) +\frac{1}{-t}(-1)<0$ (Observation~\ref{obs:donkey}), hence admits a zero by Proposition~\ref{prop:fixpoint}.
Therefore $Z_t$ is a vector field defined on all of~$\HH^2$ (necessarily continuous, since $Z_t$ is closed in $T\HH^2$).

We now prove \eqref{eqn:flowback}.
Consider $x\neq y$ in~$Z$, with $x\in T_p\HH^2$ and $y\in T_q\HH^2$ for some $p,q\in\HH^2$.
Let $x_t=\varphi_t(x)$ and $y_t=\varphi_t(y)$ be the corresponding vectors of~$Z_t$, based at $p_t:=\exp_p(tx)$ and $q_t:=\exp_q(ty)$ respectively.
If $p=q$, then $\frac{d'}{d}(x_t,y_t)\leq 1/t$ for all $t<0$ by convexity of the distance function, hence \eqref{eqn:flowback} holds for all $-1/R<t<0$.
We now assume $p\neq q$.
Fix $\varepsilon>0$ and let $t_0:=\frac{1}{R+\varepsilon}-\frac{1}{R}<0$.
It is enough to prove that for all $t\in (t_0,0)$,
\begin{equation}\label{eqn:almost-lip}
\frac{d'}{d}(x_t, y_t)\leq f_t\left ( \frac{d'}{d} (x, y)\right ),~\text{ where }~f_t(\xi)=\frac{\xi}{1+t\xi}=(\xi^{-1}+t)^{-1} .
\end{equation}
Indeed, assume that \eqref{eqn:almost-lip} holds.
One checks that $f_{t_0}(\xi)\leq \xi+\varepsilon$ for all $|\xi|\leq R$, that $f_{t_0}(\xi)\leq -R+\varepsilon$ for all $\xi\leq -R$, and that $f_t\leq f_{t_0}$ for all $t\in (t_0,0)$.
Since $\frac{d'}{d}(x,y)\leq R$, this implies \eqref{eqn:flowback}.

\begin{figure}[h!]
{\centering
\def\svgwidth{8cm}
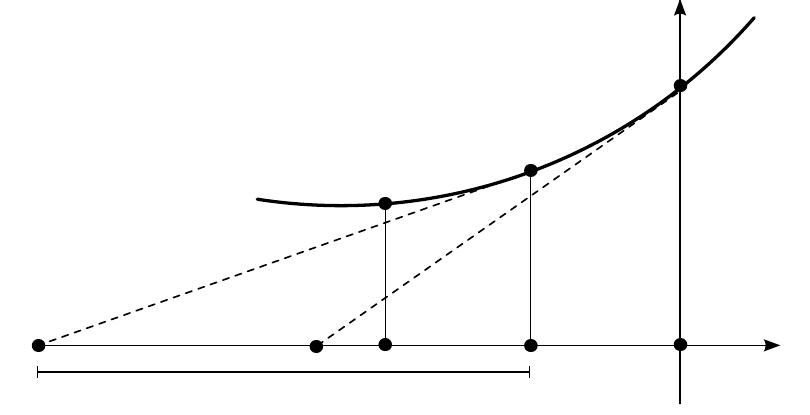}
\caption{The graph of $\psi$ with two tangents.}
\label{fig:convexcurve}
\end{figure}

We now prove \eqref{eqn:almost-lip}.
It is a pure consequence of the convexity of the distance function
$$\begin{array}{cccc}
\psi : & \RR & \longrightarrow & \RR^+ \\
& \tau & \longmapsto & d(p_{\tau},q_{\tau})\,.
\end{array}$$
We can rewrite \eqref{eqn:almost-lip} as 
\begin{equation} \label{eqn:almost-lip2}
\frac{\psi'(t)}{\psi(t)}\leq f_t \left ( \frac{\psi'(0)}{\psi(0)} \right ) .
\end{equation} 
By convexity, $\psi'(t)\leq\psi'(0)$.
If $\psi'(t)\leq 0\leq\psi'(0)$, then \eqref{eqn:almost-lip2} holds because $f_t(\xi)$ has the sign of $\xi$ when $t\in (t_0,0)$ and $\xi\leq R$.
We now assume that $\psi'(0)$ and $\psi'(t)$ have the same sign.
Then we can invert: \eqref{eqn:almost-lip2} amounts to $\frac{\psi(t)}{\psi'(t)}\geq t+\frac{\psi(0)}{\psi'(0)}$.
If $s(\tau)$ denotes the abscissa where the tangent to the graph of $\psi$ at $(\tau,\psi(\tau))$ meets the horizontal axis (see Figure~\ref{fig:convexcurve}), then $\frac{\psi(\tau)}{\psi'(\tau)}=\tau-s(\tau)$, hence \eqref{eqn:almost-lip2} becomes $s(t) \leq s(0)$, which is true by convexity of $\psi$ since $t<0$ and $\psi'(t)$ and $\psi'(0)$ have the same sign. 
This completes the proof of Proposition~\ref{prop:flowback}.
\end{proof}

\subsection{One more ingredient}\label{subsec:extraingredient}

The following lemma expresses the idea that if two points, moving uniformly on straight lines of~$\HH^2$, stay at nearly constant distance, then the line through them stays of nearly constant direction.

\begin{Lemma}\label{lem:almost-parallel}
For any $\theta>0$ there exists $0<\delta<\theta$ with the following property: if $p,p',q,q' \in \HH^2$ (with $p\neq q$ and $p'\neq q'$) all belong to a ball $B$ of diameter $\delta$ and the oriented lines $pq$ and $p'q'$ intersect (possibly outside $B$) at an angle $\geq \theta$, then the midpoints $p''$ of $pp'$ and $q''$ of $qq'$ satisfy
$$d(p'',q'')\leq (1-\delta) \max\{d(p,q),d(p',q')\} .$$ 
\end{Lemma}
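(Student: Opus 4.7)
The lemma captures a small-scale ``near-Euclidean'' phenomenon in $\HH^2$: as $\delta \to 0$, the four points lie in a ball on which the hyperbolic metric is arbitrarily close to Euclidean, so the claim reduces to an analogous Euclidean statement. My plan is to first prove the Euclidean analog with a definite constant strictly less than $1$, then transfer it to $\HH^2$ by a rescaling / compactness argument.

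\emph{Euclidean step.} In $\RR^2$, the midpoints $p'' = (p+p')/2$ and $q'' = (q+q')/2$ satisfy
$$q'' - p'' \,=\, \tfrac{1}{2}\big((q-p) + (q'-p')\big).$$
The vectors $v = q-p$ and $v' = q'-p'$ are (scaled) direction vectors of the two oriented lines, so their angle $\alpha$ satisfies $\alpha \geq \theta$. The law of cosines $|v+v'|^2 = |v|^2 + |v'|^2 + 2|v||v'|\cos\alpha$, optimized over $|v|, |v'| \in (0, M]$ with $M = \max(|v|, |v'|)$ and $\alpha \in [\theta, \pi]$, yields $|q''-p''| \leq c(\theta)\, M$ where $c(\theta) := \max(\cos(\theta/2), 1/2) \in (0, 1)$ depends only on $\theta$. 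Crucially, this Euclidean bound is insensitive to the absolute positions of $p, p', q, q'$: it sees only the two vectors $v, v'$ and the angle between them.

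\emph{Hyperbolic step (compactness/rescaling).} Suppose for contradiction the lemma fails: for some $\theta > 0$, there are $\delta_n \to 0$ and configurations $(p_n, q_n, p'_n, q'_n) \in B_n$ of diameter $\delta_n$ satisfying the angle hypothesis, yet $d(p''_n, q''_n) > (1 - \delta_n) L_n$, where $L_n := \max(d(p_n, q_n), d(p'_n, q'_n)) > 0$. Rescale the metric by $1/L_n$: the rescaled space has constant curvature $-L_n^2$, and $B_n$ has rescaled diameter $\delta_n/L_n$; since the dimensionless quantity $L_n \cdot (\delta_n/L_n) = \delta_n \to 0$, the rescaled ball is $(1 + O(\delta_n^2))$-bilipschitz equivalent to a Euclidean ball of the same rescaled diameter, no matter how large the latter may be. Passing to a subsequence, the pair differences $v_n := q_n - p_n$ and $v'_n := q'_n - p'_n$ (read off in the Euclidean approximation) and the angle between the corresponding lines converge to limits $\bar v, \bar v' \in \RR^2$ with $\max(|\bar v|, |\bar v'|) = 1$ and angle $\geq \theta$, while $d(p''_n, q''_n)/L_n$ converges to the Euclidean midpoint distance $|\bar v + \bar v'|/2$. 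The Euclidean step bounds this by $c(\theta) < 1$, contradicting $d(p''_n, q''_n)/L_n > 1 - \delta_n \to 1$.

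\emph{Main obstacle.} The delicate sub-case is $L_n/\delta_n \to 0$, where the rescaled ball $B_n$ has diameter $\delta_n/L_n \to \infty$ and the four points no longer cluster in any bounded region of a Euclidean limit. This is precisely where the position-independence of the Euclidean bound becomes essential: only the local ``shape'' data---the pair differences $v_n, v'_n$ (each coming from a bounded rescaled neighborhood that is genuinely near-Euclidean) and the angle between the two lines---must pass to well-defined limits, and the Euclidean bound applies to this data alone. Finally, the conclusion $d(p'', q'') \leq (1-\delta) L$ is secured by choosing $\delta$ small enough that $1 - \delta > c(\theta)$ and the $O(\delta^2)$ bilipschitz error from Euclidean approximation is absorbed.
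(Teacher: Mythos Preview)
Your Euclidean step is correct and clean, and the overall strategy (Euclidean model plus small-curvature transfer) is natural. But the transfer step has a real gap in precisely the ``delicate sub-case'' you flag.

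A $(1+O(\delta_n^2))$-bilipschitz equivalence controls \emph{distances}, not \emph{midpoints}. Under your exponential chart, the hyperbolic midpoint of a segment of length $\ell$ lying at distance $r$ from the basepoint differs from the Euclidean midpoint by $O(r\ell^2)$; with $r,\ell\le\delta_n$ this is $O(\delta_n^3)$ in the original scale, i.e.\ $O(\delta_n^3/L_n)$ after rescaling. If $L_n\ll\delta_n^3$ this error swamps the signal, so your assertion that ``$d(p''_n,q''_n)/L_n$ converges to the Euclidean midpoint distance $|\bar v+\bar v'|/2$'' is not justified by the bilipschitz bound alone. The ``position-independence of the Euclidean bound'' does not help: the hyperbolic midpoint map is genuinely position-dependent, and the two midpoint errors need not cancel a~priori. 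A second, smaller gap: you never relate the angle between $\bar v$ and $\bar v'$ to the hypothesis (the angle is measured at the intersection point, possibly at distance $\sim\delta_n/\theta$ from the ball); this needs a holonomy estimate of order $O(\delta_n^2)$, which is easy but missing.

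The gap is reparable: instead of bilipschitz comparison, \emph{linearize} the midpoint map. Writing $q=\exp_p(v)$, $q'=\exp_{p'}(v')$ with $|v|,|v'|\le L$, one has $d(p'',q'')=\big|D\mathrm{mid}_{(p,p')}(v,v')\big|+O(L^2)$, and a direct computation in Fermi coordinates along $pp'$ shows $\big|D\mathrm{mid}_{(p,p')}(v,v')\big|\le\tfrac12|v+v'|$ (the transverse component is even contracted by $1/\cosh(d(p,p')/2)$). Together with the holonomy bound on the angle, this yields $d(p'',q'')\le(c(\theta)+O(\delta))L$, which is what you want. The paper takes a different, synthetic route: it introduces the auxiliary midpoint $r$ of $p'q$, uses the $\mathrm{CAT}(0)$ inequality $d(p'',q'')\le d(p'',r)+d(r,q'')\le\tfrac12(d(p,q)+d(p',q'))$, and then locates the slack either in the averaging (when $d(p,q)/d(p',q')$ is far from~$1$) or in the triangle inequality at~$r$ (bounding the angle $\widehat{p''rq''}$ away from~$\pi$ via a neat fact about midpoint lines in~$\HH^2$).
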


\begin{proof}
Let $r$ be the midpoint of $p'q$.
Since $\HH^2$ is $\mathrm{CAT}(0)$, we have
$$d(p'',q'')\leq d(p'',r)+d(r,q'')\leq \frac{d(p,q)}{2}+\frac{d(p',q')}{2}\leq \max\{d(p,q),d(p',q')\} .$$ 
We just need to find a spare factor $(1-\delta)$ between the first and last terms.
Such a spare factor exists in the rightmost inequality unless $\frac{d(p,q)}{d(p',q')}$ is close to~1.
In the latter case, a spare factor exists in the leftmost inequality provided we can bound the angle $\widehat{p''rq''}$ from above by $\pi - \theta/2$.

\begin{figure}[h!]
{\centering
\def\svgwidth{5.5cm}
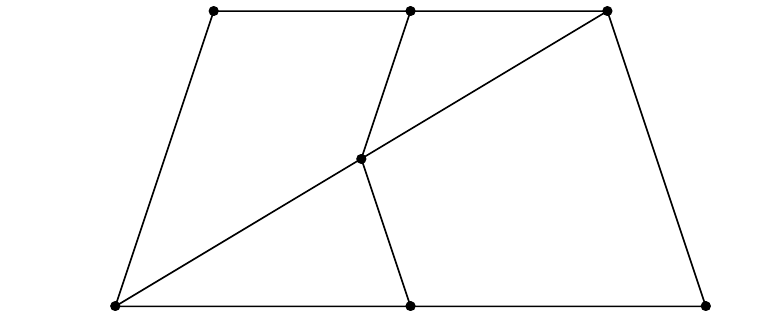}
\caption{If $pq$ and $p'q'$ form an angle, then so do $p''r$ and $rq''$.}
\label{fig:broken}
\end{figure}

The main observation is that for any noncolinear $a,b,c\in \HH^2$, the line $\ell'$ through the midpoints of $ab$ and $ac$ does not intersect the line $\ell$ through~$b$ and~$c$.
Indeed, suppose $(x_t)_{t\in \RR}$ is a parameterization of~$\ell$, and let $x'_t$ be the midpoint of $ax_t$.
Then $(x'_t)_{t\in \RR}$ is a convex curve with the same endpoints as~$\ell$: indeed, if we see $\HH^2$ as a hyperboloid in~$\RR^{2,1}$ as in Section~\ref{subsec:affineactions}, then $x'_t$ is just some positive multiple of $a+x_t$ which describes a branch of hyperbola (\emph{not} contained in a plane through the origin) as $t$ ranges over~$\RR$.
Since a branch of hyperbola in $\RR^3$ looks convex when seen from the origin, $x'_t$ describes a curve $C$ in $\HH^2$ that looks convex (in fact, an arc of conic) in the Klein model.
By convexity, $\ell'\cap C$ is then reduced to the two midpoints of $ab$ and $ac$; the line $\ell'$ cannot cross $C$ a third time, hence does not cross~the~line~$\ell$.

In our situation (see Figure~\ref{fig:broken}), this means the line $p''r$ (resp.\ $rq''$) is $\delta$-close to, but disjoint from the line $pq$ (resp.\ $p'q'$).
But $pq$ and $p'q'$ intersect at an angle $\geq \theta$, at distance at most on the order of $\frac{\delta}{\theta}$ from $B$ (hence close to $B$ if $\delta$ is small).
If $\delta$ is small enough in terms of $\theta$, this means the oriented lines $p''r$ and $rq''$ cross at an angle $>\theta/2$ at~$r$, which is what we wanted to prove.
\end{proof}

\subsection{Convex fields do no better than vector fields}\label{subsec:regularize}

Proposition~\ref{prop:flowback} does not immediately give Proposition~\ref{prop:exaequo}, since the flow-back of a $(j,u)$-equivariant convex field is not necessarily $(j,u)$-equivariant.
However, flowing backwards preserves $j$-invariance.
The trick will be to decompose an equivariant convex field into the sum of a smooth, equivariant vector field and a rough, but invariant convex field, and apply Proposition~\ref{prop:flowback} only to the latter term; the former term will be controlled by Lemma~\ref{lem:almost-parallel}.

\begin{Proposition}\label{prop:allfields}
Let $X$ be a $k$-lipschitz, $(j,u)$-equivariant convex field, defined on~$\HH^2$ and standard in the funnels (Definition~\ref{def:standard-funnels}).
Let $Y$ be a smooth, $(j,u)$-equivariant vector field on~$\HH^2$ that coincides with $X$ outside some cocompact $j(\Gamma)$-invariant set, and let $Z$ be the $j$-invariant convex field defined on~$\HH^2$ such that $X=Y+Z$.
Then $Y+Z_t$ is $(j,u)$-equivariant and
$$\limsup_{t\rightarrow 0^-}\;\lip(Y+Z_t)\leq \lip(X)$$
as $t$ goes to $0$ from below.
\end{Proposition}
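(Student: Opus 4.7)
The equivariance is immediate: since $j(\Gamma)$ acts isometrically on $T\HH^2$, the geodesic flow $(\varphi_t)$ commutes with its pushforward action, so $Z_t=\varphi_t(Z)$ inherits the $j$-invariance of $Z$, and by Observation~\ref{obs:donkey}.(i) the sum $Y+Z_t$ is $(j,u)$-equivariant.

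For the lipschitz bound, fix $\varepsilon>0$ and choose $R$ with $R\geq\lip(Z)$ and $R\geq\lip(Y)-\lip(X)$. Proposition~\ref{prop:flowback} then yields $t_0<0$ such that for all $t\in(t_0,0)$ and all $p'\neq q'\in\HH^2$:
\[
\frac{d'(Z_t(p'),Z_t(q'))}{d(p',q')}\leq \max\!\Big\{\frac{d'(\zeta(p),\zeta(q))}{d(p,q)},\,-R\Big\}+\varepsilon,
\]
where $(p,\zeta(p)),(q,\zeta(q))\in Z$ are the unique elements satisfying $\varphi_t(p,\zeta(p))=(p',Z_t(p'))$ and $\varphi_t(q,\zeta(q))=(q',Z_t(q'))$. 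Since $Y(p)+\zeta(p)\in X(p)$ and $Y(q)+\zeta(q)\in X(q)$, the $\lip(X)$-lipschitz property of $X$ gives $d'(\zeta(p),\zeta(q))\leq \lip(X)\,d(p,q)-d'_Y(p,q)$, which divided by $d(p,q)$ exceeds $-R$ by our choice, so the max reduces to the first entry. Combined with subscript-additivity of $d'$ (Observation~\ref{obs:donkey}.(viii)) this gives the key inequality
\[
\frac{d'_{Y+Z_t}(p',q')}{d(p',q')}\leq \lip(X)+\varepsilon+\Delta_t(p',q'),\quad\Delta_t(p',q'):=\frac{d'_Y(p',q')}{d(p',q')}-\frac{d'_Y(p,q)}{d(p,q)}.
\]

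It then remains to show $\Delta_t(p',q')$ tends to $0$ as $t\to 0^-$, uniformly in $(p',q')$. The plan is to argue by contradiction: assume a bad sequence $t_n\to 0^-, (p'_n,q'_n)$ exists with $|\Delta_{t_n}(p'_n,q'_n)|\geq\varepsilon_0$, and use $j(\Gamma)$-equivariance to pass to a subsequence with $p'_n\to p^*\in\HH^2$. If $q'_n\to q^*\neq p^*$, then $p_n\to p^*$ and $q_n\to q^*$ (since $d(p_n,p'_n)=O(|t_n|)$), and continuity of $d'_Y$ collapses both ratios to $d'_Y(p^*,q^*)/d(p^*,q^*)$, contradicting $|\Delta_{t_n}|\geq\varepsilon_0$. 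Otherwise $q'_n\to p^*$ as well; extracting subsequential unit directions $v_n\to v^*$ and $w_n\to w^*$ in $T^1_{p^*}\HH^2$ of the segments $[p'_n,q'_n]$ and $[p_n,q_n]$, smoothness of $Y$ together with uniform continuity of $\newf'_Y:T^1\HH^2\to\RR$ (which follows from its $j(\Gamma)$-invariance and the explicit standard form of $Y$ in the funnels via~\eqref{eqn:nablaXkr}) implies that the two ratios tend to $\newf'_Y(v^*)$ and $\newf'_Y(w^*)$, again yielding a contradiction whenever $v^*=w^*$.

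The hard case will be $v^*\neq w^*$: there $d(p_n,q_n),d(p'_n,q'_n)=O(|t_n|)$ but the two segments limit to directions making a fixed angle $\theta=\angle(v^*,w^*)>0$, so the $\newf'_Y$-averages along them genuinely disagree and the naive continuity argument breaks down. This is where Lemma~\ref{lem:almost-parallel} becomes essential: for $\delta<\theta$ sufficiently small, once the four points $p_n,p'_n,q_n,q'_n$ lie in a ball of diameter $\delta$, the midpoints $p''_n$ of $p_np'_n$ and $q''_n$ of $q_nq'_n$ satisfy $d(p''_n,q''_n)\leq(1-\delta)\max\{d(p_n,q_n),d(p'_n,q'_n)\}$. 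Decomposing $\varphi_{t_n}=\varphi_{t_n/2}\circ\varphi_{t_n/2}$ and iterating the main inequality to the halved flow-backs based at these successive midpoints should produce a geometric shrinkage of $|\Delta|$ that forces the desired contradiction. This midpoint-subdivision argument, powered by Lemma~\ref{lem:almost-parallel}, is the technical core I expect to be the hardest part of the proof.
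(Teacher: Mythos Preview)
Your setup through the ``key inequality'' is fine, but the sentence ``it then remains to show $\Delta_t(p',q')$ tends to $0$ uniformly'' is where the argument goes wrong: that claim is false in precisely the case you call hard. When the oriented segments $[p,q]$ and $[p',q']$ limit to different directions $w^*\neq v^*$, the two ratios in $\Delta_t$ converge to $\newf'_Y(v^*)$ and $\newf'_Y(w^*)$, which generically differ by a fixed amount; no iteration will make $\Delta_t$ small. What saves the day is not that $\Delta_t$ is small, but that in this regime the $Z_t$-term is so negative that it compensates. You erased exactly this information when you replaced $\max\{a,-R\}$ by the larger quantity $b$: the bound $\frac{d'_{Z_t}}{d}(p',q')\le -R+\varepsilon$ is what you need in the hard case, and you no longer have it.

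The paper keeps the two cases separate. If $\frac{d'_{Z_t}}{d}(p',q')\le -R$ one bounds $\frac{d'_{Y+Z_t}}{d}\le R'-R\le\lip(X)$ directly (with $R:=|\lip(X)|+R'$ and $R'$ a local bound on $|\newf'_Y|$). In the complementary case $\frac{d'_{Z_t}}{d}(p',q')\ge -R$, convexity of $\psi(\tau)=d(p_\tau,q_\tau)$ gives $\psi(\tau)>(1-\delta)\psi(0)$ and $\psi(\tau)>(1-\delta)\psi(t)$ for all $\tau\in[t,0]$; in particular this holds at the midpoint $\tau=t/2$. Now Lemma~\ref{lem:almost-parallel} is invoked in the \emph{contrapositive}: since the midpoint distance exceeds $(1-\delta)\max\{\psi(0),\psi(t)\}$, the segments $[p,q]$ and $[p',q']$ must make an angle $\le\theta$, and then the uniform $\varepsilon$-continuity of $\newf'_Y$ gives $|\Delta_t|\le\varepsilon$. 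Your forward use of the lemma (``midpoints get closer, iterate'') points in the wrong direction and does not connect to a bound on $\Delta_t$ or on $\lip(Y+Z_t)$; the actual role of Lemma~\ref{lem:almost-parallel} is to certify that your hard case cannot occur once $\frac{d'_{Z_t}}{d}\ge -R$.
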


\begin{proof}
By Proposition~\ref{prop:flowback} and Observation~\ref{obs:donkey}.(i), for any $t<0$ the convex field $X^t:=Y+Z_t$ is a $(j,u)$-equivariant lipschitz vector field.
Let us check that $\lip(X^t) \leq \lip(X)+o(1)$ as $t$ goes to $0$ from below.

Let $U'$ be a cocompact, $j(\Gamma)$-invariant set outside of which $Z$ is zero.
Note that $X$ is bounded on a compact fundamental domain for~$U'$ by Lemma~\ref{lem:convexbounds}, and so is~ $Y$ by smoothness.
Therefore the $j$-invariant convex field $Z$ is bounded: $\| Z\| <+\infty$.

Fix $\varepsilon>0$.
By smoothness and equivariance of~$Y$, there exist $\theta,R'>0$ such that for all $p\neq q$ in~$U'$ and $p'\neq q'$ in~$\HH^2$ with $d(p,q),d(p,p'),d(q,q')\leq\theta$, if the oriented lines $pq$ and $p'q'$ intersect at an angle $\leq \theta$ (or not at all), then
\begin{equation}\label{eqn:smoothfield}
\frac{d'_{Y}}{d}(p,q) \in [-R',R'] \quad\quad\mathrm{and}\quad\quad \left | \frac{d'_{Y}}{d}(p,q)-\frac{d'_{Y}}{d}(p',q')\right | \leq \varepsilon .
\end{equation} 
(The second condition means that $\theta$ is a modulus of $\varepsilon$-continuity for the function $\newf_{Y}' : T^1\HH^2\rightarrow \RR$ of \eqref{eqn:muintegral-currents}, because $\frac{d'_{Y}}{d}(p,q)$ is the average of $\newf_{Y}'$ over the unit tangent bundle of the segment $[p,q]$.)
We set
$$R := |\lip(X)| + R' >0 ,$$
so that $\lip(Z)\leq R$.
Let $\delta \in (0,\theta)$ be given by Lemma~\ref{lem:almost-parallel}, and $t_0=t_0(\varepsilon)<0$ by Proposition~\ref{prop:flowback}.
We shall prove that $X^t$ is $(k+2\varepsilon)$-lipschitz for any
\begin{equation}\label{eqn:condition-on-t}
\max \left \{ t_0~,~\frac{-\delta}{R}~,~\frac{-\delta}{3\|Z\|} \right \} < t < 0 .
\end{equation}
Let $x,y\in Z$ be vectors based at $\frac{\delta}{3}$-close points $p, q \in U'$.
For $t$ as in \eqref{eqn:condition-on-t}, let $x_t:=\varphi_t(x)$ and $y_t:=\varphi_t(y)$ be the corresponding vectors of~$Z_t$, based at $p_t:=\exp_p(tx)$ and $q_t:=\exp_q(ty)$ respectively.
If $\frac{d'_{Z_t}}{d}(p_t, q_t)\leq -R$, then
$$\frac{d'_{X^t}}{d}(p_t, q_t) \leq R' - R = - |\lip(X)| \leq \lip(X).$$
Note that this includes the case $p = q$ by Proposition~\ref{prop:flowback}.
We can therefore assume that $p\neq q$ and that $\frac{d'_{Z_t}}{d}(p_t, q_t)\geq -R$.
As in the proof of Proposition~\ref{prop:flowback}, the distance function
$$\begin{array}{cccc}
\psi : & \RR & \longrightarrow & \RR^+ \\
& \tau & \longmapsto & d(p_{\tau},q_{\tau})
\end{array}$$
is convex and satisfies $\frac{\psi'(\tau)}{\psi(\tau)}=\frac{d'_{Z_{\tau}}}{d}(p_{\tau},q_{\tau})$ for all~$\tau$.
Since $Z$ is $R$-lipschitz we have $\frac{\psi'(0)}{\psi(0)}\leq R$, hence
\begin{equation}\label{eqn:taught1}
\psi(\tau) \geq (1+tR)\,\psi(0) > (1-\delta)\,\psi(0)
\end{equation}
for all $\tau\in [t,0]$, by convexity of~$\psi$ and choice \eqref{eqn:condition-on-t} of~$t$.
Using convexity again and the assumption that $\frac{d'_{Z_t}}{d}(p_t, q_t)\geq -R$, we have
\begin{equation}\label{eqn:taught2}
\psi(\tau) \geq (1+tR)\,\psi(t) > (1-\delta)\,\psi(t).
\end{equation}
Since $|t|\leq\frac{\delta}{3\|Z\|}$, the points $p_t,q_t,p_0,q_0$ are all within $\delta \leq \theta$ of each other, and therefore the contrapositive of Lemma~\ref{lem:almost-parallel}, together with \eqref{eqn:taught1} and \eqref{eqn:taught2}, implies that the lines $p_t q_t$ and $p_0 q_0$ form an angle $\leq \theta$, or stay disjoint.
Then
$$\frac{d'_{Y}}{d}(p_t,q_t)\leq \frac{d'_{Y}}{d}(p_0,q_0) + \varepsilon$$
by \eqref{eqn:smoothfield}.
We also have
$$\frac{d'_{Z_t}}{d}(p_t,q_t)\leq \frac{d'_{Z}}{d}(p_0,q_0) + \varepsilon$$
by Proposition~\ref{prop:flowback}, since $t\in (t_0,0)$.
Adding these inequalities gives
$$\frac{d'_{X^t}}{d}(p_t,q_t)\leq \frac{d'_{X}}{d}(p_0,q_0) + 2\varepsilon.$$
By subdivision (Observation~\ref{obs:donkey}), we obtain $\lip(X^t)\leq \lip(X) +2\varepsilon$, as wished.
This completes the proof of Proposition~\ref{prop:allfields}.
\end{proof}

Proposition~\ref{prop:allfields} immediately implies Proposition~\ref{prop:exaequo}, and Theorem~\ref{thm:laminations} follows using Theorem~\ref{thm:laminations-convex}.

\subsection{Smooth vector fields}\label{subsec:smoothness}

As it will be useful in Section~\ref{sec:geotrans}, we prove that the $(j,u)$-equivariant vector field $X^*$ of Proposition~\ref{prop:exaequo} can be taken to be smooth (and standard in the funnels).

\begin{Proposition}\label{prop:smooth-approximates}
Given a $(j,u)$-equivariant \emph{convex field} $X$ defined on~$\HH^2$ and standard in the funnels, there exist \emph{smooth}, standard, $(j,u)$-equivariant \emph{vector fields} $X^*$ defined on $\HH^2$ with $\lip(X^*)$ arbitrarily close to $\lip(X)$. 
\end{Proposition}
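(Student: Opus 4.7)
The plan is to combine Proposition~\ref{prop:exaequo} with a standard equivariant smoothing argument. First, apply Proposition~\ref{prop:allfields} with a careful choice of the smooth auxiliary vector field $Y$: since $X$ is assumed standard (and hence smooth) in the funnels, one can construct a smooth $(j,u)$-equivariant vector field $Y$ on $\HH^2$ that coincides with $X$ outside a $j(\Gamma)$-cocompact neighborhood of the convex core, e.g.\ by blending any smooth $(j,u)$-equivariant vector field with $X$ via a $j(\Gamma)$-invariant smooth cutoff supported away from the convex core. Such a $Y$ is then standard in the funnels. Setting $Z:=X-Y$, the $j$-invariant convex field $Z$ vanishes identically in the funnels. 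For $t<0$ small enough, Proposition~\ref{prop:allfields} produces continuous $(j,u)$-equivariant vector fields $X^t=Y+Z_t$ with $\lip(X^t)\le\lip(X)+\varepsilon/2$, and the flow-back $Z_t$ remains $j$-invariant and vanishes where $Z$ does (the zero vector at $p$ flows to the zero vector at $p$ under the geodesic flow).

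Second, smoothen $Z_t$ by an equivariant convolution. After passing to a torsion-free finite-index subgroup via Selberg's lemma if necessary, the continuous $j$-invariant vector field $Z_t$ descends to a compactly supported continuous vector field $\bar Z_t$ on the quotient manifold $j(\Gamma)\backslash\HH^2$. Choose a finite cover of $\mathrm{supp}(\bar Z_t)$ by small geodesically convex balls, together with a subordinate smooth partition of unity $\{\bar\psi_i\}$. On each such ball, pull $\bar Z_t$ back to a Euclidean disk via the inverse exponential map, convolve with a smooth radial mollifier of scale~$\delta$, and push forward. Patching these local smoothings together with $\{\bar\psi_i\}$ and lifting to~$\HH^2$ yields a smooth $j$-invariant vector field $Z_t^\delta$ on $\HH^2$, vanishing in the funnels, with $Z_t^\delta\to Z_t$ uniformly on compact sets as $\delta\to 0$. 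Define $X^*:=Y+Z_t^\delta$. Then $X^*$ is smooth, $(j,u)$-equivariant (since $Y$ is $(j,u)$-equivariant and $Z_t^\delta$ is $j$-invariant, using Observation~\ref{obs:donkey}(i)), and standard in the funnels (since $X^*=Y=X$ there).

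The final step is to verify that $\lip(X^*)\le\lip(X)+\varepsilon$ for $\delta$ sufficiently small. By the subdivision property (Observation~\ref{obs:donkey}(iv)--(v)) and $j(\Gamma)$-equivariance, it suffices to bound $\lip_p(X^*)$ at each point $p$ in a compact fundamental domain, and by additivity of $d'$ (Observation~\ref{obs:donkey}(viii)) it is enough to bound $\lip_p(Z_t^\delta)$ relative to $\lip_p(Z_t)$ on each chart. In the funnels $X^*=Y$, which is under control; in the interior, one invokes standard Euclidean mollification estimates in small exponential charts. The main obstacle is this last estimate: since the $d'$-lipschitz constant is defined via the geodesic flow on $T\HH^2$ and not in the usual Lipschitz sense, one must check that Euclidean convolution preserves it up to a vanishing error. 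This follows because $d'$ is a continuous functional on $T\HH^2\times T\HH^2$, the exponential map is $C^1$-close to an isometry on small balls (so that the hyperbolic-versus-Euclidean distortion contributes only $O(\rho)$ on a chart of radius $\rho$), and convolving a $k$-lipschitz vector field on a Euclidean disk with a smooth mollifier of scale $\delta\ll\rho$ yields a smooth field whose local lipschitz constant is $k+O(\rho)$; the contribution of the partition-of-unity derivatives is controlled by $\|Z_t^\delta-Z_t\|_{C^0}\to 0$. Choosing $\rho$ and then $\delta$ small enough yields $\lip(X^*)\le\lip(X^t)+\varepsilon/2\le\lip(X)+\varepsilon$, completing the proof.
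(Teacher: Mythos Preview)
Your overall architecture matches the paper's: apply Proposition~\ref{prop:allfields} to get the vector field $Y+Z_t$ with $Z_t$ $j$-invariant and compactly supported, then smooth only~$Z_t$. The difference lies in the smoothing step, and there is a genuine gap in yours.

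The paper does not smooth via local Euclidean charts and a partition of unity. It first proves a new fact (Lemma~\ref{lem:uppercase}): the flow-back $Z_t$ is automatically \emph{Lipschitz} (uppercase), i.e.\ $\|\phi_p^q Z_t(p)-Z_t(q)\|\le L\,d(p,q)$. Only with this in hand does the paper smooth, using a single global convolution by a $G$-invariant kernel with hyperbolic parallel transport (Lemma~\ref{lem:smooth}). In the proof of Lemma~\ref{lem:smooth} the authors isolate the exact curvature obstruction: when $q\to p$, the discrepancy between the parallel-transported direction $\phi_p^{p'}u_p$ and the direction $u'$ of $[p',q']$ does \emph{not} tend to zero (it limits to the area of a triangle, by Gauss--Bonnet). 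The resulting error term is $\|\phi_{q'}^{p'}Z(q')-Z(p')\|\cdot\|\phi_p^{p'}u_p-u'\|$, and the first factor is only controlled by $d(p',q')=d(p,q)$ if $Z$ is Lipschitz; the paper says explicitly that ``this negative curvature phenomenon makes the Lipschitz assumption on~$Z$ necessary (at least for this proof).''

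Your chart-based approach does not evade this. You assert that ``the hyperbolic-versus-Euclidean distortion contributes only $O(\rho)$,'' but this comparison of lipschitz constants involves terms of the form $\langle \hat Z_t(\hat q)-\hat Z_t(\hat p),\,\hat u_p-\hat v_p\rangle/d(p,q)$, where $|\hat u_p-\hat v_p|$ is of order~$\rho^2$ and does not vanish as $q\to p$ (same Gauss--Bonnet effect, in normal coordinates). Without a Lipschitz bound on~$Z_t$ you cannot control $|\hat Z_t(\hat q)-\hat Z_t(\hat p)|/d(p,q)$, so the quotient is unbounded. Equivalently, for smooth fields the chart comparison reads $\lip_p^{\HH}(W)=\lip_{\hat p}^{E}(\hat W)+O(\rho)\|W\|+O(\rho^2)\|\partial W\|$, and the last term is not $O(\rho)$ unless you know something about~$\partial W$. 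The missing ingredient is precisely Lemma~\ref{lem:uppercase}: once you observe that the flow-back forces $Z_t$ to be Lipschitz, either your local mollification or the paper's global one goes through.
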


It is sufficient to smooth out the $j$-invariant vector field $Z_t$ of Proposition~\ref{prop:allfields} (without destroying the lipschitz constant).
Our first step is to prove that $Z_t$ is actually a Lipschitz (uppercase!) section of the tangent bundle of~$\HH^2$.
First, let us recall the definition.

For $p,q\in\HH^2$, let $\phi_p^q$ be the isometry of~$\HH^2$ that takes $p$ to~$q$ by translating along the geodesic $(p,q)$.
To condense notation, the differential of this map will again be denoted by~$\phi_p^q$.
Note that the action of $\phi_p^q$ on $T_p\HH^2$ is parallel transport along the geodesic segment $[p,q]$.
We also note that $(\phi_p^q)^{-1} = \phi_q^p$.
A vector field $X$ on~$\HH^2$ is said to be \emph{Lipschitz} if it is a Lipschitz section of $T\HH^2$, in the sense that there exists $L\geq 0$ such that for all $p,q\in\HH^2$,
$$\| \phi_p^qX(p) - X(q) \| \leq L \, d(p,q)$$
This implies in particular $\lip(X)\leq L$.

\begin{Lemma}\label{lem:uppercase}
Let $Z'$ be a bounded \emph{lipschitz} vector field. 
For all $t_0<0$ close enough to~$0$, the flow-back $Z'_{t_0}$ is a \emph{Lipschitz} vector field.
\end{Lemma}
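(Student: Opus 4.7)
The cleanest route is to express $Z'_{t_0}$ via an explicit formula and read off Lipschitz regularity from the pieces. Define $\Psi_{t_0}:\HH^2 \to \HH^2$ by $\Psi_{t_0}(p) := \exp_p(t_0\,Z'(p))$. By Proposition~\ref{prop:flowback}, for $|t_0|$ small enough $Z'_{t_0}$ is a vector field defined on all of $\HH^2$, which is equivalent to $\Psi_{t_0}$ being a bijection. For $p \in \HH^2$ and $y := \Psi_{t_0}(p)$, the geodesic from $p$ to $y$ carries the initial tangent $t_0\,Z'(p)$ at $p$ to the tangent $t_0\,Z'_{t_0}(y)$ at $y$; reversing yields $\log_y(p) = -t_0\,Z'_{t_0}(y)$, hence
\[
Z'_{t_0}(y) \,=\, -\tfrac{1}{t_0}\,\log_y\!\big(\Psi_{t_0}^{-1}(y)\big).
\]

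The first step will be to show that $\Psi_{t_0}^{-1}$ is Lipschitz. For distinct $p, q \in \HH^2$, the function $\psi(s) := d(\exp_p(sZ'(p)), \exp_q(sZ'(q)))$ is convex, as the distance between two geodesics in the $\mathrm{CAT}(0)$ space $\HH^2$, with $\psi(0) = d(p, q)$ and $\psi'(0) = d'(Z'(p), Z'(q)) \leq k\,\psi(0)$ for $k := \lip(Z') < \infty$. Convexity then gives $\psi(t_0) \geq (1 + t_0 k)\,\psi(0)$, so for $|t_0|\,\max(k, 0) < 1$,
\[
d(p, q) \,\leq\, \frac{1}{1 - |t_0|\max(k, 0)}\,d\big(\Psi_{t_0}(p), \Psi_{t_0}(q)\big).
\]

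The second step exploits the smoothness and homogeneity of the log map: for each $r > 0$ there exists $L_r > 0$ such that, for pairs $(y_1, p_1), (y_2, p_2) \in \HH^2 \times \HH^2$ with $d(y_i, p_i) \leq r$ and $d(y_1, y_2)$, $d(p_1, p_2)$ bounded,
\[
\big\|\phi_{y_1}^{y_2}\log_{y_1}(p_1) - \log_{y_2}(p_2)\big\| \,\leq\, L_r\,\big(d(y_1, y_2) + d(p_1, p_2)\big).
\]
This follows by bounding the Jacobian of the log map, which is $G$-invariant on a compact quotient by homogeneity. Combining with the bound on $\Psi_{t_0}^{-1}$ and using $d(y_i, \Psi_{t_0}^{-1}(y_i)) = |t_0|\,\|Z'(\Psi_{t_0}^{-1}(y_i))\| \leq |t_0|\,\|Z'\|_\infty =: r$, we deduce, for $y_1, y_2$ within a controlled distance,
\[
\big\|\phi_{y_1}^{y_2}Z'_{t_0}(y_1) - Z'_{t_0}(y_2)\big\| \,\leq\, \frac{L_r}{|t_0|}\Big(1 + \frac{1}{1 - |t_0|\max(k, 0)}\Big)\,d(y_1, y_2).
\]
For $y_1, y_2$ far apart, $\|Z'_{t_0}\|_\infty \leq \|Z'\|_\infty$ (parallel transport being an isometry) yields a trivial Lipschitz bound. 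The main technical subtlety will be justifying the uniform Lipschitz constant for the log map, which follows from the transitive action of $G$ on pairs of points in $\HH^2$ at a given distance together with smoothness of the exponential map.
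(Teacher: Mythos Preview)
Your argument is correct and takes a genuinely different route from the paper's. The paper sets $Z:=Z'_{t_0}$ and flows \emph{forward}: for $0\leq t\leq |t_0|$ it uses that $Z_t=Z'_{t_0+t}$ is uniformly $R$-lipschitz (Proposition~\ref{prop:flowback}) to integrate $d(p_t,q_t)\leq e^{Rt}d(p,q)$, then transplants the configuration $(p,q,Z(p),Z(q))$ to a Euclidean picture $(p^*,q^*,P,Q)$ with matching lengths and angles; triangle inequalities in~$\RR^2$ together with the bound on $d(p_{|t_0|},q_{|t_0|})$ and on $\|Z\|$ yield a bound on $\|P-Q\|=\|\phi_p^q Z(p)-Z(q)\|$. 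Your approach instead writes $Z'_{t_0}(y)=-\tfrac{1}{t_0}\log_y(\Psi_{t_0}^{-1}(y))$ and checks Lipschitz regularity of each factor separately: the bound on $\Psi_{t_0}^{-1}$ via convexity is clean, and the Lipschitz estimate for $(y,p)\mapsto\log_y p$ (with the parallel-transport comparison) is correct by the homogeneity argument you sketch, though making it fully precise requires a short detour (Sasaki metric on $T\HH^2$, or an explicit Jacobi-field computation). The paper's Euclidean-comparison trick sidesteps this by never invoking regularity of~$\log$ as a two-variable map; your decomposition is more modular and would port more readily to other homogeneous targets.
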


\begin{proof}
By Proposition~\ref{prop:flowback}, for $t_0$ close enough to $0$ (depending on $\lip(Z')$ only), the vector fields $Z'_t$ for $t_0\leq t \leq 0$ are all $R$-lipschitz for some $R\in\RR$ independent of~$t$.
Fix such $t_0$ and $R$, and define $Z:=Z'_{t_0}$.

Consider distinct points $p,q\in\HH^2$.
For $0\leq t \leq |t_0|$, set $p_t:=\exp_p(tZ(p))$ and $q_t:=\exp_q(tZ(q))$.
Since $Z_t=Z'_{t_0+t}$, we have $\lip(Z_t)\leq R$, which by integrating implies $d(p_t, q_t)\leq e^{Rt} d(p,q)$.
Define moreover $r_t:= \phi_q^p(q_t)\in\HH^2$.
If $\| Z\|\leq N$ (notation \eqref{eqn:normconvexfield}), then $d(q_t, r_t)\leq \cosh (Nt)\, d(p,q)$.

Define comparison points $p^*, q^* \in \RR^2$, distance $d(p,q)$ apart, as well as vectors $P,Q\in \RR^2$ such that $\| P\|=\| Z(p)\|$ and $\| Q \|=\| Z(q) \|$, and such that $[p^*, q^*]$ forms the same two angles with $P$ and~$Q$ as $[p,q]$ does with $Z(p)$ and $Z(q)$ respectively.
Define $p^*_t:=p^*+tP$ and $q^*_t:=q^*+tQ$ and $r^*_t:=p^*+tQ$.
Then
\begin{eqnarray*}
 d(p_t^*, q_t^*) & \leq &  d(p_t^*, r_t^*) +  d(r_t^*, q_t^*) \\
 &\leq & d(p_t, r_t) + d(p,q) \\
 & \leq & d(p_t, q_t) + d(q_t, r_t) + d(p,q) \\
 & \leq & \left ( e^{Rt} + \cosh (Nt) + 1 \right ) d(p,q) .
\end{eqnarray*}
On the other hand, by the triangle inequality, 
$$ d(p_t^*, q_t^*) \geq d(p_t^*, r_t^*) - d(r_t^*, q_t^*) = t \| P-Q\|  - d(p,q) .$$
But $\| P-Q\| =\|  \phi_p^q Z(p) - Z(q) \| $.
Combining these estimates, we find 
$$\|  \phi_p^q Z(p) - Z(q) \|  \leq \frac{e^{Rt} + \cosh (Nt) + 2}{t} \, d(p,q) .$$
Taking $t=|t_0|$ gives an upper bound for the Lipschitz constant of~$Z$.
\end{proof}

It follows from Proposition~\ref{prop:flowback} and Lemma~\ref{lem:uppercase} that the $j$-invariant component of the vector field constructed in Proposition~\ref{prop:allfields} is Lipschitz.
We next apply to it a smoothing procedure.

For any $\Delta > 0$, we choose a smooth kernel $\psi_{\Delta} : \HH^2 \times \HH^2 \to \RR^{+}$, invariant under all isometries of $\HH^2$, that vanishes on all pairs of points distance $>\Delta$ apart, and such that
\begin{equation}\label{eqn:assumptionpsi}
\int_{\HH^2} \psi_{\Delta}(p,p') \, \D p' = 1
\end{equation}
for all $p\in\HH^2$.
For any vector field $Z$ on~$\HH^2$, we define a smoothed vector field $\widetilde{Z}_{\Delta}$ on~$\HH^2$ by convolution by~$\psi_{\Delta}$:
$$\widetilde{Z}_\Delta :\ p \longmapsto \int_{\HH^2} \psi_\Delta(p,p') \, \phi_{p'}^p Z(p')  \, \D p' .$$
Then $\widetilde{Z}_{\Delta}$ inherits the smoothness of~$\psi$.
If $Z$ is $j$-invariant, then so is~$\widetilde{Z}_\Delta$. 

\begin{Lemma}\label{lem:smooth}
Let $X$ be a lipschitz, $(j,u)$-equivariant vector field on~$\HH^2$.
Let $Y$ be a smooth, $(j,u)$-equivariant vector field on~$\HH^2$ that coincides with $X$ outside some cocompact $j(\Gamma)$-invariant set, and let $Z$ be the $j$-invariant vector field on~$\HH^2$ such that $X=Y+Z$.
If $Z$ is Lipschitz, then for any $\varepsilon>0$ and any small enough $\Delta > 0$,
$$\lip\big(Y+\widetilde{Z}_{\Delta}\big) \leq \lip(X) + \varepsilon.$$
\end{Lemma}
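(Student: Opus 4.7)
The plan is a coupling argument based on isometry invariance of $\psi_\Delta$ and the uppercase Lipschitz property of $Z$ (hypothesized here; obtained in the intended application via Lemma \ref{lem:uppercase}). By the subscript-additivity of $d'$ for vector fields (Observation \ref{obs:donkey}.(viii)),
\[
d'_{Y+\widetilde Z_\Delta}(p,q) \,=\, d'_{Y+Z}(p,q) + d'_{\widetilde Z_\Delta - Z}(p,q) \,\leq\, \lip(X)\, d(p,q) + d'_{\widetilde Z_\Delta - Z}(p,q).
\]
Hence it suffices to prove $\lip(\widetilde Z_\Delta - Z) \leq \varepsilon$ for all sufficiently small $\Delta > 0$.

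For distinct $p,q \in \HH^2$, I would couple the smoothing integrals at $p$ and $q$ via the substitution $q' := \phi_p^q(p')$, where $\phi_p^q$ is the hyperbolic translation along $(p,q)$. Since $\psi_\Delta$ depends only on distance and $\phi_p^q$ has unit Jacobian,
\[
\widetilde Z_\Delta(q) \,=\, \int_{\HH^2} \psi_\Delta(p,p') \, \phi_{q'}^q Z(q') \, \D p'.
\]
Writing $d'_W(p,q) = \pi_q(W(q)) - \pi_p(W(p))$, with $\pi_p, \pi_q$ the projections onto the oriented direction of $(p,q)$ (Remark \ref{rem:d'proj}), and using the uppercase Lipschitz estimate $\|\phi_{p'}^p Z(p') - Z(p)\| \leq L\Delta$ (and likewise at $q$) together with the fact that $\phi_{p'}^p$ preserves inner products, one obtains the uniform pointwise bound
\[
\big|\pi_q(\phi_{q'}^q Z(q')) - \pi_p(\phi_{p'}^p Z(p')) - d'_Z(p,q)\big| \,\leq\, 2L\Delta.
\]
Integration against the probability density $\psi_\Delta(p,\cdot)$ then produces the \emph{absolute} estimate $|d'_{\widetilde Z_\Delta - Z}(p,q)| \leq 2L\Delta$, independent of $d(p,q)$.

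The absolute estimate already yields $|d'_{\widetilde Z_\Delta - Z}(p,q)|/d(p,q) \leq 2L\sqrt{\Delta}$ for pairs with $d(p,q) \geq \sqrt{\Delta}$. In the short-distance regime $d(p,q) < \sqrt{\Delta}$, I would exploit that $\widetilde Z_\Delta$ is smooth and invoke the infinitesimal formula \eqref{eqn:lipsmooth}: $\lip_p(\widetilde Z_\Delta) = \sup_y \langle \nabla_y \widetilde Z_\Delta, y\rangle$. Differentiating under the integral, the term involving $\nabla_p \psi_\Delta$ can be recentered by subtracting $Z(p)$, which is legitimate thanks to \eqref{eqn:assumptionpsi}, and then bounded using the Lipschitz estimate on $Z$; the remaining term measures the variation of parallel transport in its base point, which differs from the Euclidean commutation $\nabla \widetilde Z = \widetilde{\nabla Z}$ by a correction of order $\Delta$ coming from the curvature of $\HH^2$. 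Combining the two regimes via subdivision (Observation \ref{obs:donkey}.(iv)) yields $\lip(\widetilde Z_\Delta - Z) \leq \varepsilon$ for $\Delta$ small enough.

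The main obstacle is the short-distance regime: one must carefully control the commutation of the Levi-Civita connection with a parallel-transport convolution in $\HH^2$. The reversibility identity $\phi_{p'}^p \circ \phi_p^{p'} = \mathrm{id}$ along a geodesic and the vanishing $\int \nabla_p \psi_\Delta(p,p')\, \D p' = 0$ are the two structural facts that keep the curvature-dependent errors at order $\Delta$ over the $\Delta$-sized support of $\psi_\Delta$.
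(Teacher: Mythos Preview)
Your reduction to proving $\lip(\widetilde Z_\Delta - Z)\le\varepsilon$ is a genuine error: that inequality is simply false for general Lipschitz~$Z$. Already in one Euclidean dimension, $Z(x)=|x|$ gives $(\widetilde Z_\Delta - Z)'(x)\to 1$ as $x\to 0^-$ for \emph{every} $\Delta>0$, so $\lip(\widetilde Z_\Delta - Z)$ stays near~$1$. Your absolute bound $|d'_{\widetilde Z_\Delta - Z}(p,q)|\le 2L\Delta$ is correct and handles the regime $d(p,q)\ge\sqrt\Delta$, but nothing can rescue the short-distance regime along these lines. In your differentiation-under-the-integral sketch, the recentered $\nabla_p\psi_\Delta$ term is bounded by $L\int|\nabla\psi_\Delta(p,p')|\,d(p,p')\,\D p'$, which scales like $L$, not $L\Delta$; and even in flat space, where $\nabla\widetilde Z_\Delta=\widetilde{\nabla Z}$ holds a.e., this controls $\lip(\widetilde Z_\Delta)$, not $\lip(\widetilde Z_\Delta - Z)$, since $\widetilde{\nabla Z}-\nabla Z$ is not small in~$L^\infty$.

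The paper avoids this trap by \emph{not} separating $\widetilde Z_\Delta$ from~$Z$. It works only at scale $d(p,q)\le\Delta$ (then invokes subdivision) and uses your same coupling $q'=\phi_p^q(p')$ to write $d'_{\widetilde Z_\Delta}(p,q)$ as an average over~$p'$ of a quantity shown, via holonomy/area estimates of size $O(\Delta)$, to equal $d'_Z(p',q')$ up to errors $\le (L+\|Z\|)\Delta\cdot d(p',q')$. The crucial move is then to add $d'_Y(p,q)$ \emph{inside} the integral---legitimate up to $\varepsilon/2$ since $Y$ is smooth and $(p',q')$ is $O(\Delta)$-close to $(p,q)$ with nearly parallel direction---so that the integrand becomes $\frac{d'_X}{d}(p',q')+O(\varepsilon)$, which is $\le\lip(X)+O(\varepsilon)$ pointwise before integrating. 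In short: average $d'_Z$ over \emph{nearby pairs} and bound the average by the supremum; do not try to compare the average to $d'_Z(p,q)$ itself.
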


\begin{proof}
Fix $\varepsilon>0$ and consider $\Delta >0$ (to be adjusted later).
Throughout the proof, we write $\widetilde Z$ and $\psi$ in place of $\widetilde Z_\Delta$ and $\psi_\Delta$.
By subdivision (Observation~\ref{obs:donkey}), it is enough to prove that if $\Delta$ is small enough, then $d'_{Y+\widetilde{Z}}(p,q)\leq (\lip(X) + \varepsilon)\,d(p,q)$ for all distinct $p,q\in\HH^2$ with $d(p,q)\leq\Delta$.
Consider such a pair $(p,q)$, let $u_p\in T_p\HH^2$ be the unit vector at $p$ pointing to~$q$, and set $u_q:=\phi_p^q(u_p)\in T_q\HH^2$. 
By Remark~\ref{rem:d'proj},
\begin{align*}
d'_{\widetilde{Z}}(p,q) & =~\langle \widetilde{Z}(q) \,|\, u_q \rangle - \langle \widetilde{Z}(p) \,|\, u_p \rangle \\
&= \int_{\HH^2} \psi(q,q') \langle \phi_{q'}^q Z(q') \,|\, u_q \rangle \, \D q' - \int_{\HH^2} \psi(p,p') \langle \phi_{p'}^p Z(p') \,|\, u_p \rangle \, \D p' \\
&= \int_{\HH^2} \psi(q,q') \langle Z(q') \,|\, \phi_q^{q'} u_q \rangle \, \D q' - \int_{\HH^2} \psi(p,p') \langle Z(p') \,|\, \phi_p^{p'} u_p \rangle \, \D p' .
\end{align*}
In the first integral, we make the substitution $q'=\phi_p^q (p')$:
$$\int_{\HH^2} \psi(q,q') \langle Z(q') \,|\, \phi_q^{q'} u_q \rangle \, \D q' = \int_{\HH^2} \psi(p,p') \langle \phi_p^q Z(q') \,|\, \phi_p^{p'} u_p \rangle \, \D p' ,$$
where we use the invariance of $\psi$ under~$\phi_p^q$ and the fact that $\phi_q^p$ takes $\phi_q^{q'} u_q$ to $\phi_p^{p'} u_p$, by the conjugacy relation $\phi_p^{p'} = \phi_q^p\,\phi_q^{q'}\!(\phi_q^p)^{-1}$.
Therefore,
\begin{equation}\label{eqn:intlemmasmooth}
d'_{\widetilde{Z}}(p,q) = \int_{\HH^2} \psi(p,p') \, \langle \phi_q^p Z(q') -Z(p') \,|\, \phi_p^{p'} u_p \rangle \, \D p' .
\end{equation}
We now focus on the integrand.
We may restrict to $p'$ at distance $\leq\Delta$ from~$p$, otherwise $\psi(p,p')=0$.
Let $u'$ to be the unit vector at $p'$ pointing to~$q'$ (see Figure~\ref{fig:smoothness-proof}).
Then
\begin{align*}
\langle \phi_q^p Z(q') -Z(p') \,|\, \phi_p^{p'} u_p \rangle  \ \ \ &=&  \ \ \ \ &
\langle \phi_{q'}^{p'} Z(q') - Z(p') \,|\, u' \rangle \\ 
&& + \ \ \ &\langle \phi_{q'}^{p'} Z(q')-Z(p') \,|\, \phi_p^{p'} u_p - u' \rangle \\ 
&& + \ \ \ &\langle (\phi_q^p - \phi_{q'}^{p'}) \, Z(q') \,|\, \phi_p^{p'} u_p \rangle\,.
\end{align*}
The first term on the right-hand side is $d'_Z(p',q')$ (Remark~\ref{rem:d'proj}), and we shall see that the second and third terms are but small corrections, bounded respectively by $L\,\Delta \,d(p',q')$ and $\|Z\|\,\Delta\,d(p',q')$ if $\Delta$ is small enough.
Here $L$ is any positive number such that $Z$ is $L$-Lipschitz; we have $\| Z \|<+\infty$ since $Z$ is continuous, $j$-invariant, and is zero outside some cocompact set.
Let us explain these bounds.

In the second term we have $\| \phi_{q'}^{p'} Z(q')-Z(p')\|\leq L\,d(p',q')$ because $Z$ is $L$-Lipschitz.
For $\Delta$ small enough, we also have $\| \phi_p^{p'} u_p - u'\| \leq \Delta$ by the Gauss--Bonnet formula.
Indeed, let $w = \phi_p^{p'} u_p$, let $p''$ and $q''$ be the closest points to $p'$ and $q'$ respectively on the line $pq$, and let $r'$ and $r''$ be the midpoints of $[p',q']$ and $[p'',q'']$ respectively as in Figure~\ref{fig:smoothness-proof}.
Then the angle between $u'$ and $w$ is equal to the area of the quadrilateral $pr''r'p'$, because $u'$ may be obtained by transporting $u_p$ along the broken line $pr''r'p'$ while $w$ is the transport of $u_p$ along $pp'$.
Since $d(p,q)$ and $d(p,p')=d(q,q')$ are bounded by~$\Delta$, we obtain that $\| w - u'\|$ is bounded by the area $2\pi(\cosh(2\Delta)-1)$ of a ball of radius~$2\Delta$ in~$\HH^2$.
In particular, $\| w - u'\|\leq\Delta$ if $\Delta$ is small enough.
We note that as $q \to p$, the angle between $w$  and $u'$ approaches the area of triangle $pp''p'$, which in general is nonzero.
This negative curvature phenomenon makes the Lipschitz assumption on~$Z$ necessary (at least for this proof).

In the third term, we observe that $\phi_q^p Z(q')$ is the parallel translation of $Z(q')$ along the broken line $q'qpp'$ and thus differs from $\phi_{q'}^{p'} Z(q')$ by a rotation of angle equal to the area of the quadrilateral $q'qpp'$.
This area is at most $\Delta \, d(p',q')$, hence $\| (\phi_q^p - \phi_{q'}^{p'})\,Z(q')\| \leq \Delta \| Z\| \, d(p',q')$.

\begin{figure}[h!]
{\centering
\def\svgwidth{7cm}
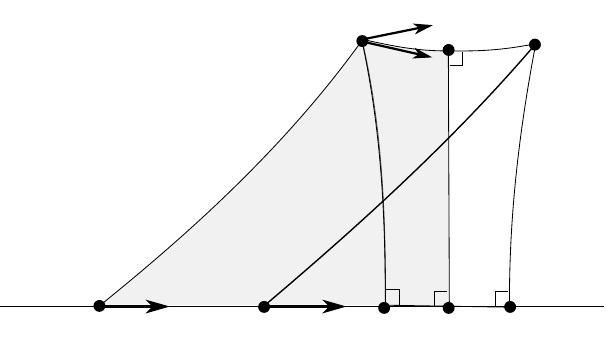}
\caption{In the proof of Lemma~\ref{lem:smooth}, the angle between $u'$ and $w = \phi_p^{p'}u_p$ is the area of the shaded quadrilateral $pr''r'p'$.}
\label{fig:smoothness-proof}
\end{figure}

We now turn back to the integrand in \eqref{eqn:intlemmasmooth}.
If $\Delta$ is small enough, with $(L+\| Z\|) \Delta \leq \eps/4$ and $\cosh \Delta \leq \frac{L + \eps/2}{L+ \eps/4}$, then
\begin{eqnarray}\label{eqn:integrandlemmasmooth}
\langle \phi_q^p Z(q') - Z(p') \,|\, \phi_p^{p'} (u_p) \rangle & \leq & \left ( \frac{d'_Z}{d}(p',q') + L\Delta + \| Z\| \Delta \right ) \,  d(p',q') \nonumber\\ 
& = & \left ( \frac{d'_Z}{d}(p',q') + \frac{\varepsilon}{4} \right) \, d(p,q) \, \cosh \Delta \nonumber\\ 
& \leq & \left (\frac{d'_Z}{d}(p',q') +\frac{\eps}{2} \right ) \, d(p,q),
\end{eqnarray}
where the last inequality follows from the bounds $\cosh \Delta \leq \frac{L + \eps/2}{L+ \eps/4}$ and $|\frac{d'_Z}{d}(p',q')| \leq L$, and from the monotonicity of the function $t\mapsto\frac{t+\eps/2}{t+\eps/4}$.

We now bound $\lip(Y+\widetilde{Z})$.
The vector field $Z$ is zero outside some $j(\Gamma)$-invariant, cocompact set $U''\subset\HH^2$.
Let $U'$ be a $j(\Gamma)$-invariant, cocompact neighborhood of~$U''$.
If $\Delta$ is small enough, then the interior of~$U'$ contains the closed $\Delta$-neighborhood $U_{\Delta}$ of~$U''$.
By smoothness of~$Y$, up to taking $\Delta$ even smaller, we may assume that for any $p\neq q$ in~$U'$ and $q' = \varphi_p^q (p')$ with $d(p,q),d(p,p')\leq\Delta$ we have $\frac{d'_Y}{d}(p,q)\leq \frac{d'_Y}{d}(p',q')+\varepsilon/2$.
Then \eqref{eqn:assumptionpsi}, \eqref{eqn:intlemmasmooth}, and \eqref{eqn:integrandlemmasmooth} imply that for all $p,q\in U'$ with $0<d(p,q)\leq\Delta$,
\begin{align*}
\frac{d'_{Y+\widetilde{Z}}}{d}(p,q)
&\leq \int_{\HH^2} \psi(p,p') \left ( \frac{d'_Y}{d}\big(p',\phi_p^q(p')\big) + \frac{\eps}{2} + \frac{d'_Z}{d}\big(p',\phi_p^q(p')\big) +  \frac{\varepsilon}{2} \right ) \, \D p' \\
&\leq \lip(Y+Z) + \varepsilon \ =\ \lip(X) + \varepsilon .
\end{align*}
By subdivision, $\lip_{U'}(Y+\widetilde{Z})\leq\lip(X)+\varepsilon$.
On the other hand, on $\HH^2\smallsetminus U_{\Delta}$ we have $Y+\widetilde{Z}=Y=X$, hence $\lip_{\HH^2\smallsetminus U_{\Delta}}(Y+\widetilde{Z})=\lip(X)$.
We conclude using Observation~\ref{obs:donkey}.(iv).
\end{proof}

Proposition~\ref{prop:smooth-approximates} follows from Proposition~\ref{prop:allfields} and Lemmas \ref{lem:uppercase} and~\ref{lem:smooth}.

\section{Applications of Theorem~\ref{thm:laminations}: proper actions and fibrations}\label{sec:applications}

What we have proved so far shows that the conditions $(1)$ and $(2)$ of Theorem~\ref{thm:newGLM} are equivalent: $(1)$ implies $(2)$ by \eqref{eqn:lip-marg2}, and the negation of $(1)$ implies by Theorem~\ref{thm:laminations} the existence of a $k$-stretched lamination $\mathscr{L}$ (with $k\geq 0$), hence the negation of $(2)$ by Proposition~\ref{prop:longcurves} (any minimal component of~$\mathscr{L}$ can be approached by simple closed curves).

In Section~\ref{subsec:nec-proper} (resp.~\ref{subsec:suff-proper}), we prove that $(1)$ is necessary (resp.\ sufficient) for the action on~$\RR^{2,1}$ to be properly discontinuous, thus proving Theorem ~\ref{thm:newGLM}.
The first direction (Proposition~\ref{prop:proper-lip}) uses Theorem~\ref{thm:laminations}; the second (Proposition~\ref{prop:fibrations}, where fibrations by timelike lines appear) can be read independently. Theorem~\ref{thm:fibrations} is then a byproduct of Proposition~\ref{prop:fibrations}.

Note that it is enough to prove Theorems~\ref{thm:newGLM} and~\ref{thm:fibrations} for a finite-index, torsion-free subgroup $\Gamma'$ of~$\Gamma$ (such a subgroup exists by the Selberg lemma \cite[Lem.\,8]{sel60}).
Indeed, if $X$ is a $(j|_{\Gamma'},u|_{\Gamma'})$-equivariant convex or vector field, we can always average out the translates $\gamma_i\bullet X$ (notation \eqref{eqn:affineaction}), where the cosets $\gamma_i\Gamma'$ form a partition of $\Gamma$, to produce a $(j,\rho)$-equivariant field whose lipschitz constant is at most that of~$X$ (using Observation~\ref{obs:donkey}).
Thus, we assume $\Gamma$ to be torsion-free in this section.

\subsection{A necessary condition for properness}\label{subsec:nec-proper}

Let $\Gamma$ be a torsion-free discrete group and $j\in\Hom(\Gamma,G)$ a convex cocompact representation.
Recall that $\lambda(g)$ refers to the translation length of $g\in G$, see \eqref{eqn:length_contraction}.
In the ``macroscopic'' case, the following was established in \cite{kasPhD}; see also \cite{gk12} for generalizations.

\begin{Propositionwithref}[{\cite[Th.\,5.1.1]{kasPhD}}]
Let $\rho\in\Hom(\Gamma,G)$ be an arbitrary representation such that $\lambda(\rho(\beta))\leq\lambda(j(\beta))$ for at least one $\beta \in \Gamma \smallsetminus \{1\}$.
If the group
$$\Gamma^{j,\rho} := \big\{ (j(\gamma),\rho(\gamma)) \,|\, \gamma\in\Gamma\big\} \subset G\times G$$
acts properly discontinuously on $\AdS$, then there exists a $(j,\rho)$-equivariant Lipschitz map $f : \HH^2\rightarrow\HH^2$ with Lipschitz constant $<1$.
\end{Propositionwithref}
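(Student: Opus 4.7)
The plan is to establish the contrapositive: if no $(j,\rho)$-equivariant map $\HH^2\to\HH^2$ has Lipschitz constant $<1$, then $\Gamma^{j,\rho}$ does not act properly on $\AdS$. The strategy mirrors the infinitesimal proof of Theorem~\ref{thm:newGLM} that will occupy the remainder of Section~\ref{sec:applications}, with Theorem~\ref{thm:macro-kirszbraun} playing the role of Theorem~\ref{thm:laminations} and hyperbolic translation length replacing the Margulis invariant.

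Let $K$ denote the infimum of Lipschitz constants of all $(j,\rho)$-equivariant maps $\HH^2\to\HH^2$, so $K\geq 1$ by assumption. I would apply Theorem~\ref{thm:macro-kirszbraun} to a cocompact $j(\Gamma)$-invariant subset $\specialC$ of the convex core, with $\varphi$ the restriction of a near-minimizing Lipschitz map: this produces a $K$-Lipschitz $(j,\rho)$-equivariant map $f$ together with a nonempty geodesic lamination $\tilde{\mathscr{L}}\subset\HH^2$ whose leaves are stretched at rate exactly $K$ under $f$. Next, pick a minimal sublamination $\mathscr{L}_0$ of the projection of $\tilde{\mathscr{L}}$ to $S=j(\Gamma)\backslash\HH^2$ and a sequence $(\gamma_n)_{n\in\NN}$ of primitive hyperbolic elements of $\Gamma$ whose translation axes $\A_{j(\gamma_n)}$ Hausdorff-accumulate on a lift of $\mathscr{L}_0$. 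Since the closed geodesic representing $\gamma_n$ spends a proportion $1-o(1)$ of its length near a leaf of $\tilde{\mathscr{L}}$ where $f$ stretches at rate exactly $K$, the macroscopic analog of Proposition~\ref{prop:longcurves}, which expresses $\lambda(\rho(\gamma_n))/\lambda(j(\gamma_n))$ as the average stretch of $f$ along that closed geodesic, yields
\[
\lim_{n\to+\infty}\,\frac{\lambda(\rho(\gamma_n))}{\lambda(j(\gamma_n))} \,=\, K \,\geq\, 1 .
\]

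To derive a contradiction with properness, I invoke the Benoist--Kobayashi criterion in rank one: $\Gamma^{j,\rho}$ acts properly on $\AdS\cong G$ if and only if the Cartan projection difference $|d(o,j(\gamma)\cdot o)-d(o,\rho(\gamma)\cdot o)|$ tends to $+\infty$ as $\gamma$ escapes $\Gamma$, for any basepoint $o\in\HH^2$. Convex cocompactness of $j$ forces $d(o,j(\gamma)\cdot o)-\lambda(j(\gamma))$ to be uniformly bounded along any escaping sequence of hyperbolic elements, so the criterion becomes $|\lambda(j(\gamma))-\lambda(\rho(\gamma))|\to +\infty$. The hypothesis $\lambda(\rho(\beta))\leq\lambda(j(\beta))$, evaluated on the powers $\beta^n$ (for which the length difference is nonnegative and scales linearly in~$n$), pins down the sign: under properness one must have $\lambda(j(\gamma))-\lambda(\rho(\gamma))\to +\infty$ along every escaping sequence. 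But the sequence $(\gamma_n)$ constructed above satisfies $\lambda(j(\gamma_n))-\lambda(\rho(\gamma_n))\leq (1-K)\lambda(j(\gamma_n))+o(\lambda(j(\gamma_n)))\leq o(\lambda(j(\gamma_n)))$ since $K\geq 1$, contradicting properness.

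The main obstacle will be the sign analysis in the last step: the single-element hypothesis $\lambda(\rho(\beta))\leq\lambda(j(\beta))$ must globally fix the direction of the inequality $\lambda(j(\gamma))>\lambda(\rho(\gamma))$ in the Benoist--Kobayashi criterion, so that the stretching from Theorem~\ref{thm:macro-kirszbraun} genuinely obstructs properness rather than being consistent with it. A secondary technical matter is the identification of the Cartan projection with translation length in the asymptotic regime, which is routine thanks to convex cocompactness of $j$ but requires a little care for sequences whose axes approach the convex core transversely.
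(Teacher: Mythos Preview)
The paper does not give its own proof of this proposition; it is quoted from \cite{kasPhD}. But the paper does prove the infinitesimal analog (Proposition~\ref{prop:proper-lip}), and your plan mirrors that proof correctly in outline. However, your execution has a genuine gap that you do not flag, beyond the sign issue you do identify.

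In the case $K=1$, knowing only that $\lambda(\rho(\gamma_n))/\lambda(j(\gamma_n))\to 1$ gives $\lambda(j(\gamma_n))-\lambda(\rho(\gamma_n))=o(\lambda(j(\gamma_n)))$, but this quantity can still tend to $+\infty$; it does \emph{not} contradict the Benoist--Kobayashi criterion. Look at how the paper handles the infinitesimal case $k=0$ in the proof of Proposition~\ref{prop:proper-lip}: it does not use length ratios at all. Instead, it shows that on a leaf $\ell$ of the lamination the optimal field contains a Killing field, and then exhibits explicit elements $\gamma_n$ for which the Killing vectors $u(\gamma_n)$ stay in a compact set of~$\g$, contradicting properness directly. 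The macroscopic analog is: an optimal $1$-Lipschitz map that stretches a leaf $\ell$ at rate~$1$ restricts to an isometry on~$\ell$, hence agrees there with some fixed $g\in G$; then for $\gamma_n$ tracking the leaf, $\rho(\gamma_n)\,g\,j(\gamma_n)^{-1}$ stays in a compact subset of~$G$, contradicting properness on the nose. You need this refined argument, not just the ratio estimate.

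For $K>1$, you correctly identify the difficulty but your proposed fix is shaky. The clean route, again mirroring the paper's proof of Proposition~\ref{prop:proper-lip}, is not to pin down the sign in the Benoist--Kobayashi criterion but to prove a macroscopic opposite-sign lemma: the existence of $\alpha$ with $\lambda(\rho(\alpha))>\lambda(j(\alpha))$ (supplied by the lamination) together with the hypothesized $\beta$ with $\lambda(\rho(\beta))\leq\lambda(j(\beta))$ obstructs properness directly, by a ping-pong argument in~$G$ analogous to the one the paper carries out in~$\g$ for Margulis invariants of opposite sign. Trying instead to globalize the sign from the single element~$\beta$ via Cayley-graph connectedness essentially presupposes the full properness criterion you are trying to establish, and risks circularity.
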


Here we prove the following ``microscopic'' version.
Recall that if $u$ is a $j$-cocycle, then the Margulis invariant ${\boldsymbol\alpha}_u(\gamma)=\frac{\D}{\D t}\big|_{t=0}\,\lambda(e^{tu(j(\gamma))}j(\gamma))$ is the infinitesimal rate of change of the length of $j(\gamma)$ under deformation in the $u$ direction (see Section~\ref{subsec:length_deriv}).

\begin{Proposition}\label{prop:proper-lip}
Let $u : \Gamma\rightarrow\g$ be a $j$-cocycle such that ${\boldsymbol\alpha}_u(\beta)\leq 0$ for at least one $\beta \in \Gamma \smallsetminus \{1\}$.
If the group
$$\Gamma^{j,u} := \big\{ (j(\gamma),u(\gamma)) \,|\, \gamma\in\Gamma\big\} \subset G\ltimes\g$$
acts properly discontinuously on~$\RR^{2,1}$, then there exists a $(j,u)$-equivariant lipschitz vector field on~$\HH^2$ with lipschitz constant $<0$.
\end{Proposition}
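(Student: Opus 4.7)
The plan is to prove the contrapositive: assuming the infimum $k$ of lipschitz constants of $(j,u)$-equivariant lipschitz vector fields on $\HH^2$ satisfies $k\geq 0$, and ${\boldsymbol\alpha}_u(\beta)\leq 0$ for some $\beta\in\Gamma\smallsetminus\{1\}$, I would exhibit infinitely many pairwise distinct elements $\gamma_n\in\Gamma$ keeping a compact $K\subset\g\simeq\RR^{2,1}$ meeting itself, contradicting proper discontinuity.

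The first step invokes the machinery of Section~\ref{sec:prooflamin}. By Proposition~\ref{prop:exaequo}, $k$ also equals the infimum of lipschitz constants over equivariant convex fields, so Theorem~\ref{thm:laminations} supplies a $(j,u)$-equivariant $k$-lipschitz convex field $X$ on $\HH^2$ together with a geodesic lamination $\mathscr{L}$ in the convex core of $S:=j(\Gamma)\backslash\HH^2$ that is maximally stretched by $X$. Applying Proposition~\ref{prop:longcurves} to simple closed geodesics Hausdorff-approximating a minimal sublamination of $\mathscr{L}$ then identifies $k$ with $k_{\boldsymbol\alpha}:=\sup_\gamma {\boldsymbol\alpha}_u(\gamma)/\lambda(j(\gamma))$.

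Next, I would split into cases according to the Margulis invariants. If ${\boldsymbol\alpha}_u(\gamma_0)=0$ for some $\gamma_0\neq 1$ (this covers ${\boldsymbol\alpha}_u(\beta)=0$, and also the case when the supremum $k_{\boldsymbol\alpha}\geq 0$ is attained), I would adjust $u$ by a $j$-coboundary---a conjugation of the affine action by a translation in $\g$, which preserves properness---to arrange $u(\gamma_0)={\boldsymbol\alpha}_u(\gamma_0)\,c^0_{\gamma_0}=0$; then $\gamma_0^n\bullet 0=0$ for every $n\in\ZZ$, and $K=\{0\}$ gives the desired failure of properness. Otherwise every Margulis invariant is nonzero and ${\boldsymbol\alpha}_u(\beta)<0$. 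If $k_{\boldsymbol\alpha}>0$, then some $\gamma$ satisfies ${\boldsymbol\alpha}_u(\gamma)>0$, and a standard opposite-sign construction (composite words $\gamma_n=\beta^{a_n}\gamma^{b_n}$ with integer exponents chosen so $a_n{\boldsymbol\alpha}_u(\beta)+b_n{\boldsymbol\alpha}_u(\gamma)=O(1)$) yields pairwise distinct elements whose affine action nearly fixes a bounded portion of an affine line. In the remaining subcase $k_{\boldsymbol\alpha}=0$ with all Margulis invariants strictly negative, I would take $\gamma_n$ whose translation axes Hausdorff-converge to a leaf $\ell$ of $\tilde{\mathscr{L}}$, so that ${\boldsymbol\alpha}_u(\gamma_n)/\lambda(j(\gamma_n))\to 0$ by Proposition~\ref{prop:longcurves}. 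At a fixed $q\in\ell\cap D$ in a compact fundamental domain $D$, the equivariance relation $X(q)\ni j(\gamma_n)_*X(j(\gamma_n^{-1})q)+u(\gamma_n)(q)$ combined with the uniform bound on $X$ over compact subsets of the preimage of the convex core (Proposition~\ref{prop:bounds}) forces $\|u(\gamma_n)(q)\|\leq 2R$; coupled with the asymptotic alignment of the eigenbases $(c^0_{\gamma_n},c^\pm_{\gamma_n})$ to a fixed frame along $\ell$, this produces a bounded orbit of a point near the limiting affine axis.

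\textbf{Main obstacle.} The hardest case will be $k_{\boldsymbol\alpha}=0$ with all ${\boldsymbol\alpha}_u(\gamma)<0$, where the Margulis opposite-sign shortcut is unavailable. The crux is to convert the single-point estimate $\|u(\gamma_n)(q)\|\leq 2R$---which only controls $u(\gamma_n)$ modulo the rotation Killing field around $q$---into a genuine bound on $u(\gamma_n)\in\g$ sufficient to pin down a bounded orbit. The maximally stretched lamination from Theorem~\ref{thm:laminations} is used to supply the needed asymptotic alignment of eigenframes, and the containment of $\mathscr{L}$ inside the convex core is what ensures that the preimages of $q$ under $j(\gamma_n^{-1})$ stay in a region where $X$ is bounded; the careful combination of these two ingredients is the main technical task.
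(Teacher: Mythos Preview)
Your overall strategy matches the paper: argue by contrapositive, invoke Theorem~\ref{thm:laminations} to obtain a $k$-stretched lamination, and for $k>0$ appeal to Margulis's opposite-sign lemma. The difficulty lies entirely in the case $k=0$, and there your plan has a genuine gap.

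In your subcase $k_{\boldsymbol\alpha}=0$ with all Margulis invariants negative, the inequality $\|u(\gamma_n)(q)\|\leq 2R$ does not follow from the equivariance relation together with Proposition~\ref{prop:bounds}. That proposition bounds $X$ only on a \emph{fixed} compact subset of the convex-core interior, whereas the points $j(\gamma_n)^{-1}\cdot q$ escape every such compact as $n\to\infty$. For a $(j,u)$-equivariant field with $u$ nontrivial in cohomology, $\|X\|$ is genuinely unbounded over the full preimage of the convex core (by equivariance, $\|X(j(\gamma)^n\cdot p)\|$ grows with $\|u(\gamma^n)\|$), so you have no independent control on the vectors in $X(j(\gamma_n)^{-1}\cdot q)$ and the argument is circular. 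Even if this bound held, your ``main obstacle'' remains: a single-point bound constrains $u(\gamma_n)$ only modulo the timelike line $\RR q\subset\g$, and since $c^0_{\gamma_n}\to c^0_\ell\in q^\perp$, the Margulis invariant is also asymptotically blind to the $q$-component, so the eigenframe alignment does not obviously close the gap.

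The paper's $k=0$ argument avoids both issues. It first shows, via Proposition~\ref{prop:zeroconvex} and a limiting argument, that the $0$-lipschitz field $X$ contains the restriction of a single Killing field $Y_0$ to the \emph{entire} leaf $\ell$; adjusting $u$ by the corresponding coboundary arranges $\underline{0}(p)\in X(p)$ for all $p\in\ell$. Then, by recurrence of $\ell$ in the compact convex core, one finds \emph{two} points $p_n,p'_n\in\ell$ at distance~$1$ whose images $q_n=j(\gamma_n)\cdot p_n$ and $q'_n=j(\gamma_n)\cdot p'_n$ lie in a fixed ball $B$. Equivariance now gives $u(\gamma_n)(q_n)\in X(q_n)$ and $u(\gamma_n)(q'_n)\in X(q'_n)$ directly---with no uncontrolled $j(\gamma_n)_*$ term---so both have norm at most $\|X(B)\|<\infty$; and a Killing field bounded at two points of $B$ a unit apart lies in a compact subset of~$\g$. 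This also renders your separate case ${\boldsymbol\alpha}_u(\gamma_0)=0$ unnecessary: the paper's dichotomy is simply $k=0$ versus $k>0$.
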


\begin{proof}
We prove the contrapositive.
Let $k$ be the infimum of lipschitz constants of $(j,u)$-equivariant vector fields on~$\HH^2$ and assume $k \geq 0$.
Let $\ell$ be a geodesic line in $\HH^2$ that projects to a leaf of the maximally stretched lamination $\mathscr{L}$ given by Theorem~\ref{thm:laminations}.
Let $X$ be the $k$-lipschitz, globally defined convex field also given by Theorem~\ref{thm:laminations}. 

$\bullet$ \textbf{The case $k=0$.}
We first suppose that $k=0$.
Since $X$ is defined on all of~$\HH^2$, Lemma~\ref{lem:convexbounds} implies that $\| X(\specialC) \|<+\infty$ for any compact set $\specialC\subset\HH^2$.

We claim that there is a Killing field $Y_0$ on~$\HH^2$ such that $X(p)$ contains $Y_0(p)$ for any $p$ in the leaf~$\ell$.
Indeed, choose for every $p \in \ell$ a vector $x_p\in X(p)$.
If $p,q\in \ell$ are distinct, then there is a unique Killing field $Y$ such that $x_p=Y(p)$ and $x_q=Y(q)$.
For any point $r$ on the segment $[p,q]$, the vector $Y(r)$ belongs to $X(r)$ by Proposition~\ref{prop:zeroconvex}.
The Killing field $Y$ containing $x_p$ and $x_q$ may depend on $p$ and $q$, but as $p$ and~$q$ escape along the line~$\ell$ in opposite directions, the possible Killing fields $Y$ that arise all belong to a compact subset of $\g$, because any $X(r)$ is bounded.
Therefore we can extract a limit $Y_0$ such that $Y_0(p)\in X(p)$ for all $p\in\ell$.

Up to modifying $u$ by a coboundary, we may assume that $Y_0$ is the zero vector field, \ie $\underline{0}(p)\in X(p)$ for all $p\in\ell$.
Since $\ell$ is contained in the convex core, which is compact modulo $j(\Gamma)$, we can find a ball $B\subset\HH^2$, a sequence $(\gamma_n)_{n\in\NN}$ of pairwise distinct elements of~$\Gamma$, and, for any $n\in\NN$, two points $p_n, p'_n \in \ell$, distance one apart, such that $q_n:= j(\gamma_n) \cdot p_n$  and $q'_n:=j(\gamma_n) \cdot p'_n$ both belong to~$B$.
Since $\underline{0}(p_n)\in X(p_n)$, the set $X(q_n)={j(\gamma_n)}_*(X(p_n))+u(\gamma_n)(q_n)$ contains $u(\gamma_n)(q_n)$; similarly $u(\gamma_n)(q'_n)\in X(q'_n)$.
Since $R:=\| X(B)\|<+\infty$ and $d(q_n, q'_n)=1$, for any $n\in\NN$ the Killing field $u(\gamma_n)$ lies in the \emph{compact} set
$$\mathcal{C} = \big\{ v \in \g \ |\ \exists\, q,q'\in B \text{ with } d(q,q')=1 \text{ and } \| v(q)\|, \|v(q') \| \leq R \big\} .$$
Thus the action of $\Gamma^{j,u}$ on $\g\cong\RR^{2,1}$ fails to take the origin $0\in\g$ off~$\mathcal{C}$; in particular, it cannot be properly discontinuous.

$\bullet$ \textbf{The case $k>0$ and the opposite sign lemma.}
We now suppose that $k>0$.
By Proposition~\ref{prop:longcurves}, there is an element $\alpha\in\Gamma\smallsetminus\{ 1\}$, corresponding to a closed curve of $j(\Gamma)\backslash\HH^2$ nearly carried by~$\mathscr{L}$, whose Margulis invariant ${\boldsymbol\alpha}_u(\alpha)$ is positive.
Recall that by assumption in Proposition~\ref{prop:proper-lip}, there is also an element $\beta\in\Gamma\smallsetminus\{ 1\}$ with ${\boldsymbol\alpha}_u(\beta) \leq 0$.
The existence of such $\alpha,\beta$ is a well-known obstruction to properness, known as Margulis's \emph{opposite sign lemma} \cite{mar83,mar84}. For convenience, we give the idea of a proof here. 

If ${\boldsymbol\alpha}_u(\beta)=0$, then $(j(\beta),u(\beta))\in G\ltimes\g$ has a fixed point in $\g\cong\RR^{2,1}$, hence $\Gamma^{j,u}$ does not act properly discontinuously on~$\RR^{2,1}$.
We now suppose ${\boldsymbol\alpha}_u(\alpha)>0>{\boldsymbol\alpha}_u(\beta)$.
Let $(a^+,a^0,a^-)$ and $(b^+,b^0,b^-)$ be bases of eigenvectors for the adjoint action of $j(\alpha)$ and $j(\beta)$ on~$\g$, respectively.
As in Section~\ref{subsec:margulis-invariant}, we assume that $a^-,a^+,b^-,b^+$ belong to the \emph{positive} light cone $\mathcal{L}$ of~$\g$, that $a^-,b^-$ (resp.\ $a^+,b^+$) correspond to eigenvalues $<1$ (resp.\ $>1$), and that $a^0$ (resp.~$b^0$) is a positive multiple of $a^-\wedge a^+$ (resp.\ of $b^-\wedge b^+$), of norm~$1$.
From $a^0$, when looking at~$\mathcal{L}$, one sees $a^+$ on the left and $a^-$ on the right, and similarly for $b^0,b^+,b^-$.
Let $A, B \subset\g$ be the affine lines, of directions $\RR a^0$ and $\RR b^0$, that are preserved by $(j(\alpha),u(\alpha))$ and $(j(\beta),u(\beta))$ respectively (see Section~\ref{subsec:margulis-invariant}).
For simplicity, we first assume that $A$ and~$B$ both contain the origin~$O$.

Let $\mathcal{A}=A+\RR a^+$ be the unstable lightlike plane containing~$A$, and $\mathcal{B}=B+\RR b^-$ the stable lightlike plane containing~$B$.
Up to applying a linear transformation in $\OO(2,1)_0$ and rescaling $a^+$ and~$b^-$, we may assume that $\mathcal{A}\cap\mathcal{B}$ is the $x$-axis $\RR (1,0,0)$ and that $a^+=(0,-1,1)$ and $b^-=(0,1,1)$.
Then $a^0\in\mathcal{A}$ and $b^0\in\mathcal{B}$ have positive $x$-coordinates, which we denote by $a^0_x$ and $b^0_x$ respectively.
Figure~\ref{fig:opposite} looks down the $z$-axis.

\begin{figure}[h!]
{\centering
\def\svgwidth{11cm}
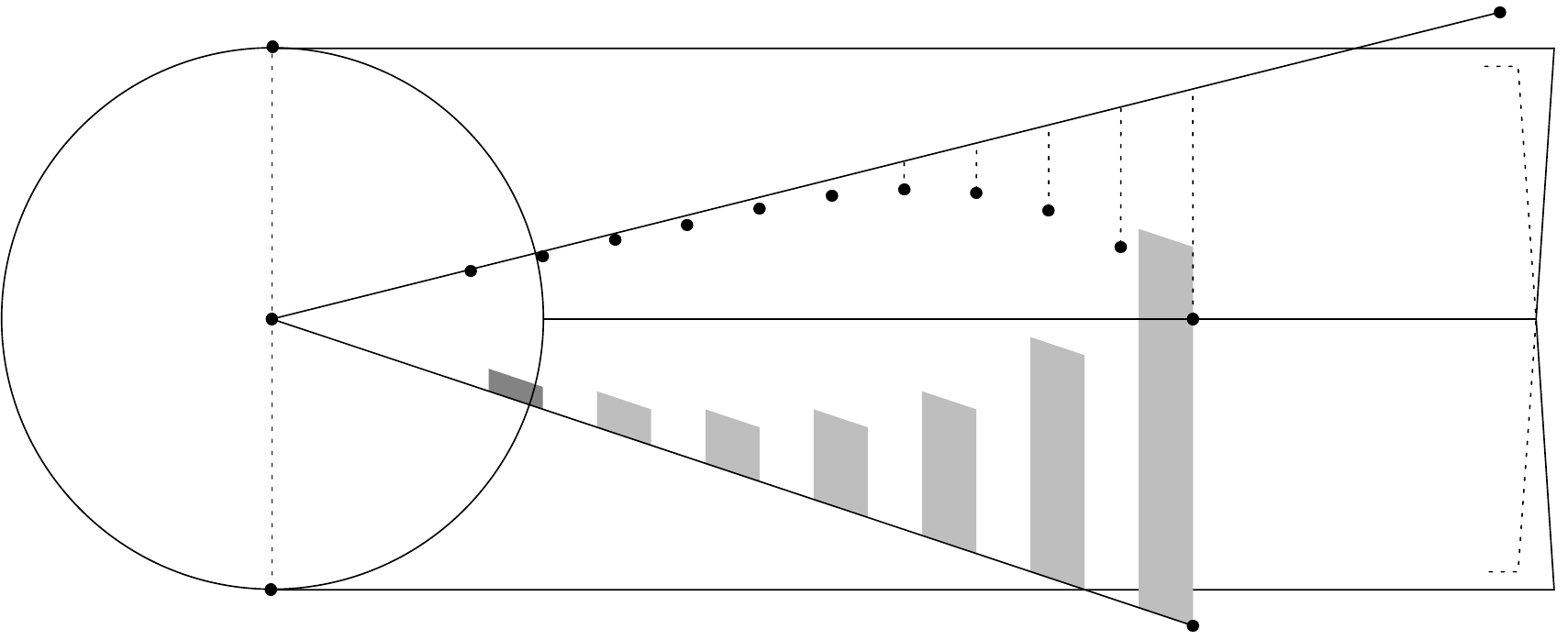}
\caption{Looking down the $z$-axis when the axes $A$ and~$B$ both contain the origin~$O$. The positive light cone $\mathcal{L}$, truncated, appears as a circle.}
\label{fig:opposite}
\end{figure}

Let $V\subset\mathcal{A}$ be a compact neighborhood of a point of~$A$, for example near the origin.
Since ${\boldsymbol\alpha}_u(\alpha)>0$, the action of $(j(\alpha),u(\alpha))\in G\ltimes\g$ on~$\mathcal{A}$ is to translate in the $x$-direction to the right (preserving the axis~$A$), while expanding exponentially in the $a^+$-direction outwards from~$A$.
In particular, for all large enough~$n$, the iterate $(j(\alpha),u(\alpha))^n\cdot V$ meets the $x$-axis at a point $p_n$ of large positive abscissa $n\,a^0_x\,{\boldsymbol\alpha}_u(\alpha)+O(1)$.

Since ${\boldsymbol\alpha}_u(\beta)<0$, the action of $(j(\beta),u(\beta))\in G\ltimes\g$ on~$\mathcal{B}$ is to translate in the $x$-direction to the left (preserving the axis~$B$), while contracting exponentially in the $b^-$-direction towards~$B$.
In particular, $(j(\beta),u(\beta))^m\cdot p_n$ is close to the origin $O$ if
$$m = \left \lfloor n \, \frac{a^0_x\,{\boldsymbol\alpha}_u(\alpha)}{b^0_x\,|{\boldsymbol\alpha}_u(\beta)|}\right \rfloor,$$
where $\lfloor\cdot\rfloor$ denotes the integral part.
For such integers $n,m$, the element $(j(\beta),u(\beta))^m\,(j(\alpha),u(\alpha))^n\in\Gamma^{j,u}$ does not carry $V$ off some compact set.
Thus the action of $\Gamma^{j,u}$ on~$\RR^{2,1}$ is not properly continuous.

If the axes $A$ and~$B$ do not both contain the origin~$O$, then the argument is essentially the same, replacing the $x$-axis with the intersection line $\Delta=\mathcal{A}\cap\mathcal{B}$ (which we may assume to be parallel to the $x$-axis): we see that $\beta^m\cdot p_n$ is still close to the point $B\cap\Delta$ for $n,m$ as above.
This completes the proof of Proposition~\ref{prop:proper-lip}.
\end{proof}

\subsection{Fibrations by timelike geodesics and a sufficient condition for properness}\label{subsec:suff-proper}

The following proposition, together with Proposition~\ref{prop:proper-lip}, completes the proof of Theorems \ref{thm:newGLM} and~\ref{thm:fibrations}.

\begin{Proposition}\label{prop:fibrations}
Let $\Gamma$ be a torsion-free discrete group and $j\in\Hom(\Gamma,G)$ a discrete and injective representation, with quotient surface $S:=j(\Gamma)\backslash\HH^2$.
Let $\rho\in\Hom(\Gamma,G)$ be an arbitrary representation and $u : \Gamma\rightarrow\g$ a $j$-cocycle.
\begin{enumerate}
  \item Suppose there exists a $(j,\rho)$-equivariant Lipschitz map $f : \HH^2\rightarrow\HH^2$ with Lipschitz constant $K<1$. Then the group
  $$\Gamma^{j,\rho} := \big\{ (j(\gamma),\rho(\gamma)) \,|\, \gamma\in\Gamma\big\} \subset G\times G$$
  acts properly discontinuously on $\AdS$ and $f$ induces a fibration $\mathcal{F}_f$ of the quotient $\Gamma^{j,\rho}\backslash\AdS$ over $S$ by timelike geodesic circles.
  \item Suppose there exists a $(j,u)$-equivariant lipschitz vector field\linebreak $X : \HH^2\rightarrow T\HH^2$ with lipschitz constant $k<0$. Then the group
  $$\Gamma^{j,u} := \big\{ (j(\gamma),u(\gamma)) \,|\, \gamma\in\Gamma\big\} \subset G\ltimes\g$$
  acts properly discontinuously on~$\RR^{2,1}$ and $X$ induces a fibration $\mathcal{F}_X$ of the quotient $\Gamma^{j,u}\backslash\RR^{2,1}$ over $S$ by timelike geodesic lines.
\end{enumerate}
\end{Proposition}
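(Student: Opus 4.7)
The plan is to build in each case a continuous $\Gamma$-equivariant projection $\pi$ from the model space onto $\HH^2$ whose fibers are the claimed timelike geodesics, and then to deduce proper discontinuity (and the resulting quotient fibration) from the proper free action of $j(\Gamma)$ on~$\HH^2$.

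For part~(1), I would define $\pi : \AdS = G \to \HH^2$ by contraction: for each $g \in G$, the self-map $p \mapsto g^{-1} \cdot f(p)$ of $\HH^2$ is $K$-Lipschitz with $K<1$ (the left translation by $g^{-1}$ is an isometry), so Banach's fixed point theorem provides a unique fixed point $\pi(g) \in \HH^2$, characterized by $g \cdot \pi(g) = f(\pi(g))$. The fiber $\pi^{-1}(p) = \{g \in G : g \cdot p = f(p)\}$ is the left coset $g_0 \cdot \mathrm{Stab}(p)$ of the elliptic one-parameter stabilizer of~$p$; as the left-translate by~$g_0$ (an isometry of $\AdS = G$) of the timelike geodesic $\mathrm{Stab}(p)$ through~$1$, it is a timelike geodesic circle. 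The $(j,\rho)$-equivariance of $f$ yields $\pi(\rho(\gamma) g j(\gamma)^{-1}) = j(\gamma) \cdot \pi(g)$, and $\pi$ is continuous because $g \mapsto g^{-1}\! f$ depends continuously on~$g$ with a uniform contraction constant.

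For part~(2), I would replace $f$ by~$X$ and Banach's theorem by Proposition~\ref{prop:fixpoint}. For each $v \in \g \simeq \RR^{2,1}$, viewing $v$ as a Killing field, the vector field $X - v$ is $k$-lipschitz with $k<0$ (Observation~\ref{obs:donkey}, since Killing fields contribute $0$ to~$d'$), so Proposition~\ref{prop:fixpoint} yields a unique zero $\pi(v) \in \HH^2$, characterized by $v(\pi(v)) = X(\pi(v))$. The fiber $\pi^{-1}(p) = \{v \in \g : v(p) = X(p)\}$ is the affine line in $\g$ directed by $\g_p := \{v \in \g : v(p) = 0\}$, which is the timelike line of Killing fields fixing~$p$ (elliptic elements are timelike in $\g \simeq \RR^{2,1}$); hence $\pi^{-1}(p)$ is a timelike geodesic line in~$\RR^{2,1}$. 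The $(j,u)$-equivariance of~$X$ gives $\pi(\gamma \bullet v) = j(\gamma) \cdot \pi(v)$: if $v(p) = X(p)$, then
\[(\gamma \bullet v)(j(\gamma) \cdot p) = j(\gamma)_*(v(p)) + u(\gamma)(j(\gamma) \cdot p) = j(\gamma)_*(X(p)) + u(\gamma)(j(\gamma) \cdot p) = X(j(\gamma) \cdot p).\]
Continuity of $\pi$ follows from the inequality $\|(X-v)(p)\| \geq |k| \, d(p, \pi(v))$ (a direct consequence of $k$-lipschitz-ness with $k<0$ applied between $p$ and the zero $\pi(v)$), which gives locally uniform bounds on $\pi$.

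With $\pi$ in hand, proper discontinuity in both parts reduces to the proper action of $j(\Gamma)$ on~$\HH^2$: for compact sets $K_1, K_2$ in the model space, their images $\pi(K_1)$ and $\pi(K_2)$ are compact in~$\HH^2$, and the existence of $\gamma$ with $\gamma \cdot K_1 \cap K_2 \neq \emptyset$ forces $j(\gamma) \cdot \pi(K_1) \cap \pi(K_2) \neq \emptyset$; thus $j(\gamma)$ --- and so $\gamma$, by injectivity of~$j$ --- lies in a finite set. The action of~$\Gamma$ is free since $\gamma \bullet v = v$ forces $j(\gamma)$ to fix $\pi(v)$, hence $j(\gamma)=1$ (as $j(\Gamma)$ is torsion-free and discrete) and so $\gamma=1$. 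The main technical point, which I expect to be the most subtle, is to upgrade the $\Gamma$-equivariant fiber-preserving continuous map $\pi$ to a genuine (locally trivial) fibration by timelike geodesics on the quotient; this should follow by smoothing (invoking Proposition~\ref{prop:smooth-approximates} in part~(2), and working directly with the smooth extensions of~$f$ in part~(1)) and applying the implicit function theorem to obtain local sections transverse to the fibers.
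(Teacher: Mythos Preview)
Your proposal is correct and essentially identical to the paper's proof: the same fixed-point construction (Banach in part~(1), Proposition~\ref{prop:fixpoint} in part~(2)), the same identification of fibers as cosets of point-stabilizers, the same equivariance check, and the same reduction of proper discontinuity to the $j(\Gamma)$-action on~$\HH^2$. The only difference is your final paragraph, where you worry about local triviality and propose smoothing via Proposition~\ref{prop:smooth-approximates}; the paper does not do this and simply calls $\pi$ a bundle projection --- and indeed smoothing is unnecessary, since local sections (hence local triviality) already follow from the continuity estimate you proved.
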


The fact that the existence of a $(j,\rho)$-equivariant contracting Lipschitz map implies the properness of the action of~$\Gamma^{j,\rho}$ is an easy consequence of the general \emph{properness criterion} of Benoist \cite{ben96} and Kobayashi \cite{kob96}: see~\cite{kasPhD}.
Our method here gives a different, short proof.

Note that we do not require $j$ to be convex cocompact in Proposition~\ref{prop:fibrations}.
In particular, using \cite[Th.\,1.8]{gk12}, we obtain that \emph{any Lorentzian $3$-manifold which is the quotient of $\AdS$ by a finitely generated group fibers in circles over a ($2$-dimensional) hyperbolic orbifold}.

\begin{proof}[Proof of Proposition~\ref{prop:fibrations}]
(1) The timelike geodesics in $\AdS=G$ are parameterized by $\HH^2 \times \HH^2$: for $p,q \in \HH^2$, the corresponding timelike geodesic~is
$$L_{p,q} := \{g \in G \ |\ g \cdot p = q\}, $$
which is a coset of the stabilizer of~$p$, hence topologically a circle.
The map $f : \HH^2\rightarrow\HH^2$ determines a natural collection $\{L_{p,f(p)} \,|\, p \in \HH^2\}$ of timelike geodesics in $G$.
This collection is a fibration of $G$ over~$\HH^2$.
Indeed, for $g \in G$, the map $g^{-1}\circ f$ is $K$-Lipschitz because $f$ is; since $K<1$, it has a unique fixed point, which we denote by $\Pi(g)\in\HH^2$.
Thus $g$ belongs to $L_{p,f(p)}$ for a unique $p\in\HH^2$, namely $\Pi(g)$.
The surjective map $\Pi : G\rightarrow\HH^2$ is continuous.
Indeed, if $g' \in G$ is close enough to $g$ in the sense that $d(p,g'^{-1}\circ f(p)) < (1-K)\,\delta$ for $p=\Pi(g)$, then $g'^{-1} \circ f$ maps the ball of radius $\delta$ around $p$ into itself, hence the unique fixed point $\Pi(g')$ of $g'^{-1} \circ f$ is within $\delta$ of $p=\Pi(g)$.
Finally, the map~$\Pi$ satisfies the following equivariance property:
$$\Pi\big(\rho(\gamma) g j(\gamma)^{-1}\big) = j(\gamma) \cdot \Pi(g)$$
for all $\gamma\in\Gamma$ and $g\in G$.
Therefore, the properness of the action of $\Gamma^{j,\rho}$ on $G=\AdS$ follows from the properness of the action of $j(\Gamma)$ on~$\HH^2$, and $\Pi$ descends to a bundle projection $\Pi : \Gamma^{j,\rho}\backslash\AdS \rightarrow S$. 

(2) The timelike geodesics in $\RR^{2,1}=\g$ are parameterized by the tangent bundle $T\HH^2$: for $p\in \HH^2$ and $x \in T_p\HH^2$, the corresponding timelike geodesic is the affine line
$$\ell_x := \{Y\in\g \ |\ Y(p)=x\} ,$$
which is a translate of the infinitesimal stabilizer of~$p$. 
The vector field $X : \HH^2\rightarrow T\HH^2$ determines a natural collection $\{\ell_{X(p)} \,|\, p \in \HH^2\}$ of timelike geodesics.
This collection is a fibration of $\g$ over~$\HH^2$.
Indeed, for $Y \in \g$, the vector field $X - Y$ is $k$-lipschitz because $X$ is $k$-lipschitz and $Y$ is a Killing field; by Proposition~\ref{prop:fixpoint}, it has a unique zero, which we denote by $\varpi(Y) \in \HH^2$.
Thus $Y$ belongs to $\ell_{X(p)}$ for a unique $p\in\HH^2$, namely $\varpi(Y)$.
The surjective map $\varpi : \g\rightarrow \HH^2$ is continuous.
Indeed, if $Y' \in \g$ is close enough to $Y$ in the sense that $\| (Y - Y')(p) \| < |k|\,\delta$ for $p=\varpi(Y)$, then the $k$-lipschitz field $X-Y'=(X-Y)+(Y-Y')$ points inward along the sphere of radius $\delta$ centered at~$p$, hence the unique zero $\varpi(Y')$ of $X-Y'$ is within~$\delta$ of $p=\varpi(Y)$.
Finally, the map $\varpi$ satisfies the following equivariance property:
$$\varpi\big(j(\gamma) \cdot Y + u(\gamma)\big) = j(\gamma) \cdot \varpi(Y)$$
for all $\gamma\in\Gamma$ and $Y\in\g$.
Therefore, the properness of the action of $\Gamma^{j,u}$ on $\g=\RR^{2,1}$ follows from the properness of the action of $j(\Gamma)$ on~$\HH^2$, and $\varpi$ descends to a bundle projection $\varpi : \Gamma^{j,u}\backslash\RR^{2,1} \rightarrow S$.
\end{proof}

Note that in Proposition~\ref{prop:fibrations}.(2), replacing $X$ with a \emph{convex field} of negative lipschitz constant would still lead to a fibration of $\Gamma^{j,u}\backslash\RR^{2,1}$ (distinct vectors in $X(p)$ for the same $p\in\HH^2$ lead to parallel, but distinct lines of~$\RR^{2,1}$).
However, using a vector field has the advantage of making the leaf space canonically homeomorphic to~$S$.

In fact, in the $\RR^{2,1}$ case the quotient manifold is diffeomorphic to $S \times \RR$ as the fibers are oriented by the time direction.
Note however that the quotient manifold is \emph{not} globally hyperbolic (see \eg \cite{mes90} for a definition), for Charette--Drumm--Brill \cite{cdb03} have shown that Margulis spacetimes contain closed timelike curves (smooth, but not geodesic).
Nonetheless, the fibration $\mathcal{F}_X$ of Proposition~\ref{prop:fibrations}.(2) admits a spacelike section.
Indeed, Barbot \cite{bar05} has shown the existence of a convex, future-complete domain $\Omega_+$ in~$\RR^{2,1}$, invariant under~$\Gamma^{j,u}$, such that the quotient $\Gamma^{j,u}\backslash\Omega_+$ is globally hyperbolic and Cauchy-complete (this works in the general context where $j$ is convex cocompact and $u$ is any cocycle).
In particular, the quotient manifold contains a Cauchy surface whose lift to $\Omega_+$ is a spacelike, convex, complete, embedded disk.
This disk intersects all timelike geodesics in $\RR^{2,1}$ exactly once and so gives a spacelike section of~$\mathcal{F}_X$.

\section{Margulis spacetimes are limits of $\AdSS$ manifolds}\label{sec:geotrans}

In this section, we make precise the idea that all Margulis spacetimes should arise by zooming in on collapsing $\AdSS$ spacetimes.
As in the hyperbo\-lic-to-$\AdSS$ transition described in \cite{dan13}, one natural framework to describe this geometric transition is that of \emph{real projective geometry} (Section~\ref{subsec:projective-basics}); this approach eliminates Lorentzian metrics from the analysis.
The proof of Theorem~\ref{thm:geomtrans} is given in Section~\ref{subsec:proofoftransthm}; the main technical step (Section~\ref{subsec:collapse}) is to control the geometry of collapsing $\AdSS$ manifolds by producing geodesic fibrations as in Proposition~\ref{prop:fibrations} and controlling how they degenerate.
Finally, in Section~\ref{subsec:metrics} we prove Corollary~\ref{cor:smooth-metrics}, which describes the geometric transition in the language of Lorentzian metrics.

Background material on locally homogeneous geometric structures, particularly relevant for Section~\ref{subsec:proofoftransthm}, may be found in \cite{thu80, gol10}.

\subsection{$\AdS$ and $\RR^{2,1}$ as projective geometries}\label{subsec:projective-basics}

Both $\AdS$ and~$\RR^{2,1}$ can be realized as domains in projective space.
Indeed, the map
$$I: \begin{pmatrix} y_1+y_4 & y_2-y_3 \\ y_2+y_3 & -y_1+y_4 \end{pmatrix} \longmapsto [y_1:y_2:y_3:y_4]$$
defines an embedding of $\AdS = G = \PSL_2(\RR)$ into $\RP^3$ whose image is the open set $\{ [y]\in\RP^3 \,|\, y_1^2 + y_2^2 - y_3^2 - y_4^2 < 0\}$ (the interior of a projective quadric); it induces an injective group homomorphism $I_{\ast} :\ \mathrm{Isom}(\AdS)_0 = G\times G\ \hookrightarrow\ \PGL_4(\RR)$, and $I$ is $I_{\ast}$-equivariant:
\begin{equation}\label{eqn:I*}
I(A\cdot x) = I_{\ast}(A)\cdot I(x)
\end{equation}
for all $A\in\mathrm{Isom}(\AdS)_0$ and $x\in\AdS$.
The map 
$$i: \begin{pmatrix} z_1 & z_2-z_3 \\ z_2+z_3 & -z_1 \end{pmatrix} \longmapsto [z_1:z_2:z_3:1]$$
defines an embedding of $\RR^{2,1} = \g = \ssl_2(\RR)$ into $\RP^3$ whose image is the affine chart $\{ [z]\in\RP^3 \,|\, z_4 \neq 0\}$\,; it induces an injective group homomorphism $i_{\ast} :\ \mathrm{Isom}(\RR^{2,1})_0 = G\ltimes\g\ \hookrightarrow\ \PGL_4(\RR)$, and $i$ is $i_{\ast}$-equivariant:
\begin{equation}\label{eqn:i*}
i\big(B\cdot w) = i_{\ast}(B) \cdot i(w)
\end{equation}
for all $B\in\mathrm{Isom}(\RR^{2,1})_0$ and $w\in\RR^{2,1}$.
We see flat Lorentzian geometry as a limit of $\AdSS$ geometry by inflating an infinitesimal neighborhood of the identity matrix.
More precisely, we rescale by applying the family of projective transformations
\begin{equation}\label{eqn:rescale}
\small
r_t := \begin{pmatrix} t^{-1} & & & \\ & t^{-1} & & \\ & & t^{-1} & \\ & & & 1\end{pmatrix} \in \PGL_4(\RR).
\end{equation}
Note that $r_t\cdot I(\AdS)\subset r_{t'}\cdot I(\AdS)$ for $0<t'<t$ and that
$$\bigcup_{t>0}\ r_t \cdot I(\AdS) \,=\, i(\RR^{2,1}) \cup \HH^2_{\infty}\,,$$
where $\HH^2_{\infty}:=\{ [y]\in\RP^3\,|\, y_1^2 + y_2^2 - y_3^2 < 0 = y_4\}$ is a copy of the hyperbolic plane.
We observe that the limit as $t \to 0$ of the action of $r_t$ is differentiation:

\begin{Proposition}\label{prop:inflate} 
\begin{enumerate}
  \item For any smooth path $t\mapsto g_t\in G=\AdS$ with~$g_0=\nolinebreak 1$,
  $$r_t \cdot I(g_t) \,\xrightarrow[t\to 0]{}\, i\bigg(\frac{\D}{\D t}\Big|_{t=0}\, g_t\bigg) \in \RP^3.$$
  \item For any smooth path $t\mapsto (h_t,k_t)\in G\times G=\mathrm{Isom}(\AdS)_0$ with~$h_0=\nolinebreak k_0$,
  $$r_t\,I_{\ast}(h_t,k_t)\,r_t^{-1} \,\xrightarrow[t\to 0]{}\, i_{\ast}\bigg(h_0,\frac{\D}{\D t}\Big|_{t=0}\, h_t k_t^{-1}\bigg) \in \PGL_4(\RR).$$
\end{enumerate}
\end{Proposition}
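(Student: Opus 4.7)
The strategy is direct computation via Taylor expansion, unpacking the definitions of $I$, $i$, and $r_t$; no deep tools beyond the algebraic identifications of $\AdS$ and $\RR^{2,1}$ as projective domains are needed.

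For (1), expand $g_t = 1 + tX_0 + O(t^2)$ with $X_0 := \frac{\D}{\D t}\big|_{t=0}\,g_t \in \g$. Writing $X_0 = \begin{pmatrix} z_1 & z_2 - z_3 \\ z_2 + z_3 & -z_1\end{pmatrix}$, so that $i(X_0) = [z_1:z_2:z_3:1]$, direct substitution into the formula defining $I$ yields $I(g_t) = [tz_1 : tz_2 : tz_3 : 1] + O(t^2)$. Applying $r_t = \mathrm{diag}(t^{-1}, t^{-1}, t^{-1}, 1)$ multiplies the first three homogeneous coordinates by $t^{-1}$, so $r_t\cdot I(g_t) \to i(X_0)$ as $t \to 0$.

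For (2), convergence in $\PGL_4(\RR)$ can be tested via uniform convergence of the projective actions on a compact subset of the open dense affine chart $i(\RR^{2,1}) \subset \RP^3$. Fix $W \in \g$; then $r_t^{-1}\cdot i(W) = [tw_1:tw_2:tw_3:1]$ represents, under $I$, the matrix $1 + tW$ up to projective scalar. By the equivariance of $I$ and the $G \times G$-action from Section~\ref{subsec:defspaces}, applying $I_\ast(h_t,k_t)$ produces $k_t(1 + tW)h_t^{-1} = k_t h_t^{-1} + t\, k_t W h_t^{-1}$. Taylor expansion using $h_0 = k_0$ gives $k_t h_t^{-1} = 1 + t\dot{u} + O(t^2)$ for the appropriate derivative $\dot u \in \g$ (the rate at which the action of $(h_t,k_t)$ moves the identity of $G$), while continuity gives $k_t W h_t^{-1} \to \Ad(h_0)W$. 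Combining, $k_t(1+tW)h_t^{-1} = 1 + t\bigl(\dot u + \Ad(h_0)W\bigr) + o(t)$, whose image under $r_t$ tends in the limit to $i\bigl(\Ad(h_0)W + \dot u\bigr) = i_\ast(h_0, \dot u)\cdot i(W)$, which is precisely the action of $i_\ast(h_0, \dot u)$.

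The only subtlety to worry about is promoting the pointwise convergence on the dense subset $i(\RR^{2,1}) \subset \RP^3$ to convergence of projective transformations in $\PGL_4(\RR)$. This can be handled in two equivalent ways. First, one can write $r_t I_\ast(h_t, k_t) r_t^{-1}$ explicitly as a $4\times 4$ matrix whose entries are rational in $t > 0$ --- the diagonal structure of $r_t$ makes the leading-order behavior of each entry readable directly from the Taylor expansion above, and one verifies that each entry has a finite limit as $t\to 0^+$. Alternatively, since $\mathrm{Isom}(\RR^{2,1})_0$ embeds as a closed subgroup of $\PGL_4(\RR)$, the limiting projective transformation is determined by its action on any four points in general position, each of which is covered by the pointwise computation (for instance $i(0) = [0{:}0{:}0{:}1]$ together with $i(W)$ for three choices of $W$ spanning $\g$).
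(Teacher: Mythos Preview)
Your proof is correct and follows essentially the same route as the paper. For~(1), both you and the paper simply unfold the definitions. For~(2), the paper applies part~(1) to the path $t \mapsto k_t g_t h_t^{-1}$ (where $g_t \in G$ is chosen so that $r_t \cdot I(g_t) = i(X)$), which is exactly the computation you carry out explicitly in terms of $1+tW$; the paper then concludes, as you do, by noting that a projective transformation is determined by its action on $i(\RR^{2,1})$.

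One small correction to your final paragraph: four points in general position in~$\RP^3$ do \emph{not} determine an element of $\PGL_4(\RR)$ --- you need a projective frame, i.e., five points in general position (for instance, every diagonal matrix fixes the four coordinate points $[e_i]$). Your first alternative, the direct $4\times 4$ matrix computation, avoids this issue; or simply take five points of $i(\g)$ in general position, all of which are already covered by your pointwise convergence.
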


\begin{proof}
Statement~(1) is an immediate consequence of the definitions.
Statement~(2) follows from~(1) by using the equivariance relations \eqref{eqn:I*} and \eqref{eqn:i*}: given $X\in\RR^{2,1}$, for small enough $t>0$ we can write $i(X)=r_t\cdot I(g_t)$ for some $g_t\in G$, and $g_t$ converges to~$1$; by~(1) we have $\frac{\D}{\D t}\big|_{t=0}\, g_t=X$ and
$$\big(r_t\,I_{\ast}(h_t,k_t)\,r_t^{-1}\big) \cdot i(X) = r_t \cdot I(k_t g_t h_t^{-1})$$
converges to
\begin{eqnarray*}
i\bigg(\frac{\D}{\D t}\Big|_{t=0}\, k_t g_t h_t^{-1}\bigg) & = & i\bigg(\frac{\D}{\D t}\Big|_{t=0}\, k_t g_t k_t^{-1} \, k_t h_t^{-1}\bigg)\\
& = & i\bigg(\Ad(k_0) \bigg(\frac{\D}{\D t}\Big|_{t=0}\, g_t\bigg) + \frac{\D}{\D t}\Big|_{t=0}\, k_t h_t^{-1}\bigg)\\
& = & i_{\ast}\bigg(k_0,\frac{\D}{\D t}\Big|_{t=0}\, k_t h_t^{-1}\bigg) \cdot i(X).
\end{eqnarray*}
We conclude using the fact that an element of $\PGL_4(\RR)$ is determined by its action on $i(\RR^{2,1})$.
\end{proof}


Now, let $\Gamma$ be a discrete group, $j\in\Hom(\Gamma,G)$ a convex cocompact representation, and $u : \Gamma\rightarrow\g$ a $j$-cocycle.
Proposition~\ref{prop:inflate} has the following immediate consequence.

\begin{Corollary}\label{cor:limitrep}
Let $t\mapsto j_t$ and $t\mapsto\rho_t$ be two smooth paths in $\Hom(\Gamma,G)$ with $j_0=\rho_0=j$ and $\frac{\D}{\D t}\big|_{t=0}\, \rho_t\, j_t^{-1} = u$.
For all $\gamma\in\Gamma$,
$$r_t\,I_{\ast}\big(j_t(\gamma),\rho_t(\gamma)\big)\,r_t^{-1} \ \xrightarrow[t\to 0]{}\ i_{\ast}\big(j(\gamma), u(\gamma)\big) \in \PGL_4(\RR).$$
\end{Corollary}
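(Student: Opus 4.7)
The plan is to recognize that this corollary is merely the pointwise specialization of Proposition~\ref{prop:inflate}(2), applied elementwise to the representations $j_t$ and $\rho_t$. There is essentially no new content to prove; one must only verify the hypotheses and match up the derivative appearing in Proposition~\ref{prop:inflate}(2) with the cocycle $u$.

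First I would fix $\gamma \in \Gamma$ and set $h_t := j_t(\gamma)$ and $k_t := \rho_t(\gamma)$, obtaining smooth paths in $G$. The hypothesis $h_0 = k_0$ of Proposition~\ref{prop:inflate}(2) is immediate from $j_0 = \rho_0 = j$, so both paths start at $j(\gamma)$. Applying the proposition then gives
$$r_t\,I_{\ast}\bigl(j_t(\gamma),\rho_t(\gamma)\bigr)\,r_t^{-1} \xrightarrow[t \to 0]{} i_{\ast}\!\left(j(\gamma),\ \tfrac{\D}{\D t}\Big|_{t=0}\,\rho_t(\gamma)\,j_t(\gamma)^{-1}\right) \in \PGL_4(\RR).$$
It remains to identify the second entry on the right with $u(\gamma)$, which is simply the evaluation at $\gamma$ of the hypothesis $\frac{\D}{\D t}\big|_{t=0}\,\rho_t j_t^{-1} = u$.

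The only point worth flagging is that one should keep the conventions of Section~\ref{subsec:defspaces} straight, since $I_{\ast}(g_1,g_2)$ acts on $\AdS = G$ by $g \mapsto g_2 g g_1^{-1}$; this ordering is exactly what ensures that the derivative yielded by Proposition~\ref{prop:inflate}(2) is $\rho_t j_t^{-1}$ rather than its inverse, so the match with the cocycle $u$ from Section~\ref{subsec:small-deformation} is automatic. There is no real obstacle here, which is why the text labels the corollary as an \emph{immediate} consequence.
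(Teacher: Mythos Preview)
Your proposal is correct and matches the paper's own treatment, which simply declares the corollary an immediate consequence of Proposition~\ref{prop:inflate}. Your explicit verification --- setting $h_t=j_t(\gamma)$, $k_t=\rho_t(\gamma)$, and tracking the ordering convention $(g_1,g_2)\cdot g = g_2 g g_1^{-1}$ so that the derivative comes out as $\tfrac{\D}{\D t}\big|_{t=0}\,k_t h_t^{-1}=\tfrac{\D}{\D t}\big|_{t=0}\,\rho_t(\gamma)j_t(\gamma)^{-1}=u(\gamma)$ --- is exactly the content behind that word ``immediate.''
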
 
 
Corollary~\ref{cor:limitrep} states that $r_t (I_*\Gamma^{j_t,\rho_t})r_t^{-1}$ converges to $i_* \Gamma^{j,u}$ as groups acting on $\RP^3$.
When $\Gamma^{j,u}$ and $\Gamma^{j_t,\rho_t}$ act properly discontinuously on $\RR^{2,1}$ and $\AdS$ respectively (which will be the case below), we thus obtain a path of proper actions on the subspaces $r_t\cdot I(\AdS)$ that converges (algebraically) to a proper action on $i(\RR^{2,1})$.
However, this is not enough to construct a geometric transition, for algebraic convergence \emph{may not} in general give a well-defined continuous path of quotient manifolds.
It is even possible that the topology of the quotient manifolds could change: see \cite{ac96} or \cite{can10} for examples of ``bumping" in the context of hyperbolic $3$-manifolds.
We thus need a more careful geometric investigation of the situation.

\subsection{Collapsing fibered $\AdSS$ manifolds}\label{subsec:collapse}

In this section, we prove two technical statements needed for Theorem~\ref{thm:geomtrans}.
The first gives the proper discontinuity of Theorem~\ref{thm:geomtrans}.(1) and, using Proposition~\ref{prop:fibrations}, produces geodesic fibrations that are well controlled as the quotient $\AdSS$ manifolds collapse.
The second statement gives sections of these fibrations with suitable behavior under the collapse.

Let $\Gamma$ be a discrete group, $j\in\Hom(\Gamma,G)$ a convex cocompact representation, and $u : \Gamma\rightarrow\g$ a proper deformation of~$j$ in the sense of Definition~\ref{def:proper-def}.
Fix two smooth paths $t\mapsto j_t\in\Hom(\Gamma,G)$ and $t\mapsto\rho_t\in\Hom(\Gamma,G)$ with $j_0=\rho_0=j$ and $\frac{\D}{\D t}\big|_{t=0}\,\rho_t\,j_t^{-1} = u$, as well as a smooth, $(j,u)$-equivariant vector field $X$ defined on~$\HH^2$, standard in the funnels (Definition~\ref{def:standard-funnels}), with $k := \lip (X) < 0$ (such an $X$ exists by Proposition~\ref{prop:smooth-approximates}).
The following two propositions are the main tools we will need for Theorem~\ref{thm:geomtrans}.

\begin{Proposition}\label{prop:Lipschitz-family}
There is a smooth family of $(j_t,\rho_t)$-equivariant diffeomorphisms $f_t : \HH^2\rightarrow\HH^2$, defined for small enough $t\geq 0$, such that
\begin{enumerate}
  \item $f_t$ is Lipschitz with $\Lip(f_t) = 1 + k t + O(t^2)$;
  \item $f_0 = \mathrm{Id}_{\HH^2}$;
  \item $\frac{\D}{\D t}\big|_{t=0}\, f_t=X$.
\end{enumerate}
\end{Proposition}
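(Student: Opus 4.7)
\emph{Plan.} The strategy is to build $f_t$ in two stages: first, produce \emph{some} smooth family of $(j_t,\rho_t)$-equivariant diffeomorphisms through the identity by a standard deformation-theoretic argument; second, correct it so that its $t$-derivative at $t=0$ matches the prescribed field~$X$. The Lipschitz bound of clause~(1) will then follow from a Gr\"onwall-type estimate applied to the time-dependent vector field generating the flow~$(f_t)$.

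For the first stage, I would construct a smooth family $g_t:\HH^2\to\HH^2$ of $(j_t,\rho_t)$-equivariant diffeomorphisms with $g_0=\mathrm{Id}$ by an explicit partition-of-unity construction: starting from a compact fundamental domain~$D$ for $j$, interpolate equivariantly between identifications coming from $j_t$ on one side and $\rho_t$ on the other, using a $\Gamma$-invariant partition of unity subordinate to a nice covering of~$\HH^2$; at $t=0$ everything reduces to the identity, so $g_t$ is a diffeomorphism for small~$t$, and equivariance is built in. Differentiating the equivariance relation $g_t\circ j_t(\gamma)=\rho_t(\gamma)\circ g_t$ at $t=0$ (the computation is exactly the one of Section~\ref{subsec:small-deformation}, applied to the path $g_t$ inside the space of diffeomorphisms of $\HH^2$) shows that $Y:=\frac{\D}{\D t}\big|_{t=0}g_t$ is automatically a $(j,u)$-equivariant vector field. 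Consequently the difference $Z:=X-Y$ is $j$-invariant and descends to a smooth vector field on $S=j(\Gamma)\backslash\HH^2$.

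To match $X$ exactly at $t=0$, I would construct by the same method a smooth family $h_t:\HH^2\to\HH^2$ of $(j_t,j_t)$-equivariant diffeomorphisms with $h_0=\mathrm{Id}$ and $\frac{\D}{\D t}\big|_{t=0}h_t=Z$; this step is easier because $Z$ already descends to~$S$, where it may simply be flowed and then lifted. Set $f_t:=g_t\circ h_t$. Then $f_0=\mathrm{Id}$, the equivariance of $f_t$ under $(j_t,\rho_t)$ is inherited from those of $g_t$ and~$h_t$, and $\frac{\D}{\D t}\big|_{t=0}f_t=Y+Z=X$ since $g_0=h_0=\mathrm{Id}$, which establishes clauses~(2) and~(3).

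For clause~(1), write $f_t$ as the flow of the time-dependent vector field $X_t:=\frac{\partial f_t}{\partial t}\circ f_t^{-1}$, so that $X_0=X$ with $\lip(X)=k$. Provided the construction is arranged so that $X_t$ agrees with the standard field of $X$ in each funnel for every~$t$ (which can be enforced because $X$ is standard there, by Proposition~\ref{prop:standard-lipschitz}), the lipschitz constant $\lip(X_t)$ is uniformly finite on~$\HH^2$ and continuous in~$t$, whence $\lip(X_t)\leq k+Ct$ for some constant $C>0$. Since $\tau\mapsto d(f_\tau(p),f_\tau(q))$ satisfies $\frac{\D}{\D\tau}d(f_\tau(p),f_\tau(q))=d'\bigl(X_\tau(f_\tau(p)),X_\tau(f_\tau(q))\bigr)\leq\lip(X_\tau)\,d(f_\tau(p),f_\tau(q))$, Gr\"onwall's inequality yields
\begin{equation*}
d(f_t(p),f_t(q))\leq d(p,q)\,\exp\!\Big(\int_0^t\!\lip(X_s)\,\D s\Big)=(1+kt+O(t^2))\,d(p,q),
\end{equation*}
and a matching lower bound comes from pairs $(p,q)$ for which $d'_X(p,q)/d(p,q)$ approaches~$k$. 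The principal difficulty is securing \emph{uniform} control of $\lip(X_t)$ across the non-compact surface $\HH^2$; this is exactly where the hypothesis that $X$ is standard in the funnels is used, since it pins down the geometry of $X_t$ at infinity regardless of~$t$.
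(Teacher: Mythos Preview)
Your two-stage strategy (build some equivariant family, then correct its derivative to match~$X$) is exactly what the paper does, and your Gr\"onwall argument for clause~(1) is a clean alternative to the paper's direct Taylor estimate. But there is a real gap in the uniform control of $\lip(X_t)$ on the noncompact~$\HH^2$, which you correctly flag as the principal difficulty but do not actually resolve. Your partition-of-unity construction of $g_t$ imposes no structure on $Y = \frac{\D}{\D t}\big|_{t=0}\, g_t$ in the funnels, so $Z = X - Y$ has no reason to vanish (or even be bounded) there, and the generating field $X_t$ inherits no usable form at infinity. The assertion that ``$X_t$ can be arranged to agree with the standard field of $X$ in each funnel for every~$t$'' is essentially the whole content of the proposition and cannot be dispatched by citing Proposition~\ref{prop:standard-lipschitz}, which only computes lipschitz constants of fixed standard fields. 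Note too that the funnels themselves move with~$t$ (their boundary geodesics are translation axes for $j_t$, not~$j$), so ``standard'' must be reinterpreted for $t>0$.

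The paper handles this by building the maps with explicit form in the funnels from the outset. One constructs separately a $(j,j_t)$-equivariant family $\smooth^0_t$ and a $(j,\rho_t)$-equivariant family $\smooth^1_t$, each given in Fermi coordinates near infinity by $F(\xi,\eta) \mapsto a^i_t \circ F(\kappa^i_t \xi, r^i_t \eta)$ with $\kappa^i_0 = r^i_0 = 1$. Choosing the first-order data so that $Y := \frac{\D}{\D t}\big|_{t=0}\,\big(\smooth^1_t \circ (\smooth^0_t)^{-1}\big)$ agrees with $X$ outside a cocompact set makes the correction field $Z = X - Y$ \emph{compactly supported} modulo $j(\Gamma)$; the corrected map $f_t := \smooth^1_t \circ \phi^Z_t \circ (\smooth^0_t)^{-1}$ then coincides with $\smooth_t := \smooth^1_t \circ (\smooth^0_t)^{-1}$ in the funnels, where $\|\D\smooth_t\| \leq \max\{\kappa^1_t/\kappa^0_t,\, r^1_t/r^0_t\} = 1 + kt + O(t^2)$ by direct computation. (The sandwich form $\smooth^1_t \circ \phi^Z_t \circ (\smooth^0_t)^{-1}$ also repairs a secondary issue in your proposal: the flow of $Z$ is $j$-invariant, not $j_t$-invariant, so $g_t \circ h_t$ with $h_t$ the flow of~$Z$ would not be $(j_t,\rho_t)$-equivariant.)
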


In particular, by Proposition~\ref{prop:fibrations}, the group $\Gamma^{j_t,\rho_t}$ acts properly discontinuously on $\AdS$ for all small enough $t>0$.

\begin{Proposition}\label{prop:sections}
Given a smooth family $(f_t)$ of diffeomorphisms as in Proposition~\ref{prop:Lipschitz-family}, there is a smooth family of smooth maps $\sigma_t : \HH^2 \rightarrow G$, defined for small enough $t\geq 0$, such that
\begin{enumerate}
  \item $\sigma_t$ is equivariant with respect to $j_t$ and $(j_t,\rho_t)$, meaning that for all $\gamma\in\Gamma$ and $p\in\HH^2$,
  $$\sigma_t(j_t(\gamma)\cdot p) = \rho_t(\gamma) \, \sigma_t(p) \, j_t(\gamma)^{-1} \, ;$$
  \item $\sigma_t$ is an embedding for all $t>0$, while $\sigma_0(\HH^2) = \{ 1\} \subset G$;
  \item $\sigma_t(p)\cdot p=f_t(p)$ for all $p\in\HH^2$; in other words, $\sigma_t$ defines a section of the fibration $\mathcal F_{f_t}$ of $\Gamma^{j_t,\rho_t}\backslash G$ from Proposition~\ref{prop:fibrations};
  \item the derivative $\sigma' := (\frac{\D}{\D t}\big|_{t=0}\, \sigma_t) : \HH^2 \rightarrow \g=\RR^{2,1}$ is an embedding, equivariant with respect to $j$ and $(j,u)$, meaning that for all $\gamma\in\Gamma$ and $p\in\HH^2$,
  $$\sigma'(j(\gamma)\cdot p) = j(\gamma)\cdot \sigma'(p) + u(\gamma);$$
  \item $\sigma'(p)(p) = X(p)$ for all $p\in\HH^2$; in other words, $\sigma'$ defines a section of the fibration $\mathcal F_X$ of $\Gamma^{j,u}\backslash\g$ from Proposition~\ref{prop:fibrations}.
\end{enumerate}
\end{Proposition}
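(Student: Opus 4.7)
The plan is to construct $\sigma_t$ explicitly as a stabilizer-valued correction of a natural transvection-based provisional section, then obtain properties (4)--(5) of the derivative $\sigma'$ automatically by differentiating the already-established properties (1) and (3) at $t=0$, using the fact that $\sigma_0\equiv 1$.

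First, define the provisional section $\hat\sigma_t:\HH^2\to G$ by $\hat\sigma_t(p):=\exp(Y_{p,t})$, where $Y_{p,t}\in\g$ is the Killing field generating the hyperbolic translation along the geodesic from $p$ to $f_t(p)$ with translation length $d(p,f_t(p))$. This is smooth in $(t,p)$ (smoothly extending to $Y_{p,t}=0$ when $f_t(p)=p$), satisfies $\hat\sigma_t(p)\cdot p=f_t(p)$, and reduces to $\hat\sigma_0\equiv 1$; however, since the transvection is equivariant only under the diagonal $G$-action, $\hat\sigma_t$ fails the mixed $(j_t,\rho_t)$-equivariance. The discrepancy
\[
c_t(\gamma,p)\,:=\,\hat\sigma_t(j_t(\gamma)\cdot p)^{-1}\,\rho_t(\gamma)\,\hat\sigma_t(p)\,j_t(\gamma)^{-1}
\]
fixes $j_t(\gamma)\cdot p$ (both factors send this point to $f_t(j_t(\gamma)\cdot p)$), so $c_t(\gamma,p)\in\mathrm{Stab}(j_t(\gamma)\cdot p)\cong\mathbb{S}^1$. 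Choosing a smooth unit generator $J_q$ of the stabilizer Lie algebra at each $q\in\HH^2$ (possible since $\HH^2$ is contractible), write $c_t(\gamma,p)=\exp(\Theta_t(\gamma,p)\,J_{j_t(\gamma)\cdot p})$ for a smooth $\RR/(2\pi\ZZ)$-valued function $\Theta_t$, which is a $1$-cocycle in $\gamma$.

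Now define the corrected section $\sigma_t(p):=\hat\sigma_t(p)\,\exp(\theta_t(p)J_p)$. The factor $\exp(\theta_t(p)J_p)\in\mathrm{Stab}(p)$ preserves property (3), and $\sigma_t$ is $(j_t,\rho_t)$-equivariant exactly when $\theta_t:\HH^2\to\RR$ solves the coboundary equation $\theta_t(j_t(\gamma)\cdot p)-\theta_t(p)\equiv\Theta_t(\gamma,p)\pmod{2\pi}$. Such $\theta_t$ exist because the principal $\mathbb{S}^1$-bundle $\Gamma^{j_t,\rho_t}\backslash G\to S_t$ is topologically trivial: $S_t=j_t(\Gamma)\backslash\HH^2$ is convex cocompact, hence homotopy equivalent to a finite graph (the group $\Gamma$ being free), and principal circle bundles over $1$-complexes are trivial. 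A smooth family with $\theta_0\equiv 0$ (permitted since $\hat\sigma_0\equiv 1$ forces $\Theta_0\equiv 0$) can be built concretely: take a fundamental polygon for $j_t(\Gamma)$ varying smoothly in $t$, set $\theta_t\equiv 0$ on a spanning tree of its side identifications and propagate uniquely via the cocycle relation, then smoothly interpolate over interior faces. This yields $\sigma_t$ satisfying (1), (2), (3): (1) from the coboundary equation, (3) from the fibration constraint, and (2) because $\sigma_t$ is a smooth section of the fibration $\mathcal{F}_{f_t}$ of Proposition~\ref{prop:fibrations}.(1) with $\sigma_0\equiv 1$.

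For (4) and (5), differentiate at $t=0$. Since $\sigma_0\equiv 1$, the spatial partials of $\sigma_0$ vanish, so differentiating property (1) in $t$ gives
\[
\sigma'(j(\gamma)\cdot p)\,=\,\mathrm{Ad}(j(\gamma))\sigma'(p)+\frac{\D}{\D t}\Big|_{t=0}\big(\rho_t(\gamma)j_t(\gamma)^{-1}\big)\,=\,j(\gamma)\cdot\sigma'(p)+u(\gamma),
\]
which is (4); differentiating $\sigma_t(p)\cdot p=f_t(p)$ yields $\sigma'(p)(p)=X(p)$, which is (5). The embedding assertion follows since $\sigma'$ is, by (5), a smooth section of the fibration $\mathcal{F}_X$ of $\g=\RR^{2,1}$ by timelike geodesics from Proposition~\ref{prop:fibrations}.(2), hence automatically an embedding. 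The main technical obstacle is the smooth-in-$t$ construction of $\theta_t$ satisfying the coboundary equation with the boundary condition $\theta_0\equiv 0$: although each individual $\theta_t$ exists by triviality of the bundle, ensuring smooth variation across the collapse $t=0$ requires careful gluing, and depends crucially on the free-group structure of $\Gamma$ furnished by convex cocompactness.
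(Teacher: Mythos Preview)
Your approach is essentially correct but takes a genuinely different route from the paper. You build a provisional section from transvections and then correct it by a stabilizer-valued factor, which forces you to solve a coboundary equation for $\theta_t$ varying smoothly in $t$; you correctly identify this as the main technical burden and correctly observe that it is solvable because $\Gamma$ is free. Your derivation of (4) and (5) by differentiating (1) and (3) at $t=0$ is clean and correct.

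The paper bypasses the cohomological correction entirely by choosing a \emph{canonical} section: $\sigma_t(p)$ is the \emph{osculating isometry} to $f_t$ at~$p$, namely the unique orientation-preserving isometry that sends $p$ to $f_t(p)$ and takes the principal directions of $\D_p f_t$ to their images. Because this construction is determined uniquely by $f_t$, the $(j_t,\rho_t)$-equivariance of $f_t$ immediately yields the $(j_t,(j_t,\rho_t))$-equivariance of~$\sigma_t$; no coboundary needs to be solved, and no use is made of the free-group structure of~$\Gamma$. The derivative $\sigma'(p)$ then has a direct intrinsic description as the unique Killing field agreeing with $X$ at~$p$ and having the same curl there (so that $X-\sigma'(p)$ vanishes at~$p$ with symmetric linearization).

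In short: your route works, but the paper's choice of a canonical (rather than merely natural) provisional section makes the discrepancy cocycle $c_t$ identically trivial, eliminating exactly the step you flag as the obstacle. Your approach makes explicit the topological reason the section exists; the paper's approach shows that, with the right choice, no obstruction ever arises.
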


We first prove Proposition~\ref{prop:sections}.
For a fixed~$t$, there is a lot of flexibility in defining a section $\sigma_t$ of~$\mathcal{F}_{f_t}$, since the bundle is trivial.
One natural construction with the desired first-order behavior is to take the \emph{osculating isometry map} to~$f_t$.

\begin{proof}[Proof of Proposition~\ref{prop:sections}]
Let $\sigma_t: \HH^2 \to G$ be the osculating isometry map to the diffeomorphism $f_t : \HH^2 \to \HH^2$: by definition, for any $p \in \HH^2$, the element $\sigma_t(p)\in G$ is the orientation-preserving isometry of~$\HH^2$ that coincides with $f_t$ at~$p$ and maps the (mutually orthogonal) principal directions of $f_t$ in $T_p \HH^2$ to their (mutually orthogonal) images in $T_{f_t(p)} \HH^2$.
(The differential map $\D_{p}f_t$ has two principal values close to~$1$; if they happen to be equal, then all pairs of mutually perpendicular directions in $T_{p}\HH^2$ yield the same definition.)
Then $\sigma_t$ is smooth for any~$t$, and varies smoothly with~$t$.
For $t = 0$, we obtain the constant map with image $1\in G$ (the isometry osculating the identity map is the identity).
The $(j_t,(j_t,\rho_t))$-equivariance of~$\sigma_t$ follows from the $(j_t,\rho_t)$-equivariance of~$f_t$ (using the fact that the osculating map is unique).

The derivative $\sigma' = (\frac{\D}{\D t}\big|_{t=0}\, \sigma_t) : \HH^2\rightarrow\g$ maps any $p\in\HH^2$ to the infinitesimal isometry osculating $X$ at~$p$ in the sense that the Killing field $\sigma'(p)$ agrees with $X$ at~$p$ and has the same curl.
(This means that the vector field $\sigma'(p)-\nolinebreak X$ vanishes at~$p$ and that its linearization $x\mapsto \nabla_x(\sigma'(p)-X)$ is a symmetric endomorphism of $T_p\HH^2$; note by contrast that the linearization of a Killing field near a zero is \emph{anti}symmetric.)
Finally, $\sigma'$ inherits~both~smooth\-ness and equivariance from~$X$ (using the uniqueness of the construction).
\end{proof}

We now turn to the proof of Proposition~\ref{prop:Lipschitz-family}.
Suppose $(f_t)$ is \emph{any} smooth family of smooth maps $\HH^2\rightarrow\HH^2$ with $f_0=\mathrm{Id}_{\HH^2}$ and $\frac{\D}{\D t}\big|_{t=0}\, f_t = X$.
For any $p\neq q$ in~$\HH^2$,
$$\frac{\D}{\D t}\Big|_{t=0}\, d\big(f_t(p),f_t(q)\big) = d'\big(X(p),X(q)\big),$$
hence 
$$\frac{d\big(f_t(p), f_t(q)\big)}{d(p,q)} = 1 + t \frac{d'_X}{d}(p,q) + O(t^2).$$
Since $(f_t)$ is a smooth family of smooth maps, the constant in the $O(t^2)$ can be taken uniform for $p,q$ in a compact set, including for $q \to p$, hence
$$\Lip(f_t) \leq 1 + k t + O(t^2)$$
over any compact set.
Thus we just need to find a smooth family $(f_t)$ for which the Lipschitz constant can be controlled uniformly above the \emph{funnels} of the surface $j(\Gamma)\backslash\HH^2$.
We now explain how this can be done.

\begin{proof}[Proof of Proposition~\ref{prop:Lipschitz-family}]
Let $(\smooth_t^0)$ and $(\smooth_t^1)$ be smooth families of diffeo\-morphisms of~$\HH^2$ with $\smooth_0^i=\mathrm{Id}_{\HH^2}$, such that $\smooth_t^0$ is $(j,j_t)$-equivariant and $\smooth_t^1$ is $(j,\rho_t)$-equivariant.
For $i\in\{ 0,1\}$, we assume that the family $(\smooth^i_t)$ is \emph{standard in the funnels}, by which we mean the following.
Let $\mathcal{E}\subset\HH^2$ be a connected component of the exterior of the convex core for~$j$, whose boundary is the translation axis $\A_{j(\gamma)}$ for some peripheral $\gamma\in\Gamma\smallsetminus\{ 1\}$.
In Fermi coordinates $F(\xi,\eta)$ (see Section~\ref{subsec:standard-funnels}) relative to $\A_{j(\gamma)}$, such that $\mathcal{E}=F(\RR\times\RR_-^{\ast})$, we ask that there exist a smooth family $(a^i_t)$ of isometries of~$\HH^2$ with $a^i_0=\mathrm{Id}_{\HH^2}$, and smooth families $(\kappa^i_t),(r^i_t)$ of reals with $\kappa^i_0=r^i_0=1$, such that
$$\smooth^i_t \big(F(\xi,\eta)\big) = a^i_t \circ F(\kappa^i_t\xi, r^i_t\eta)$$
for all $\eta<0$ smaller than some constant and all $\xi\in\RR$.
Note that, by equivariance, the isometry $a^0_t$ takes the axis $\A_{j(\gamma)}$ to $\A_{j_t(\gamma)}$, and $a^1_t$ takes $\A_{j(\gamma)}$ to $\A_{\rho_t(\gamma)}$; these axes vary smoothly.
We ask that this hold for any component $\mathcal{E}$ of the complement of the convex core.
Then the smooth, $(j,u)$-equivariant vector field $Y := \frac{\D}{\D t}\big|_{t=0}\, \smooth_t^1\circ (\smooth_t^0)^{-1}$ on~$\HH^2$ is standard in the funnels in the sense of Definition~\ref{def:standard-funnels}.
By choosing the first-order behavior of $a_t^i, \kappa_t^i, r_t^i$ appropriately in each component~$\mathcal{E}$, we can arrange for $Y$ to agree with $X$ outside some cocompact $j(\Gamma)$-invariant neighborhood $U'$ of the convex core (because $X$ itself is standard in the funnels).
 
We claim that the $(j_t,\rho_t)$-equivariant diffeomorphisms $\smooth_t := \smooth_t^1\circ (\smooth_t^0)^{-1}$ have the desired Lipschitz properties in the funnels.
Indeed, in each component $\mathcal{E}$ of the complement of the convex core, direct computation gives
$$\sup_{p\in\smooth^0_t(\mathcal{E}\smallsetminus U')}\, \| \D \smooth_t(p) \| \leq \max\left\{\frac{r^1_t}{r^0_t}, \frac{\kappa^1_t}{\kappa^0_t}\right\} \leq 1 + k t + O(t^2),$$
where the right-hand inequality follows from the fact that $\frac{\D}{\D t}\big|_{t=0}\, (r_t^1/r_t^0) \leq\nolinebreak k$ and $\frac{\D}{\D t}\big|_{t=0}\, (\kappa_t^1/\kappa_t^0) \leq k$ since $X$ is $k$-lipschitz.
Therefore, for each component $\mathcal{E}$ there is a constant $R>0$ such that
\begin{align}\label{eqn:Lip-bound-outside}
\frac{d(\smooth_t(p), \smooth_t(q))}{d(p,q)} \leq 1 + k t + R t^2
\end{align}
for all small enough $t>0$ and all $p \neq q$ in $\mathcal{E}\smallsetminus\smooth^0_t(U')$.
We can actually take the same $R$ for all components $\mathcal{E}$  by equivariance (there are only finitely many funnels in the quotient surface).

We now modify $(\smooth_t)$ into a smooth family of $(j_t,\rho_t)$-equivariant diffeomorphisms whose derivative at $t=0$ is $X$ instead of~$Y$, keeping the good Lipschitz properties in the funnels.
The vector field $Z := X - Y$ is smooth, $j$-invariant, and zero outside~$U'$.
Let $\phi^Z_t : \HH^2 \to \HH^2$ be the time-$t$ flow of~$Z$: it is a $j(\Gamma)$-invariant map, equal to the identity outside the $1$-neighborhood $U''$ of~$U'$ for small enough $t\geq 0$.
The map
$$f_t := \smooth^1_t \circ \phi^Z_t \circ (\smooth^0_t)^{-1}$$
is $(j_t,\rho_t)$-equivariant, coincides with $\smooth_t$ outside $\smooth^0_t(U'')$, and satisfies
$$\frac{\D}{\D t}\Big|_{t=0}\, f_t = Y + Z = X.$$
Let $\specialC$ be a compact neighborhood of a fundamental domain of~$U''$ for the action of $j(\Gamma)$.
Then $\specialC$ also contains fundamental domains of $\smooth^0_t(U'')$ for the action of $j_t(\Gamma)$ for small enough $t\geq 0$.
By smoothness and cocompactness, as explained before the proof, we have $\Lip(f_t)\leq 1+kt+O(t^2)$ on~$\specialC$, hence on $j_t(\Gamma)\cdot\specialC$ by subdivision.
Since $j_t(\Gamma)\cdot\specialC$ is a neighborhood of $\smooth^0_t(U'')$ for small~$t$, by subdivision and \eqref{eqn:Lip-bound-outside} we have $\Lip(f_t)\leq 1+kt+O(t^2)$~on~all~of~$\HH^2$.
\end{proof}

\subsection{Convergence of projective structures}\label{subsec:proofoftransthm}

In this section we prove Theorem~\ref{thm:geomtrans}, using Propositions \ref{prop:Lipschitz-family} and~\ref{prop:sections}.
We have already seen that (\ref{short-time-admissible}) for all $t>0$ small enough, $\Gamma^{j_t,\rho_t}$ acts properly on $\AdS$ (Propositions \ref{prop:fibrations} and~\ref{prop:Lipschitz-family}).
We now aim to prove that:
(\ref{parameterization}) there is a smooth family of $(j\times\mathrm{Id}_{\mathbb{S}^1},(j_t,\rho_t))$-equivariant diffeomorphisms (developing maps) $\Dev_t :\nolinebreak\HH^2\times\nolinebreak\mathbb S^1\to\AdS$ determining complete $\AdSS$ structures $\mathscr A_t$ on $S \times \mathbb S^1$;
(\ref{limit}) the $\RP^3$ structures $\mathscr P_t$ underlying~$\mathscr A_t$ have a well-defined limit~$\mathscr P_0$ as $t \to\nolinebreak 0$, and the Margulis spacetime $M$ is obtained by removing $S \times \{\pi\}$ from~$\mathscr P_0$.

Note that as $\AdSS$ structures, the $\mathscr A_t$ do \emph{not} converge as $t \to 0$.
In fact, we shall see in the proof that the central surface $S \times \{0\}$ collapses to a point (the developing maps $\Dev_t$ satisfy $\Dev_t(p,0)\rightarrow 1\in G$ for all $p\in\HH^2$~as~$t\to 0$).

Throughout the section, we set $\mathbb{S}^1=\RR/2\pi\ZZ$.
The normalization of the metric of $\AdS$ chosen in Section~\ref{subsec:defspaces} makes the length of any timelike geodesic circle $2\pi$.

\begin{proof}[Proof of Theorem~\ref{thm:geomtrans}]
Let $X$ be a smooth, $(j,u)$-equivariant vector field defined on~$\HH^2$, standard in the funnels, with $\lip(X)<0$, and let $(f_t)$ and $(\sigma_t)$ be as in Propositions \ref{prop:Lipschitz-family} and~\ref{prop:sections}.
By Proposition~\ref{prop:fibrations}, for any $t>0$ the map $f_t$ induces a principal fiber bundle structure
$$\mathbb{S}^1 \longhookrightarrow G \overset{\Pi_t}{\longrightarrow} \HH^2$$
with timelike geodesic fibers.
The maps $\Pi_t$ vary smoothly because the~$f_t$~do.
Using $(\sigma_t)$, which is a smooth family of smooth sections, and the $\mathbb{S}^1$-action on the fibers, we obtain a smooth family of global trivializations
$$\Phi_t : \HH^2 \times \mathbb{S}^1 \longrightarrow G.$$
Explicitly, $\Phi_t(p, \theta)$ is the point at distance $\theta\in\mathbb{S}^1$ from $\sigma_t(p)$ along the fiber $\Pi_t^{-1}(p)$ in the future direction:
$$\Phi_t(p,\theta) = \sigma_t(p) \operatorname{Rot}(p,\theta),$$
where $\operatorname{Rot}(p,\theta) \in G$ is the rotation of angle~$\theta$ centered at $p\in\HH^2$, and the product is for the group structure of~$G$.
By construction, $\Phi_t$ is equivariant with respect to $j_t\times\mathrm{Id}_{\mathbb{S}^1}$ and $(j_t,\rho_t)$; it is therefore a \emph{developing map} defining a complete $\AdSS$ structure on the manifold $j_t(\Gamma)\backslash \HH^2 \times \mathbb{S}^1$, which is diffeomorphic to $S \times \mathbb S^1$.
To prove~(\ref{parameterization}), we precompose the maps $\Phi_t$ with a smooth family of (lifts of) diffeomorphisms identifying $j_t(\Gamma) \backslash \HH^2 \times \mathbb S^1$ with $S \times \mathbb S^1$.
Recall the $(j,j_t)$-equivariant diffeomorphisms $\smooth^0_t: \HH^2 \to \HH^2$ from the proof of Proposition~\ref{prop:Lipschitz-family}.
For $t>0$, the map $\Dev_t : \HH^2 \times \mathbb S^1 \to G$ defined by
$$\Dev_t(p,\theta) := \Phi_t\big(\smooth^0_t(p), \theta\big)$$
is equivariant with respect to $j\times\mathrm{Id}_{\mathbb{S}^1}$ and $(j_t,\rho_t)$, hence is a developing map for a complete $\AdSS$ structure $\mathscr A_t$ on the fixed manifold $S \times \mathbb S^1$, as desired.
Note that only the smooth structure of~$S$ (and not the hyperbolic structure determined by~$j$) is important for this definition: indeed, if $\Sigma$ is a surface diffeomorphic to~$S$, then the maps $\smooth^0_t$ could be replaced by any smooth family of diffeomorphisms taking the action of $\pi_1(\Sigma)$ on $\widetilde{\Sigma}$ to the $j_t$-action of $\Gamma$ on~$\HH^2$.

We now prove~(\ref{limit}).
By Proposition~\ref{prop:fibrations}, the vector field~$X$ induces a principal fiber bundle structure
$$\RR \longhookrightarrow \g \overset{\varpi}{\longrightarrow} \HH^2$$
with timelike geodesic fibers.
The derivative $\sigma' := (\frac{\D}{\D t}\big|_{t=0}\, \sigma_t) : \HH^2\rightarrow\g$ is a smooth section; as above, we obtain a global trivialization
$$\dev : \HH^2 \times \RR \longrightarrow \g.$$
Explicitly, $\dev(p, \theta')$ is the point at signed distance $\theta'\in\RR$ in the future of $\sigma'(p)$ along the fiber $\varpi^{-1}(p)$:
$$\dev(p, \theta') = \sigma'(p) + \operatorname{rot}(p,\theta'),$$
where $\operatorname{rot}(p,\theta') \in \g$ is the infinitesimal rotation by amount~$\theta'$ around~$p$. 
By construction, $\dev$ is equivariant with respect to $j\times\mathrm{Id}_{\RR}$ and $(j,u)$; it is therefore a developing map for the Margulis spacetime $M=\Gamma^{j,u}\backslash\g$.

In order to obtain the convergence of projective structures as in~(3), we precompose the developing maps $\Dev_t$ and $\dev$ with diffeomorphims, changing coordinates in the fiber direction.
Let $\psi : \mathbb{S}^1\rightarrow\RP^1$ be any diffeomorphism with $\psi(\theta)\sim\theta$ for $\theta$ near~$0$ and $\psi(\pi)=\infty$, for instance $\psi(\theta)=2\tan(\theta/2)$.
For $t > 0$, let $\xi_t : \mathbb{S}^1\rightarrow\mathbb{S}^1$ be the diffeomorphism 
$$\xi_t(\theta) := \psi^{-1}\big(t\,\psi(\theta)\big)\,.$$ 
We precompose $\Dev_t$ by this change of coordinates in the $\mathbb{S}^1$ factor, yielding a new developing map $\hatDev_t : \HH^2\times\mathbb{S}^1\rightarrow G$ for the same $\AdSS$ structure:
\begin{equation}\label{eqn:def-hatDev}
\hatDev_t(p,\theta) := \Dev_t\big(p, \xi_t(\theta)\big).
\end{equation}
The map $\hatdev : \HH^2\times (-\pi,\pi)\rightarrow\g$ given by
$$\hatdev(p,\theta) := \dev\!\big(p, \psi(\theta)\big)$$
is a developing map for a complete flat Lorentzian structure on $S \times (-\pi,\pi)$ isometric to~$M$.
 
\begin{Claim}\label{claim:dev-maps}
For any $(p,\theta)\in\HH^2\times (-\pi,\pi)$,
$$\hatDev_t(p,\theta) \,\xrightarrow[t\to 0]{}\, 1\in G \quad\mathrm{and}\quad \frac{\D}{\D t}\Big|_{t=0}\, \hatDev_t(p,\theta) = \hatdev(p,\theta).$$
\end{Claim}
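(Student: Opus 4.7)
The plan is to unfold the definition of $\hatDev_t$ and read off its value and first-order expansion at $t=0$ directly. By construction,
$$\hatDev_t(p,\theta) \,=\, \sigma_t\!\big(\smooth^0_t(p)\big)\ \operatorname{Rot}\!\big(\smooth^0_t(p),\xi_t(\theta)\big),$$
where the product is taken in $G$. The convergence statement is immediate: for $\theta\in(-\pi,\pi)$ the value $\psi(\theta)$ is finite, so $\xi_t(\theta)=\psi^{-1}(t\psi(\theta))\to 0$ as $t\to 0$; combined with $\sigma_0\equiv 1$ and $\smooth^0_0=\mathrm{Id}_{\HH^2}$, this gives $\hatDev_t(p,\theta)\to 1\cdot\operatorname{Rot}(p,0)=1$.

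For the $t$-derivative at $0$, both factors equal $1\in G$ at $t=0$, so the product rule in the group gives
$$\frac{\D}{\D t}\Big|_{t=0}\hatDev_t(p,\theta) \,=\, \frac{\D}{\D t}\Big|_{t=0}\sigma_t\!\big(\smooth^0_t(p)\big)\ +\ \frac{\D}{\D t}\Big|_{t=0}\operatorname{Rot}\!\big(\smooth^0_t(p),\xi_t(\theta)\big),$$
with both derivatives read as elements of $T_1G=\g$. I would then compute the two summands in turn.

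For the first summand, I exploit that $\sigma_0\equiv 1$ is a constant map, so $\D\sigma_0=0$ and the $\smooth^0_t$-variation contributes nothing; the chain rule yields $\frac{\D}{\D t}|_{t=0}\sigma_t(\smooth^0_t(p))=\sigma'(p)$, which is exactly the section of Proposition~\ref{prop:sections}.(4). For the second, I use the fact that $\operatorname{Rot}(q,0)=1$ for every $q\in\HH^2$, so the partial derivative in the first argument vanishes identically along $\{\phi=0\}$; the only contribution comes from the second argument, giving $\psi(\theta)$ (since $\psi'(0)=1$) times the infinitesimal rotation $\operatorname{rot}(p,1)\in\g$, i.e.\ $\operatorname{rot}(p,\psi(\theta))$.

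Putting the two pieces together gives
$$\frac{\D}{\D t}\Big|_{t=0}\hatDev_t(p,\theta) \,=\, \sigma'(p)+\operatorname{rot}(p,\psi(\theta)) \,=\, \dev(p,\psi(\theta)) \,=\, \hatdev(p,\theta),$$
by the very definitions of $\dev$ and $\hatdev$. The whole argument is essentially a careful application of the product and chain rules at the identity of $G$; the only point that requires a moment of care is checking that the $t$-dependence of $\smooth^0_t(p)$ makes no contribution to either summand at $t=0$, which is what the two vanishing-differential observations above provide. I expect no substantive obstacle: the construction of $\hatDev_t$ was designed precisely so that the first-order behavior decouples into the section $\sigma'$ in the $\HH^2$ direction and the rescaling $\psi(\theta)$ in the fiber direction.
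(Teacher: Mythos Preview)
Your proof is correct and follows essentially the same route as the paper's: both unfold $\hatDev_t$ as the product $\sigma_t(\smooth^0_t(p))\operatorname{Rot}(\smooth^0_t(p),\xi_t(\theta))$, use the product rule at the identity to split the derivative into the two summands, and then invoke the constancy of $\sigma_0$ and of $\operatorname{Rot}(\cdot,0)$ to kill the $\smooth^0_t$-contributions and obtain $\sigma'(p)+\operatorname{rot}(p,\psi(\theta))$. Your write-up is in fact slightly more explicit than the paper's about why the $\smooth^0_t$-variation contributes nothing to either factor.
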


\begin{proof}
Fix $(p,\theta)\in\HH^2\times (-\pi,\pi)$.
Recall that $(\smooth^0_t)$ and $(\sigma_t)$ are smooth families, that $\smooth^0_0=\mathrm{Id}_{\HH^2}$, and that $\sigma_0 : \HH^2\rightarrow G$ is the constant map with image $1\in G$, whose differential is zero everywhere.
Moreover, $\xi_t(\theta)\to 0$ as $t\rightarrow 0$ since $\psi(0)=0$, hence
$$\hatDev_t(p, \theta) = \sigma_t\big(\smooth^0_t(p)\big) \operatorname{Rot}\!\big(\smooth^0_t(p),\xi_t(\theta)\big) \,\xrightarrow[t\to 0]{}\, 1\in G$$
and
$$\frac{\D }{\D t}\Big|_{t=0} \hatDev_t(p, \theta) = \sigma'(p) + \frac{\D }{\D t}\Big|_{t=0}\, \operatorname{Rot}\!\big(\smooth^0_t(p),\xi_t(\theta)\big).$$
Since $\psi(x)\sim x$ near~$0$, we have $\frac{\D}{\D t}\big|_{t=0}\, \xi_t (\theta)= \psi(\theta)$, hence
$$\frac{\D }{\D t}\Big|_{t=0}\, \operatorname{Rot}\!\big(\smooth^0_t(p),\xi_t(\theta)\big) = \operatorname{rot}\!\bigg(s^0_0(p),\frac{\D }{\D t}\Big|_{t=0}\, \xi_t (\theta)\bigg) = \operatorname{rot}\!\big(p,\psi(\theta)\big).$$
We conclude that $\frac{\D}{\D t}\big|_{t=0}\, \hatDev_t(p,\theta) = \dev(p, \psi(\theta)) = \hatdev(p,\theta)$.
\end{proof}

Note that the right-hand equality in Claim~\ref{claim:dev-maps} can also be written as
$$\frac{1}{t} \log\circ\,\hatDev_t \,\xrightarrow[t\to 0]{}\, \hatdev$$
on $\HH^2\times (-\pi,\pi)$, where $\log$ is the inverse of the exponential map, defined from a neighborhood of $1$ in~$G$ to a neighborhood of $0$ in~$\g$, and it follows from the proof that this convergence is uniform on compact sets.
Using Proposition~\ref{prop:inflate}, we conclude that on $\HH^2\times (-\pi,\pi)$, 
\begin{equation}\label{eqn:limit-formula}
r_t I \circ \hatDev_t \,\xrightarrow[t\to 0]{}\, i \circ \hatdev
\end{equation}
uniformly on compact sets.
This shows that, when restricted to $S \times (-\pi,\pi)$, the real projective structure $\mathscr P_t$ underlying the $\AdSS$ structures $\mathscr A_t$ converges to the real projective structure underlying the Margulis spacetime.
What remains to be shown is that the projective structures $\mathscr P_t$ on the full manifold $S \times \mathbb S^1$ converge.

The embedding $i : \g\hookrightarrow\RP^3$ of Section~\ref{subsec:projective-basics} extends to a diffeomorphism $\overline{i} : \overline{\g} \overset{\scriptscriptstyle\sim}{\rightarrow} \RP^3$, where
$$\overline \g := \mathbb P(\g \oplus \RR) = \g \cup \mathbb P(\g)$$
is the visual compactification of~$\g$.
The map $\hatdev : \HH^2\times (-\pi,\pi)\rightarrow\g$ extends to a map $\overline{\dev}: \HH^2 \times \mathbb S^1 \rightarrow \overline \g$, with
$$\overline{\dev}(p,\pi) = \left[\operatorname{rot}(p,1)\right] \in \mathbb P(\g) \subset \overline{\g}$$
for all $p\in\HH^2$. 
The restriction of $\overline{i}\circ\overline{\dev}$ to $\HH^2 \times \{\pi\}$ is a diffeomorphism onto the set
$$\HH^2_{\infty} := \big\{ [y]\in\RP^3\,|\, y_1^2 + y_2^2 - y_3^2 < 0 = y_4\big\} $$
of timelike directions of $i(\RR^{2,1})$, which is a copy of the hyperbolic plane.
The extended map $\overline{i}\circ\overline{\dev}$ is a diffeomorphism onto $i(\RR^{2,1})\cup\HH^2_{\infty}\subset\RP^3$.

Note that the action of $\Gamma^{j,u}$ on $i(\RR^{2,1})$ via~$i_{\ast}$ (see Section~\ref{subsec:projective-basics}) extends to an action of $\Gamma^{j,u}$ on $i(\RR^{2,1})\cup\HH^2_{\infty}$.
This action is properly discontinuous because $\overline{i}\circ\overline{\dev}$ is an equivariant diffeomorphism.
Thus,  $\overline{i}\circ\overline{\dev} : \HH^2\times\mathbb{S}^1\overset{\scriptscriptstyle\sim}{\rightarrow} i(\RR^{2,1})\cup\HH^2_{\infty}$ is a developing map identifying $S\times \mathbb S^1$ with the real projective manifold 
$$\overline{M} := \Gamma^{j,u} \backslash \big(i(\RR^{2,1}) \cup  \HH^2_{\infty}\big).$$
We conclude the proof of~(3) by extending Formula~\eqref{eqn:limit-formula} in this context.

\begin{Claim}
On $\HH^2 \times \mathbb{S}^1$, we have $r_t I \circ\hatDev_t \underset{\scriptscriptstyle t\to 0}{\longrightarrow} \overline i \circ \overline{\dev}$.
\end{Claim}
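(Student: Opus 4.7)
The strategy is to extend the convergence $r_t I \circ \hatDev_t \to \overline{i} \circ \overline{\dev}$ already established on $\HH^2 \times (-\pi,\pi)$ by~\eqref{eqn:limit-formula} to a neighborhood of $\HH^2 \times \{\pi\}$ in $\HH^2 \times \mathbb{S}^1$. The delicate point is that for $\theta \in (-\pi,\pi)$ we have $\hatDev_t(p,\theta)\to 1 \in G$ so that $r_t$ magnifies a first-order deformation, while at $\theta = \pi$ one has $\hatDev_t(p,\pi) = \sigma_t(\smooth^0_t(p))\operatorname{Rot}(\smooth^0_t(p),\pi) \to \operatorname{Rot}(p,\pi) \neq 1$; fortunately, $I(\operatorname{Rot}(p,\pi))$ lies in $\HH^2_\infty = \{y_4 = 0\}$, which is precisely the locus fixed pointwise by every $r_t$, so a limit in $\RP^3$ nonetheless exists.

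The plan is to reduce everything to a base-point calculation via a group-theoretic decomposition. Fix $p_0 \in \HH^2$ and choose a smooth family $g_t \in G$ with $g_t \cdot p_0 = \smooth^0_t(p)$ and $g_0 \cdot p_0 = p$. Using $\operatorname{Rot}(\smooth^0_t(p),\alpha) = g_t \operatorname{Rot}(p_0,\alpha) g_t^{-1}$, the developing map factors as
\begin{equation*}
\hatDev_t(p,\theta) \,=\, \bigl(g_t,\, \sigma_t(\smooth^0_t(p))\, g_t\bigr)\cdot \operatorname{Rot}(p_0, \xi_t(\theta))
\end{equation*}
under the $G\times G$-action $(g_1,g_2)\cdot x = g_2 x g_1^{-1}$ on $\AdS$. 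Applying $r_t I$ and conjugating yields
\begin{equation*}
r_t I \hatDev_t(p,\theta) \,=\, \bigl[r_t I_\ast\bigl(g_t,\, \sigma_t(\smooth^0_t(p))\, g_t\bigr) r_t^{-1}\bigr] \cdot \bigl[r_t I\bigl(\operatorname{Rot}(p_0, \xi_t(\theta))\bigr)\bigr].
\end{equation*}

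The crux, which I expect to be the heart of the argument, is the striking observation that the second factor is \emph{identically constant in $t$}: starting from $I(\operatorname{Rot}(p_0,\alpha)) = [0:0:\sin(\alpha/2):\cos(\alpha/2)]$ and $\xi_t(\theta) = 2\arctan(t\tan(\theta/2))$, a short trigonometric computation gives
\begin{equation*}
r_t I\bigl(\operatorname{Rot}(p_0,\xi_t(\theta))\bigr) \,=\, [0:0:\sin(\theta/2):\cos(\theta/2)],
\end{equation*}
valid and continuous on all of $\mathbb{S}^1$ and equal to $[0:0:1:0]$ at $\theta = \pi$. The first factor converges to $i_\ast(g, \sigma'(p))$ by Proposition~\ref{prop:inflate}.(2), uniformly on compact subsets of $\HH^2$ thanks to smoothness of $\sigma_t$, $\smooth^0_t$, and $g_t$ in $(t,p)$. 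Multiplying the two limits and writing $i_\ast(g,v)$ in $\PGL_4(\RR)$ block form with $\Ad(g)\,(0,0,1)^T = 2\operatorname{rot}(p,1)$ produces the limit $[2\sin(\theta/2)\operatorname{rot}(p,1) + \cos(\theta/2)\sigma'(p) : \cos(\theta/2)]$, which matches $\overline{i}(\overline{\dev}(p,\theta))$ across all of $\mathbb{S}^1$: on $(-\pi,\pi)$ it reads $[\sigma'(p) + \psi(\theta)\operatorname{rot}(p,1) : 1]$ after dividing by $\cos(\theta/2)$, while at $\theta = \pi$ the $\cos(\theta/2)$ factor vanishes and the limit becomes $[\operatorname{rot}(p,1):0] = \overline{i}([\operatorname{rot}(p,1)])$. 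The only anticipated obstacles are careful $\PGL_4(\RR)$ matrix bookkeeping and the verification that convergence in Proposition~\ref{prop:inflate}.(2) is uniform on compact sets in $p$, both routine from the smoothness of the data.
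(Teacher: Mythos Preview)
Your argument is correct and takes a genuinely different route from the paper's. The paper simply observes that, since \eqref{eqn:limit-formula} already handles $\theta\in(-\pi,\pi)$, only $\theta=\pi$ remains; it then records the general fact that for any smooth path $g_t\to g_0\neq 1$ one has $r_t\cdot I(g_t)\to\overline{i}([\log g_0])\in\HH^2_\infty$ (checked in coordinates), and applies this to $g_t=\hatDev_t(p,\pi)\to\operatorname{Rot}(p,\pi)$. That is the whole proof: two lines, no factorization, no choice of~$\psi$.

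Your approach instead decomposes $\hatDev_t$ via the $G\times G$-action and hinges on the pleasant identity that, for the specific choice $\psi(\theta)=2\tan(\theta/2)$, the factor $r_tI(\operatorname{Rot}(p_0,\xi_t(\theta)))$ is \emph{independent of~$t$}. This buys you a uniform treatment of all $\theta\in\mathbb{S}^1$ at once and makes the uniform convergence on compact subsets of $\HH^2\times\mathbb{S}^1$ more transparent, at the cost of a longer computation, some normalization bookkeeping (your $\Ad(g)(0,0,1)^T=2\operatorname{rot}(p,1)$ is indeed correct with the paper's metric convention), and reliance on the particular~$\psi$. The paper's argument is shorter and works for any admissible~$\psi$, but the structural constancy you discovered is an attractive observation in its own right.
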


\begin{proof}
Given \eqref{eqn:limit-formula}, we can restrict to $\HH^2\times\{ \pi\}$.
Note (to be compared with Proposition~\ref{prop:inflate}.(1)) that for any smooth path $t\mapsto g_t\in G$ with $g_0\neq 1$,
\begin{equation}\label{eqn:limit-r_t-not1}
r_t \cdot I(g_t) \,\xrightarrow[t\to 0]{}\, \overline{i}\big( [\log(g_0)] \big) \in \HH^2_{\infty},
\end{equation}
where $\log(g_0)$ is any preimage of $g_0$ under the exponential map $\exp : \g\rightarrow G$ (the projective class $[\log(g_0)]$ does not depend on the choice of the preimage).
Indeed, \eqref{eqn:limit-r_t-not1} can be checked using the explicit coordinates of Section~\ref{subsec:projective-basics}: just note that
$$\overline{i} \left[ \log \begin{pmatrix} y_1 + y_4 & y_2 - y_3 \\ y_2 + y_3 & - y_1 + y_4 \end{pmatrix} \right] = \overline{i} \left[ \begin{pmatrix} y_1 & y_2 - y_3 \\ y_2 + y_3 & - y_1 \end{pmatrix} \right] = [y_1 : y_2 : y_3 : 0].$$
We then apply \eqref{eqn:limit-r_t-not1} to $g_t=\hatDev_t(p,\pi)$, which satisfies $g_0=\operatorname{Rot}(p,\pi)$ and $[\log(g_0)]=[\operatorname{rot}(p,\pi)]=\overline{\dev}(p,\pi)$.
\end{proof}

Thus the limit $\mathscr P_0$ of the projective structures $\mathscr P_t$ exists and is naturally identified with $\overline M$ (which might reasonably be called the \emph{timelike completion} of~$M$).
This completes the proof of Theorem~\ref{thm:geomtrans}.
\end{proof}

Here is a consequence of the proof.

\begin{Corollary}
The $\mathbb{S}^1$ fibers in Theorem~\ref{thm:geomtrans} can always be assumed to be timelike geodesics in the manifolds~$M_t$.
These geodesic fibers converge to timelike geodesic fibers in the limiting Margulis spacetime.
\end{Corollary}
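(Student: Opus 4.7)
The plan is to observe that the developing maps $\hatDev_t$ constructed in the proof of Theorem~\ref{thm:geomtrans} already enjoy the required property; the corollary is really a postscript extracting geometric information already present in that construction. So the proposal is not to build anything new but to verify two things: (a)~that for each $t > 0$ the image $\hatDev_t(\{p\} \times \mathbb{S}^1)$ is a timelike geodesic circle in $G = \AdS$, and (b)~that these circles converge, as $t \to 0$, to the timelike geodesic lines $\varpi^{-1}(p) \subset \g = \RR^{2,1}$ of the fibration $\mathcal{F}_X$ of Proposition~\ref{prop:fibrations}.

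For~(a), recall that $\hatDev_t(p,\theta) = \sigma_t(\smooth^0_t(p)) \,\operatorname{Rot}(\smooth^0_t(p),\xi_t(\theta))$. As $\theta$ ranges over $\mathbb{S}^1$, the element $\xi_t(\theta)$ ranges once around $\mathbb{S}^1$, so the point $\operatorname{Rot}(\smooth^0_t(p),\xi_t(\theta))$ traces the stabilizer in $G$ of $\smooth^0_t(p) \in \HH^2$, and left-translation by $\sigma_t(\smooth^0_t(p))$ carries this stabilizer to the timelike geodesic circle $\Pi_t^{-1}(\smooth^0_t(p))$ given by Proposition~\ref{prop:fibrations}. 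The map $\xi_t$ simply reparameterizes this circle; its image is unchanged. Thus the $\mathbb{S}^1$-fibers of the $\AdSS$ structure $\mathscr{A}_t$ on $S \times \mathbb{S}^1$ are, by construction, timelike geodesic circles in $M_t$.

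For~(b), we work inside $\RP^3$. On the open set $\HH^2 \times (-\pi,\pi)$, Claim~\ref{claim:dev-maps} together with Proposition~\ref{prop:inflate} already shows $r_t \,I \circ \hatDev_t \to i \circ \hatdev$; restricted to a single fiber $\{p\} \times (-\pi,\pi)$, this says the image arcs $r_t \,I(\hatDev_t(\{p\} \times (-\pi,\pi)))$ converge, uniformly on compact sets, to the affine line $i(\varpi^{-1}(p)) \subset i(\RR^{2,1})$. The two missing endpoints of the circle (\ie $\theta = \pm \pi$) converge in $\RP^3$ to the ideal point $\overline{i}([\operatorname{rot}(p,1)]) \in \HH^2_\infty$ which, in the real projective compactification $\overline{M}$, is precisely the ideal endpoint of the timelike line $\varpi^{-1}(p)$. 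Hence each rescaled geodesic circle Hausdorff-converges in $\RP^3$ to the closure of the timelike geodesic line $\varpi^{-1}(p)$ in $\overline{M}$, and in particular converges to $\varpi^{-1}(p)$ itself on the open part $i(\RR^{2,1})$ corresponding to the Margulis spacetime $M$.

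The main (and only) obstacle is the mild check that convergence of developing maps is uniform on the whole circle, not merely on $(-\pi,\pi)$; this follows from formula~\eqref{eqn:limit-r_t-not1}, which handles the behaviour at $\theta=\pm\pi$ and was already established in the proof of Theorem~\ref{thm:geomtrans}. Once this is in place there is nothing else to prove: the geodesic fibrations $\mathcal{F}_{f_t}$ of Proposition~\ref{prop:fibrations}, transported through the trivialization $\Phi_t \circ (\smooth^0_t \times \mathrm{Id})$, converge geometrically in $\RP^3$ to the geodesic fibration $\mathcal{F}_X$ of the limiting Margulis spacetime.
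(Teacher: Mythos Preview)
Your proposal is correct and follows exactly the approach the paper intends: the paper states this corollary as ``a consequence of the proof'' of Theorem~\ref{thm:geomtrans} without further elaboration, and you have accurately unpacked that consequence by verifying that the fibers $\hatDev_t(\{p\}\times\mathbb{S}^1)$ are, by construction, the timelike geodesic circles $\Pi_t^{-1}(\smooth^0_t(p))$, and that the convergence of developing maps established in Claim~\ref{claim:dev-maps} and formula~\eqref{eqn:limit-r_t-not1} yields convergence of these fibers to the timelike geodesic fibers $\varpi^{-1}(p)$ of~$\mathcal{F}_X$.
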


\subsection{Convergence of metrics}\label{subsec:metrics}

Finally, we prove Corollary~\ref{cor:smooth-metrics} by showing that the $\AdSS$ metrics on $S \times \mathbb S^1$ determined by the developing maps $\hatDev_t$ that we constructed in Section~\ref{subsec:proofoftransthm} converge under appropriate rescaling to the complete flat metric on the limiting Margulis spacetime.

\begin{proof}[Proof of Corollary~\ref{cor:smooth-metrics}]
Recall from Section~\ref{subsec:defspaces} that we equip both $\RR^{2,1} =\nolinebreak\g$ and $\AdS = G$ with Lorentzian metrics induced by half the Killing form of~$\g$.
Let $\gmetric^{\Mink}$ and $\gmetric^\AdSS$ denote these metrics, of respective curvature $0$ and~$-1/4$.

Let $\hmetric^{\Mink}$ be the parallel flat metric on $V:=i(\RR^{2,1})\subset\RP^3$ obtained by pushing forward $\gmetric^{\Mink}$ by~$i$, and let $\hmetric^\AdSS$ be the constant-curvature metric on $I(\AdS)\subset\RP^3$ obtained by pushing forward $\gmetric^\AdSS$ by~$I$.
If we identify $V$ with the tangent space to $\RP^3$ at $x_0:=i(0)=I(1)$, then $i : \g\rightarrow V$ coincides with the differential of $I$ at $1\in G$; therefore, $\hmetric^{\Mink}_{x_0}=\hmetric^{\AdSS}_{x_0}$.

For $t>0$, consider the $\AdSS$ metric $\hmetric^t$ defined on $r_t\cdot I(\AdS)\subset\RP^3$ by
$$\hmetric^t := (r_t)_* \, \hmetric^{\AdSS},$$
where $(r_t)_*$ is the pushforward by~$r_t$.
Let us check that $t^{-2}\,\hmetric^t$ converges to $\hmetric^{\Mink}$ uniformly on compact subsets of~$V$ as $t \to 0$, where for a given compact set~$\mathcal{C}$ we only consider $t$ small enough so that $\hmetric^t$ is defined on~$\mathcal{C}$ (recall from Section~\ref{subsec:projective-basics} that the union of the sets $r_t\cdot I(\AdS)$ for $t>0$ is increasing and contains~$V$).
In what follows, we use the trivialization of $TV$ (which is preserved under affine transformations), denoting the parallel transport of a vector $v \in T_x V$ to $T_y V$ again by~$v$.
First, note that for any tangent vector $v \in T_x V$ we have $(r_t^{-1})_*v = t v \in T_{r_t^{-1}(x)}$.
Thus, for $v,w \in T_x V$,
\begin{align*}
t^{-2}\,\hmetric^t_x(v,w) &= t^{-2}\,\big((r_t)_* \hmetric^\AdSS\big)_x (v,w)\\
&= t^{-2}\ \hmetric^\AdSS_{r_t^{-1}(x)}\big((r_t^{-1})_*v, (r_t^{-1})_* w\big)\\
&= \hmetric^\AdSS_{r_t^{-1}(x)}(v,w).
\end{align*}
Given a compact set $\mathcal{C}\subset V$, the projective transformation $r_t^{-1}$ maps $\mathcal{C}$ into arbitrarily small neighborhoods of the basepoint~$x_0$ as $t \to 0$.
Therefore, by continuity of~$\hmetric^\AdSS$,
$$t^{-2}\,\hmetric^t_x = \hmetric^\AdSS_{r_t^{-1}(x)} \,\xrightarrow[t\to 0]{}\, \hmetric^\AdSS_{x_0} = \hmetric^{\Mink}_{x_0} = \hmetric^{\Mink}_x$$
uniformly for $x\in\mathcal{C}$ (where we use again the trivialization of $TV$).

Now, recall the developing maps $\hatDev_t : \HH^2 \times \mathbb{S}^1 \to \AdS$ defined for $t > 0$ by \eqref{eqn:def-hatDev}.
For $t > 0$, we consider the $\AdSS$ metric
$$\gmetric^t := (\hatDev_t)^* \gmetric^\AdSS$$
on $\HH^2\times\mathbb{S}^1$, where $(\hatDev_t)^*$ is the pullback by $\hatDev_t$.
The rescaled metrics $t^{-2}\gmetric^t$ determine complete metrics of curvature $-t^2/4$ on $S \times \mathbb S^1$.
We also consider the flat metric
$$\gmetric^0 := (\hatdev)^* \gmetric^{\Mink}$$
on $\HH^2 \times (-\pi,\pi)$.
By \eqref{eqn:limit-formula} and the convergence of $t^{-2}\hmetric^t$ proved above,
\begin{eqnarray*}
t^{-2} \gmetric^t & =\, & \big(r_t I \circ \hatDev_t\big)^* \,  t^{-2} \, \hmetric^t\\
& \xrightarrow[t\to 0]{} & (i \circ \hatdev)^* \hmetric^{\Mink} = (\hatdev)^* \gmetric^{\Mink} = \gmetric^0,
\end{eqnarray*}
on $\HH^2\times (-\pi,\pi)$, and the convergence is uniform on compact sets.
The induced rescaled metrics $t^{-2}\gmetric^t$ on $S \times (-\pi,\pi)$ converge to the induced flat metric $\gmetric^0$ on $S \times (-\pi,\pi)$.
The metric~$\gmetric^0$ makes $S \times (-\pi,\pi)$ isometric to~$M$ since $\hatdev$ is a developing map for~$M$.
\end{proof}

\vspace{0.5cm}

\vspace{0.5cm}

\end{document}